\title[$\Gamma$-convergence for free discontinuity functionals in ${\rm BD}$] {$\mathbf{\Gamma}$-convergence and homogenisation for free discontinuity functionals with linear growth in the space of functions with bounded deformation}
\author{Gianni Dal Maso}
\address{SISSA, via Bonomea 265, Trieste, Italy}
\email{dalmaso@sissa.it}
\author{Davide Donati}
\address{SISSA, via Bonomea 265, Trieste, Italy}
\email{ddonati@sissa.it}
\numberwithin{equation}{section}
\theoremstyle{plain} 
\newtheorem{theorem}{Theorem}[section] \newtheorem{lemma}[theorem]{Lemma} 
\newtheorem{proposition}[theorem]{Proposition}
\newtheorem{corollary}[theorem]{Corollary}
\theoremstyle{definition} 
\newtheorem{definition}[theorem]{Definition}
\newtheorem{example}[theorem]{Example} \newtheorem{remark}[theorem]{Remark}
\newcommand{\dx}{{\rm d}}
\newcommand\e{\varepsilon}
\newcommand{\N}{\mathbb{N}}
\newcommand{\Z}{\mathbb{Z}}
\newcommand\R{\mathbb{R}}
\newcommand{\B}{\mathcal{B}}
\newcommand{\E}{\mathcal{E}}
\newcommand\Rd{\mathbb{R}^d}
\newcommand\Sd{\mathbb{S}^{d-1}}
\newcommand{\Rdsym}{\mathbb{R}^{d\times d}_{\rm sym}}
\newcommand{\Rdskew}{\R^{d\times d}_{\rm skew}}
\newcommand{\Rdd}{\R^{d\times d}}
\newcommand{\hd}{\mathcal{H}^{d-1}}
\newcommand\Hd[1]{\mathcal{H}^{d-1}(#1)}
\newcommand{\Lb}{\mathcal{L}}
\newcommand{\Ld}{\mathcal{L}^d}
\newcommand{\mres}
{\mathbin{\vrule height 1.6ex depth 0pt width
0.13ex\vrule height 0.13ex depth 0pt width 1.3ex}}
\mathchardef\emptyset="001F
\newcommand{\U}{\mathcal{U}}
\newcommand{\m}{\mathfrak{m}}
\newcommand{\leqnomode}{\tagsleft@true\let\veqno\@@leqno}
\theoremstyle{remark}
\begin{document}

\thanks{Preprint SISSA 01/2026/MATE}
\begin{abstract} 
\smallskip
 We study the $\Gamma$-convergence of sequences of free discontinuity functionals with linear growth defined in the space ${\rm BD}$ of functions with bounded deformation. We prove a  compactness result with respect to $\Gamma$-convergence and outline the main properties of the $\Gamma$-limits, which lead to an integral representation result. The corresponding integrands are obtained by taking limits of suitable minimisation problems on small cubes.
 These results are then  used to study the deterministic and stochastic homogenisation problem for a large class of free discontinuity functionals defined in ${\rm BD}$.
\end{abstract}

\maketitle
\vspace{-0.6 cm}
{\bf MSC codes:} 49J45, 49Q20, 60G60, 74Q05, 74S60.

\noindent
{\bf Keywords:}  Free discontinuity problems, $\Gamma$-convergence, integral representation, homogenisation, stochastic homogenisation.

\section{Introduction}

    Given a bounded open set $U\subset \Rd$, the space ${\rm BD}(U)$ of  {\it functions of bounded deformation} is defined as the space of vector fields $u\in L^1(U;\Rd)$ such that the symmetric part $Eu:=\frac{1}{2}(Du+Du^T)$ of the distributional gradient $Du$ is a bounded Radon measure with values in the space $\Rdsym$ of $(d\times d)$ symmetric matrices.  It was introduced in \cite{Matthies,suquet, Temam}  to provide a variational framework for the study  of elasto-plasticity in the small strain regime and has since been object of a large number of contributions, where the authors investigated trace and fine properties \cite{Babdjian,AmbCoscDalM,DePhilRindler,Hajlasz,kohn1979new,ContiFocardiIurlano,arroyo2020slicing}, approximation by means of more regular functions \cite{BabadjianIurlano,CrismaleSIAM,ChambolleJMPA,ChambolleJMPA2, ChambolleConti2}, and rigidity estimates \cite{ChambolleContiFranfort,DiFrattaSolombrino,Friedrich,FriedrichM3MA,ChambPonsGiacom}.  

In addition to its applications to elasto-plasticity,
this space is particularly useful  in the mathematical modelling of fracture mechanics in the variational formulation of Francfort and Marigo introduced in \cite{FrancfortMarigo}. For this application, it is convenient to consider the decomposition of the measure $Eu$ introduced in \cite{AmbCoscDalM} and given by  
\begin{equation}\label{intro decomposition}
    Eu=E^au+E^cu+E^ju,
\end{equation}
where $E^au$ is the absolutely continuous part of $Eu$ with respect to the  Lebesgue measure $\Ld$, whose density is denoted by $\E u$, $E^ju$ is the jump part of $Eu$, defined as the restriction of $Eu$ to the jump set $J_u$ of $u$, and the Cantor part $E^cu$ is the restriction of the singular part of the measure $Eu$ to the complement of $J_u$ in $U$.
It is known that $E^ju:=([u]\odot\nu_u)\hd\mres J_u$, where $[u]:=u^+-u^-$ is the difference of  the unilateral traces $u^+$ and $u^-$ of $u$ on $J_u$, $\nu_u$ is the unit normal to $J_u$, $\odot$ denotes the symmetrised tensor product, $\hd$ is the $(d-1)$-dimensional Hausdorff measure, and $\mres$ denotes the restriction of a Borel measure to a Borel set. Moreover, it is known that $E^cu$ is singular with respect to the Lebesgue measure and vanishes on all Borel sets with finite $\hd$-measure.

In some {\it cohesive} fracture models, the approach of \cite{FrancfortMarigo} naturally leads to the minimisation of energies of the form (see, for instance, \cite[Sections 4.2 and 5.2]{Bourdin})
\begin{equation}\label{intro cohesive energies}
   F(u,U):= \int_{U}f(x,\E u)\,{\rm d}x+\int_{U}f^\infty\Big(x,\frac{{\rm d} E^cu}{{\rm d}|E^cu|}\Big)\,{\rm d}|E^cu|+\int_{J_u\cap U}g(x,[u],\nu_u)\,{\rm d}\hd,
\end{equation}
where  the {\it bulk} energy density $f\colon \Rd\times \Rdsym\to [0,+\infty)$ is a Borel function with 
\begin{equation}\label{intro bound f}
    c_1|A|-c_2\leq f(x,A)\leq c_3|A|+c_4 \quad \text{ for every }x\in\Rd \text{ and }A\in\Rdsym,
\end{equation}
for some constants $0<c_1\leq c_3$, $c_2\geq 0$, and $c_4\geq 0$,  
$f^\infty\colon \Rd\times \Rdsym\to [0,+\infty)$ denotes the {\it recession function} of $f$ with respect to $A$, defined by 
\begin{equation*}
    f^\infty(x,A):=\limsup_{t\to+\infty}\frac{f(x,tA)}{t}\quad \text{ for every }x\in \Rd \text{ and }A\in\Rdsym,
\end{equation*}
and the {\it surface} energy density $g\colon \Rd\times \Rd\times \Sd\to[0,+\infty)$ is a Borel function satisfying 
\begin{equation}\label{intro bound g} 
    c_1|\zeta\odot \nu|\leq g(x,\zeta,\nu)\leq c_3|\zeta\odot \nu|\quad \text{ for every }x\in\Rd, \, \zeta\in\Rd, \text{ and }\nu\in\Sd,
\end{equation}
where $\Sd:=\{\nu\in \Rd:|\nu|=1\}$.

In this paper we study the $\Gamma$-limits of sequences of the form \eqref{intro cohesive energies} with respect to the topology induced by $L^1_{\rm loc}$. The first result in this direction is Theorem \ref{compactness}, which states that for every sequence $\{F_n\}_{n\in\N}$, with $c_1,\dots,  c_4$ independent of $n$, there exists a subsequence, not relabelled, such that for every bounded open set $U \subset \Rd$ the sequence $\{F_n(\cdot,U)\}_{n\in\N}$ $\Gamma$-converges to a functional $F(\cdot, U)$ defined on ${\rm BD}(U)$. 

This limit functional $F$ satisfies suitable regularity properties (see Definition \ref{abstract functionals}), the most important being that for every bounded open set $U\subset \Rd$ and $u\in {\rm BD}(U)$ the set function defined for every Borel set $B\subset U$ by
\begin{equation}\notag
    B\mapsto F(u,B):=\inf\{F(u,V): V \text{ open},\,  B\subset V\subset U\}
\end{equation}
is a bounded Radon measure. Let $F^a(u,\cdot)$ and $F^s(u,\cdot)$ be the absolutely continuous and singular parts of  the measure $F(u,\cdot)$ with respect to $\Ld$, respectively.  In analogy to \eqref{intro decomposition}, we  decompose the limit functional $F$ as 
\begin{equation*}
    F(u,B)=F^a(u,B)+F^c(u,B)+F^j(u,B),
\end{equation*}
where  $F^j(u,B):=F^s(u,B\cap J_u)$ and $F^c(u,B):=F^s(u,B\setminus J_u)$. It is possible to check (see Remark \ref{remark abs continuity}) that $F^c(u,\cdot)$ is the absolutely continuous part of $F(u,\cdot)$ with respect to $E^cu$ and that $F^j(u,\cdot)$ is the absolutely continuous part with respect to $E^ju$. 

By recent results of Caroccia, Focardi, and Van Goethem \cite{CaroccFocardiVan} (see also \cite{EbobisseToader}) it is possible to represent $F^a$ and $F^j$ by means of  integral functionals in the form 

\begin{gather}\label{intro bulk}
    F^{a}(u,B)=\int_{B}f(x,\E u)\,{\rm d}x,\\
    \label{intro surface}F^{j}(u,B)=\int_{J_u\cap B}g(x,[u],\nu_u)\,{\rm d}\hd,
\end{gather}
where  $f$ and $g$ are Borel functions satisfying \eqref{intro bound f} and \eqref{intro bound g}. As customary in integral representation results for free discontinuity functionals (see, for instance,  \cite{BraidesChiadoPiat,bouchitte1998global}), the functions $f$ and $g$ are defined by means of limits of auxiliary minimisation problems on small cubes. More precisely, for every bounded open set $W \subset \Rd$ with Lipschitz boundary, and $w\in {\rm BD}(W)$ we set 
\begin{equation*}
    \m^F(w,W):=\inf\{F(u,W): u\in {\rm BD}(W) \text{ and }u=w \text{ on }\partial W\}.
\end{equation*}
The integrands $f$ and $g$ that appear in \eqref{intro bulk} and \eqref{intro surface} are given by
\begin{gather}\label{intro def bulk} 
    f(x,A):=\limsup_{\rho \to 0^+}\frac{\m^F(\ell_A,Q(x,\rho))}{\rho^d},\\\notag 
     g(x,\zeta,\nu):=\limsup_{\rho \to 0^+}\frac{\m^F(u_{x,\zeta,\nu},Q_\nu(x,\rho))}{\rho^{d-1}},
\end{gather}
where  $\ell_A$ is the linear function defined by  $\ell_A(y)=Ay$, $Q(x,\rho)$ is the cube with centre $x$, side-length $\rho>0$, and sides parallel to the axes, 
   $u_{x,\zeta,\nu}$ is the pure jump function equal to $\zeta$ on $\{y\in\Rd:(y-x)\cdot\nu\geq 0\}$ and to $0$ on $\{y\in\Rd:(y-x)\cdot\nu< 0\}$,  while $Q_\nu(x,\rho)$ is a cube with centre $x$, side-length $\rho>0$, and  two faces orthogonal to $\nu$.

Under suitable additional hypotheses (see Definition \ref{def Falpha}) on the sequence $\{F_n\}_{n\in\N}$, and assuming that \eqref{intro def bulk} holds with $f$ independent of $x$ and $\limsup$ replaced by $\lim$, we show in Theorem \ref{theorem cantor} that the complete limit functional $F$, including its Cantor part $F^c$, can be represented in the form
\begin{equation}\label{intro representation}
   F(u,B)= \int_{B}f(\E u)\,{\rm d}x+\int_{B}f^\infty\Big(\frac{{\rm d} E^cu}{{\rm d}|E^cu|}\Big)\,{\rm d}|E^cu|+\int_{J_u\cap B}g(x,[u],\nu_u)\,{\rm d}\hd,
\end{equation}
for every $u\in {\rm BD}(U)$, with $U\subset \Rd$ bounded open set, and every Borel set $B\subset U$. Thanks to \eqref{intro bulk} and \eqref{intro surface} the crucial point is to show that 
\begin{equation}\notag 
   F^c(u,B)=\int_{B}f^\infty\Big(\frac{{\rm d} E^cu}{{\rm d}|E^cu|}\Big)\,{\rm d}|E^cu|.
\end{equation}

In Caroccia, Focardi, and Van Goethem \cite{CaroccFocardiVan}, this is obtained by exploiting the uniform  continuity of $F$ with respect to horizontal translations, which  is one of their key assumptions. However, this hypothesis is not natural for our approach to stochastic homogenisation, so we are forced to use a different technique.
First of all, we use the characterisation of
\begin{equation*}
    \frac{{\rm d}F^c(u,\cdot)}{{\rm d}|E^cu|}(x)
\end{equation*} 
obtained in \cite[Lemma 5.3]{CaroccFocardiVan}, which holds even if  $\{F_n\}_{n\in\N}$ is not uniformly continuous, and then, using the property introduced in Definition \ref{def Falpha}, we  adapt to the ${\rm BD}$ setting  some arguments used in \cite{DalToaHomo,DalMasoDonati2025} for the ${\rm BV}$ case.

Combining the compactness result  and the integral representation \eqref{intro representation} we are able to obtain in Theorem \ref{theorem sufficient} a characterisation of the integrands of the $\Gamma$-limits by means of limits  of minimum values on small cubes. More precisely, assuming that $F_n$ satisfy the property described in Definition \ref{def Falpha} uniformly with respect to $n$ and  that for some functions $\hat{f}$ and $\hat{g}$ we have 
 \begin{gather}\label{intro condition 1}
\hat{f}(A)=\lim_{\rho\to 0^+}\limsup_{n\to+\infty}\frac{\m^{F_n}(\ell_A,Q(x,\rho))}{\rho^d}=\lim_{\rho\to 0^+}\liminf_{n\to+\infty}\frac{\m^{F_n}(\ell_A,Q(x,\rho))}{\rho^d},\\\label{intro condition 2}
\hat{g}(x,\zeta,\nu)=\limsup_{\rho\to 0^+}\limsup_{n\to+\infty}\frac{\m^{F_n}(u_{x,\zeta,\nu},Q_\nu(x,\rho))}{\rho^{d-1}}=\limsup_{\rho\to 0^+}\liminf_{n\to+\infty}\frac{\m^{F_n}(u_{x,\zeta,\nu},Q_\nu(x,\rho))}{\rho^{d-1}}
    \end{gather}
for every $x\in\Rd$, $A\in\Rdsym$, $\zeta\in\Rd$, and $\nu\in\Sd$, then for every bounded open set  $U \subset \Rd$ we prove that the sequence $\{F_n(\cdot,U)\}_{n\in\N}$ $\Gamma$-converges to the functional $F(\cdot,U)$ in \eqref{intro representation} with  $f=\hat{f}$ and $g=\hat{g}$.

We conclude the paper by applying these results to a general class of integral functionals to deduce homogenisation results. More precisely, in Theorems \ref{homogeneous homogenization} and \ref{stochastic homo} we will consider 
 functionals $F_n$ given by 
\begin{equation}\label{intro to homogenise}
\hspace{-0.2 cm}F_n(u,U):= \!\!\int_{U}f\Big(\frac{x}{\e_n},\E u\Big)\,{\rm d}x+\int_{U}f^\infty\Big(\frac{x}{\e_n},\frac{{\rm d} E^cu}{{\rm d}|E^cu|}\Big)\,{\rm d}|E^cu|+\int_{J_u\cap U}\hspace{-0.2 cm}\e_n g\Big(\frac{x}{\e_n},\frac{1}{\e_n}[u],\nu_u\Big)\,{\rm d}\hd,    
\end{equation}
where $f$ and $g$ satisfy \eqref{intro bound f},  \eqref{intro bound g}, and some conditions related to Definition \ref{def Falpha} (see Definition \ref{defizione integrande rappresentabili}), while $\{\e_n\}_{n\in\N}\subset (0,1)$ is a sequence converging to $0$ as $n\to+\infty$.
We remark that, in general the functionals $\{F_n\}_{n\in\N}$ do not coincide with those normally considered in the homogenisation  of free discontinuity problems, unless one assumes that $[0,+\infty)\ni t\mapsto g(x,t\zeta,\nu)$ is positively homogeneous of degree one, in which case  $F_n(u,U)$ reads as  
\begin{equation}
\label{intro homo Bv}
\int_{U}f\Big(\frac{x}{\e_n},\E u\Big)\,{\rm d}x+\int_{U}f^\infty\Big(\frac{x}{\e_n},\frac{{\rm d} E^cu}{{\rm d}|E^cu|}\Big)\,{\rm d}|E^cu|+\int_{J_u\cap U} g\Big(\frac{x}{\e_n},[u],\nu_u\Big)\,{\rm d}\hd, \end{equation}
which is the standard functional considered in the homogenisation problem for free discontinuity functionals (see \cite{Braides1996,CagnettiStocFree,CagnettiAnnals,CagnettiDetFree}). 

So far, in the ${\rm BD}$ setting  our technique allows us to prove the $\Gamma$-convergence of \eqref{intro homo Bv} only when $g$ is positively $1$-homogeneous in the variable $\zeta$, while in the ${\rm BV}$ setting (see \cite{CagnettiAnnals,DalToaHomo,DalMasoDonati2025})  the $\Gamma$-convergence of the functionals corresponding to \eqref{intro homo Bv} has been proved assuming only that the function $g$ is sufficiently close to a $1$-homogeneous function for $|\zeta|$ small enough. Unfortunately, this approach  heavily relies on sophisticated truncation arguments that we are not able to extend to ${\rm BD}$. 

The particular choice of the scaling for the surface term
in \eqref{intro to homogenise}
will allow us to circumvent the use of vertical truncations and will allow us to obtain suitable change of variable formulas used in the proof, which avoids any truncation argument.

A similar problem for bulk energies of the form
\begin{equation*}
    \Phi_n(u,U):=\begin{cases}\displaystyle
        \int_{U}f\Big(\frac{x}{\e_n},\E u\Big)\,{\rm d}x &\text{ if }Eu<\!<\Ld,\\
        +\infty &\text{ otherwise,}
    \end{cases}
\end{equation*}
was studied in \cite{CaroccFocardiVan}, when $f$ is $1$-periodic with respect to $x$ and satisfies \eqref{intro bound f}, and in \cite{Mandallena} in the stochastically periodic case.

To obtain the $\Gamma$-limit of $\{F_n\}_{n\in\N}$ using \eqref{intro condition 1} and \eqref{intro condition 2} is convenient to rewrite these formulas using the change of variables $y=\frac{x}{\e_n}$. Setting $v(y):=\frac{1}{\e_n}u(\e_ny)$,  one then checks that 
\begin{gather*}
F_n(u,Q(x,\rho))=\e_n^d\Big(\int_{Q(\frac{x}{\e_n},\frac{\rho}{\e_n})}f(y,\E v)\,{\rm d}y+\int_{Q(\frac{x}{\e_n},\frac{\rho}{\e_n})}f^\infty\Big(y,\frac{{\rm d} E^cv}{{\rm d}|E^cv|}\Big)\,{\rm d}|E^cv|\\
+\int_{J_v}g(y,[v],\nu_u)\,{\rm d}\hd\Big),
\end{gather*}
where we have exploited the scaling chosen for the surface term in \eqref{intro to homogenise}. Setting $r_n:=\frac{\rho}{\e_n}$, we have
\begin{equation*}
 \frac{1}{\rho^d}\m^{F_n}(\ell_A,Q(x,\rho))=\frac{1}{r_n^d}\m^{F}(\ell_A,Q(r_n\tfrac{x}{\rho},r_n)),
\end{equation*}
where $F$ is given by \eqref{intro cohesive energies}.
Hence, if there exists the limit
\begin{equation}\label{intro stochastic bulk}
    f_{\rm lim}(A):=\lim_{n\to+\infty}\frac{1}{r_n^d}\m^{F}(\ell_A,Q(x,r_n))
\end{equation}
and is independent of $x$, condition \eqref{intro condition 1} is satisfied by $\hat{f}(x,A)=f_{\rm lim}(A)$. 

To deal with \eqref{intro condition 2} we consider $F^\infty$  the functional  obtained by replacing $f$ and $g$ by $f^\infty$  and $g^\infty$ in \eqref{intro cohesive energies}, where 
\begin{equation*}
g^\infty(x,\zeta,\nu):=\limsup_{t\to+\infty}\frac{g(x,t\zeta,\nu)}{t}
\end{equation*}
(see Definition \ref{defizione integrande rappresentabili}). Performing a change of variables similar to the one that led to \eqref{intro stochastic bulk},  we obtain that if the limit 
\begin{equation}\label{intro stocastic surface}
    g_{\rm lim}(\zeta,\nu):=\lim_{n\to+\infty}\frac{1}{r_n^{d-1}}\m^{F^\infty}(u_{r_nx,\zeta,\nu},Q(x,r_n))
\end{equation}
exists for every $\zeta\in\Rd$ and $\nu\in\Sd$ and is independent of $x$, then condition \eqref{intro condition 2} holds with $\hat{g}(x,\zeta,\nu)=g_{\rm lim}(\zeta,\nu)$. 

As $\eqref{intro condition 1}$ and \eqref{intro condition 2} are sufficient conditions for the $\Gamma$-convergence, we obtain the following result (see Theorem \ref{homogeneous homogenization}): suppose that the limits \eqref{intro stochastic bulk} and \eqref{intro stocastic surface} exist for every $A\in\Rdsym$, $\zeta\in\Rd$, $\nu\in\Sd$, and are independent of $x$, then for every bounded open set  $U \subset \Rd$ the sequence $\{F_n(\cdot,U)\}_{n\in\N}$ $\Gamma$-converges with respect to the $L^1_{\rm loc}$-convergence to the functional $F_{\rm lim}(\cdot,U)$ defined by
\begin{equation}\notag
    F_{\rm lim}(u,U):=\int_{U}f_{\rm lim}(\E u)\,{\rm d}x+\int_{U}f^\infty_{\rm lim}\Big(\frac{{\rm d} E^cu}{{\rm d}|E^cu|}\Big)\,{\rm d}|E^cu|+\int_{J_u}g_{\rm lim}([u],\nu_u)\,{\rm d}\hd.
\end{equation}

 We also prove a related result in the probabilistic setting. Under the standard assumptions of stochastic homogenisation (see Section \ref{subsection stochastic}), we will assume that the integrands $f$ and $g$ appearing in \eqref{intro to homogenise} are random functions, and we will prove that the limit \eqref{intro stochastic bulk} exists almost surely thanks to the Subadditive Ergodic Theorem of Akcoglu and Krengel \cite{AkcogluKrengel}, and show that the almost sure existence of the limit in  \eqref{intro stocastic surface} can be obtained by using the same theorem in dimension $d-1$, arguing as in \cite{CagnettiStocFree,CagnettiAnnals}. This will allow us to prove in Theorem \ref{stochastic homo} the almost sure $\Gamma$-convergence of $\{F_{n}\}_{n\in\N}$ to $F_{\rm lim}$. Of course, the stochastic result immediately implies a related result in the deterministic periodic case (see Corollary \ref{periodic corollary}).

\section{Notation}\label{sec: Notation}

In this section we present the notation used throughout the paper.
\begin{itemize}
    \item[(a)] The space $\Rd$ is endowed with the usual scalar product,  denoted by $\cdot$,  while the  Euclidean norm is denoted by $|\,\,|$. The unit sphere of $\Rd$ is denoted by $\Sd:=\{\nu\in\Rd:|\nu|=1\}$. We also set $\Sd_{\pm}:=\{\nu\in\mathbb{S}^{d-1}: \pm \nu_{i(\nu)}>0\}$, where $i(\nu)\in\{1,...,d\}$ is the largest index such that $\nu_{i(\nu)}\neq 0$.
\smallskip

    \item[(b)]
    
    We identify the vector space $\R^{d\times d}$ with the space of $d\times d$ matrices. The subspace of  $\Rdd$  of $d\times d$ symmetric matrices (resp.\ antisymmetric) is denoted by $\Rdsym$ (resp.\ $\R^{d\times d}_{\rm skew}$). If $A\in\Rdd$ its symmetric part is denoted by  $A^{\rm sym}:=\frac{1}{2}\big(A+A^T\big)$.  For $A\in\R^{d\times d}$ and  $x\in\Rd$, the vector $A x\in\Rd$ is given by the usual matrix by vector multiplication. Given a matrix $A=(A_{ij})\in\Rdd$, its Frobenius norm  is given by
    \begin{equation*}
        |A|:=\Big(\sum_{i,j=1}^d|A_{ij}|^2\Big)^{1/2}.
    \end{equation*}
    The identity matrix is denoted by $I$.

     \item[(c)] The $i$-th vector of the canonical basis of $\Rd$ is denoted by $e_i$. Given $x\in\Rd$ and $\rho>0$, we consider the cube $Q(x,\rho):=\big\{y\in\Rd:|(y-x)\cdot e_i|<\rho/2 \,\,\text{ for every }i\in \{1,...,d\}\big\}$.
     \item[(d)] $SO(d)$ is the space of  $d\times d$ orthonormal matrices $R$ with $\text{det}(R)=1$.  We fix once and for all a map $\Sd\ni\nu\mapsto R_\nu\in SO(d)$ satisfying the following properties: $R_\nu e_d=\nu$ and $R_\nu(Q(0,\rho))=R_{-\nu}(Q(0,\rho))$ for every $\nu\in\mathbb{S}^{d-1}$, $R_{e_d}=I$, and 
      the restrictions of $\nu\mapsto R_\nu$ to $\mathbb{S}^{d-1}_{\pm}$  are  continuous  (for an example of such map see, for instance,  \cite[Example A.1]{CagnettiStocFree}).
     \item[(e)] 
Given $x\in\Rd$, $\nu\in\Sd$, and $\rho>0$, we consider the open cube defined by 
\begin{equation*}
    Q_\nu(x,\rho):=x+R_\nu\big(Q(0,\rho) \big).
\end{equation*}
\item[(f)] For every $A\in\Rdd$, $x\in\Rd$, $\zeta\in\Rd$, and $\nu\in\Sd$ let $\ell_A\colon\Rd\to \Rd$ and $u_{x,\zeta,\nu}\colon\Rd\to \Rd$ be the functions defined for every $y\in\Rd$ by 
\begin{equation}\notag
    \ell_A(y):=Ay \quad\text{ and }\quad   u_{x,\zeta,\nu}(y):=\begin{cases}\zeta &\text{ if }(y-x)\cdot\nu>0,\\
    0 & \text{ if }(y-x)\cdot\nu\leq 0.
        
    \end{cases}
\end{equation}

\item[(g)] Given an open set $U\subset \Rd$, the collection of all open subsets (resp.\ Borel) of $U$ is denoted by $\mathcal{U}(U)$ (resp.\ $\mathcal{B}(U)$). If $V\in \U(\Rd)$,  we write $V\subset \subset U$ if the closure of $V$ is contained in $U$. The collection of all these open sets is denoted by $\U_c(U)$. 

\item[(h)]Given $U\in\U(\Rd)$ and a finite dimensional real normed vector space $X$, the space of all bounded Radon measures with values in $X$ is denoted by $\mathcal{M}_b(U;X)$. The indication of $X$ is omitted if $X=\R$.  Given a positive measure $\lambda\in \mathcal{M}_b(U)$, and a measure  $\mu \in\mathcal{M}_b(U;X)$ with $\mu<\!<\lambda$,  the function  ${\rm d}\mu/{\rm d}\lambda$ denotes the Radon-Nikod\'ym derivative of $\mu$ with respect to $\lambda$, defined as the density of $\mu$ with respect to $\lambda$. The total variation  measure of $\mu\in\mathcal{M}_b(U;X)$ with respect to the norm $|\,\,|$ on $X$, is denoted by $|\mu|$. Given a positive measure $\lambda\in \mathcal{M}_b(U)$ and an integrable function  $f\colon U\to X$,  we denote by $f\lambda$  the $X$-valued measure defined for every  $B\in\B(U)$ by 
 \begin{equation*}
     f\lambda(B):=\int_{B}f\,{\rm d}\lambda.
 \end{equation*}
 Given  a Borel measure $\mu$ on $U$ and $E\in\B(U)$, the Borel measure $\mu\mres E$ is defined by $\mu\mres E(B):=\mu(B\cap E)$ for every $B\in\B(U)$.

 \item [(i)] Given a function $u\in L^1_{\rm loc}(\Rd;\Rd)$, the \textit{jump set} $J_u$ of $u$ is the set of all points $x\in U$ with the following property: there exists a triple $(u^+(x),u^-(x),\nu_u(x))\in\R^d\times \R^d\times \Sd$, with $u^{+}(x)\neq u^-(x)$, such that 
\begin{gather}\notag 
\lim_{\rho\to 0^+}\frac{1}{\rho^d}\int_{B_{\rho}(x)\cap H^{\pm}(x)}|u(y)-u^{\pm}(x)|\,{\rm d}y=0,
     \end{gather}
  where  $H^{\pm}(x):=\{y\in\Rd:\pm(y-x)\cdot\nu_u(x)>0\}$. The triple $(u^+(x),u^-(x),\nu_u(x))$ is uniquely determined, up to changing the sign of $\nu(x)$ and swapping   $u^+(x)$ and $u^-(x)$. Having fixed $\nu_u$, we set $[u]:=u^+-u^-$ on  $J_u$. It can be shown that $J_u$ is a Borel set and  that there is a choice of $u^\pm$ and $\nu_u$ that makes them Borel measurable on $J_u$ (see \cite[Proposition 3.69]{Ambrosio}).
  Moreover, Del Nin has recently showed (see \cite{DelNin}) that $J_u$ is always countably $(\hd,d-1)$-rectifiable in the sense of \cite[3.2.14]{Federer}.
\item[(j)] Given $U\in\U(\Rd)$, a function $u\colon U\to \Rd$ is said to be of {\it bounded deformation} if  $u\in L^1(U;\Rd)$ and its {\it distributional symmetric gradient} $Eu:=\frac12(Du+Du^T)$ belongs to $\mathcal{M}_b(U;\Rdsym)$. The collection of all functions of bounded deformation on $U$ is denoted by ${\rm BD}(U)$, while ${\rm BD}_{\rm loc}(U)$ is the collection of all $\Ld$-measurable functions $u\colon U\to \Rd$ such that $u|_V\in {\rm BD}(V)$ for every $V\in \U_c(U)$.
 We refer the reader to \cite{Temam} for an introduction to this space and to \cite{AmbCoscDalM} for the fine properties of its functions.

 \smallskip
 
 \item[(k)] Given $U\in \U(\Rd)$ and  $u\in {\rm BD}(U)$, the measure $Eu$ can be decomposed as 
\begin{equation}\label{decomposition of gradients}
    Eu=E^au+E^cu+E^ju=\E u\Ld+E^cu+
    [u]\odot\nu_u\hd\mres J_u,
\end{equation}
where:
\
\smallskip

\noindent(i)  the {\it absolutely continuous part} $E^au$ is the absolutely continuous part of $Eu$ with respect to $\Ld$, whose density is denoted by $\E u\in L^1(U;\Rdsym)$,
    \smallskip
    
\noindent (ii)  the {\it Cantor part} $E^cu$ is defined as $E^cu:=E^su\mres(\Omega\setminus J_u)$, where $E^su$ is the singular part of $Eu$ with respect to $\Ld$; it is known that  $E^cu$ vanishes on all $B\in \B(U)$ which are $\sigma$-finite with respect to $\hd$,

\smallskip

\noindent
(iii) the {\it jump part} $E^ju$ is defined by $E^ju:=Eu\mres J_u=E^su\mres J_u$; it is known that $E^ju=[u]\odot \nu_u\hd\mres J_u$, where   $\odot$ denotes the symmetric tensor product defined by $(a\odot b)_{ij}=\frac12(a_ib_j+a_jb_i)$ for every $a,b\in\Rd$.

\smallskip

\item[(l)] A function $u\colon \Rd\to \Rd$ is said to be a {\it rigid motion} if there exist $A\in\Rdskew$ and $b\in\Rd$ such that $u(x)=Ax+b$ for every $x\in\Rd$. The collection of all rigid motions is denoted by $\mathcal{R}$.
\end{itemize}

\section{The integrands and the functionals}
In this section we introduce the collections of integrands and of functionals that are going to be our main objects of study.

We fix non-negative constants $c_1,...,c_5\in [0,+\infty)$,  with $0<c_1\leq c_3$,  and a constant $\alpha\in (0,1)$.
We also fix a continuous function $\sigma\colon[0,+\infty)\to [0,+\infty)$ with the property that $\sigma(0)=0$ and 
\begin{equation}\label{slope at infty for sigma}
\sigma^\infty:=\limsup_{t\to+\infty}\frac{\sigma(t )}{t}<+\infty.
\end{equation}

\begin{definition}\label{bulk and surface integrands}Let $\mathcal{F}$ be the collection of all integrands $f\colon \Rd\times \Rdsym\to [0,+\infty)$ satisfying the following conditions:
\begin{enumerate}
\item[(f1)] $f$ is Borel measurable on $\Rd\times \Rdsym$;
    \item[(f2)] for every $x\in \Rd$ and $A\in\Rdsym$ we have $$c_1|A|-c_2\leq f(x,A)\leq c_3|A|+c_4;$$ 
    \item[(f3)]for every $x\in\Rd$ and $A_1,A_2\in\R^{d\times d}_{\rm sym}$ we have 
    $$|f(x,A_1)-f(x,A_2)|\leq c_5|A_1-A_2|.$$ 
\end{enumerate}

Let $\mathcal{G}$ be the collection of all integrands $g\colon  \Rd\times\Rd\times \Sd\to [0,+\infty)$ satisfying the following conditions:
\begin{enumerate}
\item [(g1)] $g$ is Borel measurable on $\Rd\times \Rd\times \Sd$;
\item[(g2)]for every $x\in\Rd$, $\zeta\in\Rd$, and $\nu\in\Sd$  we have $$g(x,\zeta,\nu)=g(x,-\zeta,-\nu);$$
\item [(g3)] for every $x\in\Rd$, $\zeta\in\Rd$, and $\nu\in\Sd$ we have 
$$c_1|\zeta\odot\nu|\leq g(x,\zeta,\nu)\leq c_3|\zeta\odot\nu|;$$
\item [(g4)] for every $x\in\Rd$, $\zeta_1,\zeta_2\in\Rd$, and $\nu\in\Sd$ we have $$|g(x,\zeta_1,\nu)-g(x,\zeta_2,\nu)|\leq \sigma(|\zeta_1-\zeta_2|).$$ 
\end{enumerate}
\end{definition}

\medskip

To every function $f\colon\Rd\times \Rdsym\to [0,+\infty)$  we associate its {\it recession function} $f^\infty$ (with respect to the variable $A$), defined by
\begin{equation}\label{def recession}
f^\infty(x,A):=\limsup_{t\to+\infty}\frac{f(x,tA)}{t}\quad \text{ for every $x\in \Rd$ and $A\in\Rdsym$}.
\end{equation}

\begin{remark}
 Given $f\colon \Rd\times \Rdsym$ satisfying (f2) and (f3), one can check that for every $x\in\Rd$, $A,A_1,A_2\in\Rdsym$ the function $f^\infty$ satisfies
\begin{gather}\label{bound finfty}
c_1|A|\leq f^\infty(x,A)\leq c_3|A|,
\\   \notag |f^\infty(x,A_1)-f^\infty(x,A_2)|\leq c_5|A_1-A_2|.
\end{gather}
    \end{remark}

We now introduce a class of  integral functionals  associated with $\mathcal{F}$  and $\mathcal{G}$.
\begin{definition}\label{def:integral functionals}
   Given $f\in\mathcal{F}$ and 
   $g\in\mathcal{G}$, for every open set $U\subset \Rd$ and $u\in L^1_{\rm loc}(\Rd;\Rd)$ we set 
\begin{equation}\label{integral on open sets}
    F^{f,g}(u,U):=\int_Uf(x,\E u)\,{\rm d}x+\int_Uf^\infty\Big(x,\frac{{\rm d}E^cu}{{\rm d}|E^cu|}\Big)\,{\rm d}|E^cu|+\int_{J_u\cap U}g(x,[u],\nu_u)\,{\rm d}\hd
\end{equation}
if $u|_U\in {\rm BD}_{\rm loc}(U)$, while $ F^{f,g}(u,U)=+\infty$ otherwise. The definition is then extended to every Borel set $B\in\B(\Rd)$ by setting
\begin{equation}\label{3.4bis}
    F^{f,g}(u,B):=\inf\{F^{f,g}(u,U): U\subset\Rd \text{ open, with }U\supset B\}.
\end{equation}
 Note that, if   $U\subset \Rd$ is a bounded open set, $B
\in\B(U)$, $u|_U\in {\rm BD}_{\rm loc}(U)$, and $|Eu|(U)<+\infty$, then
\begin{equation}\label{def functional}
    F^{f,g}(u,B)=\int_Bf(x,\E u)\,{\rm d}x+\int_Bf^\infty\Big(x,\frac{{\rm d}E^cu}{{\rm d}|E^cu|}\Big)\,{\rm d}|E^cu|+\int_{J_u\cap B}g(x,[u],\nu_u)\,{\rm d}\hd.
\end{equation}
Indeed, in this case the upper bounds in (f2), \eqref{bound finfty}, and (g3) imply
\begin{gather*}
   \int_Uf(x,\E u)\,{\rm d}x+
   \int_Uf^\infty\Big(x,\frac{{\rm d}E^cu}{{\rm d}|E^cu|}\Big)\,{\rm d}|E^cu|+
   \int_{J_u\cap U}g(x,[u],\nu_u)\,{\rm d}\hd<+\infty,
\end{gather*}
so that \eqref{def functional} follows from  \eqref{integral on open sets} and \eqref{3.4bis}.
\end{definition}

We now introduce a collection of abstract functionals  related to the integral functionals considered above. 
\begin{definition}\label{abstract functionals}
     Let $\mathfrak{F}$ be the collection of functionals $F\colon L^1_{\rm loc}(\Rd;\Rd)\times \B(\Rd)\to [0,+\infty]$ with the following properties:
\begin{enumerate}
    \item [{\rm(a)}] {\it measure property}:   for every $u\in L^1_{\rm loc}(\Rd;\Rd)$ the set function $F(u,\cdot)$ is a Borel measure that for every $B\in\B(\Rd)$  satisfies 
    \begin{equation}\label{outer regularity}
        F(u,B):=\inf\{F(u,U):U\subset  \Rd \text{ open}, \,U\supset B\};
    \end{equation}
    \item [\rm {(b)}] {\it locality on open sets:} for every bounded open set $U\subset \Rd$ and  $u,v\in L^1_{\rm loc}(\Rd;\Rd)$ with $u=v$ $\Ld$-a.e\ on $U$ we have $F(u,U)=F(v,U)$;
    \item[{\rm (c)}]{\it upper and lower bounds:} if $u\in L^1_{\rm loc}(\Rd;\Rd)$ and   $U\subset \Rd$ is a bounded open set, then 
    \begin{equation*}
       c_1|Eu|(U)-c_2\Ld(U)\leq F(u,U)\leq c_3|Eu|(U)+c_4\Ld(U) \quad \text{if  $u|_U\in {\rm BD}(U)$},
    \end{equation*}
    while $F(u,U)=+\infty$ if $u|_U\notin {\rm BD}(U);$
    \item[{\rm (d)}]{\it invariance under rigid motions:} for every $u\in L^1_{\rm loc}(\Rd;\Rd)$, $w\in\mathcal{R}$, and $B\in \B(\Rd)$  we have
 $$F(u+w,B)=F(u,B).$$
 
    \item[{\rm (e)}]{\it bulk continuity estimate:} for every $u\in L^1_{\rm loc}(\Rd;\Rd)$, $A\in\Rdsym$, and $B\in \B(\Rd)$ we have 
    \begin{equation*}
        F(u+\ell_A,B)\leq F(u,B)+c_5|A|\Ld(B);
    \end{equation*}
    \item[{\rm (f)}]{\it surface continuity estimate:} for every  $u\in L^1_{\rm loc}(\Rd;\Rd)$,  $x\in\Rd$, $\zeta\in\Rd$, $\nu\in\Sd$, and $B\in \B(\Rd)$ we have 
    \begin{equation*}
        F(u+u_{x,\zeta,\nu},B)\leq F(u,B)+\sigma(|\zeta|)\hd(B\cap \Pi^\nu_x),
    \end{equation*}
    where $\Pi^\nu_x:=\{y\in\Rd:(y-x)\cdot\nu=0\}$ is the hyperplane orthogonal to $\nu$ containing $x$.
\end{enumerate}
We denote by $\mathfrak{F}_{\rm sc}$ the collection of functionals $F\in\mathfrak{F}$ such that 
for every open $U\subset \Rd$ the functional $F(\cdot,U)$ is $L^1_{\rm loc}(\Rd;\Rd)$-lower semicontinuous.
\end{definition}

\begin{remark}\label{strong locality}
    Combining properties (a) and (b) we obtain that the functionals in $\mathfrak{F}$ satisfy the following locality property: if $B\in\B(\Rd)$, $u,v\in L^1_{\rm loc}(\Rd;\Rd)$, and $u=v$ $\Ld$-a.e.\ on a neighbourhood of $B$, then $F(u,B)=F(v,B)$. Easy examples show that, in general, this equality does not hold if we have only $u=v$ $\Ld$-a.e.\ on $B$. 

    For this reason, if $U\subset \Rd$ is an open set and $u\in L^1_{\rm loc}(U;\Rd)$, for every $B\in\B(U)$ we can define  $F(u,B):=F(v,B)$, where $v\in L^1_{\rm loc}(\Rd;\Rd)$ is any function such that $v=u$ $\Ld$-a.e.\ on $U$. The locality property described above implies that the value of $F(u,B)$  does not depend on the chosen extension $v$ of $u$.
\end{remark}

\begin{remark}
From \eqref{outer regularity} and (c) we deduce the following inequalities for Borel sets. If $U\subset \Rd$ is a bounded open set and $u\in {\rm BD}(U)$, then for every $B\in\B(U)$ we have
\begin{equation*}
       c_1|Eu|(B)-c_2\Ld(B)\leq F(u,B)\leq c_3|Eu|(B)+c_4\Ld(B). 
    \end{equation*}
\end{remark}
\begin{remark}
   Thanks to property (a) a functional $F\in\mathfrak{F}$ belongs to $\mathfrak{F}_{\rm sc}$ if and only if for every open bounded set $U\subset \Rd$ the functional $F(\cdot,U)$ is $L^1_{\rm loc}(\Rd;\Rd)$-lower semicontinuous.
\end{remark}

We now show that the functional introduced in Definition \ref{def:integral functionals} belongs to $\mathfrak{F}$.

\begin{proposition}\label{prop integrals are abstract}
    Let $f\in\mathcal{F}$ and $g\in\mathcal{G}$. Then the functional $F^{f,g}$ introduced in Definition \ref{def:integral functionals} belongs to $\mathfrak{F}$.
\end{proposition}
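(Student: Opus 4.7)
The plan is to verify directly the six axioms (a)--(f) of Definition \ref{abstract functionals} using the explicit integral representation \eqref{def functional} together with the decomposition \eqref{decomposition of gradients} and the pointwise bounds on $f$, $f^\infty$, and $g$. Axioms (a)--(e) follow from essentially routine manipulations, while (f) requires a more delicate decomposition of the jump set.

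\textbf{The routine axioms.} For (a) and $u\in {\rm BD}_{\rm loc}(\Rd)$, the three non-negative set functions $f(\cdot,\E u)\Ld$, $f^\infty(\cdot,{\rm d}E^cu/{\rm d}|E^cu|)|E^cu|$, and $g(\cdot,[u],\nu_u)\hd\mres J_u$ are Radon measures on $\Rd$: each is locally finite by the upper bounds in (f2), \eqref{bound finfty}, and (g3), together with $u\in {\rm BD}(V)$ for every bounded open $V$. Hence their sum is Radon, which combined with \eqref{3.4bis} gives both the measure property and the outer-regularity identity \eqref{outer regularity}. The non-${\rm BD}$ case is handled by inspection, since $F^{f,g}(u,W)=+\infty$ for every open $W$ containing a bounded open set on which $u$ fails to belong to ${\rm BD}$. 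For (b), if $u=v$ $\Ld$-a.e.\ on $U$ then $Eu=Ev$ on $U$, so $\E u=\E v$, $E^cu=E^cv$, and $([u],\nu_u)=([v],\nu_v)$ on $J_u\cap U=J_v\cap U$, and \eqref{def functional} gives the equality. For (c), integrate the pointwise bounds in (f2), \eqref{bound finfty}, and (g3) against $\Ld$, $|E^cu|$, and $\hd\mres J_u$ respectively, and use $|Eu|(U)=\int_U|\E u|\,{\rm d}x+|E^cu|(U)+\int_{J_u\cap U}|[u]\odot\nu_u|\,{\rm d}\hd$. For (d), a rigid motion $w(x)=Ax+b$ with $A\in\Rdskew$ has $Ew=0$, so $E(u+w)=Eu$; the jump triple is unchanged (since $w$ is continuous), and each integrand in \eqref{def functional} is unaffected. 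For (e), $\ell_A$ contributes only $\E(u+\ell_A)=\E u+A$, leaving $E^cu$ and the jump triple intact; the Lipschitz bound (f3) yields $f(x,\E u+A)\le f(x,\E u)+c_5|A|$, which integrates to the required bound $c_5|A|\Ld(B)$ on the bulk term, while the Cantor and jump terms are unchanged.

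\textbf{The surface continuity estimate (main obstacle).} For (f) I first observe that $u_{x,\zeta,\nu}$ is piecewise constant, hence its bulk and Cantor parts vanish, and the bulk and Cantor terms of $F^{f,g}(u+u_{x,\zeta,\nu},B)$ coincide with those of $F^{f,g}(u,B)$. It remains to control the surface integral, whose jump set is contained in $J_u\cup\Pi^\nu_x$, and I would split this set into three pieces. On $J_u\setminus\Pi^\nu_x$ the jump data of $u+u_{x,\zeta,\nu}$ coincide with those of $u$, contributing exactly the corresponding piece of $F^{f,g}(u,\cdot)$. On $\Pi^\nu_x\setminus J_u$ the new jump is $\zeta$ with normal $\nu$, and (g4) combined with $g(\cdot,0,\nu)=0$ (a consequence of (g3)) gives $g(\cdot,\zeta,\nu)\le\sigma(|\zeta|)$. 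On $J_u\cap\Pi^\nu_x$, the countable $(d-1)$-rectifiability of $J_u$ forces the subset where $\nu_u$ is not parallel to $\nu$ to be $\hd$-negligible, because transversal intersections with the hyperplane $\Pi^\nu_x$ have dimension at most $d-2$; on the remaining set where $\nu_u=\pm\nu$ the new jump is $[u]\pm\zeta$ (with matching normal), and (g2) together with (g4) bound the integrand by $g(\cdot,[u],\nu_u)+\sigma(|\zeta|)$. Summing the three contributions and using $\hd((\Pi^\nu_x\setminus J_u)\cap B)+\hd((\Pi^\nu_x\cap J_u)\cap B)\le\hd(\Pi^\nu_x\cap B)$ yields the required inequality. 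The key technical points are the transversality and rectifiability argument on $J_u\cap\Pi^\nu_x$, and the orientation bookkeeping made possible by (g2).
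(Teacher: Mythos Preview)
Your proposal is correct and follows the same route as the paper. The paper's own proof is extremely terse for (f), saying only ``a similar argument shows that $F^{f,g}$ satisfies property~(f)''; your decomposition of $J_{u+u_{x,\zeta,\nu}}$ into $J_u\setminus\Pi^\nu_x$, $\Pi^\nu_x\setminus J_u$, and $J_u\cap\Pi^\nu_x$, together with the rectifiability/transversality argument showing that $\nu_u=\pm\nu$ at $\hd$-a.e.\ point of $J_u\cap\Pi^\nu_x$, is exactly the right way to make that one-line remark precise.
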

\begin{proof}
   By \eqref{3.4bis} $F^{f,g}$ satisfies \eqref{outer regularity}, while the rest of property (a) is obvious when $u\in {\rm BD}(\Rd)$. The general case $u\in L^1_{\rm loc}(\Rd;\Rd)$ can be obtained by a straightforward argument; it can also be treated using the De Giorgi-Letta criterion for measures (see Lemma \ref{measure} below).  Property (b) is obvious. Property (c) follows from (f2), \eqref{bound finfty}, and (g3). To see that property (d) is satisfied by $F^{f,g}$ is enough to observe that for every $w\in\mathcal{R}$ we have $Ew=0$. 

   To see that $F^{f,g}$ satisfies property (e) we observe that by (f3) for every $B\in\B(\Rd)$ and $A\in\Rdsym$ we have
    \begin{equation*}
        |F^{f,g}(u+\ell_A,B)-F^{f,g}(u,B)|\leq c_5|A|\Ld(B).
    \end{equation*}
    A similar argument shows that $F^{f,g}$ satisfies property (f).
\end{proof}

 \section{A compactness result for  \texorpdfstring{$\mathfrak{F}$}{F}}\label{sec:compactness}
  In this section we  state and prove a compactness result for sequences of functionals in $\mathfrak{F}$. The proof is based on the classical localisation method for $\Gamma$-convergence (see \cite[Section 3.3]{BraHandbook} or \cite[Chapter 18]{DalBook}).

\begin{theorem}\label{compactness} Let $\{F_n\}_{n\in\N}\subset \mathfrak{F}$.  Then there exist a functional $F\in\mathfrak{F}_{\rm sc}$ and a subsequence, not relabelled, such that for every bounded open set $U\subset \Rd$ the sequence $\{F_n(\cdot,U)\}_{n\in\N}$ $\Gamma$-converges to $F(\cdot,U)$ in the topology of $L^1_{\rm loc
}(\Rd;\Rd)$.
\end{theorem}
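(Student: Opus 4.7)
\medskip

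\noindent\textbf{Proof proposal.} The plan is to follow the classical localisation method for $\Gamma$-convergence, as in \cite[Chapter 18]{DalBook}. First I fix a countable base $\mathcal{R}$ of bounded open sets of $\R^d$ (for instance, finite unions of open rectangles with rational vertices) that is stable under finite unions and intersections and cofinal for the relation $\subset\subset$. Since $L^1_{\rm loc}(\R^d;\Rd)$ is a separable metrisable space, a standard diagonal argument based on the compactness of $\Gamma$-convergence in such spaces yields a subsequence, not relabelled, such that for every $R\in\mathcal{R}$ the sequence $F_n(\cdot,R)$ $\Gamma$-converges in $L^1_{\rm loc}(\Rd;\Rd)$ to some functional $\widetilde F(\cdot,R)$. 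For an arbitrary bounded open set $U\subset\Rd$ and $u\in L^1_{\rm loc}(\Rd;\Rd)$ I then define the inner regular envelope
\begin{equation*}
F(u,U):=\sup\{\widetilde F(u,R): R\in\mathcal{R},\ R\subset\subset U\},
\end{equation*}
and extend $F(u,\cdot)$ to $\B(\Rd)$ via outer regularity on open sets and then by \eqref{outer regularity}.

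The heart of the argument is showing that $F(u,U)$ coincides with both the $\Gamma$-$\liminf$ and the $\Gamma$-$\limsup$ of $F_n(\cdot,U)$ at $u$. The $\Gamma$-$\liminf$ inequality follows directly from monotonicity, since $F_n(\cdot,R)\leq F_n(\cdot,U)$ for $R\subset U$. For the $\Gamma$-$\limsup$ I need a fundamental estimate: given $R'\subset\subset R\subset\subset U$ and recovery sequences on $R$ and on $U\setminus\overline{R'}$ one must glue them via a cut-off function $\varphi$ with values in $[0,1]$. In the ${\rm BD}$ setting, multiplying by a scalar $\varphi$ produces an extra term of the form $u\odot\nabla\varphi$ in the symmetric gradient; this is controlled using the upper bound in (c) together with standard interpolation between $u_n$ and the recovery sequence on an annular region, where one chooses an optimal slice via Fubini/De Giorgi's averaging to make the gluing error on the transition region as small as desired. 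The upper and lower bounds in (c) give the required equi-integrability.

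Once the fundamental estimate is established, the properties (a)--(f) for $F$ can be checked in turn. The measure property (a) is verified by the De Giorgi--Letta criterion (Lemma \ref{measure} below): superadditivity on disjoint open sets is immediate from monotonicity of $F_n$; subadditivity is precisely what the fundamental estimate provides; inner regularity holds by construction of the envelope; outer regularity on Borel sets holds by the definition in \eqref{outer regularity}. Properties (b), (c), and (d) pass to the $\Gamma$-limit directly, since each is a pointwise inequality or identity stable under $L^1_{\rm loc}$-lower semicontinuous envelopes. Properties (e) and (f) follow from the fact that, given a recovery sequence $u_n\to u$ for $F(u,U)$, the sequence $u_n+\ell_A\to u+\ell_A$ (resp.\ $u_n+u_{x,\zeta,\nu}\to u+u_{x,\zeta,\nu}$) is admissible for the $\Gamma$-$\limsup$ of $F_n$ at $u+\ell_A$ (resp.\ $u+u_{x,\zeta,\nu}$), so applying property (e) (resp.\ (f)) to each $F_n$ and letting $n\to\infty$ yields the desired inequality for $F$. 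Finally, lower semicontinuity of $F(\cdot,U)$ on each bounded open set is automatic from the general theory of $\Gamma$-convergence, so $F\in\mathfrak{F}_{\rm sc}$.

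I expect the main obstacle to be the fundamental estimate in the ${\rm BD}$ framework: the cut-off error $u\odot\nabla\varphi\,\Ld$ must be absorbed without imposing any continuity of the integrands in $x$ and without relying on BV-style truncations. A careful choice of an annulus where $u_n$ and the recovery sequence are close in $L^1$, combined with an averaging argument over a one-parameter family of cut-offs (De Giorgi's slicing) and the Lipschitz bound (f3) and the modulus (g4) to absorb the residual bulk/surface contributions from the gluing into arbitrarily small errors, should suffice. Once the estimate is in place, the remaining verifications are routine given the structure of $\mathfrak{F}$.
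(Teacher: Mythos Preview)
Your proposal is correct and follows essentially the same localisation strategy as the paper: extract a subsequence so that the inner regularisations of the $\Gamma$-liminf and $\Gamma$-limsup agree, prove a fundamental (nested subadditivity) estimate by gluing recovery sequences via cut-offs and De Giorgi averaging over layers, deduce the measure property from the De Giorgi--Letta criterion, and then verify (b)--(f) directly. The only noteworthy difference is cosmetic: the paper invokes the abstract compactness result for increasing set functionals \cite[Theorem~16.9]{DalBook} rather than diagonalising over a countable base, but the two are equivalent.

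One clarification is worth making: you overestimate the difficulty of the fundamental estimate. In the gluing step you only need the \emph{upper growth bound} in property (c) of Definition~\ref{abstract functionals}, namely $F_n(w,S)\le c_3|Ew|(S)+c_4\Ld(S)$, applied on the transition strip. The extra symmetric-gradient term $(u_n-v_n)\odot\nabla\varphi\,\Ld$ is then controlled by $\|u_n-v_n\|_{L^1}\cdot\|\nabla\varphi\|_\infty$, which vanishes since both sequences converge to $u$ in $L^1_{\rm loc}$; the residual bulk/surface energy on the strip is handled by averaging over $m$ layers so that it contributes $M/m\to0$. Neither the Lipschitz bound (f3) nor the modulus (g4) (nor, for that matter, properties (e) and (f) of the abstract class $\mathfrak{F}$) plays any role here---those are needed only later to show that the limit integrands $f,g$ inherit (f3) and (g4). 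So the ${\rm BD}$ setting introduces no genuine new obstacle at this stage compared with ${\rm BV}$: the Leibniz formula for $E(\varphi u)$ and the $L^1$-convergence of the recovery sequences are all that is required.
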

\begin{proof}
For every open set $U\subset\Rd$  we consider the functionals defined on $L^1_{\rm loc}(\Rd;\Rd)$ by
\begin{gather}\label{def Gammaliming et limsup}
    F^\prime (\cdot,U)=\Gamma\text{-}\liminf_{n\to+\infty}F_n(\cdot,U)\quad \text{ and }\quad  F^{\prime\prime} (\cdot,U)=\Gamma\text{-}\limsup_{n\to+\infty}F_n(\cdot,U),\\\label{def inner}
    F^\prime_- (\cdot,U)=\sup_{U'\in\mathcal{U}_c(U)}F^\prime(\cdot,U')\quad \text{ and }\quad  F^{\prime\prime}_- (\cdot,U)=\sup_{U'\in\mathcal{U}_c(U)}F^{\prime\prime}(\cdot,U'),
\end{gather}
where the $\Gamma$-liminf and $\Gamma$-limsup are computed with respect to the $L^1_{\rm loc}(\Rd;\Rd)$-convergence and $\U_c(U)$ is the collection of open sets defined in (g) of Section \ref{sec: Notation}. 

It is immediate to check that for every $u\in L^1_{\rm loc}(\Rd;\Rd)$ the  set functions $F_n(u,\cdot)$, $F^\prime(u,\cdot)$, $F''(u,\cdot)$, $ F^\prime_- (u,\cdot)$, and  $ F''_- (u,\cdot)$ are  increasing. Thus, we can apply the compactness theorem for sequences of increasing set functionals \cite[Theorem 16.9]{DalBook} to obtain that there exist a subsequence of $\{F_n\}_{n\in\N}$, not relabelled, and a functional $F\colon L^1_{\rm loc}(\Rd;\Rd)\times \U(\Rd)$ such that   
\begin{equation}\label{def F}
    F(u,U)=F'_-(u,U)=F''_-(u,U)\quad \text{for every }u\in L^1_{\rm loc}(\Rd;\Rd)  \text{ and }U\in\mathcal{U}(\Rd).
\end{equation}

For every $u\in L^1_{\rm loc}(\Rd;\Rd)$ and $B\in\B(\Rd)$ we set 
\begin{equation}\label{Borel extension}
    F(u,B):=\inf\{F(u,U):U\in\U(\Rd)  \text{ with }U\supset B\}.
\end{equation}
We want to prove that $F\in\mathfrak{F}$. The approximation property \eqref{outer regularity} is obvious, while the measure property (a) will be proved in Lemma \ref{measure} below.  The locality property (b) and the lower semicontinuity of $F$ follow from general results about $\Gamma$-limits of local functionals (see \cite[Proposition 16.15 and Remark 16.3]{DalBook}).

We note that it is enough to prove properties (d)-(f) for $U\in \U_c(\Rd)$. Indeed, by the measure property (a) they can be extended to $U\in\U(\Rd)$ using the inner regularity of Borel measures. The further extension to Borel sets follows immediately from \eqref{Borel extension}. 

Let us fix  $u\in L^1_{\rm loc}(\Rd;\Rd)$ and $U\in \U_c(\Rd)$.
We begin by proving the upper and lower bounds in (c).  We consider first the case $u\in {\rm BD}(U)$. Under this hypothesis, the inequality 
\begin{equation}\label{upper bound F''}
        F''(u,U)\leq c_3|Eu|(U)+c_4\Ld(U)
    \end{equation} is a consequence of the corresponding inequality for $F_n$ and of the fact that the $\Gamma$-limsup is less than or equal to the pointwise limsup. The upper bound in (c) for $F$ follows immediately from \eqref{def inner},  \eqref{def F}, and \eqref{upper bound F''}. To prove the lower bound for $F''$, we observe that our assumption $u\in {\rm BD}(U)$ implies that $F''
(u,U)<+\infty$.  Given $U'\in\U_c(U)$, we fix a sequence $\{u_n\}_{n\in\N}\subset L^1_{\rm loc}(\Rd;\Rd)$ 
 converging to $u$ in $L^1_{\rm loc}(\Rd;\Rd)$ such that $\limsup_n F_n(u_n,U')= F''(u,U')<+\infty$.  Since for every $n\in\N$ the functional $F_n$ satisfies  (c) we have $u_n\in {\rm BD}(U')$ for $n\in\N$ large enough and 
\begin{gather}\notag 
c_1|E{u_n}|(U')-c_2\Ld(U')\leq F_n(u_n,U').
\end{gather}
From the lower semicontinuity of $v\mapsto|Ev|(U')$ with respect to the $L^1$-convergence and from the previous inequality we obtain 
\begin{equation*}
    c_1|E u|(U')-c_2\Ld(U')\leq F''(u,U').
\end{equation*} Using again  \eqref{def inner} and \eqref{def F}, we deduce the corresponding lower bound for $F(u,U)$. 

Assume now that $u\notin  {\rm BD}(U)$. We want to prove that $F(u,U)=+\infty$. Assume by contradiction that  $F(u,U)<+\infty$. Given   $U'\in \U_c(U)$, we consider a sequence $\{u_n\}_{n\in\N}\subset L^1_{\rm loc}(\Rd;\Rd)$ converging to $u$ in $L^1_{\rm loc}(\Rd;\Rd)$ such that $\limsup_nF_n(u_n,U')=F''(u,U')\leq F(u,U)<+\infty$. Arguing as before, we obtain that  $u_n\in {\rm BD}(U')$ for $n\in\N$ large enough and  $$\limsup_{n\to+\infty}|Eu_n|(U')\leq \frac{1}{c_1}\limsup_{n\to+\infty}F_n(u_n,U')+\frac{c_2}{c_1}\Ld(U')\leq \frac{F(u,U)}{c_1}+\frac{c_2}{c_1}\Ld(U),
$$ which implies $u\in {\rm BD}(U')$ and $|Eu|(U')\leq \frac{1}{c_1}{F(u,U)}+\frac{c_2}{c_1}\Ld(U)$. As this holds for every $U'\in\U_c(U)$, we obtain $u\in {\rm BD}(U)$, contradicting our hypothesis. This concludes the proof of the implication $u\notin {\rm BD}(U)\implies F(u,U)=+\infty$.

Properties (d)-(f) for $F$ follow immediately from the same properties for $F_n$. 

In Lemma \ref{lemma:equality comp} we will prove that for every $u\in L^1_{\rm loc}(\Rd;\Rd)$   and $U\in\mathcal{U}_c(\Rd)$ we have
\begin{equation}\label{equality comp}
      F(u,U)=F'(u,U)=F''(u,U), 
\end{equation}
hence for every $U\in\U_c(\Rd)$ the sequence  $\{F_n(\cdot,U)\}_{n\in\N}$ $\Gamma$-converges to $F(\cdot,U)$ in the topology of  $L^1_{\rm loc}(\Rd;\Rd)$. 
\end{proof}

In the rest of this section we assume that $F'$ and $F''$ are defined by \eqref{def Gammaliming et limsup} and that $F$ satisfies \eqref{def F}. Our aim is to  prove equality \eqref{equality comp} and the measure property (a) of Definition \ref{abstract functionals} for this functional. The proof of the measure property (a) is  based on the De Giorgi-Letta criterion for measures, which is based on the subadditivity of the function $U\mapsto F(u,U)$. For this reason,
we begin by proving that $F^{\prime \prime}$ satisfies the {\it nested} subadditivity condition. 

\begin{lemma}\label{fundamental estimate} Let $u\in L^1_{\rm loc}(\Rd;\Rd)$ and  $U',U,V\in \mathcal{U}_c(\Rd)$, with  $U'\subset \subset U$. Then  
\begin{gather}\label{nested subadditivity}
    F''(u,U'\cup V)\leq F''(u,U)+F''(u,V).
\end{gather}
\end{lemma}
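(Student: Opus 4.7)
The plan is to prove \eqref{nested subadditivity} by the classical De~Giorgi slicing / cut-off construction, adapted to the ${\rm BD}$ setting. The only tools I rely on are the measure property (a), locality (b), and the upper/lower bounds (c), which hold for every $F_n\in\mathfrak{F}$.

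First I would dispose of the trivial case $F''(u,U)+F''(u,V)=+\infty$. Otherwise, for a small $\eta>0$, I would extract sequences $\{u_n\},\{v_n\}\subset L^1_{\rm loc}(\Rd;\Rd)$ converging to $u$ in $L^1_{\rm loc}$ with $\limsup_{n\to+\infty} F_n(u_n,U)\leq F''(u,U)+\eta$ and $\limsup_{n\to+\infty} F_n(v_n,V)\leq F''(u,V)+\eta$; by (c) this forces $u_n\in {\rm BD}(U)$ and $v_n\in {\rm BD}(V)$ for large $n$, with uniform bounds on $|Eu_n|(U)$ and $|Ev_n|(V)$. Fixing $M\in\N$, I then choose nested open sets $U'=U_0\subset\subset U_1\subset\subset\cdots\subset\subset U_M\subset\subset U$ with pairwise disjoint layers $T_i:=U_i\setminus\overline{U_{i-1}}$, together with cut-offs $\varphi_i\in C^\infty_c(U_i)$ satisfying $\varphi_i\equiv 1$ on a neighbourhood of $\overline{U_{i-1}}$ and $\|\nabla\varphi_i\|_\infty\leq C_M$; the boundaries $\partial U_i$ are chosen so that $|Eu_n|$ and $|Ev_n|$ place no mass on them. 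Setting $w_n^i:=\varphi_i u_n+(1-\varphi_i)v_n$ then gives a function still converging to $u$ in $L^1_{\rm loc}$ for each fixed~$i$.

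Using the measure property of $F_n$ and locality, together with the facts $w_n^i=u_n$ on $U_{i-1}\supset U'$ and $w_n^i=v_n$ on $(U'\cup V)\setminus\overline{U_i}\subset V$ (since $U'\subset U_i$), the energy decomposes as
\[
F_n(w_n^i,U'\cup V)\leq F_n(u_n,U)+F_n(w_n^i,T_i)+F_n(v_n,V).
\]
For the transition term, the product rule
\[
Ew_n^i=\varphi_i\,Eu_n+(1-\varphi_i)\,Ev_n+\bigl[(u_n-v_n)\odot\nabla\varphi_i\bigr]\Ld,
\]
combined with the upper bound in (c) applied to $w_n^i$ and the lower bound in (c) applied to $u_n$ and $v_n$, yields
\[
F_n(w_n^i,T_i)\leq \tfrac{c_3}{c_1}\bigl[F_n(u_n,T_i)+F_n(v_n,T_i)\bigr]+c_3 C_M\|u_n-v_n\|_{L^1(T_i;\Rd)}+O(\Ld(T_i)).
\]
By disjointness of the layers, $\sum_{i=1}^M\bigl[F_n(u_n,T_i)+F_n(v_n,T_i)\bigr]\leq F_n(u_n,U)+F_n(v_n,V)$, so a De~Giorgi averaging argument produces $i^*=i^*(n)\in\{1,\dots,M\}$ for which this sum is at most $\frac{1}{M}\bigl[F_n(u_n,U)+F_n(v_n,V)\bigr]$. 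Substituting $i=i^*$ into the decomposition, taking $\liminf_{n\to+\infty}$ (using $w_n^{i^*}\to u$ in $L^1_{\rm loc}$ and $\|u_n-v_n\|_{L^1(T_{i^*};\Rd)}\to 0$), and finally letting $M\to+\infty$ and $\eta\to 0$ gives \eqref{nested subadditivity}.

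The main obstacle, specific to ${\rm BD}$, is that the smooth cut-off does not commute with the symmetric gradient, so the extra bulk term $(u_n-v_n)\odot\nabla\varphi_i\,\Ld$ appears in $Ew_n^i$ and must be absorbed; this is done by balancing the $L^1_{\rm loc}$-smallness of $u_n-v_n$ (as $n\to+\infty$) against the blow-up $\|\nabla\varphi_i\|_\infty=O(M)$, which is possible because the De~Giorgi averaging is performed at a fixed $M$ while $n$ is first sent to infinity. A related technicality is that the layer sum identity for $v_n$ requires $T_i\subset V$; the case $T_i\not\subset V$ must be handled either by a preliminary reduction that enforces $V\supset U\setminus\overline{U'}$ (shrinking $U$ along a suitable sequence and using monotonicity of $F''$) or by a careful modification of $v_n$ outside $V$, chosen so as not to affect $F_n(v_n,V)$ by locality and so as to keep the extended function in ${\rm BD}$ on each $T_i$.
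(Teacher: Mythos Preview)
Your approach is essentially the paper's: De~Giorgi layering with cut-offs between $U'$ and $U$, the Leibniz formula for $E(\varphi_i u_n+(1-\varphi_i)v_n)$, and averaging over layers. The only substantive issue is the ``related technicality'' you flag at the end, and it is self-inflicted. Since $U'\subset U_{i-1}$ is disjoint from $T_i=U_i\setminus\overline{U_{i-1}}$, you have $(U'\cup V)\cap T_i = V\cap T_i=:S_i$, so the measure decomposition of $F_n(w_n^i,U'\cup V)$ naturally yields $F_n(w_n^i,S_i)$ rather than $F_n(w_n^i,T_i)$. On $S_i\subset V$ the function $v_n$ is already in ${\rm BD}$, the Leibniz estimate applies directly, and the layer sums $\sum_i|Ev_n|(S_i)\le|Ev_n|(V)$ (or $\sum_i F_n(v_n,S_i)\le F_n(v_n,V)$) hold by disjointness. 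This is exactly what the paper does; no shrinking of $U$ and no extension of $v_n$ outside $V$ is needed, and the requirement that $|Ev_n|(\partial U_i)=0$ also becomes unnecessary.

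Two smaller slips. At the end you must take $\limsup_{n\to+\infty}$, not $\liminf$: since $F''$ is the $\Gamma$-limsup, the relevant inequality is $F''(u,U'\cup V)\le\limsup_n F_n(w_n^{i^*(n)},U'\cup V)$. And the extra term $(u_n-v_n)\odot\nabla\varphi_i\,\mathcal{L}^d$ is not specific to ${\rm BD}$; the identical issue (with $\otimes$ in place of $\odot$) arises in the ${\rm BV}$ fundamental estimate and is handled the same way.
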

\begin{proof}
Without loss of generality, we may assume that $F''(u,U)+F''(u,V)<+\infty$. Since $F''\geq F$ and $F$ satisfies property (c),  this implies that $u\in {\rm BD}(U\cup  V)$. 
 Recalling the definition of $F''$ (see \eqref{def Gammaliming et limsup}), we can find  two sequences $\{u_n\}_{n\in\N}$ and $\{v_n\}_{n\in\N}$ in $L^1_{\rm loc}(\Rd;\Rd)$, converging to $u$ in  $L^1_{\rm loc}(\Rd;\Rd)$ and  such that 
\begin{equation}\label{recoveries subadditivity}
F''(u,U)=\limsup_{n\to+\infty}F_n(u_n,U)\quad \text{ and }\quad F''(u,V)=\limsup_{n\to+\infty}F_n(v_n,V).
\end{equation}
Thanks to property (c) for $F_n$ and the finiteness of $F''(u,U)$ and $F''(u,V)$ we can assume that  $\{u_n\}_{n\in\N}\subset {\rm BD}(U)$, $\{v_n\}_{n\in\N}\subset {\rm BD}(V)$, and that 
\begin{gather}
M:=c_3\sup_{n\in\N} \Big(|E{u_n}|(U)+|E{v_n}|(V)\Big)+c_4\Ld(U\cup V)<+\infty
\label{bound M}.
\end{gather}

Let us fix $m\in\N$ and set $\eta:=\text{dist}(U',\partial U)>0$. For $j\in\{0,...,m\}$ we also consider the sets $U_j:=\{x\in U: \text{dist}(x,\partial U)>\eta-\frac{j\eta}{m}\}$. Clearly,  we have  $U'\subset U_0\subset \subset\dots\subset \subset U_{m-1}\subset U_m= U$.  

For every $j\in\{1,...,m\}$ we consider a function  $\varphi_j\in C^\infty_c(\Rd;[0,1])$  satisfying $\varphi_j=1$ on  a neighbourhood of $\overline{U}_{j-1}$, $\varphi_j=0$ on  a neighbourhood of $\Rd\setminus U_{j}$, and  \begin{equation}\label{4.11}|\nabla \varphi_j|\leq \frac{2m}{\eta}\quad  \text{on $\Rd$}.
\end{equation}We set 
\begin{equation}\label{convex combination}
    w^j_n:=\varphi_j {u}_n+(1-\varphi_j){v}_n\quad \text{and}\quad S_j:=(U_{j}\setminus \overline{U}_{j-1})\cap V, 
\end{equation}
 and observe  that $w_n^j\in {\rm BD}(U'\cup V)$ and 
\begin{equation}\label{liebnitz rule}
    E{w^j_n}=\varphi_j E{{u}_n}+(1-\varphi_j)E{{v}_n}+({u}_n-v_n)\odot \nabla \varphi_j\, \Ld
\end{equation}
as Radon measures on $U'\cup V$. Moreover, one checks immediately that for every choice of  $j_n\in\{1,...,m\}$ the sequence  $\{w^{j_n}_n\}_{n\in\N}$ converges to $u$  in $L^1_{\rm loc}(\Rd;\Rd)$ as $n\to+\infty$.

 Let us fix $j\in\{1,...,m\}$ and $n\in\N$. We observe that by the  measure property (a), by the properties of $\varphi_j$, and by the locality property of Remark  \ref{strong locality} we have
\begin{gather}\notag 
    F_n(w^j_n,U'\cup V)\leq     F_n(w^j_n,\overline{U}_{j-1})+F_n(w^j_n,S_j)+F_n(w^j_n,V\setminus {U}_{j})\\ \label{decomposition in shells}
   =F_n(u_n,\overline{U}_{j-1})+F_n(w^j_n,S_j)+F_n(v_n,V\setminus {U}_{j})
    \leq F_n(u_n,U)+F_n(w^j_n,S_j)+F_n(v_n,V).
\end{gather}

We  now estimate the term involving the strip $S_j$ in \eqref{decomposition in shells}. 
\begin{gather}\notag 
F_n(w_n^j,S_j)
\leq c_3|Ew^j_n|(S_j)+c_4\Ld(S_j),
\end{gather}
and by \eqref{4.11} and \eqref{liebnitz rule} we get 
\begin{align}\notag
    |E{w^j_n}|(S_j)\leq |E{u_n}|(S_j)+|E{v_n}|(S_j)+\frac{2m}{\eta}\int_{S_j}|u_n-v_n|\,{\rm d}x. 
\end{align}
 From these two inequalities, we deduce that  for every $n\in\N$ we can find $j_n\in\{1,..,m\}$  such that, setting $w_n:=w^{j_n}_n$, we have \bigskip {
 \begin{gather}
     \notag 
    F_n(w_n,S_{j_n})\leq\frac{1}{m} \sum_{j=1}^m \Big(c_3|E{u_n}|(S_j)+c_3|E{v_n}|(S_j) +\frac{2c_3m}{\eta}\int_{S_j}|u_n-v_n|\,{\rm d}x+c_4\Ld(S_j)\Big)\\ 
    \leq \notag 
    \frac{1}{m}\big(c_3|Eu_n|(U)+c_3|Ev_n|(V)+c_4\Ld(U\cup V)\big)+ \frac{2c_3}{\eta}\int_{U \cap V}|u_n-v_n|\,\dx x
    \end{gather}
    \begin{gather}
    \label{estimate on good stripbis}
     \leq \frac{M}{m}+\frac{2c_3}{\eta}\int_{U\cap V}|u_n-v_n|\,\dx x,
  \end{gather}
}
   where in the second inequality we used \eqref{bound M}.

Finally, combining \eqref{decomposition in shells} and \eqref{estimate on good stripbis} we deduce that
\begin{align}
    F_n(w_n,U'\cup V)
    &\leq  F_n(u_n,U)+F_n(v_n,V)\label{final subadditive estimate}
 +\frac{M}{m}+\frac{2c_3}{\eta}\int_{U \cap V}|u_n-v_n|\,\dx x
\end{align}
for every $n\in\N$.
Recalling that $\{u_n-v_n\}_{n\in\N}$  converges to $0$ strongly in $L^1(U\cap V;\Rd)$, that $\{w_n\}_{n\in\N}$ converges to $u$ in $L^1_{\rm loc}(\Rd;\Rd)$,  we may let $n\to+\infty$ in \eqref{final subadditive estimate} and from \eqref{recoveries subadditivity} we  deduce that
\begin{equation*}
    F''(u,U'\cup V)\leq\limsup_{n\to+\infty} F_n(w_n,U'\cup V)\leq  F''(u,U)+F''(u,V)+\frac{M}{m}.
\end{equation*}
 Letting $m\to+\infty$ in this inequality, we obtain \eqref{nested subadditivity}.
\end{proof}

We are now ready to prove the subadditivity of $F(u,\cdot)$ on $\U(\Rd)$.
\begin{lemma}\label{lemma: subadditivity}
    Let  $u\in L^1_{\rm loc}(\Rd;\Rd)$ and $U,V\in \U(\Rd)$. Then
    \begin{equation*}
        F(u,U\cup V)\leq F(u,U)+F(u,V).
    \end{equation*}
    \begin{proof}
        The result is a consequence of Lemma \ref{fundamental estimate} and of classical arguments concerning increasing set functions (see, for instance, \cite[Lemma 18.4]{DalBook}).
    \end{proof}    
\end{lemma}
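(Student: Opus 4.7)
The plan is to combine the nested subadditivity of $F''$ established in Lemma \ref{fundamental estimate} with the inner-regularity formula $F(u,U) = F''_-(u,U) = \sup\{F''(u,W) : W \in \U_c(U)\}$ coming from \eqref{def inner} and \eqref{def F}. The strategy is: given any $W \subset\subset U \cup V$, show that $F''(u,W) \leq F(u,U) + F(u,V)$ by splitting $W$ into two pieces whose closures sit well inside $U$ and $V$ respectively, then applying Lemma \ref{fundamental estimate} to a slightly enlarged configuration.

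First, I would establish a purely topological decomposition lemma: if $W \subset\subset U\cup V$, then there exist open sets $U_1, U_2, V_1$ with
\[
U_1 \subset\subset U_2 \subset\subset U, \qquad V_1 \subset\subset V, \qquad W \subset U_1 \cup V_1.
\]
This is obtained from the compactness of $\overline W$: for every $x \in \overline W$, either $x \in U$ or $x \in V$, so choose a small open ball around $x$ whose closure lies in the appropriate set; a finite subcover yields open sets $U_1 \subset\subset U$ and $V_1 \subset\subset V$ with $\overline W \subset U_1 \cup V_1$. Then interpose $U_2$ between $U_1$ and $U$, again using compactness of $\overline{U_1}$ inside $U$.

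Next, I would chain the inequalities. By monotonicity of $F''(u,\cdot)$ as an increasing set function (immediate from the definition of $\Gamma$-limsup on nested open sets), together with $W \subset U_1 \cup V_1$, we have $F''(u,W) \leq F''(u, U_1 \cup V_1)$. Applying Lemma \ref{fundamental estimate} with the configuration $U_1 \subset\subset U_2$ gives
\[
F''(u, U_1 \cup V_1) \leq F''(u, U_2) + F''(u, V_1).
\]
Since $U_2 \in \U_c(U)$ and $V_1 \in \U_c(V)$, the definition \eqref{def inner} yields $F''(u, U_2) \leq F''_-(u,U) = F(u,U)$ and $F''(u,V_1) \leq F''_-(u,V) = F(u,V)$. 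Combining these estimates gives $F''(u,W) \leq F(u,U) + F(u,V)$ for every $W \in \U_c(U \cup V)$. Taking the supremum over such $W$ and using $F(u, U\cup V) = F''_-(u, U\cup V)$ completes the proof.

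The only mildly delicate step is the topological decomposition in the first paragraph, but it is a standard compactness argument, so I expect no real obstacle here; the proof is essentially a bookkeeping application of the already established nested subadditivity combined with the inner-regular definition of $F$.
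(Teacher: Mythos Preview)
Your proof is correct and is precisely the standard argument the paper invokes by citing \cite[Lemma 18.4]{DalBook}: combine the nested subadditivity of $F''$ from Lemma \ref{fundamental estimate} with the inner-regular definition \eqref{def inner}--\eqref{def F} of $F$, using the compactness-based splitting of an arbitrary $W\subset\subset U\cup V$ into pieces compactly contained in $U$ and $V$ respectively. The only point worth noting is that Lemma \ref{fundamental estimate} requires $U_1,U_2,V_1\in\U_c(\Rd)$, which your finite-cover construction automatically ensures since $\overline W$ is compact.
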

We now prove that $F$ satisfies the measure property (a).
\begin{lemma}\label{measure}
    Let $u\in L^1_{\rm loc}(\Rd;\Rd)$. Then $F(u,\cdot)$ is a Borel measure.
\end{lemma}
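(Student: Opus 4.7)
The plan is to apply the De Giorgi--Letta criterion (see, e.g., \cite[Theorem 14.23]{DalBook}), according to which any set function on $\mathcal{U}(\Rd)$ that is increasing, inner regular, subadditive, and superadditive on disjoint open sets extends uniquely to a Borel measure, the extension coinciding with the outer regular one. Applied to $F(u, \cdot)$, the outer regular extension is precisely \eqref{Borel extension}, so it suffices to verify these four conditions on $\mathcal{U}(\Rd)$.

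Three of them are essentially at hand. Monotonicity of $U \mapsto F(u, U)$ follows directly from $F(u, U) = F'_-(u, U)$ and the already-noted monotonicity of $F'$. Inner regularity, i.e.\ $F(u, U) = \sup\{F(u, U') : U' \in \mathcal{U}_c(U)\}$, is immediate from the definition of $F'_-$ in \eqref{def inner} together with a standard exhaustion: any $U' \in \mathcal{U}_c(U)$ can itself be approximated by $U'' \in \mathcal{U}_c(U')$, so $F(u, U') \leq F'_-(u, U) = F(u, U)$. Subadditivity on $\mathcal{U}(\Rd)$ has just been proved in Lemma \ref{lemma: subadditivity}.

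The only remaining step is superadditivity on disjoint open sets, which I would argue through $F'$. Fix $U, V \in \mathcal{U}(\Rd)$ with $U \cap V = \emptyset$, and choose $U' \in \mathcal{U}_c(U)$, $V' \in \mathcal{U}_c(V)$; these are then disjoint and $U' \cup V' \in \mathcal{U}_c(U \cup V)$. For every $\e > 0$ select a sequence $\{u_n\}_{n\in\N} \subset L^1_{\rm loc}(\Rd;\Rd)$ with $u_n \to u$ in $L^1_{\rm loc}(\Rd;\Rd)$ and
\begin{equation*}
\liminf_{n\to+\infty} F_n(u_n, U' \cup V') \leq F'(u, U' \cup V') + \e.
\end{equation*}
Since each $F_n \in \mathfrak{F}$ satisfies the measure property (a), the set function $F_n(u_n, \cdot)$ is a Borel measure; as $U'$ and $V'$ are disjoint,
\begin{equation*}
F_n(u_n, U' \cup V') = F_n(u_n, U') + F_n(u_n, V').
\end{equation*}
Combining the superadditivity of $\liminf$ with the inequalities $\liminf_n F_n(u_n, U') \geq F'(u, U')$ and $\liminf_n F_n(u_n, V') \geq F'(u, V')$ (which hold by definition of $\Gamma$-liminf applied to the particular sequence $u_n$), one obtains $F'(u, U' \cup V') + \e \geq F'(u, U') + F'(u, V')$. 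Letting $\e \to 0$ and then taking suprema over $U' \in \mathcal{U}_c(U)$ and $V' \in \mathcal{U}_c(V)$ independently gives $F(u, U \cup V) \geq F(u, U) + F(u, V)$.

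The only real obstacle I anticipate is a notational/conceptual one: one must be careful to use $F'$ and $F'_-$ (or $F''$ and $F''_-$) correctly and avoid any circular appeal to the identification $F = F' = F''$, which at this point is not yet available and will only be proved in Lemma \ref{lemma:equality comp}. The argument above sidesteps this issue because both superadditivity and inner regularity can be extracted from $F'$ alone.
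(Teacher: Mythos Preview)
Your proof is correct and follows essentially the same approach as the paper: both apply the De Giorgi--Letta criterion and verify subadditivity (via Lemma~\ref{lemma: subadditivity}), superadditivity, and inner regularity. The only difference is cosmetic: where the paper cites \cite[Proposition 16.12]{DalBook} for superadditivity of $\Gamma$-liminfs, you spell the argument out explicitly, and your care in avoiding the not-yet-established identity $F=F'=F''$ is well placed.
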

\begin{proof}
    It is enough to use the De Giorgi-Letta criterion for measures (see \cite[Théorème 5.1]{DeGiorgiLetta} or\cite[Theorem 14.21]{DalBook}), which ensures that the claim is proved, provided that we show that the set function $U\mapsto F(u,U)$ is subadditive, superadditive, and inner regular on $\U(\Rd)$. Subadditivity follows from Lemma \ref{lemma: subadditivity}, superadditivity is a consequence of  classic results of $\Gamma$-convergence (see \cite[Proposition 16.12]{DalBook}), and  inner regularity follows from  \eqref{def F}, recalling  \eqref{def inner}.
\end{proof}

Finally, we conclude the section by showing that $\Gamma$-convergence takes place for every $U\in\U_c(\Rd)$.

\begin{lemma}\label{lemma:equality comp}
Let $u\in L^1_{\rm loc}(\Rd;\Rd)$ and $U\in \U_c(\Rd)$. Then
\begin{equation}\notag
      F(u,U)=F'(u,U)=F''(u,U).
\end{equation}
\end{lemma}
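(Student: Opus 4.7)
The plan is to establish the chain
\begin{equation*}
F''(u,U) \leq F(u,U) \leq F'(u,U) \leq F''(u,U);
\end{equation*}
only the first inequality is nontrivial. The last is immediate from the general inequality $\Gamma\text{-}\liminf \leq \Gamma\text{-}\limsup$. For the middle one, monotonicity of the $F_n(u,\cdot)$ is inherited by $F'(u,\cdot)$, so that $F'(u,U') \leq F'(u,U)$ for every $U' \in \U_c(U)$; taking the supremum over such $U'$ and recalling \eqref{def inner}--\eqref{def F} gives $F(u,U) = F'_-(u,U) \leq F'(u,U)$.

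The main step is thus to show $F''(u,U) \leq F(u,U)$, i.e.\ inner regularity of $F''(u,\cdot)$ at $U$. If $u|_U \notin {\rm BD}(U)$ then $F(u,U) = +\infty$ by property (c) (already verified for $F$ in the proof of Theorem \ref{compactness}), and there is nothing to prove. So assume $u|_U \in {\rm BD}(U)$. For small $\delta > 0$ I would introduce the nested bounded open sets
\begin{equation*}
W_\delta := \{x \in U : {\rm dist}(x,\partial U) > 3\delta\}, \quad W'_\delta := \{x \in U : {\rm dist}(x,\partial U) > 2\delta\}, \quad V_\delta := \{x \in U : {\rm dist}(x,\partial U) < 4\delta\},
\end{equation*}
so that $W_\delta \subset\subset W'_\delta \subset\subset U$, $V_\delta \in \U_c(\Rd)$ with $V_\delta \subset U$, and $W_\delta \cup V_\delta \supset U$. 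Applying the nested subadditivity of Lemma \ref{fundamental estimate} to $F''$ with the triple $(W_\delta, W'_\delta, V_\delta)$, together with monotonicity of $F''(u,\cdot)$, yields
\begin{equation*}
F''(u,U) \leq F''(u, W_\delta \cup V_\delta) \leq F''(u, W'_\delta) + F''(u, V_\delta).
\end{equation*}

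To close the argument I estimate the two summands separately. Since $W'_\delta \in \U_c(U)$, the definition of $F''_-$ and \eqref{def F} give $F''(u, W'_\delta) \leq F''_-(u,U) = F(u,U)$. For the collar term, the inclusion $V_\delta \subset U$ ensures $u|_{V_\delta} \in {\rm BD}(V_\delta)$, so the upper bound \eqref{upper bound F''} for $F''$ applies and gives
\begin{equation*}
F''(u, V_\delta) \leq c_3 |Eu|(V_\delta) + c_4 \Ld(V_\delta),
\end{equation*}
whose right-hand side tends to $0$ as $\delta \to 0^+$ since $V_\delta$ decreases to the empty set (every point of $U$ has strictly positive distance from $\partial U$). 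Letting $\delta \to 0^+$ yields $F''(u,U) \leq F(u,U)$, which closes the chain. The main delicacy I anticipate is precisely the choice of the collar $V_\delta$: it must lie inside $U$ so that $u|_{V_\delta} \in {\rm BD}(V_\delta)$ and the upper bound in (c) becomes available, since $u$ is assumed to be ${\rm BD}$ only on $U$; defining $V_\delta$ as a neighbourhood of $\partial U$ in $\Rd$ instead would break the argument.
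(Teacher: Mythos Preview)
Your proof is correct and follows essentially the same route as the paper's: both reduce to showing $F''(u,U)\leq F(u,U)$ via the nested subadditivity of Lemma~\ref{fundamental estimate}, splitting $U$ into a compactly contained inner piece (where $F''\leq F''_- = F$) and a thin collar near $\partial U$ (where the upper bound~\eqref{upper bound F''} gives a vanishing contribution). The only cosmetic difference is that the paper chooses the collar as $U\setminus K$ for a well-chosen compact $K$, while you parametrise by distance to $\partial U$; your explicit remark that $V_\delta$ must be taken inside $U$ to ensure $u|_{V_\delta}\in{\rm BD}(V_\delta)$ is exactly the point, and your version is arguably a bit cleaner in tracking the three sets needed for Lemma~\ref{fundamental estimate}.
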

\begin{proof}
    Since the inequalities $F(u,U)\leq F'(u,U)\leq F''(u,U)$ are obvious, we only  have to show that $F''(u,U)\leq F(u,U)$, assuming that $F(u,U)<+\infty$. By property (c) for the functional $F$,  this implies that 
    $u\in {\rm BD}(U)$. 
    
    Let $\e>0$ and consider a compact set $K\subset U$ such that \begin{equation*}c_3|Eu|(U\setminus K)+c_4\Ld(U\setminus K)< \e.\end{equation*}
    Consider now $U'\in \U_c(\Rd)$ with $K\subset U'\subset \subset U$. Using Lemma \ref{fundamental estimate} with $V=U\setminus K$, from the previous inequality and \eqref{upper bound F''} we obtain 
    \begin{equation}\notag
         F''(u,U)\leq F''(u,U')+F''(u,U\setminus K)< F(u,U)+\e.
    \end{equation}
    The conclusion then follows from the arbitrariness of $\e$.
\end{proof}

\section{A partial integral  
representation for abstract functionals}

In this section we use the results proved in  \cite{CaroccFocardiVan} to provide a partial integral representation of every functional  $F\in\mathfrak{F}_{\rm sc}$.  More precisely, we  show that the ``absolutely continuous''  part and the ``jump'' part of every such $F$ can be written as integrals associated to suitable integrands $f\in\mathcal{F}$ and $g\in\mathcal{G}$. 

\medspace 

We begin by introducing a decomposition of functionals in $\mathfrak{F}$ that reflects the usual decomposition of the measure $Eu$ described in \eqref{decomposition of gradients}.

\begin{definition}\label{decomposition}
    Let $F\in\mathfrak{F}$, $U\in \U_c(\Rd)$, and $u\in {\rm BD}(U)$. By (a) and (c) of Definition \ref{abstract functionals} $F(u,\cdot)$ is a bounded Radon measure on $U$.  Its absolutely continuous and the singular part with respect to $\Ld$,  defined on every $B\in\B(U)$,  are denoted by $F^a(u,B)$ and $F^s(u,B)$, respectively.  We also introduce the bounded Radon measures $F^c(u,\cdot)$ and $F^j(u,\cdot)$ defined by 
    \begin{gather*}
F^c(u,B):=F^s(u,B\setminus J_u) \text{ and }F^j(u,B):=F(u, B\cap J_u)
    \end{gather*}
    for every $B\in\B(U)$.
Note that we have
\begin{equation}\label{somma facj}
F(u,B)=F^a(u,B)+F^c(u,B)+F^j(u,B)
\end{equation}
for every $B\in \B(U)$. 
\end{definition}

\begin{remark}\label{remark abs continuity}
    Let $F\in\mathfrak{F}$,  $U\in\U_c(\Rd)$, and $u\in {\rm BD}(U)$. It follows directly from the bounds in (c) of Definition \ref{abstract functionals} that $F^c(u,\cdot)$ and  $F^j(u,\cdot)$, considered as measures defined on $\B(U)$, are absolutely continuous with respect to $|E^cu|$ and $\hd\mres J_u$, respectively.
\end{remark}
In the rest of the paper, we will often work with functions defined by using auxiliary minimisation problems. For this reason, given a functional $F\in\mathfrak{F}$, a set $U\in\U_c(\Rd)$ with Lipschitz boundary $\partial U$, and a function $v\in {\rm BD}(U)$, we set 
\begin{equation}\notag 
    \m^F(v,U):=\inf\{F(u,U): u\in {\rm BD}(U) \text{ and } u=v \text{ on }\partial U \},
\end{equation}
where the equality $u=v$ is understood in the sense of traces. As $v$ is a competitor for the minimisation problem $\m^F(v,U)$, by the upper bounds in Definition \ref{abstract functionals} it follows that
\begin{equation}\notag 
      \m^F(v,U)\leq F(v,U)\leq c_3|Ev|(U)+c_4\Ld(U)<+\infty.
\end{equation}

Given  $F\in\mathfrak{F}$,  for every $x\in\Rd,$ $\zeta\in\Rd$,  $A\in\Rdd_{\rm sym}$, and $\nu\in\Sd$ we set
\begin{gather}\label{def f}
    f(x,A):=\limsup_{\rho \to 0^+}\frac{\m^F(\ell_A,Q(x,\rho))}{\rho^d},\\\label{def g}
    g(x,\zeta,\nu):=\limsup_{\rho \to 0^+}\frac{\m^F(u_{x,\zeta,\nu},Q_\nu(x,\rho))}{\rho^{d-1}},
\end{gather}
where $Q(x,\rho)$ and $Q_\nu(x,\rho)$ are the cubes defined in (c) and (e) of Section \ref{sec: Notation}, while the functions $\ell_A$ and $u_{x,\zeta,\nu}$ are those introduced in (f) of the same section.

We now state the partial integral representation result, whose proof is given in \cite[Lemma 5.1]{CaroccFocardiVan}, following the lines of \cite{bouchitte1998global} and of \cite{EbobisseToader}.
\begin{lemma}\label{representation proposition}
Let $F\in\mathfrak{F}_{\rm sc}$, $U\in\U_c(\Rd)$, and  $u\in {\rm BD}(U)$. 
Let $f$ and $g$ be the two functions defined by \eqref{def f} and \eqref{def g}. Then $f\in\mathcal{F}$, $g\in\mathcal{G}$, and 
\begin{gather}\label{integral representation bulk}
    F^a(u,B)=\int_Bf(x,\E u)\, {\rm d}x,\\\label{integral representation jump}
    F^j(u,B)=\int_{J_u\cap B}g(x,[u],\nu_u)\,{\rm d}\hd,
\end{gather}
for every set $B\in\B(U)$.
\end{lemma}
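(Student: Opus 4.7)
The plan is to adapt the global method for integral representation of \cite{bouchitte1998global}, as extended to the ${\rm BD}$ setting in \cite{EbobisseToader} and \cite{CaroccFocardiVan}.

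First I verify that $f\in\mathcal{F}$ and $g\in\mathcal{G}$. For (f2), the upper bound follows by using $\ell_A$ as a competitor in $\m^F(\ell_A,Q(x,\rho))$ together with (c) of Definition \ref{abstract functionals}, noting that $|E\ell_A|(Q(x,\rho))=|A|\rho^d$ because $A$ is symmetric. The lower bound $f(x,A)\geq c_1|A|-c_2$ follows since, by the Gauss--Green formula for ${\rm BD}$ functions, any competitor $v$ with $v=\ell_A$ on $\partial Q(x,\rho)$ satisfies $Ev(Q(x,\rho))=A\rho^d$, hence $|Ev|(Q(x,\rho))\geq|A|\rho^d$. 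Property (f3) follows from (e), which yields $|\m^F(\ell_{A_1},Q)-\m^F(\ell_{A_2},Q)|\leq c_5|A_1-A_2|\Ld(Q)$ by translating competitors. Borel measurability (f1) follows from a standard approximation argument combined with the outer regularity \eqref{outer regularity}. The conditions (g1)--(g4) are established analogously, using (f) in place of (e); for the symmetry (g2) one uses $Q_\nu(x,\rho)=Q_{-\nu}(x,\rho)$ (item (d) of Section \ref{sec: Notation}) together with $u_{x,\zeta,\nu}=u_{x,-\zeta,-\nu}+\zeta$ $\Ld$-a.e., whose constant difference is absorbed by the rigid-motion invariance (d) of Definition \ref{abstract functionals}.

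For \eqref{integral representation bulk}, by Definition \ref{decomposition} we have $F^a(u,\cdot)\ll\Ld$ and $F^s(u,Q(x,\rho))=o(\rho^d)$ for $\Ld$-a.e.\ $x\in U$, so
$$\frac{\dx F^a(u,\cdot)}{\dx\Ld}(x)=\lim_{\rho\to 0^+}\frac{F(u,Q(x,\rho))}{\rho^d}\quad\text{for }\Ld\text{-a.e.\ }x\in U.$$
I fix $x$ which is also a Lebesgue point of $\E u$ and at which $|E^cu|(Q(x,\rho))=o(\rho^d)$ and $\hd(J_u\cap Q(x,\rho))=o(\rho^d)$, a condition holding on a set of full $\Ld$-measure in $U$. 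By a Poincar\'e--Korn rigidity estimate in ${\rm BD}$ (as used in \cite{CaroccFocardiVan}), there exist $w_\rho\in\mathcal{R}$ with $\rho^{-(d+1)}\|u-\ell_{\E u(x)}(\cdot -x)-w_\rho\|_{L^1(Q(x,\rho))}\to 0$. By (d), $F(u,Q(x,\rho))=F(u-w_\rho,Q(x,\rho))$, so one may assume $u$ is $L^1$-close to $\ell_{\E u(x)}(\cdot-x)$ on $Q(x,\rho)$. The main step is then
$$|F(u,Q(x,\rho))-\m^F(\ell_{\E u(x)},Q(x,\rho))|=o(\rho^d),$$
obtained by cutting and pasting a near-minimiser of the auxiliary problem on $Q(x,(1-\eta)\rho)$ with $u$ on the transition annulus via Lemma \ref{fundamental estimate}, using (e) to absorb the linear correction and exploiting the asymptotic vanishing of the Cantor and jump parts. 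Dividing by $\rho^d$ and recalling \eqref{def f} (with $\limsup$ becoming $\lim$ at such $x$) yields $\dx F^a(u,\cdot)/\dx\Ld(x)=f(x,\E u(x))$.

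The proof of \eqref{integral representation jump} proceeds along the same lines, with $\Ld$ replaced by $\hd\mres J_u$ and $Q(x,\rho)$ by $Q_{\nu_u(x)}(x,\rho)$. At $\hd$-a.e.\ $x\in J_u$ the approximate traces $u^\pm(x)$, the normal $\nu_u(x)$, and the jump $[u](x)$ exist; the rescalings of $u$ on $Q_{\nu_u(x)}(x,\rho)$ converge in $L^1$ to $u^-(x)+u_{x,[u](x),\nu_u(x)}$; and $|E^au|(Q_{\nu_u(x)}(x,\rho))$, $|E^cu|(Q_{\nu_u(x)}(x,\rho))$, and $\hd((J_u\setminus\Pi^{\nu_u(x)}_x)\cap Q_{\nu_u(x)}(x,\rho))$ are all $o(\rho^{d-1})$. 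Writing $\dx F^j(u,\cdot)/\dx(\hd\mres J_u)(x)$ as the blow-up limit on these cubes, absorbing the constant $u^-(x)$ via (d), and using the surface continuity estimate (f) in place of (e), one identifies this derivative with $g(x,[u](x),\nu_u(x))$ via \eqref{def g}.

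The main obstacle is the cut-and-paste comparison that controls $|F(u,Q(x,\rho))-\m^F(\ell_{\E u(x)},Q(x,\rho))|$ (and its surface analogue) with an error of order $o(\rho^d)$ (respectively $o(\rho^{d-1})$). One must modify $u$ on a thin annulus near $\partial Q(x,(1-\eta)\rho)$ to match the boundary values of a near-minimiser of the auxiliary problem while keeping the transition cost strictly of lower order than the leading term. This requires combining the annular estimate used in the proof of Lemma \ref{fundamental estimate}, the rigid-motion invariance (d), the continuity estimates (e) and (f), and the asymptotic negligibility of the non-leading parts of $Eu$ at the chosen $x$, balanced through a suitable choice of the transition parameter $\eta\to 0^+$.
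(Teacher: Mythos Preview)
Your proposal is correct and follows essentially the same approach as the paper: the paper simply cites \cite[Lemma 5.1]{CaroccFocardiVan} for the representation formulas \eqref{integral representation bulk}--\eqref{integral representation jump} and defers the verification $f\in\mathcal{F}$, $g\in\mathcal{G}$ to Lemmas \ref{inclusion bulk} and \ref{surface inclusion}, whereas you spell out the underlying global-method blow-up argument and sketch the property checks in the same spirit. One minor point: as the paper remarks immediately after the lemma, properties (e) and (f) of Definition \ref{abstract functionals} are \emph{not} needed to obtain the representation formulas themselves---they enter only in the proofs of (f3) and (g4)---so your invocation of (e) in the bulk cut-and-paste step and of (f) in the jump identification is harmless but superfluous.
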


\begin{proof}
   The proof of equalities \eqref{integral representation bulk} and \eqref{integral representation jump} can be found in \cite[Lemma 5.1]{CaroccFocardiVan}. The inclusions $f\in\mathcal{F}$ and $g\in\mathcal{G}$ are proved in Lemmas \ref{inclusion bulk} and \ref{surface inclusion} below.  
\end{proof}

\begin{remark}
    We remark that properties (e) and (f) of Definition \ref{abstract functionals} are not used to obtain \eqref{integral representation bulk} and \eqref{integral representation jump}. Properties (e) and (f) are used only to prove that $f$ and $g$ satisfy (f3) and (g4).
\end{remark}

To prove that $f$ is Borel measurable we will need the following lemma, which will be used also in Section \ref{section: homo} for different purposes.
\begin{lemma}\label{lemma poco use}
    Let $F\in\mathfrak{F}$, $U',U\in\U_c(\Rd)$,  with Lipschitz boundary and  $U'\subset \subset U$, and let $w\in {\rm BD}(U)$. Then
    \begin{equation}\label{claim lemma useless}
        \m^F(w,U)\leq \m^F(w,U')+c_3|Ew|(U\setminus U')+c_4\Ld(U\setminus U').
    \end{equation}
\end{lemma}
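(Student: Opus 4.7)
The plan is to construct an admissible competitor for $\m^F(w,U)$ by taking a near-optimal competitor on $U'$ and gluing it to $w$ outside $U'$, then use the measure structure of $F(\cdot,U)$ to estimate the energy.

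Fix $\e>0$ and pick $u\in {\rm BD}(U')$ with $u=w$ on $\partial U'$ (in the sense of traces) and $F(u,U')\leq \m^F(w,U')+\e$. Define $\tilde u\in L^1_{\rm loc}(\Rd;\Rd)$ by $\tilde u=u$ on $U'$ and $\tilde u=w$ on $\Rd\setminus U'$. Since $u$ and $w$ have the same trace on $\partial U'$ from the $U'$-side, the distributional symmetric gradient of $\tilde u$ in $U$ satisfies
\begin{equation*}
E\tilde u\mres U'=Eu,\quad E\tilde u\mres(U\setminus \overline{U'})=Ew\mres(U\setminus\overline{U'}),\quad E\tilde u\mres\partial U'=Ew\mres\partial U',
\end{equation*}
the last identity coming from the fact that the only contribution on $\partial U'$ is the jump produced by the mismatch of the $U'$- and $(U\setminus U')$-traces of $\tilde u$, which coincides with that of $w$ itself (since the $U'$-trace of $\tilde u$ equals the $U'$-trace of $w$). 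In particular $\tilde u\in {\rm BD}(U)$ and $\tilde u=w$ on $\partial U$, so $\tilde u$ is admissible in the definition of $\m^F(w,U)$.

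Using the measure property (a), the locality on open sets (b), and the upper bound in (c) of Definition \ref{abstract functionals}, we decompose
\begin{equation*}
F(\tilde u,U)=F(\tilde u,U')+F(\tilde u,\partial U')+F(\tilde u,U\setminus\overline{U'}).
\end{equation*}
By (b) and Remark \ref{strong locality}, $F(\tilde u,U')=F(u,U')\leq\m^F(w,U')+\e$ and $F(\tilde u,U\setminus\overline{U'})=F(w,U\setminus\overline{U'})\leq c_3|Ew|(U\setminus\overline{U'})+c_4\Ld(U\setminus\overline{U'})$. For the boundary term we use the upper bound in (c) together with $\Ld(\partial U')=0$ (Lipschitz boundary) and the identity $E\tilde u\mres\partial U'=Ew\mres\partial U'$ obtained above, giving $F(\tilde u,\partial U')\leq c_3|Ew|(\partial U')$. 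Summing the three contributions and using $|Ew|(U\setminus\overline{U'})+|Ew|(\partial U')=|Ew|(U\setminus U')$ and $\Ld(U\setminus\overline{U'})=\Ld(U\setminus U')$, we obtain
\begin{equation*}
\m^F(w,U)\leq F(\tilde u,U)\leq \m^F(w,U')+\e+c_3|Ew|(U\setminus U')+c_4\Ld(U\setminus U').
\end{equation*}
Letting $\e\to 0^+$ yields \eqref{claim lemma useless}.

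The main technical point is the identification $E\tilde u\mres\partial U'=Ew\mres\partial U'$: a naive bound $|E\tilde u|(\partial U')\leq|Eu|(\partial U')+|Ew|(\partial U')$ would not suffice because $|Eu|(\partial U')$ need not be controlled; the trace-matching condition $u=w$ on $\partial U'$ is precisely what forces the interior trace of $\tilde u$ to agree with the inner trace of $w$, so that the jump of $\tilde u$ across $\partial U'$ reduces to the jump of $w$ across $\partial U'$.
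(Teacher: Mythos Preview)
Your proof is correct and follows essentially the same approach as the paper's: take a near-minimiser $u$ on $U'$, extend it by $w$ on $U\setminus U'$, and split $F(\tilde u,U)$ into the contribution on $U'$ (controlled by $\m^F(w,U')+\e$) and on $U\setminus U'$ (controlled via the upper bound (c) and the trace-matching identity $|E\tilde u|(\partial U')=|Ew|(\partial U')$). The only cosmetic difference is that the paper does a two-way split $F(u,U)=F(u,U')+F(u,U\setminus U')$ and then decomposes $|Eu|(U\setminus U')$ into the open part and the boundary part, whereas you do a three-way split of $F$ directly; the content is identical.
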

\begin{proof}
    Let $\eta>0$ and $u\in {\rm BD}(U')$ with $u=w$ on $\partial U'$ be such that 
    \begin{equation}\label{ineq eta}
         F(u,U')\leq \m^F(w,U')+\eta.
    \end{equation}
    We can then extend $u$ to $U$ by setting $u=w$ on $U\setminus U'$, so that $u=w$ on $\partial U$, which implies 
    \begin{equation}\notag 
        \m^F(w,U)\leq F(u,U). 
    \end{equation}
    Using properties (a) and (c) of Definition \ref{abstract functionals} we get 
    \begin{equation}\label{intermediate something}
     F(u,U)=F(u,U')+F(u,U\setminus U')\leq F(u,U')+c_3|Eu|(U\setminus \overline{U}{'})+c_3|Eu|(\partial U')+c_4\Ld(U\setminus U').
    \end{equation}
    Since $u=w$ on the open set $U\setminus \overline{U}{'}$, we have $Eu=Ew$ as measures defined on $\B(U\setminus \overline{U}{'})$, hence $|Eu|(U\setminus \overline{U}{'})=|Ew|(U\setminus \overline{U}{'})$. Moreover, since $|Eu|\mres \partial U'=|(u^+-u^-)\odot\nu_{\partial U'}|\hd\mres (J_u\cap \partial U')$ and    $|Ew|\mres \partial U'=|(w^+-w^-)\odot\nu_{\partial U'}|\hd\mres  (J_w\cap \partial U')$, the equalities $u^+=w^+$ and $u^-=w^-$ on $\partial U'$ imply  that $|Eu|(\partial U')=|Ew|(\partial U')$.
    Combining this equality with \eqref{ineq eta}-\eqref{intermediate something} we get 
    \begin{equation*}
        \m^F(w,U)\leq \m^F(w,U')+c_3|Ew|(U\setminus \overline{U}{'})+c_3|Ew|(\partial U')+c_4\Ld(U\setminus U')+\eta,
    \end{equation*}
    which, by arbitrariness of $\eta$, implies \eqref{claim lemma useless}.
\end{proof}

We are ready to prove that $f\in\mathcal{F}$. 
\begin{lemma}\label{inclusion bulk}
     Let $F\in\mathfrak{F}$. Then the function $f$ defined by \eqref{def f} belongs to $\mathcal{F}$.
\end{lemma}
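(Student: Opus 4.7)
The plan is to verify, in order, the bounds (f2), the Lipschitz continuity (f3), and finally the Borel measurability (f1) of $f$ as defined in \eqref{def f}.

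For (f2), both bounds come from property (c) of Definition \ref{abstract functionals}. For the upper bound, $\ell_A\in{\rm BD}(Q(x,\rho))$ is admissible for $\m^F(\ell_A,Q(x,\rho))$ and $|E\ell_A|(Q(x,\rho))=|A|\rho^d$, so (c) gives $\m^F(\ell_A,Q(x,\rho))\le F(\ell_A,Q(x,\rho))\le(c_3|A|+c_4)\rho^d$. For the lower bound, I would use the Gauss--Green formula for BD functions: any admissible $u\in{\rm BD}(Q(x,\rho))$ with $u=\ell_A$ on $\partial Q(x,\rho)$ satisfies $Eu(Q(x,\rho))=\int_{\partial Q(x,\rho)}\ell_A\odot\nu_{\partial Q}\,{\rm d}\hd=A\rho^d$, whence $|Eu|(Q(x,\rho))\ge|A|\rho^d$ and $F(u,Q(x,\rho))\ge(c_1|A|-c_2)\rho^d$. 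Dividing by $\rho^d$ and taking $\limsup_{\rho\to 0^+}$ yields $c_1|A|-c_2\le f(x,A)\le c_3|A|+c_4$.

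For (f3), fix $\eta>0$ and pick $u\in{\rm BD}(Q(x,\rho))$ with $u=\ell_{A_1}$ on $\partial Q(x,\rho)$ and $F(u,Q(x,\rho))\le\m^F(\ell_{A_1},Q(x,\rho))+\eta$. Then $v:=u+\ell_{A_2-A_1}$ is admissible for $\m^F(\ell_{A_2},Q(x,\rho))$, and property (e) of Definition \ref{abstract functionals} gives $F(v,Q(x,\rho))\le F(u,Q(x,\rho))+c_5|A_2-A_1|\rho^d$. Letting $\eta\to 0$ yields $\m^F(\ell_{A_2},Q(x,\rho))\le\m^F(\ell_{A_1},Q(x,\rho))+c_5|A_2-A_1|\rho^d$; dividing by $\rho^d$, taking $\limsup_{\rho\to 0^+}$, and swapping $A_1,A_2$ yields (f3).

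For (f1), by (f3) the map $A\mapsto f(x,A)$ is $c_5$-Lipschitz uniformly in $x$, so by a standard Carath\'eodory-type argument (approximating $A$ by a countable dense subset of $\Rdsym$) joint Borel measurability reduces to the Borel measurability of $x\mapsto f(x,A)$ for each fixed $A$. Setting $\phi(x,\rho):=\m^F(\ell_A,Q(x,\rho))$ and $C:=c_3|A|+c_4$, Lemma \ref{lemma poco use} applied with $w=\ell_A$, $U=Q(x,\rho)$, $U'=Q(x,\rho')$, $\rho'<\rho$, yields the monotonicity $\phi(x,\rho)-C\rho^d\le\phi(x,\rho')-C(\rho')^d$. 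Pairing each $\rho$ with a nearby rational $q<\rho$ with $q/\rho\to 1$, the bound $|\rho^{-d}\phi(x,\rho)-q^{-d}\phi(x,q)|\to 0$ then allows us to write
\begin{equation*}
f(x,A)=\inf_{k\ge 1}\sup_{q\in\mathbb{Q}^+,\,q<1/k}q^{-d}\phi(x,q),
\end{equation*}
a countable inf-sup. It then remains to show $x\mapsto\phi(x,q)$ is Borel for each rational $q>0$; I expect this to be the main obstacle, since $F$ is not assumed translation-invariant and so $\phi(\cdot,q)$ need not be continuous. To handle it, I would reapply Lemma \ref{lemma poco use} to obtain, for $x_n\to x$ and rational $\epsilon>0$ with $Q(x,q-\epsilon)\subset\subset Q(x_n,q)$ for $n$ large, the estimate $\phi(x_n,q)\le\phi(x,q-\epsilon)+C(q^d-(q-\epsilon)^d)$; combining this one-sided semicontinuity with the monotonicity in $\rho$ and taking countable infima/suprema over rational centres and radii then allows one to express $\phi(\cdot,q)$ as a Borel function, completing the verification that $f\in\mathcal{F}$.
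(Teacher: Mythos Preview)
Your arguments for (f2) and (f3) match the paper's essentially verbatim; for the lower bound in (f2) you invoke the Gauss--Green formula where the paper says ``Jensen's inequality'', but both express the same fact, namely that any competitor $u$ with trace $\ell_A$ on $\partial Q(x,\rho)$ satisfies $|Eu|(Q(x,\rho))\ge |Eu(Q(x,\rho))|=|A|\rho^d$.

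For (f1), the paper does not give a self-contained argument either: it cites \cite[Lemma~5.3]{DalConvex} and says to replace Lemma~4.11 there by our Lemma~\ref{lemma poco use}, which is exactly the tool you reach for. Your reduction to rational $\rho$ via the monotonicity of $\rho\mapsto\phi(x,\rho)-C\rho^d$ is correct and is the right first move. However, your final step --- showing that $x\mapsto\phi(x,q)$ is Borel by ``combining one-sided semicontinuity with monotonicity in $\rho$ and taking countable infima/suprema over rational centres and radii'' --- is too vague to stand as a proof. The one-sided estimate $\limsup_{y\to x}\phi(y,q)\le\phi(x,q-\epsilon)+C(q^d-(q-\epsilon)^d)$ does not immediately yield Borel measurability of $\phi(\cdot,q)$ because $\rho\mapsto\phi(x,\rho)$ may have jumps, and ``rational centres'' alone do not produce functions of the continuous variable $x$. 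Completing this step requires the full sandwich argument (bounding $\phi(x,q)$ between semicontinuous envelopes built from Lemma~\ref{lemma poco use} and showing these envelopes yield the same $\limsup$ as $\rho\to 0^+$), which is precisely what \cite[Lemma~5.3]{DalConvex} provides.

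In short: your approach is the same as the paper's, and the part you leave incomplete is the same part the paper delegates to an external reference.
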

\begin{proof}
The proof of property (f1) can be obtained following the lines of the proof of \cite[Lemma 5.3]{DalConvex}, replacing the Lemma 4.11 of that paper by our  Lemma \ref{lemma poco use}.  

Let us fix $x\in\Rd$ and $A\in\Rdsym$.  We want to show that 
    \begin{equation}\label{upper bound f2}
        f(x,A)\leq c_3|A|+ c_4.
    \end{equation}
  To this aim, we note that for every $\rho>0$  from the upper bound in (c) of Definition \ref{abstract functionals} and the fact that $\ell_A$ is a competitor for the minimisation problem $\m^F(\ell_A,Q(x,\rho))$ it follows that 
 \begin{equation*}
     \m^F(\ell_A,Q(x,\rho))\leq F(\ell_A,Q(x,\rho))\leq (c_3|A|+c_4)\rho^d.
 \end{equation*}
 We now divide by $\rho^d$, take the limsup as $\rho\to 0^+$, and use \eqref{def f} to obtain \eqref{upper bound f2}.

 To conclude the proof of (f2) we  show that 
 \begin{equation}\label{lowerbound}
    c_1|A|-c_2\leq f(x,A).
 \end{equation}
 To prove this, let us fix $\rho>0$ and observe that the lower bound in (c) of Definition \ref{abstract functionals} we have 
\begin{equation}\notag 
   c_1|Eu|(Q(x,\rho))-c_2\rho^d\leq F(u,Q(x,\rho))
\end{equation}
 for every $u\in {\rm BD}(Q(x,\rho))$, so that 
 \begin{equation}\notag 
     c_1\min\{|Eu|(Q(x,\rho)):u\in {\rm BD}(Q(x,\rho)) \text{ with }u=\ell_A \text{ on }\partial Q(x,\rho)\}-c_2\rho^d\leq \m^F(\ell_A,Q(x,\rho)).
 \end{equation}
 By Jensen's inequality the function $\ell_A$ minimises the problem on the left-hand side of the previous inequality. Hence, 
 \begin{equation*}
       c_1|A|\rho^d-c_2\rho^d\leq \m^F(\ell_A,Q(x,\rho)).
 \end{equation*}
 Dividing by $\rho^d$, taking the limsup 
 as $\rho\to 0^+$, and recalling \eqref{def f}, we obtain \eqref{lowerbound}.

We now prove (f3). To this aim, let us fix $x\in\Rd$, $A_1,A_2\in\Rdsym$, $\rho>0$, and $\eta>0$.
Consider a function $u_2\in {\rm BD}(Q(x,\rho))$ with $u_2=\ell_{A_2}$ on $\partial Q(x,\rho)$ and such that 
\begin{equation*}
   F(u_2,Q(x,\rho))\leq \m^{F}(\ell_{A_2},Q(x,\rho))+\eta\rho^{d}.
\end{equation*}
Then, we set $u_1:=u_2-\ell_{A_2}+\ell_{A_1}$ and observe that $u_1=\ell_{A_1}$ on $\partial Q(x,\rho)$. Using property (e) of Definition \ref{abstract functionals} and the previous inequality, it follows that 
\begin{equation}\label{estimate for lipschiztianity}
\begin{gathered}
    \m^F(\ell_{A_1},Q(x,\rho))\leq F(u_1,Q(x,\rho))\leq F(u_2,Q(x,\rho))+c_5|A_1-A_2|\rho^d\\\leq \m^F(\ell_{A_2},Q(x,\rho))+c_5|A_1-A_2|\rho^d+\eta\rho^{d}.
\end{gathered}
\end{equation}
Dividing by $\rho^d$, taking first the limsup as $\rho\to 0^+$ and then as $\eta\to 0^+$, by \eqref{def f} we obtain 
\begin{equation*}
    f(x,A_1)\leq  f(x,A_2)+c_5|A_1-A_2|.
\end{equation*}
Exchanging the roles of $A_1$ and $A_2$, we obtain (f3).
\end{proof}

To prove that $g$ is Borel measurable we will need the following lemma, which will be used also in Section \ref{section: homo} for different purposes.
\begin{lemma}\label{lemma poco used}
    Let $F\in\mathfrak{F}$. Then there exists a continuous function $\omega\colon [0,+\infty)\times [0,+\infty)\to [0,+\infty)$, increasing with respect to each variable and with $\omega(0,0)=0$, such that for every $x_1,x_2\in\Rd$, $\zeta\in\Rd$, $\nu_1,\nu_2\in\Sd$, and $0<\rho_1<\rho_2$, the inclusion $Q_{\nu_1}(x_1,\rho_1)\subset \subset Q_{\nu_2}(x_2,\rho_2)$ implies \begin{equation}\notag
    \begin{gathered}
        \m^F(u_{x_2,\zeta,\nu_2},Q_{\nu_2}(x_2,\rho_2))\leq   \m^F(u_{x_1,\zeta,\nu_1},Q_{\nu_1}(x_1,\rho_1))+c_3|\zeta|(\rho^{d-1}_2-\rho^{d-1}_1)\\
        {}+c_4(\rho^d_2-\rho^d_1)+c_3|\zeta|\omega(\tfrac{|x_2-x_1|}{\rho_1},|\nu_1-\nu_2|)\rho^{d-1}_1.
    \end{gathered}
     \end{equation}
\end{lemma}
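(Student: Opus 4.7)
Write $A_i := Q_{\nu_i}(x_i,\rho_i)$, so that $A_1 \subset\subset A_2$. The strategy is to exhibit an admissible competitor for $\m^F(u_{x_2,\zeta,\nu_2},A_2)$ by gluing a near-minimizer of the left-hand problem with $u_{x_2,\zeta,\nu_2}$ across $\partial A_1$. Fix $\eta>0$ and choose $u\in {\rm BD}(A_1)$ with $u=u_{x_1,\zeta,\nu_1}$ on $\partial A_1$ and
\[
F(u,A_1)\leq \m^F(u_{x_1,\zeta,\nu_1},A_1)+\eta.
\]
Define $\tilde u:=u$ on $A_1$ and $\tilde u:=u_{x_2,\zeta,\nu_2}$ on $\Rd\setminus\overline{A_1}$. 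Matching of traces on $\partial A_1$ produces a well-defined measure $E\tilde u$, so $\tilde u\in {\rm BD}(A_2)$, and by construction $\tilde u=u_{x_2,\zeta,\nu_2}$ on $\partial A_2$, making it admissible.

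Using the measure property (a) together with locality (b) of Definition \ref{abstract functionals} applied to the disjoint decomposition $A_2 = A_1 \sqcup \partial A_1\sqcup (A_2\setminus \overline{A_1})$, I would split
\[
F(\tilde u,A_2)= F(u,A_1) + F(\tilde u,\partial A_1) + F(u_{x_2,\zeta,\nu_2},A_2\setminus \overline{A_1}).
\]
Property (c) and the identity $|Eu_{x_2,\zeta,\nu_2}|=|\zeta\odot\nu_2|\hd\mres \Pi^{\nu_2}_{x_2}$ bound the last term by $c_3|\zeta|\big(\rho_2^{d-1}-\hd(\Pi^{\nu_2}_{x_2}\cap A_1)\big)+c_4(\rho_2^d-\rho_1^d)$, since $|\zeta\odot\nu_2|\leq |\zeta|$. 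Because $\Ld(\partial A_1)=0$ and $E^c$ of a ${\rm BD}$ function does not charge $\hd$-finite sets, property (c) reduces the middle term to $c_3|E^j\tilde u|(\partial A_1)$. Now the inner trace of $\tilde u$ on $\partial A_1$ equals $u_{x_1,\zeta,\nu_1}$ by the boundary condition, while the outer trace is $u_{x_2,\zeta,\nu_2}$; hence $[\tilde u]$ vanishes outside the symmetric difference
\[
\Sigma:=\{y:(y-x_1)\cdot\nu_1>0\}\,\triangle\,\{y:(y-x_2)\cdot\nu_2>0\}
\]
and equals $\pm\zeta$ on $\Sigma\cap\partial A_1$, giving $F(\tilde u,\partial A_1)\leq c_3|\zeta|\hd(\Sigma\cap\partial A_1)$.

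Summing these estimates and inserting $\pm\rho_1^{d-1}$ yields
\[
\m^F(u_{x_2,\zeta,\nu_2},A_2)\leq \m^F(u_{x_1,\zeta,\nu_1},A_1)+\eta+c_3|\zeta|(\rho_2^{d-1}-\rho_1^{d-1})+c_4(\rho_2^d-\rho_1^d)+c_3|\zeta|R,
\]
where $R:=\rho_1^{d-1}-\hd(\Pi^{\nu_2}_{x_2}\cap A_1)+\hd(\Sigma\cap\partial A_1)$. The dilation $y\mapsto (y-x_1)/\rho_1$ sends $A_1$ to $Q_{\nu_1}(0,1)$ and shows that the scale-invariant quantity $R/\rho_1^{d-1}$ depends only on $(x_2-x_1)/\rho_1$, $\nu_1$, and $\nu_2$; hence
\[
\Psi(\alpha,\beta):=\sup\{R/\rho_1^{d-1}:|x_1-x_2|/\rho_1\leq\alpha,\ |\nu_1-\nu_2|\leq\beta\}
\]
is a bounded non-decreasing function of $(\alpha,\beta)$. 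One then chooses a continuous, non-decreasing majorant $\omega\geq\Psi$ with $\omega(0,0)=0$ (e.g.\ by averaging $\Psi$ over a small neighbourhood and adding a continuous correction), and the claim follows by letting $\eta\to 0^+$.

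The main obstacle is the geometric continuity statement $\Psi(\alpha,\beta)\to 0$ as $(\alpha,\beta)\to 0^+$, which amounts to showing that $\hd(\Pi^{\nu_2}_{x_2}\cap Q_{\nu_1}(0,1))\to 1$ and $\hd(\Sigma\cap\partial Q_{\nu_1}(0,1))\to 0$ \emph{uniformly} in $\nu_1,\nu_2\in\Sd$ when $x_2\to 0$ and $\nu_2\to\nu_1$. This relies on the global continuity of the set-valued map $\nu\mapsto Q_\nu(\cdot,\cdot)$ (a consequence of the symmetry $R_\nu(Q(0,\rho))=R_{-\nu}(Q(0,\rho))$ from (d) of Section \ref{sec: Notation}, despite $R_\nu$ itself being continuous only on each hemisphere $\Sd_\pm$) and a routine compactness argument on $\Sd\times\Sd$. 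A back-of-the-envelope computation shows that the wedge $\Sigma\cap\partial Q_{\nu_1}(0,1)$ has $\hd$-measure of order $|x_1-x_2|/\rho_1+|\nu_1-\nu_2|$ and a similar estimate controls the cross-section defect, so in fact $\omega$ can be taken linear in its arguments; but extracting a clean continuous $\omega$ independent of all the parameters is the technical heart of the proof.
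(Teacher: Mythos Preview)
Your approach is correct and is essentially the standard gluing argument that the paper invokes by referring to \cite[Lemma 5.9]{DalConvex} (with $D$ replaced by $E$): extend a near-minimiser on the inner cube by the target boundary datum $u_{x_2,\zeta,\nu_2}$, then estimate separately the annulus $A_2\setminus\overline{A_1}$ via property~(c), and the interface $\partial A_1$ via the jump $[\tilde u]=\pm\zeta$ supported on the symmetric difference $\Sigma$ of the two half-spaces. Your identification of the scale-invariant remainder $R/\rho_1^{d-1}$ and the reduction to the purely geometric continuity statement $\Psi(\alpha,\beta)\to 0$ is exactly the crux of that argument, and your sketch of why this holds (continuity of $\nu\mapsto Q_\nu(0,1)$ as a set, compactness of $\Sd$) is accurate.
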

\begin{proof}
    The proof is obtained by the same argument used in the proof of \cite[Lemma 5.9]{DalConvex}, replacing the use of the distributional gradient $D$ by that of the symmetric distributional gradient $E$.
\end{proof}

We are now in position to prove that $g\in\mathcal{G}$.

\begin{lemma}\label{surface inclusion}
    Let $F\in\mathfrak{F}$. Then the function $g$ defined by \eqref{def g} belongs to $\mathcal{G}$.
\end{lemma}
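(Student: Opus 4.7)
My plan is to verify the four defining properties (g1)--(g4) of $\mathcal{G}$ separately for the function $g$ in \eqref{def g}. Properties (g2), (g3), and (g4) are direct consequences of the axioms of $\mathfrak{F}$ applied to the explicit form of $u_{x,\zeta,\nu}$, while the Borel measurability (g1) is the main obstacle and will be handled by mimicking the scheme of \cite[Lemma 5.3]{DalConvex}, with our Lemma \ref{lemma poco used} playing the role of \cite[Lemma 5.9]{DalConvex} in that argument.

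\emph{Bounds and symmetry.} For the upper bound in (g3), I would use $u_{x,\zeta,\nu}$ itself as a competitor for the minimisation problem defining $\m^F(u_{x,\zeta,\nu},Q_\nu(x,\rho))$: property (c) of Definition \ref{abstract functionals} together with the identity $Eu_{x,\zeta,\nu}=(\zeta\odot\nu)\hd\mres\Pi_x^\nu$ yields $\m^F(u_{x,\zeta,\nu},Q_\nu(x,\rho))\leq c_3|\zeta\odot\nu|\rho^{d-1}+c_4\rho^d$. For the lower bound, the Green-type formula for ${\rm BD}$ shows that every admissible $u$ satisfies $Eu(Q_\nu(x,\rho))=(\zeta\odot\nu)\rho^{d-1}$, hence $|Eu|(Q_\nu(x,\rho))\geq |\zeta\odot\nu|\rho^{d-1}$, and (c) gives $\m^F(u_{x,\zeta,\nu},Q_\nu(x,\rho))\geq c_1|\zeta\odot\nu|\rho^{d-1}-c_2\rho^d$; dividing by $\rho^{d-1}$ and taking the $\limsup$ as $\rho\to 0^+$ completes (g3). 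For (g2), the normalisation $R_\nu(Q(0,\rho))=R_{-\nu}(Q(0,\rho))$ recorded in (d) of Section \ref{sec: Notation} gives $Q_\nu(x,\rho)=Q_{-\nu}(x,\rho)$, while a pointwise computation in each half-space bounded by $\Pi_x^\nu$ shows $u_{x,\zeta,\nu}=u_{x,-\zeta,-\nu}+\zeta$ almost everywhere; since the constant $\zeta$ is a rigid motion, property (d) of Definition \ref{abstract functionals} provides a bijection $u\mapsto u-\zeta$ between the competitors of the two minimum problems that preserves the value of $F$, so the two minimum values coincide and (g2) follows.

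\emph{Lipschitz estimate.} For (g4), given $\eta>0$ I would pick $u_2$ admissible for $\m^F(u_{x,\zeta_2,\nu},Q_\nu(x,\rho))$ with $F(u_2,Q_\nu(x,\rho))\leq\m^F(u_{x,\zeta_2,\nu},Q_\nu(x,\rho))+\eta\rho^{d-1}$ and set $u_1:=u_2+u_{x,\zeta_1-\zeta_2,\nu}$. A pointwise check on the two half-spaces bounded by $\Pi_x^\nu$ shows $u_{x,\zeta_2,\nu}+u_{x,\zeta_1-\zeta_2,\nu}=u_{x,\zeta_1,\nu}$, so $u_1$ is admissible for $\m^F(u_{x,\zeta_1,\nu},Q_\nu(x,\rho))$. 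Applying property (f) of Definition \ref{abstract functionals} and using $\hd(Q_\nu(x,\rho)\cap\Pi_x^\nu)=\rho^{d-1}$, one obtains
\[
\m^F(u_{x,\zeta_1,\nu},Q_\nu(x,\rho))\leq\m^F(u_{x,\zeta_2,\nu},Q_\nu(x,\rho))+\sigma(|\zeta_1-\zeta_2|)\rho^{d-1}+\eta\rho^{d-1}.
\]
Dividing by $\rho^{d-1}$, taking $\limsup_{\rho\to 0^+}$, letting $\eta\to 0^+$, and exchanging the roles of $\zeta_1$ and $\zeta_2$ yields (g4).

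\emph{Borel measurability --- the main obstacle.} For (g1) I would proceed in three steps. First, Lemma \ref{lemma poco used} applied with $x_1=x_2$ and $\nu_1=\nu_2$ gives a one-sided continuity estimate in $\rho$, which allows the $\limsup_{\rho\to 0^+}$ in \eqref{def g} to be computed along a fixed countable sequence $\rho_k\to 0^+$ without altering the value of $g$. Second, for each such $\rho_k$ and each fixed $\zeta$, the full statement of Lemma \ref{lemma poco used}, combined with the continuity of $\nu\mapsto R_\nu$ on each hemisphere $\Sd_\pm$ recorded in (d) of Section \ref{sec: Notation}, yields the Borel measurability of $(x,\nu)\mapsto\m^F(u_{x,\zeta,\nu},Q_\nu(x,\rho_k))/\rho_k^{d-1}$ on $\Rd\times\Sd_\pm$ via a semicontinuity argument in which one passes through auxiliary cubes of slightly perturbed radius and exploits the vanishing of the modulus $\omega$. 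Third, combining this with the Lipschitz dependence on $\zeta$ from (g4) gives joint Borel measurability of the restriction of $g$ to $\Rd\times\Rd\times\Sd_\pm$ by a standard Carathéodory-type argument, and since $\Sd=\Sd_+\cup\Sd_-$, Borel measurability of $g$ on $\Rd\times\Rd\times\Sd$ follows. The delicate point is the second step: the strict inclusion $Q_{\nu_1}(x_1,\rho_1)\subset\subset Q_{\nu_2}(x_2,\rho_2)$ required by Lemma \ref{lemma poco used} forces one to work with slightly different radii on the two sides and absorb the mismatch into the error terms as $(x_1,\nu_1)\to(x_2,\nu_2)$, and this is where the argument requires the most care.
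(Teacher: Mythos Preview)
Your proposal is correct and follows essentially the same approach as the paper for (g1), (g2), and (g4): the paper also cites \cite{DalConvex} for measurability (though the relevant lemma there is 5.12, not 5.3---your three-step outline is nonetheless the right surface-case argument), uses the identity $u_{x,-\zeta,-\nu}=u_{x,\zeta,\nu}-\zeta$ together with property (d) for the symmetry, and employs the same competitor perturbation via property (f) for the $\sigma$-estimate.

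The one substantive difference is in the lower bound of (g3). You use the Gauss--Green formula for ${\rm BD}$ to obtain $Eu(Q_\nu(x,\rho))=(\zeta\odot\nu)\rho^{d-1}$ for every admissible competitor, whence $|Eu|(Q_\nu(x,\rho))\geq|\zeta\odot\nu|\rho^{d-1}$ directly. The paper instead applies the integral representation \eqref{integral representation jump} of Lemma~\ref{representation proposition} to the specific functional $\widetilde{F}(u,U):=|Eu|(U)$ and reads off $\widetilde{g}(\zeta,\nu)=|\zeta\odot\nu|$ by evaluating $\widetilde{F}^j$ on the pure jump $u_{x,\zeta,\nu}$. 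Your route is more elementary and self-contained, avoiding the forward reference to the representation result (which, while not circular since only the representation formula and not the inclusion $g\in\mathcal{G}$ is used, is still a heavier tool than the trace identity).
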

\begin{proof}
    The proof of property (g1) can be obtained arguing  as in \cite[Lemma 5.12]{DalConvex}, replacing Lemma 5.9 of that paper by our Lemma \ref{lemma poco used}.

   Property (g2) follows immediately from (d) of Definition \ref{abstract functionals} and \eqref{def g} once we observe that $u_{x,-\zeta,-\nu}=u_{x,\zeta,\nu}-\zeta$ and that $Q_\nu(x,\rho)=Q_{-\nu}(x,\rho)$ by (d) and (e) of Section \ref{sec: Notation}.

   We prove property (g3). Let us fix $x\in\Rd$, $\zeta\in\Rd$, and $\nu\in\Sd$. As for every $\rho>0$ the function $u_{x,\zeta,\nu}$ is a competitor for the minimisation problem $\m^F(u_{x,\zeta,\nu},Q_\nu(x,\rho))$, by the upper bound in (c) of Definition \ref{abstract functionals} and (k) of Section \ref{sec: Notation} we obtain that 
   \begin{equation*}
       \m^F(u_{x,\zeta,\nu},Q_\nu(x,\rho))\leq F(u_{x,\zeta,\nu},Q_\nu(x,\rho))\leq  c_3|\zeta\odot \nu|\rho^{d-1}+c_4\rho^d.
   \end{equation*}
   Dividing by $\rho^{d-1}$, letting $\rho\to 0^+$, and  recalling \eqref{def g} we obtain the upper bound in (g3).

   To obtain the lower bound in (g3) we argue as follows. For every $\rho>0$ let  $u\in {\rm BD}(Q_\nu(x,\rho))$ be a function with $u=u_{x,\zeta,\nu}$ on $\partial Q_\nu(x,\rho)$ and observe that by the lower bound in (c) of Definition \ref{abstract functionals} we have
   \begin{equation}\notag
       c_1|Eu|(Q_\nu(x,\rho))-c_2\rho^d\leq F(u,Q_\nu(x,\rho)),
   \end{equation}
   so that 
   \begin{equation}\label{lower bound for g}
       c_1\inf\{|Eu|(Q_\nu(x,\rho)): u=u_{x,\zeta,\nu} \text{ on }\partial Q_\nu(x,\rho)\}-c_2\rho^d\leq \m^F(u_{x,\zeta,\nu},Q_\nu(x,\rho)).
   \end{equation}
  Consider the function $\Tilde{g}\colon\Rd\times \Sd\to [0,+\infty)$ defined  for every $\zeta\in\Rd$ and $\nu\in\Sd$ by
   \begin{equation}\label{def gtilde}
       \widetilde{g}(\zeta,\nu):=\limsup_{\rho\to 0^+}\frac{\inf\{|Eu|(Q_\nu(x,\rho)): u=u_{x,\zeta,\nu} \text{ on }\partial Q_\nu(x,\rho)\}}{\rho^{d-1}},
   \end{equation}
   where $x$ is an arbitrary point of $\Rd$.   By Lemma \ref{representation proposition} applied to the functional $\widetilde{F}(u,U):=|Eu|(U)$, we have
    \begin{equation}\label{widetilde g}
        \widetilde{F}^j(u,U)=\int_{J_u\cap U}\widetilde{g}([u],\nu_u)\,{\rm d}\hd
    \end{equation}
    for every $U\in\U_c(\Rd)$ and $u\in {\rm BD}(U)$. In particular, for $U=Q_\nu(x,1)$ and $u=u_{x,\zeta,\nu}$ by (k) of Section \ref{sec: Notation}  equality \eqref{widetilde g}   leads to
    $|\zeta\odot \nu|=\widetilde{g}(\zeta,\nu)$. Dividing \eqref{lower bound for g} by $\rho^{d-1}$, letting $\rho\to 0^+$, and using \eqref{def g} and \eqref{def gtilde}, we obtain the lower bound in (g3).

We conclude by showing that $g$ satisfies property (g4). Let $x\in\Rd$,  $\zeta_1,\zeta_2\in\Rd$, $\nu\in\Sd$, $\rho>0$, and $\eta>0$. Consider a function $u_2\in {\rm BD}(Q_\nu(x,\rho))$ with $u_2=u_{x,\zeta_2,\nu}$ on $\partial Q_\nu(x,\rho)$ and such that 
\begin{equation*}
   F(u_2,Q_\nu(x,\rho))\leq \m^{F}(u_{x,\zeta_2,\nu},Q_\nu(x,\rho))+\eta\rho^{d-1}.
\end{equation*}
Then, we set $u_1:=u_2-u_{x,\zeta_2,\nu}+u_{x,\zeta_1,\nu}$ and observe that $u_1=u_{x,\zeta_1,\nu}$ on $\partial Q_\nu(x,\rho)$. Using property (f) of Definition \ref{abstract functionals} and the previous inequality, it follows that 
\begin{equation}\label{sigma continuita g}
\begin{gathered}
    \m^F(u_{x,\zeta_1,\nu},Q_\nu(x,\rho))\leq F(u_{x,\zeta_1,\nu},Q_\nu(x,\rho))\leq F(u_{x,\zeta_2,\nu},Q_\nu(x,\rho))+\sigma(|\zeta_1-\zeta_2|)\rho^{d-1}\\\leq \m^F(u_{x,\zeta_2,\nu},Q_\nu(x,\rho))+\sigma(|\zeta_1-\zeta_2|)\rho^{d-1}+\eta\rho^{d-1}.
\end{gathered}
\end{equation}
Dividing this inequality by $\rho^{d-1}$, taking first the limsup for $\rho\to 0^+$ and then the limit for $\eta\to 0^+$, by \eqref{def g} we obtain
\begin{equation*}
    g(x,\zeta_1,\nu)\leq g(x,\zeta_2,\nu)+\sigma(|\zeta_1-\zeta_2|).
\end{equation*}
Exchanging the roles of $\zeta_1$ and $\zeta_2$, we obtain (g4).
\end{proof}

\section{A smaller collection of  functionals}
 In this section we introduce two subcollections of $\mathcal{F}$ and $\mathcal{G}$, denoted  by $\mathcal{F}^\alpha$ and $\mathcal{G}^\infty$, by prescribing suitable conditions on the integrands, related to their behaviour when $|A|\to +\infty$ and $|\zeta|\to +\infty$ (see Remarks \ref{remark recession infinity} and \ref{remark ginfty} below). 
 We then define a subcollection of $\mathfrak{F}$, denoted by  $\mathfrak{F}^{\alpha,\infty}$, which is stable by $\Gamma$-convergence (see Proposition \ref{alfa closure}) and includes the functionals $F^{f,g}$ with $f\in\mathcal{F}^\alpha$ and $g\in\mathcal{G}^\infty$. We shall see in Section \ref{sec:full integral} that all lower semicontinuos functionals $F\in\mathfrak{F}^{\alpha,\infty}$ admit a complete integral representation, including their Cantor part $F^c$, provided they satisfy an additional condition (see \eqref{x independence} below).

In the rest of the paper we fix three constants $0<\alpha<1$ and $c_6,c_7\geq 0$. We are ready to introduce the collections $\mathcal{F}^\alpha$ and $\mathcal{G}^\infty$.

\begin{definition}\label{defizione integrande rappresentabili}
    Let $\mathcal{F}^\alpha$ be the collection of all functions $f\in\mathcal{F}$ such that
for every $x\in\Rd$,  $A\in\R^{d\times d}_{\rm sym}$, and $s,t>0$ we have
     \begin{equation}\label{f4}\leqnomode\tag{f4}
     \big|\frac{f(x,sA)}{s}-\frac{f(x,tA)}{t}\big|\leq \frac{c_6}{s}f(x,sA)^{1-\alpha}+\frac{c_6}{s}+\frac{c_6}{t}f(x,tA)^{1-\alpha}+\frac{c_6}{t}.
     \end{equation}
    Let $\mathcal{G}^\infty$ be the collection of all functions $g\in\mathcal{G}$ such that for every $x\in\Rd$, $\zeta\in\Rd$, $\nu\in\Sd$, and $s,t>0$ we have
\begin{equation}\label{g5}\leqnomode\tag{g5}
   \big|\frac{g(x,s\zeta,\nu)}{s}- \frac{g(x,t\zeta,\nu)}{t}\big|\leq c_7\Big(\frac{g(x,s\zeta,\nu)}{s}+\frac{g(x,t\zeta,\nu)}{t}\Big)\Big(\frac{1}{s}+\frac{1}{t}\Big).
\end{equation}
\end{definition}
\begin{remark}\label{remark recession infinity}
     Property \eqref{f4} can be interpreted as a condition specifying the rate at which $f(x,tA)/t$ approaches its recession function $f^\infty(x,A)$ as $t\to+\infty$. Indeed (see \cite[Remark 3.4]{CagnettiAnnals}), the upper bound in (f2) and \eqref{f4}, are equivalent to the two conditions 
    \begin{gather}
    \label{existence finffty}f^\infty(x,A)=\lim_{t\to+\infty}\frac{f(x,tA)}{t}\quad \text{ for every $x\in\Rd$  and }A\in\Rdsym,\\\label{quantified infinity}
     \big|\frac{f(x,tA)}{t}-{f^\infty(x,A)}\big|\leq \frac{c_6}{t}+\frac{c_6}{t}f(x,tA)^{1-\alpha}\quad \text{ for every $x\in\Rd$, $t>0$, and  }A\in\Rdsym.
     \end{gather}
     
    Observe that by the growth conditions in (f2), inequality \eqref{quantified infinity} implies that
    \begin{equation}\label{quantified rate}
        \big|\frac{f(x,tA)}{t}-{f^\infty(x,A)}\big|\leq \frac{C_A}{t^\alpha}\quad \text{ for every $x\in\Rd$, $t\geq 1$, and $A\in\Rdsym$,}
    \end{equation}
    where  $C_A:=c_6+c_4^{1-\alpha}+c_6c_3^{1-\alpha}|A|^{1-\alpha}$.     Conditions similar to  \eqref{quantified infinity} and to \eqref{quantified rate} have already been considered in the literature (see, for instance, \cite[Property (H4)]{bouchitte1998global} and \cite{CagnettiAnnals,DalToaHomo,DalMasoDonati2025,Larsen}).
\end{remark}

\begin{remark}\label{remark ginfty}
    If $g\in\mathcal{G}^\infty$, then for every $x\in\Rd$, $\zeta\in\Rd$, and $\nu\in\Sd$ there exists the limit
    \begin{equation}\label{limit ginfty}
g^\infty(x,\zeta,\nu):=\lim_{t\to+\infty}\frac{g(x,t\zeta,\nu)}{t}.
    \end{equation}
    Indeed, (g3) and \eqref{g5} imply the Cauchy condition for the function $t\mapsto g(x,t\zeta,\nu)/t$. Passing to the limit in (g3) we obtain 
\begin{equation}\label{grwoth g infity}
        c_1|\zeta\odot\nu|\leq g^\infty(x,\zeta,\nu)\leq c_3|\zeta\odot\nu|
    \end{equation}
    for every $x\in\Rd$, $\zeta\in\Rd$, $\nu\in\Sd$, 
    while passing to the limit in (g4) and using \eqref{slope at infty for sigma} we get
    \begin{equation}\leqnomode\label{lipschitz for iinfity}\tag{g4$'$}
        |g^\infty(x,\zeta_1,\nu)-g^\infty(x,\zeta_2,\nu)|\leq \sigma^\infty|\zeta_1-\zeta_2|
    \end{equation}
    for every $x\in\Rd,\,\zeta_1,\zeta_2\in\Rd$, and $\nu\in\Sd$, where $\sigma^\infty$ is the non-negative constant introduced in \eqref{slope at infty for sigma}.

    Letting $s\to +\infty$ in (g5), we also obtain 
    \begin{equation}\label{ginfty quantified}
        \big|\frac{g(x,t\zeta,\nu)}{t}-g^\infty(x,\zeta,\nu)\big|\leq c_7\Big(\frac{g(x,t\zeta,\nu)}{t}+g^\infty(x,\zeta,\nu)\Big)\frac{1}{t}
    \end{equation}
    for every $x\in\Rd$, $\zeta\in\Rd$, $\nu\in\Sd$, and $t>0$.
 Thanks to the bounds in (g3) and \eqref{grwoth g infity}, this inequality implies that 
 \begin{equation}\label{difference with ginfty}
     \big|\frac{g(x,t\zeta,\nu)}{t}-g^\infty(x,\zeta,\nu)\big|\leq \frac{2c_3c_7|\zeta\odot \nu|}{t},
 \end{equation}
 while  the inequality
 \begin{equation*}
     \big|\frac{g(x,t\zeta,\nu)}{t}-g^\infty(x,\zeta,\nu)\big|\leq \frac{2c_1c_7|\zeta\odot \nu|}{t} \end{equation*}
     implies \eqref{ginfty quantified}.

Conversely, assume that $g$ is a function in $\mathcal{G}$ such that the limit in \eqref{limit ginfty} exists and  the inequality \begin{equation}\label{monotone g}
g^\infty(x,\zeta,\nu)\leq \frac{g(x,t\zeta,\nu)}{t}
    \end{equation}
   is satisfied for every $x\in\Rd$, $\zeta\in\Rd$, $\nu\in\Sd$, and $t>0$. If $g$ satisfies  also \eqref{ginfty quantified}, then $g\in\mathcal{G}^\infty$. Indeed,   by \eqref{ginfty quantified} and \eqref{monotone g} we have
   \begin{equation}\label{ginfty quantified 2}
        \big|\frac{g(x,t\zeta,\nu)}{t}-g^\infty(x,\zeta,\nu)\big|\leq 
        c_7\Big(\frac{g(x,t\zeta,\nu)}{t}+\frac{g(x,s\zeta,\nu)}{s}\Big) \frac{1}{t}
    \end{equation}
    for every $x\in\Rd$, $\zeta\in\Rd$, $\nu\in\Sd$, and $s,t>0$. Exchanging the roles of $s$ and $t$, we obtain 
    \begin{equation}\label{ginfty quantified 3}
        \big|\frac{g(x,s\zeta,\nu)}{s}-g^\infty(x,\zeta,\nu)\big|\leq 
        c_7\Big(\frac{g(x,s\zeta,\nu)}{s}+\frac{g(x,t\zeta,\nu)}{t}\Big) \frac{1}{s}.
    \end{equation}
   Using the triangle inequality, from \eqref{ginfty quantified 2} and \eqref{ginfty quantified 3} we obtain  (g5).

   Note that the existence of the limit in \eqref{limit ginfty} and inequality \eqref{monotone g} is always satisfied when $t\mapsto g(x,t\zeta,\nu)$ is concave.

\end{remark}
We now introduce a  collection of  functionals closely related to conditions \eqref{f4} and \eqref{g5}.
\begin{definition}\label{def Falpha}
    Let $\mathfrak{F}^{\alpha,\infty}$ be the collection of all functionals $F\in\mathfrak{F}$ such that for every $U\in\U_c(\Rd)$, $u\in {\rm BD}(U)$, and  $s,t>0$ we have
    
    \begin{equation}\label{(g)}\leqnomode\tag{g}
    \begin{gathered}
        \big|\frac{F(su,U)}{s}-\frac{F(tu,U)}{t}\big|\leq \frac{c_6}{s}\Ld(U)^\alpha F(su,U)^{1-\alpha}+\frac{c_6}{s}\Ld(U)\\
        +
        \frac{c_6}{t}\Ld(U)^\alpha F(tu,U)^{1-\alpha}+\frac{c_6}{t}\Ld(U)+c_7\Big(\frac{F(su,U)}{s}+\frac{F(tu,U)}{t}\Big)\Big(\frac{1}{s}+\frac{1}{t}\Big).
    \end{gathered}
    \end{equation}
The collection of functionals $F\in\mathfrak{F}^{\alpha,\infty}$ such that 
for every $U\in \U(\Rd)$ the functional $F(\cdot,U)$ is $L^1_{\rm loc}(\Rd;\Rd)$-lower semicontinuous is denoted by $\mathfrak{F}^{\alpha,\infty}_{\rm sc}$.
\end{definition}

We now show that integral functionals of the form $F^{f,g}$ belong to $\mathfrak{F}^{\alpha,\infty}$ whenever $f\in\mathcal{F}^\alpha$ and $g\in\mathcal{G}^\infty$.

\begin{proposition}\label{inclusion in falfainfty}
    Let $f\in\mathcal{F}^\alpha$ and $g\in\mathcal{G}^\infty$. Then the functional $F^{f,g}$ defined by \eqref{def:integral functionals} belongs to $\mathfrak{F}^{\alpha,\infty}$.
\end{proposition}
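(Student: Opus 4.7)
Since Proposition \ref{prop integrals are abstract} already gives $F^{f,g}\in\mathfrak{F}$, the only remaining task is to verify property \eqref{(g)}. The plan is to use the homogeneity of the Cantor term to eliminate it from the scaling difference, then reduce the inequality to a pointwise application of \eqref{f4} and \eqref{g5} followed by integration with H\"older's inequality on the bulk part.

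First, I would unravel the action of scaling on $F^{f,g}(su,U)$. For $s>0$ one has $\mathcal{E}(su)=s\mathcal{E}u$, $E^c(su)=sE^cu$, $|E^c(su)|=s|E^cu|$, $\frac{dE^c(su)}{d|E^c(su)|}=\frac{dE^cu}{d|E^cu|}$, and $[su]=s[u]$. Dividing by $s$ and using these identities gives
\begin{equation*}
\frac{F^{f,g}(su,U)}{s}=\int_U\frac{f(x,s\mathcal{E}u)}{s}\,\dx x+\int_U f^\infty\Big(x,\frac{\dx E^cu}{\dx|E^cu|}\Big)\,\dx|E^cu|+\int_{J_u\cap U}\frac{g(x,s[u],\nu_u)}{s}\,\dx\hd,
\end{equation*}
and an identical formula for $t$ in place of $s$. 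Subtracting the two expressions, the Cantor terms cancel identically, leaving only the bulk and the surface contributions.

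Next, I would apply the pointwise estimates. To the bulk integrand I apply \eqref{f4} with $A=\mathcal{E}u(x)$, and to the surface integrand I apply \eqref{g5} with $\zeta=[u](x)$, $\nu=\nu_u(x)$. Integrating the surface bound over $J_u\cap U$ against $\hd$ yields directly the term
\begin{equation*}
c_7\Big(\frac{1}{s}+\frac{1}{t}\Big)\Big(\int_{J_u\cap U}\frac{g(x,s[u],\nu_u)}{s}\,\dx\hd+\int_{J_u\cap U}\frac{g(x,t[u],\nu_u)}{t}\,\dx\hd\Big),
\end{equation*}
which is bounded by $c_7\big(\tfrac{F(su,U)}{s}+\tfrac{F(tu,U)}{t}\big)\big(\tfrac{1}{s}+\tfrac{1}{t}\big)$ because both surface integrals are dominated by the respective rescaled functionals. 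For the bulk integral of the $(1-\alpha)$-power I would use H\"older's inequality,
\begin{equation*}
\int_U f(x,s\mathcal{E}u)^{1-\alpha}\,\dx x\leq \Ld(U)^\alpha\Big(\int_U f(x,s\mathcal{E}u)\,\dx x\Big)^{1-\alpha}\leq \Ld(U)^\alpha F(su,U)^{1-\alpha},
\end{equation*}
since all three terms defining $F^{f,g}$ are non-negative, and likewise with $t$; the constant terms $c_6/s$ and $c_6/t$ simply integrate to $c_6\Ld(U)/s$ and $c_6\Ld(U)/t$.

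Summing these four contributions reproduces exactly the right-hand side of \eqref{(g)}, so the proof concludes. I do not anticipate any genuine obstacle: the argument is essentially a direct bookkeeping of the pointwise conditions on $f$ and $g$, with the one small structural observation being that the Cantor term is invariant under the rescaling $u\mapsto su/s$ and therefore drops out, which is what allows the hypotheses on $f$ and $g$ alone (without any assumption on $f^\infty$) to suffice.
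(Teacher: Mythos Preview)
Your proposal is correct and follows essentially the same route as the paper's proof: both reduce to property \eqref{(g)} via Proposition \ref{prop integrals are abstract}, observe that the Cantor term drops out by positive $1$-homogeneity of $f^\infty$, apply \eqref{f4} pointwise with H\"older's inequality for the bulk term, and apply \eqref{g5} for the surface term, bounding the resulting integrals by the full functional using non-negativity of each contribution.
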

\begin{proof}
    Thanks to Proposition \ref{prop integrals are abstract}, we only need to prove that $F^{f,g}$ satisfies property \eqref{(g)}. To this aim, let us fix $U\in \U_c(\Rd)$, $u\in {\rm BD}(U)$, and $s,t>0$. Using \eqref{f4} and H\"older's inequality we see that 
    \begin{gather}\notag 
        \big|\int_U\frac{f(x,s\E u)}{s}\,{\rm d}x-\int_U\frac{f(x,t\E u)}{t}\,{\rm d}x\big|\leq \int_U\Big(\frac{c_6}{s}f(x,s\E u)^{1-\alpha}+\frac{c_6}{s}+\frac{c_6}{t}f(x,t\E u)^{1-\alpha}+\frac{c_6}{t}\Big){\rm d}x\\\label{bulk h}
        \leq \frac{c_6}{s}\Ld(U)^\alpha F(su,U)^{1-\alpha}+\frac{c_6}{s}\Ld(U)+\frac{c_6}{t}\Ld(U)^\alpha F(tu,U)^{1-\alpha}+\frac{c_6}{t}\Ld(U).
    \end{gather}
    Since $f^\infty$ is positively $1$-homogeneous, we immediately see that  
    \begin{equation}
        \frac{1}{s}\int_U{}f^\infty\Big(x,s\frac{{\rm d} E^cu}{{\rm d}|E^cu|}\Big)\,{\rm d}|E^cu|=   \frac{1}{t}\int_U{}f^\infty\Big(x,t\frac{{\rm d} E^cu}{{\rm d}|E^cu|}\Big)\,{\rm d}|E^cu|.
    \end{equation}
    Condition (g5) implies that 
    \begin{equation}\label{h jump}
    \begin{gathered}
\hspace{-3 cm}\big|\int_{J_u\cap U}\Big(\frac{g(x,s[u],\nu_u)}{s}-\frac{g(x,t[u],\nu_u)}{t}\Big){\rm d}\hd\big|\\\leq c_7\Big(\int_{J_u\cap U}\frac{g(x,s[u],\nu_u)}{s}\,\hd +\int_{J_u\cap U}\frac{g(x,t[u],\nu_u)}{t}\hd\Big)\Big(\frac{1}{s}+\frac{1}{t}\Big)\\
\leq c_7\Big(\frac{F(su,U)}{s}+\frac{F(tu,U)}{t}\Big)\Big(\frac{1}{s}+\frac{1}{t}\Big).
    \end{gathered}
    \end{equation}
    Finally, combining \eqref{bulk h}-\eqref{h jump} we obtain that $F^{f,g}$ satisfies property \eqref{(g)}. 
\end{proof}

We now prove that the closure of the class $\mathfrak{F}^{\alpha,\infty}$ with respect to $\Gamma$-convergence is $\mathfrak{F}^{\alpha,\infty}_{\rm sc}$.
\begin{proposition}\label{alfa closure}
    Let $\{F_n\}_{n\in\N}\subset \mathfrak{F}^{\alpha,\infty}$.\ Assume that for every $U\in\U_c(\Rd)$ the sequence $\{F_n(\cdot,U)\}_n$ $\Gamma$-converges to $F(\cdot,U)$ with respect to the topology of $L^1_{\rm loc}(\Rd;\Rd)$. Then $F\in\mathfrak{F}^{\alpha,\infty}_{\rm sc}$.
\end{proposition}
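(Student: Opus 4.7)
The assertion $F \in \mathfrak{F}_{\rm sc}$ follows as in Theorem~\ref{compactness}, since the assumed $\Gamma$-convergence on every $U\in\U_c(\Rd)$ is (strictly) stronger than the subsequential conclusion of that theorem; repeating verbatim the arguments of Lemmas~\ref{fundamental estimate}--\ref{lemma:equality comp} shows that $F$ inherits properties (a)--(f) of Definition~\ref{abstract functionals} and is $L^1_{\rm loc}$-lower semicontinuous. It therefore remains only to verify the scaling condition~\eqref{(g)} for $F$.

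To do this I would fix $U \in \U_c(\Rd)$, $u \in {\rm BD}(U)$, and $s, t > 0$, and by symmetry focus on bounding $F(tu, U)/t - F(su, U)/s$ from above. Using the $\Gamma$-limsup inequality, pick a recovery sequence $v_n \to su$ in $L^1_{\rm loc}(\Rd;\Rd)$ with $F_n(v_n, U) \to F(su, U)$, and set $w_n := v_n/s$, so that $w_n \to u$ in $L^1_{\rm loc}(\Rd;\Rd)$, $F_n(sw_n, U) \to F(su, U)$, and $tw_n \to tu$. Property (c) of Definition~\ref{abstract functionals}, combined with the convergence of $F_n(sw_n, U)$, yields a uniform bound on $|Ew_n|(U)$, and hence on $F_n(tw_n, U)$. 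Applying \eqref{(g)} for $F_n$ at $w_n$ with scales $s,t$ and extracting a subsequence along which $F_n(tw_n, U) \to L$ (with $L \geq F(tu, U)$ by $\Gamma$-liminf), the passage to the limit delivers
\begin{equation*}
\frac{L}{t} - \frac{F(su, U)}{s} \leq \frac{c_6}{s}\Ld(U)^\alpha F(su, U)^{1-\alpha} + \frac{c_6}{s}\Ld(U) + \frac{c_6}{t}\Ld(U)^\alpha L^{1-\alpha} + \frac{c_6}{t}\Ld(U) + c_7\Big(\frac{F(su, U)}{s} + \frac{L}{t}\Big)\Big(\frac{1}{s} + \frac{1}{t}\Big).
\end{equation*}
A symmetric construction starting from a recovery sequence for $tu$ will produce a companion inequality controlling $F(su, U)/s - F(tu, U)/t$, with some quantity $N \geq F(su, U)$ playing the role of $F(su,U)$ in the corresponding error term.

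The hard part will be turning these estimates, in which $L \geq F(tu,U)$ and $N \geq F(su,U)$, into exactly the form of~\eqref{(g)}, in which $F(tu,U)$ and $F(su,U)$ appear: since the error is monotone increasing in each of its arguments, the naive passage to the limit gives only a weaker bound. My plan for closing this gap is to exploit the fact that the displayed inequality above is of the form $\phi(L) \leq \Phi(F(su,U))$, where $\phi$ is monotone in the relevant range (thanks to the concavity of $y\mapsto y^{1-\alpha}$ and the factor $c_7(1/s+1/t)$ being controllable), so that $L$ is implicitly pinned down by $F(su,U)$; coupling this with the symmetric bound on $N$ in terms of $F(tu,U)$ and iterating should force $L = F(tu, U)$ and $N = F(su, U)$ in the limit, yielding~\eqref{(g)}. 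An alternative route, which altogether avoids the bootstrap, is to construct via a cut-and-glue argument in the spirit of Lemma~\ref{fundamental estimate} a single sequence $\hat w_n \to u$ that is simultaneously an asymptotic recovery sequence for both $F(su, U)$ and $F(tu, U)$; applying \eqref{(g)} for $F_n$ at $\hat w_n$ would then produce~\eqref{(g)} for $F$ directly.
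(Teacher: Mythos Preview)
Your setup is correct and mirrors the paper's proof: reduce to the one-sided inequality, take a recovery sequence for one of the two scaled functions (the paper chooses $tu$, you choose $su$, which is the same up to relabelling), apply \eqref{(g)} for $F_n$, and pass to the limit. You also correctly isolate the function
\[
\phi(z)=\frac{z}{t}-\frac{c_6}{t}\Ld(U)^\alpha z^{1-\alpha}-\frac{c_6}{t}\Ld(U)-c_7\frac{z}{t}\Big(\frac{1}{s}+\frac{1}{t}\Big)
\]
and observe that it is monotone in the relevant range.

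Where you go astray is in what you call ``the hard part.'' You do \emph{not} need to force $L=F(tu,U)$ via any bootstrap or to build a simultaneous recovery sequence. The monotonicity you already noted finishes the argument in one line, exactly as the paper does. Namely: if $\phi(F(tu,U))\le 0$ the desired one-sided inequality is trivial (the right-hand side is nonnegative); otherwise $\phi(F(tu,U))>0$, and since $\phi(0)\le 0$ and $\phi$ is strictly decreasing then strictly increasing (compute $\phi'$; its unique zero is at $z_0=\big(c_6(1-\alpha)\Ld(U)^\alpha\big)^{1/\alpha}\big(1-c_7(1/s+1/t)\big)^{-1/\alpha}$ when $1-c_7(1/s+1/t)>0$, while if $1-c_7(1/s+1/t)\le 0$ then $\phi\le 0$ everywhere and we are again in the trivial case), the positivity of $\phi(F(tu,U))$ forces $F(tu,U)>z_0$. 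Since $L\ge F(tu,U)>z_0$ and $\phi$ is increasing on $[z_0,\infty)$, you get directly
\[
\phi(F(tu,U))\le \phi(L)\le \Psi(F(su,U)),
\]
which is precisely the one-sided form of \eqref{(g)}. So your proposed iteration and the alternative cut-and-glue construction are both unnecessary; the paper's proof is exactly your argument with this simple monotonicity step in place of your ``hard part.''
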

\begin{proof}
    Thanks to Theorem \ref{compactness}  we have $F\in\mathfrak{F}_{\rm sc}$. Hence, to conclude we only have to show that $F$ satisfies property \eqref{(g)}. The argument we use is a variant  of the one presented in \cite[Proposition 6.11]{DalMasoDonati2025} and \cite[Theorem 4.10]{DalToaHomo}.

Let us fix $U\in \U_c(\Rd)$, $u\in {\rm BD}(U)$, and $s,t>0$. To prove that \eqref{(g)} holds it is enough to show  
\begin{equation}\label{claim closure}
 \begin{gathered}
        \frac{F(su,U)}{s}-
        \frac{c_6}{s}\Ld(U)^\alpha F(su,U)^{1-\alpha}-\frac{c_6}{s}\Ld(U)-c_7\frac{F(su,U)}{s}\Big(\frac{1}{s}+\frac{1}{t}\Big)\\\quad \leq\frac{F(tu,U)}{t} +\frac{c_6}{t}\Ld(U)^\alpha F(tu,U)^{1-\alpha}+\frac{c_6}{t}\Ld(U)+c_7\frac{F(tu,U)}{t}\Big(\frac{1}{s}+\frac{1}{t}\Big).
    \end{gathered}
    \end{equation}
     Indeed, exchanging the role of $s$ and $t$ we obtain  \eqref{(g)}. We note that, if the left-hand side of \eqref{claim closure} is less than or equal to zero, then the inequality is trivial because the right-hand side is non-negative. Hence, we may assume that the left-hand side is  positive.
    
    Let $\{u_n\}_{n\in\N}\subset {\rm BD}(U)$ be a sequence converging to $u$  in $L^1_{\rm loc}(\Rd;\Rd)$ and such that $F_n(tu_n,U)$ converges to $ F(tu,U)$ as $n\to+\infty.$ It follows from property \eqref{(g)} applied to $F_n$ that for every $n\in\N$ we have
     \begin{equation} \label{qualcosa}
    \begin{gathered}
         \frac{F_n(su_n,U)}{s}-
        \frac{c_6}{s}\Ld(U)^\alpha F_n(su_n,U)^{1-\alpha}-\frac{c_6}{s}\Ld(U)-c_7\frac{F_n(su_n,U)}{s}\Big(\frac{1}{s}+\frac{1}{t}\Big)\\\quad \leq\frac{F_n(tu_n,U)}{t} +\frac{c_6}{t}\Ld(U)^\alpha F_n(tu_n,U)^{1-\alpha}+\frac{c_6}{t}\Ld(U)+c_7\frac{F_n(tu_n,U)}{t}\Big(\frac{1}{s}+\frac{1}{t}\Big).
    \end{gathered}
\end{equation}
    Thanks to our choice of $\{u_n\}_{n\in\N}$, we have 
\begin{gather*}
\lim_{n\to+\infty}\Big(\frac{F_n(tu_n,U)}{t}+\frac{c_6}{t}\Ld(U)^\alpha F_n(tu_n,U)^{1-\alpha}+\frac{c_6}{t}\Ld(U)+c_7\frac{F_n(tu_n,U)}{t}\Big(\frac{1}{s}+\frac{1}{t}\Big)\Big)\\= \frac{F(tu,U)}{t} +\frac{c_6}{t}\Ld(U)^\alpha F(tu,U)^{1-\alpha}+\frac{c_6}{t}\Ld(U)+c_7\frac{F(tu,U)}{t}\Big(\frac{1}{s}+\frac{1}{t}\Big).
\end{gather*}
Hence, by \eqref{qualcosa} to conclude we only need to show that 
\begin{equation}\label{last claim}
\begin{gathered} 
   \frac{F(su,U)}{s}-
        \frac{c_6}{s}\Ld(U)^\alpha F(su,U)^{1-\alpha}-\frac{c_6}{s}\Ld(U)-c_7\frac{F(su,U)}{s}\Big(\frac{1}{s}+\frac{1}{t}\Big)\\ \hspace{-0.3 cm}\leq  \liminf_{n\to+\infty}\Big( \frac{F_n(su_n,U)}{s}-
        \frac{c_6}{s}\Ld(U)^\alpha F_n(su_n,U)^{1-\alpha}-\frac{c_6}{s}\Ld(U)-c_7\frac{F_n(su_n,U)}{s}\Big(\frac{1}{s}+\frac{1}{t}\Big)\Big). 
\end{gathered}
 \end{equation}

We consider the function $\Phi\colon [0,+\infty)\to\R$ defined for every $z\in [0,+\infty)$ by
\begin{equation}\notag
    \Phi(z):=\frac{z}{s}-\frac{c_6}{s}\Ld(U)^\alpha z^{1-\alpha}-\frac{c_6}{s}\Ld(U)-c_7\frac{z}{s}\Big(\frac{1}{s}+\frac{1}{t}\Big). 
\end{equation}
We observe that $\Phi(F(su,U))$ coincides with the left-hand side of \eqref{claim closure} and \eqref{last claim}, while the  right-hand side of \eqref{last claim} coincides with $\liminf _{n}\Phi(F_n(su_n,U))$. If $1-c_7(1/s+1/t)\leq 0$ then $\Phi(z)\leq 0$ for every $z\in[0,+\infty)$,  so that \eqref{claim closure} is satisfied. If $1-c_7(1/s+1/t)>0$, then we set $z_0:=\big(c_6(1-\alpha)\Ld(U)^\alpha\big)^{1/\alpha}\big(1-c_7\big(1/s+1/t)\big)^{-1/\alpha}.$ By direct computation of $\Phi'$ we see that $\Phi$ is strictly decreasing in $[0, z_0]$ and strictly increasing in $[z_0,+\infty)$. Since $\Phi(0)\leq 0$ we deduce that $\Phi(z)\leq 0$ for $z\in [0,z_0]$. This implies that if $\Phi(z)>0$ then $z>  z_0$. Since $\Phi(F(su,U))$ coincides with the left-hand side of \eqref{last claim}, which we assumed to be positive, we have that $F(su,U)>z_0$. Moreover, by the $\Gamma$-liminf inequality  we have $F(su,U)\leq \liminf_n F_n(su_n,U)$, so that, using that $\Phi$ is increasing on $[z_0,+\infty)$, we deduce that
\begin{equation*}
   \Phi( F(su,U))\leq \liminf_{n\to+\infty} \Phi(F_n(su_n,U)).
\end{equation*}
This proves \eqref{last claim} and concludes the proof of the proposition.
\end{proof}

 We now present an inequality that allows to estimate the difference between the minimum values of some minimisation problems with Dirichlet boundary conditions involving functionals in $\mathfrak{F}^{\alpha,\infty}$. The aim of this lemma is twofold. On the one hand, it is useful to establish that the functions $f$ and $g$ defined by \eqref{def f} and \eqref{def g}  satisfy \eqref{f4} and \eqref{g5}. On the other hand, this lemma will be crucial in the proof of the integral representation of the Cantor part of functionals in $\mathfrak{F}^{\alpha,\infty}_{\rm sc}$. 
 The proof of this lemma closely resembles that of Proposition \ref{alfa closure}. It follows the lines of the proofs of \cite[Lemmas 6.12 and 6.13]{DalMasoDonati2025}, removing any truncation argument, which is not available in our context.
\begin{lemma}\label{minimum estimate}
Let $F\in\mathfrak{F}^{\alpha,\infty}$, let  $U\in\U_c(\Rd)$  with Lipschitz boundary, and let $w\in {\rm BD}(U)$. Then  
\begin{gather*}
    \big|\frac{\mathfrak{m}^F(sw,U)}{s}- \frac{\mathfrak m^F(tw,U)}{t}\big|\leq \frac{c_6}{s}\Lb^d(U)^\alpha\mathfrak{m}^F(sw,U)^{1-\alpha}+\frac{c_6}{s}\Lb^d(U)\\
    \frac{c_6}{t}\Lb^d(U)^\alpha\mathfrak{m}^F(tw,U)^{1-\alpha}+\frac{c_6}{t}\Lb^d(U)+c_7\Big(\frac{\m^F(sw,U)}{s}+\frac{\m^F(tw,U)}{t}\Big)\Big(\frac{1}{s}+\frac{1}{t}\Big).
\end{gather*}
\end{lemma}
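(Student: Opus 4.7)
The plan is to exploit the fact that $\m^F(\cdot,U)$ is a minimum of $F(\cdot,U)$ over a class of functions that is stable under scaling (up to rescaling the boundary datum), and to then reduce the estimate between minima to an application of property (g) of Definition \ref{def Falpha}, which was the defining property of $\mathfrak F^{\alpha,\infty}$. Since the inequality to be proved is symmetric in $s$ and $t$, it is enough to prove the one-sided estimate
\begin{equation*}
\frac{\m^F(tw,U)}{t}-\frac{c_6}{t}\Ld(U)^\alpha\m^F(tw,U)^{1-\alpha}-\frac{c_6}{t}\Ld(U)-c_7\frac{\m^F(tw,U)}{t}\Big(\frac{1}{s}+\frac{1}{t}\Big)\leq R(\m^F(sw,U)),
\end{equation*}
where $R(z):=\frac{z}{s}+\frac{c_6}{s}\Ld(U)^\alpha z^{1-\alpha}+\frac{c_6}{s}\Ld(U)+c_7\frac{z}{s}\big(\frac{1}{s}+\frac{1}{t}\big)$ is increasing in $z\ge 0$; swapping the roles of $s$ and $t$ and combining yields the lemma.

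To prove this one-sided estimate, fix $\eta>0$ and choose $u\in {\rm BD}(U)$ with $u=sw$ on $\partial U$ and $F(u,U)\leq \m^F(sw,U)+\eta$. Set $v:=u/s$, so that $v=w$ on $\partial U$, $sv=u$, and $tv=(t/s)u$ is admissible for $\m^F(tw,U)$, giving $\m^F(tw,U)\leq F(tv,U)$. Applying property \eqref{(g)} to $v$ with parameters $s,t$ and rearranging the terms, we obtain
\begin{equation*}
\Phi(F(tv,U))\leq R(F(sv,U))=R(F(u,U))\leq R(\m^F(sw,U)+\eta),
\end{equation*}
where
\begin{equation*}
\Phi(z):=\frac{z}{t}-\frac{c_6}{t}\Ld(U)^\alpha z^{1-\alpha}-\frac{c_6}{t}\Ld(U)-c_7\frac{z}{t}\Big(\frac{1}{s}+\frac{1}{t}\Big)
\end{equation*}
is exactly the function whose value at $\m^F(tw,U)$ equals the left-hand side we want to bound.

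The remaining issue, which is the main technical point, is to pass from the inequality $\Phi(F(tv,U))\leq R(\m^F(sw,U)+\eta)$ to $\Phi(\m^F(tw,U))\leq R(\m^F(sw,U)+\eta)$, since $\m^F(tw,U)\leq F(tv,U)$ but $\Phi$ need not be monotone. Here I would repeat the dichotomy used in the proof of Proposition \ref{alfa closure}: if $1-c_7(1/s+1/t)\leq 0$, then $\Phi(z)\leq 0$ for every $z\geq 0$ and the desired inequality is trivial since $R$ is non-negative; otherwise a direct derivative computation shows that $\Phi$ is strictly decreasing on $[0,z_0]$ and strictly increasing on $[z_0,+\infty)$, with $\Phi(0)\leq 0$, for an explicit threshold $z_0>0$. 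Consequently, either $\Phi(\m^F(tw,U))\leq 0$, in which case the bound is immediate, or $\m^F(tw,U)>z_0$; in the latter case $F(tv,U)\geq \m^F(tw,U)>z_0$, both values lie in the increasing region, and the monotonicity of $\Phi$ there gives $\Phi(\m^F(tw,U))\leq \Phi(F(tv,U))\leq R(\m^F(sw,U)+\eta)$.

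Finally, letting $\eta\to 0^+$ and using the continuity of $R$ at $\m^F(sw,U)$ gives the claimed one-sided inequality, and swapping $s$ and $t$ concludes. The main obstacle is precisely the self-reference of $F(tv,U)$ in the application of \eqref{(g)}, which is not resolved by a direct algebraic absorption (as one would be tempted to do) but rather by the $\Phi$-monotonicity dichotomy borrowed from Proposition \ref{alfa closure}; this is also the reason the lemma is stated in the form of property \eqref{(g)} rather than in a simpler closed form.
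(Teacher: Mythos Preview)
Your proof is correct and follows essentially the same approach as the paper's. The only cosmetic differences are that the paper picks the near-minimiser $u$ with $u=w$ on $\partial U$ satisfying $F(tu,U)\le \m^F(tw,U)+\eta$ (so your $v$ plays the role of their $u$, with the roles of $s$ and $t$ swapped), and that the paper substitutes $F(tu,U)\le \m^F(tw,U)+\eta$ directly into the right-hand side using the subadditivity of $z\mapsto z^{1-\alpha}$, whereas you wrap this step cleanly in the monotonicity of $R$; the crucial $\Phi$-monotonicity dichotomy borrowed from Proposition~\ref{alfa closure} is identical in both arguments.
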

\begin{proof}
     It  is enough to prove that
     \begin{equation}\label{claim lemma minimi}
    \begin{gathered}
        \frac{\mathfrak{m}^F(sw,U)}{s}- \frac{c_6}{s}\Lb^d(U)^\alpha\mathfrak{m}^F(sw,U)^{1-\alpha}-\frac{c_6}{s}\Lb^d(U)-c_7\frac{\m^F(sw,U)}{s}\Big(\frac{1}{s}+\frac{1}{t}\Big)\\
        \leq \frac{\mathfrak m^F(tw,U)}{t}+\frac{c_6}{t}\Ld(U)^\alpha\mathfrak{m}^F(tw,U)^{1-\alpha}+\frac{c_6}{t}\Lb(U)+c_7\frac{\m^F(tw,U)}{t}\Big(\frac{1}{s}+\frac{1}{t}\Big)
    \end{gathered}
    \end{equation}
    whenever the left-hand side is positive.

    Let $\eta>0$ and consider a function $u\in {\rm BD}(U)$ with $u=w$ on $\partial U$ and such that 
    \begin{equation}\notag
      F(tu,U)\leq \mathfrak m(tw,U)+\eta.
    \end{equation}
    Since $F$ satisfies property \eqref{(g)} of Definition \ref{def Falpha}, from this inequality we deduce that 
\begin{equation*}
 \begin{gathered}
        \frac{F(su,U)}{s}-
        \frac{c_6}{s}\Lb^d(U)^\alpha F(su,U)^{1-\alpha}-\frac{c_6}{s}\Ld(U)-c_7\frac{F(su,U)}{s}\Big(\frac{1}{s}+\frac{1}{t}\Big)\\\quad \leq\frac{F(tu,U)}{t} +\frac{c_6}{t} \Lb^d(U)^\alpha F(tu,U)^{1-\alpha}+\frac{c_6}{t}\Ld(U)+c_7\frac{F(su,U)}{s}\Big(\frac{1}{s}+\frac{1}{t}\Big)\\\notag 
        \leq \frac{\m^F(tw,U)}{t}+\frac{\eta}{t}+\frac{c_6}{t} \Ld(U)^\alpha\m^F(tw,U)^{1-\alpha}+\frac{c_6}{t} \Ld(U)^\alpha\eta^{1-\alpha}\\+\frac{c_6}{t}\Ld(U)+c_7\frac{\m^F(tw,U)}{t}\Big(\frac{1}{s}+\frac{1}{t}\Big)+c_7\frac{\eta}{s}\Big(\frac{1}{s}+\frac{1}{t}\Big).
    \end{gathered}
    \end{equation*}
   Exploiting the fact that $\m^F(sw,U)\leq F(su,U)$ and using the same argument employed at the end of the proof of Proposition \ref{alfa closure} we conclude that
   \begin{gather*} \frac{\mathfrak{m}^F(sw,U)}{s}- \frac{c_6}{s}\Lb^d(U)^\alpha\mathfrak{m}^F(sw,U)^{1-\alpha}-\frac{c_6}{s}\Lb^d(U)-c_7\frac{\m^F(sw,U)}{s}\Big(\frac{1}{s}+\frac{1}{t}\Big)
   \\   \leq \frac{\m^F(tw,U)}{t}+\frac{\eta}{t}+\frac{c_6}{t} \Ld(U)^\alpha\m^F(tw,U)^{1-\alpha}+\frac{c_6}{t} \Ld(U)^\alpha\eta^{1-\alpha}\\
   +\frac{c_6}{t}\Ld(U)+c_7\frac{\m^F(tw,U)}{t}\Big(\frac{1}{s}+\frac{1}{t}\Big)+c_7\frac{\eta}{s}\Big(\frac{1}{s}+\frac{1}{t}\Big).
   \end{gather*}
As $\eta$ is arbitrary, we obtain \eqref{claim lemma minimi}.
\end{proof}

We conclude the section by proving  a further property of the functionals $F\in\mathfrak{F}^{\alpha,\infty}$.
\begin{lemma}
    Let $F\in\mathfrak{F}^{\alpha,\infty}$. Then the functions $f$ and $g$  defined by \eqref{def f} and \eqref{def g}  belong to $\mathcal{F}^\alpha$ and $\mathcal{G}^\infty$, respectively.
\end{lemma}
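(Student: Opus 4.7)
My plan is to exploit Lemma \ref{minimum estimate}, the $\m^F$-version of the scaling property \eqref{(g)}, by specialising it to the cubes and boundary data appearing in the definitions \eqref{def f}--\eqref{def g} of $f$ and $g$, dividing by the appropriate power of the side-length, and passing to the $\limsup$ as $\rho\to 0^+$. Since Lemmas \ref{inclusion bulk} and \ref{surface inclusion} already give $f\in\mathcal{F}$ and $g\in\mathcal{G}$, it suffices to check the two extra scaling conditions \eqref{f4} and \eqref{g5} (possibly, as will be seen, after enlarging the constants).

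For \eqref{g5}, I would fix $x\in\Rd$, $\zeta\in\Rd$, $\nu\in\Sd$, $s,t>0$, and apply Lemma \ref{minimum estimate} with $w=u_{x,\zeta,\nu}$ and $U=Q_\nu(x,\rho)$, so that $su_{x,\zeta,\nu}=u_{x,s\zeta,\nu}$ and $\Ld(U)=\rho^d$. Dividing by $\rho^{d-1}$ and rewriting the first $c_6$-summand as $\rho^{\alpha}(\m^F(\cdot,Q_\nu(x,\rho))/\rho^{d-1})^{1-\alpha}$ and the second as a constant multiple of $\rho$, the boundedness of $\m^F(\cdot,Q_\nu(x,\rho))/\rho^{d-1}$ coming from (c) of Definition \ref{abstract functionals} makes both $c_6$-contributions vanish as $\rho\to 0^+$. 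Only the $c_7$-term survives, with exactly the structure required by \eqref{g5}. I would then pick $\rho_n\to 0^+$ along which $\m^F(u_{x,s\zeta,\nu},Q_\nu(x,\rho_n))/\rho_n^{d-1}\to g(x,s\zeta,\nu)$, pass to a further subsequence so that the analogous expression in $t$ converges to some $L\leq g(x,t\zeta,\nu)$, and obtain in the limit
\[
\frac{g(x,s\zeta,\nu)}{s}-\frac{L}{t}\leq c_7\Big(\frac{g(x,s\zeta,\nu)}{s}+\frac{L}{t}\Big)\Big(\frac{1}{s}+\frac{1}{t}\Big).
\]
Replacing $L$ by the upper bound $g(x,t\zeta,\nu)$ (which weakens both sides in the correct direction) and symmetrising $s\leftrightarrow t$ yields \eqref{g5} with the same constant $c_7$.

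For \eqref{f4}, I would proceed analogously with $w=\ell_A$ and $U=Q(x,\rho)$ (so $s\ell_A=\ell_{sA}$ and $\Ld(U)=\rho^d$), but divide by $\rho^d$ rather than $\rho^{d-1}$. This time the $c_6$-summands rescale precisely to $\tfrac{c_6}{s}(\m^F(\ell_{sA},Q(x,\rho))/\rho^d)^{1-\alpha}+\tfrac{c_6}{s}$ (plus the $t$-analogue) and converge to the $f^{1-\alpha}$-terms in \eqref{f4}; the $c_7$-summand however also survives and gives an additional contribution $c_7\big(\tfrac{f(x,sA)}{s}+\tfrac{f(x,tA)}{t}\big)\big(\tfrac{1}{s}+\tfrac{1}{t}\big)$, which I would absorb into the $c_6$-terms by means of the two-sided growth control (f2), i.e.\ $c_1|A|-c_2\leq f(x,A)\leq c_3|A|+c_4$. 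This produces \eqref{f4} with a constant possibly strictly larger than the $c_6$ appearing in Definition \ref{def Falpha}, depending also on $c_1,c_3,c_4,c_7$.

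The hard part will be this absorption step in the bulk case: in the surface case the $\rho$-scaling kills the $c_6$-terms and leaves the $c_7$-term in exactly the shape of \eqref{g5}, but in the bulk case both $c_6$- and $c_7$-terms survive under the $\rho^d$-scaling, and the $c_7$-term is more singular in $s,t$ than the $c_6 f^{1-\alpha}/s$-terms when $s$ or $t$ is small. Carrying out the absorption cleanly via (f2) is the delicate technical point and is what forces the constant entering \eqref{f4} to be enlarged.
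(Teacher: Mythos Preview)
Your treatment of \eqref{g5} is correct and coincides with the paper's. For \eqref{f4}, the intermediate inequality you obtain after dividing by $\rho^d$ and passing to the $\limsup$---namely the one carrying both the $c_6$- and the $c_7$-contributions---is also correct and is exactly what the paper derives first. The gap is in the absorption step: the $c_7$-term \emph{cannot} be bounded by an expression of the form $\tfrac{C}{s}f(x,sA)^{1-\alpha}+\tfrac{C}{s}+\tfrac{C}{t}f(x,tA)^{1-\alpha}+\tfrac{C}{t}$ for any constant $C$ independent of $x,A,s,t$. Expanding the $c_7$-term produces a summand $c_7 f(x,sA)/s^2$, which by (f2) is at most $c_3c_7|A|/s + c_4c_7/s^2$. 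When $c_4>0$ the piece $c_4c_7/s^2$ already defeats any bound of order $1/s$: for fixed $A$ and $s\to 0^+$ the candidate right-hand side behaves like $C/s$, since $f(x,sA)^{1-\alpha}$ stays bounded. Even when $c_4=0$, taking $s,t$ fixed and $|A|\to\infty$ gives a $c_7$-term of order $|A|$ against a bound of order $|A|^{1-\alpha}$. So no enlargement of $c_6$ rescues the argument; you correctly sensed that this term is ``more singular'', but (f2) alone does not close the gap.

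The paper eliminates the $c_7$-term by a different mechanism, starting from the same intermediate inequality you reached. One first lets $s\to+\infty$ so that $f^\infty(x,A)$ appears on one side, and then exploits the positive $1$-homogeneity of $f^\infty$: replacing $A$ by $\lambda A$, dividing by $\lambda$, and setting $\tau=t\lambda$ rewrites the estimate so that every term depends only on $\tau$ except for the $c_7$-contribution, whose coefficient remains $1/t$. Sending $t\to+\infty$ with $\tau$ fixed then annihilates the $c_7$-term and leaves precisely \eqref{quantified infinity}, which by Remark~\ref{remark recession infinity} is equivalent to \eqref{f4} with the \emph{original} constant $c_6$.
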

\begin{proof}
   Thanks to Lemmas \ref{inclusion bulk} and \ref{surface inclusion}, to conclude it is enough to show that $f$ and $g$ satisfy properties \eqref{f4} and \eqref{g5}.
   
   We first prove that $f$ satisfies  (f4). Let us fix $x\in\Rd$, $A\in\Rdsym$, and  $s,t>0$. Thanks to Lemma \ref{minimum estimate} applied with $U=Q(x,\rho)$ and $w=\ell_A$, we have that 
\begin{gather*}
    \frac{\mathfrak{m}^F(s\ell_A,Q(x,\rho))}{s}\leq  \frac{\mathfrak m^F(t\ell_A,Q(x,\rho))}{t}+ \frac{c_6}{s}\rho^{\alpha d}\mathfrak{m}^F(s\ell_A,Q(x,\rho))^{1-\alpha}+\frac{c_6}{s}\rho^d\\
    \frac{c_6}{t}\rho^{\alpha d}\mathfrak{m}^F(t\ell_A,Q(x,\rho))^{1-\alpha}+\frac{c_6}{t}\rho^d+c_7\Big(\frac{\m^F(s\ell_A,Q(x,\rho))}{s}+\frac{\m^F(t\ell_A,Q(x,\rho))}{t}\Big)\Big(\frac{1}{s}+\frac{1}{t}\Big).
\end{gather*}
Dividing this inequality by $\rho^d$, letting $\rho\to 0^+$, and recalling \eqref{def f}, we obtain
\begin{equation}\notag 
\begin{gathered}
      \frac{f(x,sA)}{s}\leq \frac{f(x,tA)}{t}+  \frac{c_6}{s}f(x,sA)^{1-\alpha}+\frac{c_6}{s}+
    \frac{c_6}{t}f(x,tA)^{1-\alpha}\\+\frac{c_6}{t}+c_7\Big(\frac{f(x,sA)}{s}+\frac{f(x,tA)}{t}\Big)\Big(\frac{1}{s}+\frac{1}{t}\Big).
\end{gathered}
\end{equation}
Exchanging the roles of $s$ and $t$ we obtain
\begin{equation}\notag
\begin{gathered}
      \frac{f(x,tA)}{t}\leq \frac{f(x,sA)}{s}+  \frac{c_6}{s}f(x,sA)^{1-\alpha}+\frac{c_6}{s}+
    \frac{c_6}{t}f(x,tA)^{1-\alpha}\\+\frac{c_6}{t}+c_7\Big(\frac{f(x,sA)}{s}+\frac{f(x,tA)}{t}\Big)\Big(\frac{1}{s}+\frac{1}{t}\Big).
\end{gathered}
\end{equation}

As these inequalities hold for every $s$ and $t$, we may let $s\to +\infty$  and obtain for every $t>0$ that 
\begin{equation}\notag  
     \big|f^\infty(x,A)-\frac{f(x,tA)}{t}\big|\leq
    \frac{c_6}{t}f(x,tA)^{1-\alpha}\\+\frac{c_6}{t}+\frac{c_7}{t}\Big({f^\infty(x,A)}+\frac{f(x,tA)}{t}\Big).
\end{equation}
Let us fix $\lambda>0$ and apply this inequality with $A$ replaced by $\lambda A$ to get 
\begin{gather}\notag 
     \big|\lambda f^\infty(x, A)-\frac{f(x,t\lambda A)}{t}\big|\leq
    \frac{c_6}{t}f(x,t\lambda A)^{1-\alpha}+\frac{c_6}{t}+\frac{c_7}{t}\Big({\lambda f^\infty(x, A)}+\frac{f(x,t\lambda A)}{t}\Big)
\end{gather}
for every $t>0$, hence 
\begin{gather}\notag 
     \big|f^\infty(x, A)-\frac{f(x,t\lambda A)}{\lambda t}\big|\leq
    \frac{c_6}{\lambda t}f(x,t\lambda A)^{1-\alpha}+\frac{c_6}{\lambda t}+\frac{c_7}{t}\Big({ f^\infty(x, A)}+\frac{f(x,t\lambda A)}{\lambda t}\Big)
\end{gather}
for every $t>0$ and $\lambda>0$. Setting $\tau=t\lambda$
\begin{gather}\notag
     \big|f^\infty(x, A)-\frac{f(x,\tau A)}{\tau }\big|\leq
    \frac{c_6}{\tau }f(x,\tau  A)^{1-\alpha}+\frac{c_6}{\tau }+\frac{c_7}{t}\Big({ f^\infty(x, A)}+\frac{f(x,\tau A)}{\tau}\Big)
\end{gather}
for every $t>0$ and $\tau>0$, so that, letting $t\to +\infty$ we obtain 
\begin{gather}\label{last inequalirt}
     \big|f^\infty(x, A)-\frac{f(x,\tau A)}{\tau}\big|\leq
    \frac{c_6}{\tau }f(x,\tau  A)^{1-\alpha}+\frac{c_6}{\tau }
\end{gather}
for every $\tau>0$.  Recalling that $f$ satisfies (f2), we may use Remark \ref{remark recession infinity} to deduce from \eqref{last inequalirt} that $f$ satisfies (f4), concluding the proof of the inclusion $f\in\mathcal{F}^\alpha$.

We now prove that $g$ satisfies property (g5). Let $x\in\Rd$, $\zeta\in\Rd$, and $\nu\in\Sd$. Lemma \ref{minimum estimate} applied with $U=Q_\nu(x,\rho)$ and $w=u_{x,\zeta,\nu}$ implies that for every $s,t>0$ we have 
\begin{gather*}
    \frac{\mathfrak{m}^F(su_{x,\zeta,\nu},Q_\nu(x,\rho))}{s}\leq \frac{\mathfrak m^F(tu_{x,\zeta,\nu},Q_\nu(x,\rho))}{t}+ \frac{c_6}{s}\rho^{\alpha d}\mathfrak{m}^F(su_{x,\zeta,\nu},Q_\nu(x,\rho))^{1-\alpha}+\frac{c_6}{s}\rho^d\\
    \frac{c_6}{t}\rho^{\alpha d}\mathfrak{m}^F(tu_{x,\zeta,\nu},Q_\nu(x,\rho))^{1-\alpha}+\frac{c_6}{t}\rho^d\\+c_7\Big(\frac{\m^F(su_{x,\zeta,\nu},Q_\nu(x,\rho))}{s}+\frac{\m^F(tu_{x,\zeta,\nu},Q_\nu(x,\rho))}{t}\Big)\Big(\frac{1}{s}+\frac{1}{t}\Big).
\end{gather*}
Dividing this inequality by $\rho^{d-1}$, letting $\rho\to 0^+$, and recalling \eqref{def g} we deduce
\begin{equation}\notag 
   \frac{g(x,s\zeta,\nu)}{s}\leq \frac{g(x,t\zeta,\nu)}{t}+c_7\big(\frac{g(x,s\zeta,\nu)}{s}+\frac{g(x,t\zeta,\nu)}{t}\Big)\Big(\frac{1}{s}+\frac{1}{t}\Big).
\end{equation}
Exchanging the roles of $s$ and $t$ we obtain (g5).
\end{proof}

\section{Full integral representation}\label{sec:full integral}
In this section we prove a complete integral representation result for functionals $F$ in $\mathfrak{F}^{\alpha,\infty}_{\rm sc}$, including the representation of the Cantor part $F^c$, assuming that  the  function  $f$, defined in \eqref{def f}, does not depend on $x$. 

A full integral representation result for functionals $F\in\mathfrak{F}_{\rm sc}$ has recently been proved in \cite{CaroccFocardiVan}, under the additional assumption of uniform continuity of $F$ with respect to translations of the independent variables. This condition was originally considered  in \cite[Lemma 3.11]{bouchitte1998global} for the corresponding integral representation problem in ${\rm BV}$.

In the case of periodic homogenisation one can easily cheque that the $\Gamma$-limit functional $F_{\rm hom}$ (see Theorem \ref{homogeneous homogenization} below) is invariant under translations  (see, for instance,  \cite[Theorem 6.14]{CaroccFocardiVan}). However, in general, this invariance (nor the continuity with respect to translations) cannot be checked directly in the case of non-periodic homogenisation. For this reason, we will prove Theorem \ref{theorem cantor} below under the sole additional assumption \eqref{x independence}, which is much weaker than invariance under translations (see Example \ref{simple example}).
We shall see in Section \ref{section: homo} that condition \eqref{x independence} is satisfied almost surely under the standard hypotheses of stochastic homogenisation.

The following theorem is the main result of this section.
\begin{theorem}\label{theorem cantor}Let $F\in\mathfrak{F}^{\alpha,\infty}_{\rm sc}$ and let $g$ be the function defined by \eqref{def g}. Assume that there exist a function $f\colon\Rdsym\to[0,+\infty)$ and a Borel set $N\subset \Rd$, $\sigma$-finite with respect to $\hd$,  such that
\begin{equation}\label{x independence}
   f(A)=\lim_{\rho \to 0^+}\frac{\m^F(\ell_A,Q(x,\rho))}{\rho^d} \quad \text{for every }x\in\Rd\setminus N\text{ and }A\in\Rdsym.
\end{equation}
Then for every $U\in\U_c(\Rd)$ and  $u\in {\rm BD}(U)$ we have
\begin{equation}\notag 
    F(u,B)=\int_Bf(\E u)\,{\rm d}x+\int_Bf^\infty\Big(\frac{{\rm d}E^cu}{{\rm d}|E^cu|}\Big)\,{\rm d}|E^cu|+\int_{J_u\cap B}g(x,[u],\nu_u)\,{\rm d}\hd
\end{equation}
for every $B\in \B(U)$.
\end{theorem}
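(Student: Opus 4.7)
\smallskip
\noindent\textit{Proof strategy.} By Lemma~\ref{representation proposition}, the absolutely continuous and jump parts of $F$ already admit the representations
\[
F^a(u,B)=\int_B f(x,\mathcal{E}u)\,dx,\qquad F^j(u,B)=\int_{J_u\cap B}g(x,[u],\nu_u)\,d\hd,
\]
with $f,g$ given by \eqref{def f}, \eqref{def g}. Hypothesis \eqref{x independence} forces the $\limsup$ in \eqref{def f} to be a true limit and removes the $x$-dependence of $f$ on $\Rd\setminus N$; since $N$ is $\sigma$-finite with respect to $\hd$ we have $\Ld(N)=0$, and hence $F^a(u,B)=\int_B f(\mathcal{E}u)\,dx$. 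Therefore, by the decomposition \eqref{somma facj}, the theorem reduces to the Cantor-part identity
\begin{equation*}
F^c(u,B)=\int_B f^\infty\Big(\tfrac{dE^cu}{d|E^cu|}\Big)\,d|E^cu|.
\end{equation*}

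The plan for the Cantor part is to invoke \cite[Lemma~5.3]{CaroccFocardiVan}, which provides a pointwise characterization of the Radon--Nikodym derivative $\tfrac{dF^c(u,\cdot)}{d|E^cu|}(x_0)$ at $|E^cu|$-a.e.\ $x_0\in U$ as a limit of local minimum values of $F$ on cubes $Q(x_0,\rho)$. At such a point set $A_0:=\tfrac{dE^cu}{d|E^cu|}(x_0)\in\Rdsym$ and $t_\rho:=|E^cu|(Q(x_0,\rho))/\rho^d$; by singularity of $|E^cu|$ with respect to $\Ld$ one has $t_\rho\to+\infty$ as $\rho\to 0^+$. The fine blow-up behaviour of BD functions, combined with the rigid-motion invariance (d) and the continuity estimates (e), (f) of Definition~\ref{abstract functionals}, allows to compare the characterizing quantity on $Q(x_0,\rho)$ with $\m^F(t_\rho\ell_{A_0},Q(x_0,\rho))$ up to errors negligible after division by $t_\rho\rho^d$. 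The analytic core of the proof is thus
\begin{equation}\label{plan:core}
\lim_{\rho\to 0^+}\frac{\m^F(t_\rho\ell_{A_0},Q(x_0,\rho))}{t_\rho\rho^d}=f^\infty(A_0).
\end{equation}

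To establish \eqref{plan:core}, fix $t>0$ and apply Lemma~\ref{minimum estimate} with $w=\ell_{A_0}$, $U=Q(x_0,\rho)$, $s=t_\rho$. Controlling the terms $\m^F(t_\rho\ell_{A_0},Q(x_0,\rho))^{1-\alpha}$ and $\m^F(t\ell_{A_0},Q(x_0,\rho))^{1-\alpha}$ via the upper bound in property (c) yields
\[
\limsup_{\rho\to 0^+}\Bigl|\frac{\m^F(t_\rho\ell_{A_0},Q(x_0,\rho))}{t_\rho\rho^d}-\frac{\m^F(t\ell_{A_0},Q(x_0,\rho))}{t\rho^d}\Bigr|\leq \frac{C(|A_0|)}{t^\alpha}+\frac{C(|A_0|)}{t},
\]
since the $t_\rho$-dependent error terms vanish as $\rho\to 0^+$. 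Hypothesis \eqref{x independence} applied to $tA_0$ gives $\rho^{-d}\m^F(t\ell_{A_0},Q(x_0,\rho))\to f(tA_0)$ as $\rho\to 0^+$; dividing by $t$, letting $t\to\infty$, and using the convergence $f(tA_0)/t\to f^\infty(A_0)$ ensured by Remark~\ref{remark recession infinity} then yields \eqref{plan:core}.

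The main obstacle is the decoupling of the two scales: the spatial blow-up $\rho\to 0^+$ and the amplitude $t\to\infty$ must be handled independently. The $\alpha$-quantified continuity encoded in Lemma~\ref{minimum estimate}, together with the growth $t_\rho\to+\infty$ holding at $|E^cu|$-a.e.\ point, is exactly what legitimates the exchange of these limits; without the exponent $\alpha>0$ the error bounds would be of order one and the argument would collapse. Once the pointwise density of $F^c(u,\cdot)$ with respect to $|E^cu|$ is identified with $f^\infty(A_0)$, the Besicovitch differentiation theorem delivers the desired integral representation of $F^c$, and recombining the three integral formulas via \eqref{somma facj} concludes the proof.
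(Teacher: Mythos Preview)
Your overall strategy matches the paper's: reduce to the Cantor part via Lemma~\ref{representation proposition} and \eqref{somma facj}, invoke the pointwise characterisation of $\tfrac{dF^c(u,\cdot)}{d|E^cu|}$ from \cite[Lemma~5.3]{CaroccFocardiVan}, and then use Lemma~\ref{minimum estimate} to decouple the spatial scale $\rho\to 0^+$ from the amplitude scale $t\to+\infty$, so as to identify the resulting limit with $f^\infty(A_0)$. The analytic core you outline in \eqref{plan:core} is exactly what the paper proves, by the same mechanism.

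There is, however, a genuine gap in the shape of the localising sets. The characterisation from \cite[Lemma~5.3]{CaroccFocardiVan}, as recorded in the paper as Lemma~\ref{lemma focardi}, is \emph{not} stated on cubes $Q(x_0,\rho)$: it uses parallelograms $P^x_{\lambda_j}(x,\rho_{i,j})$ oriented by the vectors $a(x),b(x)$ coming from the rank-one structure $\tfrac{dE^cu}{d|E^cu|}=a\odot b$ in \eqref{rank 2}, with an additional limit in the eccentricity parameter $\lambda_j\to 0^+$. Your proposal assumes the characterisation lives on standard cubes, which is precisely where hypothesis \eqref{x independence} is formulated; but these two families of sets do not coincide, and the mismatch is not cosmetic in the BD setting. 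The paper bridges it with an extra lemma (Lemma~\ref{change of parallelogram}), which shows that the cube limit $\lim_{\rho\to 0^+}\rho^{-d}\m^F(t\ell_A,Q(x,\rho))=f(tA)$ transfers to $\lim_{\rho\to 0^+}\m^F(t\ell_A,P^{a,b}_\lambda(x,\rho))/\Ld(P^{a,b}_\lambda(x,\rho))=f(tA)$; only after this step can Lemma~\ref{minimum estimate} be applied on the parallelograms. Without this transfer your argument does not connect \eqref{x independence} to the quantity actually delivered by \cite{CaroccFocardiVan}, and \eqref{plan:core} as written (on cubes) is not the statement you need. Once the parallelogram transfer is in place, the rest of your argument goes through essentially as in the paper.
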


We first state a useful lemma, obtained by Caroccia, Focardi, and Van Goethem in \cite[Lemma 5.3]{CaroccFocardiVan} (see \cite[Lemma 3.7]{bouchitte1998global} for an analogous result in the ${\rm BV}$-setting), which characterises the Radon-Nikod\'ym derivative ${\rm d}F^c(u,\cdot)/{\rm d}|E^cu|$  by means of suitable minimum values of minimisation problems on small parallelograms. To state this result, we set some further notation.

We recall that De Philippis and Rindler proved in \cite[Theorem 1.18]{DePhilRindler} (see also the survey \cite{DePhilRindlersurvey} and the book \cite{Rindler}) the following remarkable theorem: given an open set $U\in \U(\Rd)$ and  $u\in {\rm BD}(U)$ there exist two Borel maps $a,b\colon U\to \Rd$  such that 
\begin{equation}\label{rank 2}
    \frac{{\rm d}E^cu}{{\rm d}|E^cu|}=a\odot b\quad\text{ and }  \quad |a\odot b|=1 \quad  \text{ $|E^cu|$-a.e.\ in }U. 
\end{equation}
 For every $\lambda>0$ and every pair $(a,b)\in\Rd\times \Rd$ with $a \neq \pm b$ we set
\begin{equation*}
    P^{a,b}_\lambda:=\big\{z\in U:|z\cdot b|<\tfrac\lambda2,\,\,|z\cdot a|<\tfrac12,\,|z\cdot \theta_i|<\tfrac12\, \text{ for }i\in\{1,...,n-2\}\big\},
\end{equation*}
where $\{\theta_i\}_{i=1}^{n-2}\subset \Sd$ is such that $\{a,b,\theta_1,...,\theta_{n-2}\}$ is a basis of $\Rd$, while if $a=\pm b$ we set
\begin{equation*}
    P^{a,b}_\lambda:=\big\{z\in U:|z\cdot b|<\tfrac\lambda2,\,\,|z\cdot \theta_i|<\tfrac12\, \text{ for }i\in\{1,...,n-1\}\big\},
\end{equation*}
where $\{\theta_i\}_{i=1}^{n-1}\subset \Sd$ is such that $\{b,\theta_1,...,\theta_{n-1}\}$ is a basis of $\Rd$. The choice of the vectors $\{\theta_i\}_i$ in the previous definitions is irrelevant for the arguments that follow. Given $x\in\Rd$ and $\rho>0$, we also set $P^{a,b}_\lambda(x,\rho):=x+\rho P_\lambda^{a,b}$.  Moreover, for every point $x\in U$ such that \eqref{rank 2} holds, we set $P^x_\lambda(x,\rho):=P^{a(x),b(x)}_\lambda(x,\rho)$.

The following result characterises ${\rm d}F^c(u,\cdot)/{\rm d}|E^cu|$ in terms of the double limit of infima of problems related to parallelograms of the form $P^x_\lambda$.

\begin{lemma}[{\!\!\cite[Lemma 5.3]{CaroccFocardiVan}}]\label{lemma focardi}Let $F\in\mathfrak{F}_{\rm sc}$, $U\in \U(\Rd)$, and $u\in {\rm BD}(U)$. Then there exists $C(u)\in\B(U)$, with $|E^cu|(U\setminus C(u))=0$, such that for every $x\in C(u)$ equality \eqref{rank 2} holds and  there exist a positive sequence $\{\lambda_{j}\}_{j\in\N}$, converging to $0$ as $j\to +\infty$,  and for every $j\in\N$ a positive sequence $\{\rho_{i,j}\}_{i\in\N}$, converging to $0$ as $i\to+\infty$, such that, setting \begin{equation}\label{def A e sij}
    A=A(x):=a(x)\odot b(x)\quad \text{and}\quad s_{i,j}:=\frac{|Eu|(P^x_{\lambda_j}(x,\rho_{i,j}))}{\Ld(P^x_j(x,\rho_{i,j}))},
\end{equation}
we have
\begin{gather} \label{eq Foc div}
 \text{ for every $j\in\N$ the sequence }\{s_{i,j}\}_{i\in\N} \text{ tends to $+\infty$ as }  i\to+\infty,
\\
\label{eq Foc}
    \frac{{\rm d} F^c(u,\cdot)}{{\rm d}|E^cu|}(x)=\lim_{j\to+\infty}\limsup_{i\to+\infty}\frac{\m^F(s_{i,j}\ell_{A},P^x_{\lambda_j}(x,\rho_{i,j}))}{s_{i,j}\Ld(P^x_{\lambda_j}(x,\rho_{i,j}))}.
\end{gather}
\end{lemma}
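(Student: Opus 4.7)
The strategy is to split $F=F^a+F^c+F^j$ according to Definition \ref{decomposition} and reduce the theorem to identifying the Cantor density $dF^c(u,\cdot)/d|E^cu|$. Since $F\in\mathfrak{F}^{\alpha,\infty}_{\mathrm{sc}}\subset\mathfrak{F}_{\mathrm{sc}}$, Lemma \ref{representation proposition} immediately produces integrands $\tilde f(x,A)$ and $g(x,\zeta,\nu)$, defined by \eqref{def f}--\eqref{def g}, satisfying \eqref{integral representation bulk}--\eqref{integral representation jump}. The hypothesis \eqref{x independence} forces $\tilde f(x,A)=f(A)$ for every $x\in\Rd\setminus N$ and every $A$, and since $N$ is $\sigma$-finite with respect to $\hd$ both $\Ld(N)=0$ and $|E^cu|(N)=0$, so $F^a(u,B)=\int_B f(\E u)\,dx$ as desired. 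In particular, for the smooth function $T\ell_A$ the jump and Cantor parts vanish, and one obtains the identity $F(T\ell_A,P)=f(TA)\Ld(P)$ for every Borel set $P$; this is the key identity powering the upper bound below.

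It remains to prove $dF^c(u,\cdot)/d|E^cu|(x)=f^\infty(A(x))$ at $|E^cu|$-a.e.\ $x$, where $A(x):=a(x)\odot b(x)$ is the rank-two matrix produced by the De Philippis--Rindler structure \eqref{rank 2}. Fix such an $x$, which may be assumed to lie in $C(u)\setminus N$, and write $A=A(x)$. Lemma \ref{lemma focardi} then provides sequences $\lambda_j\downarrow 0$, $\rho_{i,j}\downarrow 0$, $s_{i,j}\uparrow+\infty$ such that, setting $\phi_P(s):=\m^F(s\ell_A,P)/(s\Ld(P))$ and $P_{i,j}:=P^x_{\lambda_j}(x,\rho_{i,j})$,
\[
\frac{dF^c(u,\cdot)}{d|E^cu|}(x)\;=\;\lim_{j\to\infty}\limsup_{i\to\infty}\phi_{P_{i,j}}(s_{i,j}).
\]

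The crucial analytic tool is Lemma \ref{minimum estimate} applied with $w=\ell_A$ and arbitrary $P$: dividing its conclusion by $\Ld(P)$ and exploiting the uniform upper bound $\phi_P(s)\leq c_3|A|+c_4/s$ (from $\m^F\leq F$ and property (c)), a routine manipulation yields
\[
|\phi_P(s)-\phi_P(t)|\;\leq\;C(|A|)\bigl(s^{-\alpha}+t^{-\alpha}\bigr)\qquad\text{for every }s,t\geq 1,\ \text{uniformly in }P.
\]
Setting $s=s_{i,j}$ and $t=T$ with $T$ large but fixed, and combining with the upper estimate $\phi_{P_{i,j}}(T)\leq F(T\ell_A,P_{i,j})/(T\Ld(P_{i,j}))=f(TA)/T$ from the previous paragraph, the passage $i\to\infty$, $j\to\infty$, and finally $T\to\infty$ (recall $f(TA)/T\to f^\infty(A)$ by Remark \ref{remark recession infinity}) delivers the upper bound $dF^c/d|E^cu|(x)\leq f^\infty(A)$.

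The main obstacle is the matching lower bound, for which I would adapt the BV techniques of \cite{DalToaHomo,DalMasoDonati2025} to the BD setting. The guiding idea is that $f$, being defined in \eqref{x independence} as a limit of cell-problems on cubes with linear Dirichlet data, is implicitly symmetric quasiconvex, so one expects the matching inequality $\m^F(T\ell_A,P)\geq f(TA)\Ld(P)-o(\Ld(P))$ to hold asymptotically as $P$ shrinks to $x$. The genuine difficulty is that BD competitors with trace $T\ell_A$ on $\partial P$ carry singular parts $E^cu,E^ju$ that redistribute $\int_P Eu=TA\,\Ld(P)$ between absolutely continuous and singular components, and this is precisely the phenomenon reflected by the degeneration $\lambda_j\downarrow 0$ in Lemma \ref{lemma focardi}, where the parallelograms become thin along the direction $b(x)$. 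Controlling this redistribution by means of the symmetric quasiconvexity of $f$ and the truncation-free scheme used in \cite{DalMasoDonati2025}, one would show
\[
\liminf_{j\to\infty}\liminf_{i\to\infty}\phi_{P_{i,j}}(T)\;\geq\;\frac{f(TA)}{T}-o_T(1),
\]
which, combined with the Cauchy estimate above and the limit $T\to\infty$, yields the reverse inequality $dF^c/d|E^cu|(x)\geq f^\infty(A(x))$ and completes the proof.
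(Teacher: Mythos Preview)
Your proposal does not address the stated lemma at all. The statement you are asked to prove is Lemma~\ref{lemma focardi}, which asserts the \emph{existence} of the sequences $\{\lambda_j\}$, $\{\rho_{i,j}\}$ and the limit formula~\eqref{eq Foc} for the Cantor density of a general $F\in\mathfrak{F}_{\rm sc}$. Instead, your proposal \emph{uses} Lemma~\ref{lemma focardi} as an ingredient (``Lemma~\ref{lemma focardi} then provides sequences\dots'') in an argument whose goal is to show that the Cantor density equals $f^\infty(A(x))$. That is the content of Theorem~\ref{theorem cantor}, not of the lemma you were given.

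The paper's proof of Lemma~\ref{lemma focardi} is essentially a citation: the result is due to Caroccia, Focardi, and Van~Goethem \cite[Lemma~5.3]{CaroccFocardiVan}, with the remark that their hypothesis~(H4) (joint continuity under translations) can be replaced by property~(e) of Definition~\ref{abstract functionals}, and that the boundary data in~\eqref{eq Foc} differ from theirs only by a rigid motion. No independent argument is given or needed here.

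As a sketch for Theorem~\ref{theorem cantor}, your upper bound is correct and matches the paper's. Your lower bound, however, takes an unnecessarily indirect route through ``symmetric quasiconvexity of $f$'' and an unspecified adaptation of truncation-free BV arguments. The paper's proof is more direct: hypothesis~\eqref{x independence} gives $\m^F(t\ell_A,Q(x,\rho))/\rho^d\to f(tA)$ as $\rho\to 0^+$, Lemma~\ref{change of parallelogram} transfers this limit from cubes to the parallelograms $P^x_{\lambda_j}(x,\rho)$, and then the same Cauchy-type estimate from Lemma~\ref{minimum estimate} that you use for the upper bound compares $\phi_{P_{i,j}}(s_{i,j})$ with $\phi_{P_{i,j}}(t)$ for fixed $t$. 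No quasiconvexity argument is needed.
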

\begin{proof}
    This result is proved in \cite[Lemma 5.3]{CaroccFocardiVan} under slightly different hypotheses on the functional $F$. In particular, they assume a condition, which they denote by (H4), that prescribes a joint continuity of the functional $F$ with respect to  translations both of the dependent and of the independent variables.  Not all functionals in $\mathfrak{F}^{\alpha,\infty}$ satisfy this property. However, examining the proof of \cite[Lemma 5.3]{CaroccFocardiVan} one can see that the use of property (H4) can be replaced by the property (e) of Definition \ref{abstract functionals}. This invariance property also allows us to consider the  boundary conditions appearing in \eqref{eq Foc}  instead of those  used in \cite[(5.6)]{CaroccFocardiVan}, which differ from ours by a rigid motion.
\end{proof}

The following result shows that, under suitable assumptions on $F$, in definition \eqref{def f} we may replace the cube $Q(x,\rho)$ by  parallelograms of the form $P^{a,b}_{\lambda}(x,\rho)$. 
\begin{lemma}\label{change of parallelogram}
      Let $F\in\mathfrak{F}$, $A\in \Rdsym$, $a,b\in\Rd$, and  $\lambda>0$. Assume that  there exists $\mu\geq 0$  such that 
    \begin{equation*}
        \m^F(\ell_A,Q(y,\rho))\leq \mu \rho^d
    \end{equation*}
    for every $y\in\Rd$ and $\rho>0$.
    Then
    \begin{equation*}
\m^F(\ell_A,P^{a,b}_\lambda(y,\rho))\leq \mu \Lb^d(P^{a,b}_\lambda(y,\rho))
    \end{equation*}
    for every $y\in\Rd$ and $\rho>0$.  If, in addition, for some $x\in\Rd$ we have
    \begin{equation*}
        \lim_{\rho \to 0^+}\frac{\m^F(\ell_{A}, Q(x,\rho))}{\rho^d}=\mu,
    \end{equation*}
     then 
     \begin{equation*}
         \lim_{\rho \to 0^+} \frac{\m^F(\ell_{A}, P^{a,b}_\lambda(x,\rho))}{\Ld(P^{a,b}_\lambda(x,\rho))}=\mu.
     \end{equation*}
\end{lemma}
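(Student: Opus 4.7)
My plan is to prove both assertions by a gluing construction based on the measure property (a) of Definition \ref{abstract functionals} and the uniform cube bound $\m^F(\ell_A, Q(y,\rho)) \leq \mu \rho^d$.

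For the first assertion I fix $y\in\Rd$, $\rho>0$, and $\eta>0$, and choose a finite disjoint family of cubes $\{Q(z_k,r_k)\}_{k=1}^N \subset P^{a,b}_\lambda(y,\rho)$ whose union exhausts the parallelogram up to a set of Lebesgue measure at most $\eta$. For each $k$ I pick $u_k \in {\rm BD}(Q(z_k,r_k))$ with $u_k = \ell_A$ on $\partial Q(z_k,r_k)$ and $F(u_k,Q(z_k,r_k)) \leq \mu r_k^d + \eta/N$, and I define $u = u_k$ on each cube and $u = \ell_A$ on the remainder of $P^{a,b}_\lambda(y,\rho)$. Since the trace of $u$ from both sides of $\partial Q(z_k,r_k)$ equals $\ell_A$, the function $u$ carries no jump on these interfaces, so the extended bound $F(u,B) \leq c_3|Eu|(B)+c_4\Ld(B)$ for Borel $B$ gives $F(u,\bigcup_k\partial Q(z_k,r_k))=0$, and property (a) yields
\begin{equation*}
F(u,P^{a,b}_\lambda(y,\rho)) \leq \sum_k F(u_k,Q(z_k,r_k)) + (c_3|A|+c_4)\eta \leq \mu\,\Ld(P^{a,b}_\lambda(y,\rho)) + \bigl(1+c_3|A|+c_4\bigr)\eta.
\end{equation*}
Because $u = \ell_A$ on $\partial P^{a,b}_\lambda(y,\rho)$, passing $\eta \to 0^+$ gives the first claim.

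For the second assertion, the first part already yields $\limsup_{\rho\to 0^+}\m^F(\ell_A,P^{a,b}_\lambda(x,\rho))/\Ld(P^{a,b}_\lambda(x,\rho)) \leq \mu$. For the matching $\liminf$, I fix a constant $C>0$, depending only on $a,b,\lambda$, such that $P^{a,b}_\lambda(x,\rho) \subset\subset Q(x,C\rho)$ for every $\rho>0$, and I run the same gluing strategy inside $Q(x,C\rho)$: in the inner parallelogram I place an $\eta$-optimal competitor $u^\ast$ for $\m^F(\ell_A,P^{a,b}_\lambda(x,\rho))$ (with trace $\ell_A$ on $\partial P^{a,b}_\lambda(x,\rho)$), I tile the annular region $Q(x,C\rho)\setminus \overline{P^{a,b}_\lambda(x,\rho)}$ by small cubes carrying $\eta$-optimal competitors controlled by $\mu r_k^d$, and I extend by $\ell_A$ elsewhere. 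After refining the tiling and letting $\eta \to 0^+$ I obtain
\begin{equation*}
\m^F(\ell_A,Q(x,C\rho)) \leq \m^F(\ell_A,P^{a,b}_\lambda(x,\rho)) + \mu\bigl((C\rho)^d - \Ld(P^{a,b}_\lambda(x,\rho))\bigr).
\end{equation*}
The hypothesis at $x$ forces $\m^F(\ell_A,Q(x,C\rho)) = \mu(C\rho)^d + o(\rho^d)$ as $\rho \to 0^+$, so the $\mu(C\rho)^d$ contributions cancel and I deduce $\m^F(\ell_A,P^{a,b}_\lambda(x,\rho)) \geq \mu\,\Ld(P^{a,b}_\lambda(x,\rho)) - o(\rho^d)$, which upon division by $\Ld(P^{a,b}_\lambda(x,\rho)) \sim \rho^d$ gives $\liminf \geq \mu$.

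The only delicate step is the verification that the glued function contributes no extra energy on the internal interfaces $\bigcup_k \partial Q(z_k,r_k)$ and, in the second part, on $\partial P^{a,b}_\lambda(x,\rho)$, so that the measure property splits the energy cleanly into the contributions from each piece. This is exactly why I use $\ell_A$ both as the Dirichlet datum in the auxiliary problems and as the filler outside the selected regions: the matching traces force the jump part of $Eu$ on these interfaces to vanish, and since $\Ld$ and $|E^cu|$ both give zero mass to them, the extended upper bound makes their contribution to $F(u,\cdot)$ zero. Once this bookkeeping is in place, the error estimates are routine.
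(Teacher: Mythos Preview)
Your proof is correct and follows essentially the same covering-and-gluing strategy as the argument the paper points to (namely \cite[Lemma 5.3]{DalToaHomo}); you have spelled out the details that the paper leaves as a reference. In particular, your handling of the interfaces---using that all competitors share the trace $\ell_A$ so that $|Eu|$ vanishes on the glued boundaries, and then invoking the Borel-set version of the upper bound in (c)---is exactly the mechanism that makes the measure property (a) split the energy cleanly.
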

\begin{proof}
    The proof can be obtained arguing  as in \cite[Lemma 5.3]{DalToaHomo}, observing that in the last part of that lemma limsup can be replaced by lim. We also remark that, in contrast with \cite[Lemma 5.3]{DalToaHomo}, we deal with minimisation problems without constraints on the oscillation of the competitors, which simplifies the proof.
\end{proof}

We are now ready to prove Theorem \ref{theorem cantor}.

\medskip

\noindent{\it Proof of Theorem \ref{theorem cantor}.}  By \eqref{x independence} we have 
\begin{equation}\label{equality ookk}
    f(A)=\limsup_{\rho\to 0^+}\frac{\m^F(\ell_A,Q(x,\rho))}{\rho^d}\quad \text{ for $\Ld$-a.e.\ $x\in\Rd$ and every $A\in\Rdsym$.}
\end{equation}
Let us fix $U\in\U_c(\Rd)$ and $u\in {\rm BD}(U)$. Thanks to \eqref{somma facj}, \eqref{equality ookk}, and Proposition \ref{representation proposition}, it is enough to show that 
    \begin{equation*}
F^c(u,B)=\int_{B}f^\infty\Big(\frac{{\rm d}E^cu}{{\rm d}|E^cu|}\Big){\rm d}|E^cu|
    \end{equation*}
    for every $B\in \B(U)$. To prove this we will show that
    \begin{equation}\label{radon cantor}
        \frac{{\rm d}F^c(u,\cdot)}{{\rm d}|E^cu|}(x)=f^\infty\Big(\frac{{\rm d}E^cu}{{\rm d}|E^cu|}(x)\Big)
    \end{equation}
    for $|E^cu|$-a.e.\ $x\in U$. Let $C(u)\subset U$ be the set of Lemma \ref{lemma focardi} and let us fix $x\in C(u)\setminus N$, where $N$ is as in the statement. 
    
 Thanks to Lemma \ref{lemma focardi} there exist a positive sequence $\{\lambda_{j}\}_{j\in\N}$ converging to $0$ as $j\to +\infty$, and for every $j\in\N$ a positive sequence $\{\rho_{i,j}\}_{i\in\N}$, converging to $0$ as $i\to+\infty$, such that \eqref{eq Foc div} and \eqref{eq Foc} hold. For simplicity of notation, we set $P_j^x(x,\rho_{i,j}):=P^{x}_{\lambda_j}(x,\rho_{i,j})$.  To obtain \eqref{radon cantor} is enough to show that 
    \begin{equation}\label{claim radon cantor}
    f^\infty\Big(\frac{{\rm d}E^cu}{{\rm d}|E^cu|}(x)\Big)=\lim_{j\to+\infty}\limsup_{i\to+\infty}\frac{\m^F(s_{i,j}\ell_{A},P^x_j(x,\rho_{i,j}))}{s_{i,j}\Ld(P^x_j(x,\rho_{i,j}))},
    \end{equation}
    where $A=A(x)$ and $s_{i,j}$ are defined by \eqref{def A e sij}.

     By \eqref{def f}, \eqref{integral representation bulk}, and \eqref{x independence} we have $$\m^F(s_{i,j}\ell_{A},P^x_j(x,\rho_{i,j})\leq F(s_{i,j}\ell_A,P^x_j(x,\rho_{i,j}))=f(s_{i,j}A)\Ld(P^x_j(x,\rho_{i,j})) ,$$ so that  by \eqref{eq Foc div}   we have
    \begin{equation}\label{upper bound cantor}
\limsup_{i\to+\infty}\frac{\m^F(s_{i,j}\ell_{A},P^x_j(x,\rho_{i,j}))}{s_{i,j}\Ld(P^x_j(x,\rho_{i,j}))}\leq \limsup_{i\to +\infty} \frac{f(s_{i,j}A)}{s_{i,j}}=f^\infty(A),
    \end{equation}
    for every $j\in\N$.

Then, we observe that  for every $y\in\Rd$, $\rho>0$,  and $t>0$  by \eqref{def f}, \eqref{integral representation bulk}, and \eqref{x independence}  we have 
    \begin{gather}\notag \m^{F}(t\ell_A,Q(y,\rho))\leq F(t\ell_A,Q(y,\rho))=f(tA)\rho^d 
      \quad \text{ and }\quad \lim_{\rho\to 0^+}\frac{\m^F(t\ell_A,Q(x,\rho))}{\rho^d}=f(tA).
    \end{gather}
  Hence, we can  use Lemma \ref{change of parallelogram} to  deduce  that
\begin{equation}\label{limitazzo} 
          \lim_{\rho\to 0^+}\frac{\m^F(t\ell_A,P^x_\lambda(x,\rho))}{\Lb^d(P^x_\lambda(x,\rho))}=f(tA)
    \end{equation}
    for every $\lambda,t>0$. Recalling \eqref{existence finffty}, from the previous equality it follows that  
    \begin{equation}\label{recession parallelogram}
           \lim_{t\to +\infty}\lim_{\rho\to 0^+}\frac{\m^F(t\ell_A,P^x_\lambda(x,\rho))}{t\Lb^d(P^x_\lambda(x,\rho))}=f^\infty(A).
    \end{equation}

 Then, we apply Lemma \ref{minimum estimate} with $w=\ell_A$ and $U=P^x_j(x,\rho_{i,j})$ to get for every $t>0$  
 \begin{equation}\label{minimum recession}
    \begin{gathered}
\big|\frac{\mathfrak{m}^F(s_{i,j}\ell_A,P^x_j(x,\rho_{i,j}))}{s_{i,j}\Ld(P^x_j(x,\rho_{i,j}))}- \frac{\mathfrak m^F(t\ell_A,P^x_j(x,\rho_{i,j}))}{t\Ld(P^x_j(x,\rho_{i,j}))}\big|\leq \frac{c_6}{s_{i,j}}\Big(\frac{\m^F(s_{i,j}\ell_A,P^x_j(x,\rho_{i,j}))}{\Ld(P^x_j(x,\rho_{i,j}))}\Big)^{1-\alpha}\\+\frac{c_6}{s_{i,j}}+
    \frac{c_6}{t}\Big(\frac{\mathfrak{m}^F(t\ell_A,P^x_j(x,\rho_{i,j}))}{\Ld(P^x_j(x,\rho_{i,j}))}\Big)^{1-\alpha}+\frac{c_6}{t} \\
+c_7\Big(\frac{\m^F(s_{i,j}\ell_A,P^x_j(x,\rho_{i,j}))}{s_{i,j}\Ld(P^x_j(x,\rho_{i,j}))}+\frac{\m^F(t\ell_A,P^x_j(x,\rho_{i,j}))}{t\Ld(P^x_j(x,\rho_{i,j}))}\Big)\Big(\frac{1}{s_{i,j}}+\frac{1}{t}\Big).
\end{gathered}
\end{equation}
We observe that for every $\tau>0$ it follows from the upper bound in (c) of Definition \ref{abstract functionals} that 
\begin{equation}\label{estimate from above}
    \m^F(\tau\ell_A,P^x_j(x,\rho_{i,j}))\leq F(\tau\ell_A,,P^x_j(x,\rho_{i,j}))\leq (c_3\tau |A|+c_4)\Ld(P^x_j(x,\rho_{i,j})).
\end{equation}

Let us fix $\e>0$. Recalling  \eqref{existence finffty}, we may find $t>0$ such that 
\begin{equation}\label{epsilon} 
    |\frac{f(tA)}{t}-f^\infty(A)|<\e\quad \text{and }\quad \frac{c_6}{t}+ \frac{c_6}{t^\alpha}(c_3|A|+\frac{c_4}{t})^{1-\alpha}+c_7\big(2c_3|A|+\frac{c_4}{t}\big)\frac{1}{t}<\e.
    \end{equation}
 Thanks to \eqref{limitazzo}, from the first inequality in \eqref{epsilon} we deduce that  for every $j\in\N$ we have
 \begin{equation*}
       f^\infty(A)-\e< \frac{f(tA)}{t}=\lim_{i\to +\infty}\frac{\mathfrak{m}^F(t\ell_A,P^x_j(x,\rho_{i,j}))}{t\Ld(P^x_j(x,\rho_{i,j}))}.
 \end{equation*}

\noindent Combining this inequality  with    \eqref{minimum recession}, we get  
 \begin{gather}\notag
     f^\infty(A)-\e\leq \lim_{j\to+\infty}\lim_{i\to +\infty}\frac{\mathfrak{m}^F(t\ell_A,P^x_j(x,\rho_{i,j}))}{t\Ld(P^x_j(x,\rho_{i,j}))}\leq \limsup_{j\to+\infty}\limsup_{i\to+\infty}\Big(\frac{\mathfrak{m}^F(s_{i,j}\ell_A,P^x_j(x,\rho_{i,j}))}{s_{i,j}\Ld(P^x_j(x,\rho_{i,j}))}\\ \notag+ \frac{c_6}{s_{i,j}}\Big(\frac{\m^F(s_{i,j}\ell_A,P^x_j(x,\rho_{i,j}))}{\Ld(P^x_j(x,\rho_{i,j}))}\Big)^{1-\alpha}+\frac{c_6}{s_{i,j}}+
    \frac{c_6}{t}\Big(\frac{\mathfrak{m}^F(t\ell_A,P^x_j(x,\rho_{i,j}))}{\Ld(P^x_j(x,\rho_{i,j}))}\Big)^{1-\alpha}+\frac{c_6}{t}\\\label{stima lunghetta}
    +c_7\Big(\frac{\m^F(s_{i,j}\ell_A,P^x_j(x,\rho_{i,j}))}{s_{i,j}\Ld(P^x_j(x,\rho_{i,j}))}+\frac{\m^F(t\ell_A,P^x_j(x,\rho_{i,j}))}{t\Ld(P^x_j(x,\rho_{i,j}))}\Big)\Big(\frac{1}{s_{i,j}}+\frac{1}{t}\Big)\Big).
 \end{gather}

Using \eqref{estimate from above}, we see that the right-hand side of the previous chain of inequalities can be bounded from above by 
 \begin{gather}
\limsup_{j\to+\infty}\limsup_{i\to+\infty}\Big(\frac{\mathfrak{m}^F(s_{i,j}\ell_A,P^x_j(x,\rho_{i,j}))}{s_{i,j}\Ld(P^x_j(x,\rho_{i,j}))} \notag+ \frac{c_6}{s_{i,j}^\alpha}\Big(c_3|A|{+}\frac{c_4}{s_{i,j}}\Big)^{1-\alpha}\!\!\!+\frac{c_6}{s_{i,j}}+
    \frac{c_6}{t^\alpha}(c_3|A|+\frac{c_4}{t})^{1-\alpha}\!\!+\frac{c_6}{t}\\\label{stima lunghettta 2}
    +c_7\Big(2c_3|A|+\frac{c_4}{s_{i,j}}+\frac{c_4}{t}\Big)\Big(\frac{1}{s_{i,j}}+\frac{1}{t}\Big)\Big).
 \end{gather}
Recalling \eqref{upper bound cantor} and \eqref{eq Foc div}, from  \eqref{epsilon}-\eqref{stima lunghettta 2} we infer
    \begin{equation*}
        f^\infty(A)-\e<\lim_{j\to+\infty}\limsup_{i\to+\infty}\frac{\mathfrak{m}^F(s_{i,j}\ell_A,P^x_j(x,\rho_{i,j}))}{s_{i,j}\Ld(P^x_j(x,\rho_{i,j}))}+\e \leq  f^\infty(A)+\e.
    \end{equation*}
As $\e>0$ is arbitrary, we obtain \eqref{claim radon cantor}. Since $x\in C(u)\setminus N$, $|E^cu|(U\setminus C(u))=0$, and   $|E^cu|(N)=0$ (see part (ii) of (k) of Section \ref{sec: Notation}), this concludes the proof.\qed

\medskip

 It is easy to produce examples of functionals $F\in\mathfrak{F}^{\alpha,\infty}_{\rm sc}$ which are not invariant under translation of the independent variable, but satisfy \eqref{x independence}. 
\begin{example}\label{simple example}
 Assume that $0<c_1<c_3$ and let $f$ and $g$ be the functions defined for every  $A\in\Rdsym$, $x\in\Rd$, $\zeta\in\Rd$, and $\nu\in\Sd$ by 
  \begin{gather}\notag
      f(A):=c_3|A| \quad \text{and}\quad g(x,\zeta,\nu):=\begin{cases}
          c_3|\zeta\odot\nu| &\text{if }x\in \Rd\setminus \Pi^{e_d},\\
           c_1|\zeta\odot\nu|&\text{if }x\in\Pi^{e_d},
      \end{cases}
  \end{gather}
  where $e_d=(0,\dots,1)$ and $\Pi^{e_d}:=\{y\in \Rd: y\cdot e_d=0\}$. Clearly $f\in\mathcal{F}$ and $g\in\mathcal{G}$, and since they are positively homogeneous of degree one we  also have $f\in\mathcal{F}^\alpha$ and $g\in\mathcal{G}^\infty$. Hence, 
 the functional $F:=F^{f,g}$ belongs to $\mathfrak{F}^{\alpha,\infty}$ by Proposition \ref{inclusion in falfainfty}. 
   Let $\psi\colon\Rd\to \R$ be the function defined by \begin{equation*}
       \psi(x):=\begin{cases}
           c_3 &\text{if }x\in \Rd\setminus \Pi^{e_d},\\
           c_1&\text{if }x\in\Pi^{e_d}.
       \end{cases}
   \end{equation*}
 We observe that for every $U\in\U_c(\Rd)$ and $u\in {\rm BD}(U)$ we have
   \begin{equation*}
       F(u,U)=\int_U\psi\,{\rm d}|Eu|.
   \end{equation*}
   Since $\psi$ is lower semicontinuous in $\Rd$, this equality shows that $F(\cdot,U)$ is $L^1_{\rm loc}(\Rd;\Rd)$-lower semicontinuous, hence  $F\in\mathfrak{F}^{\alpha,\infty}_{\rm sc}$.
 
\end{example}

\medskip
\section{Characterisation of \texorpdfstring{$\Gamma$}{Γ}-convergence using minima on small cubes}\label{sec:gamma convergence}
In this section we determine the integrands of the $\Gamma$-limit  of a sequence  $\{F_n\}_{n\in\N}\subset \mathfrak{F}^{\alpha,\infty}$ by means of limits of the minimum values of suitable minimisation problems for $F_n$ on small cubes. 

\medskip

 The next lemma shows that the $\Gamma$-convergence of a sequence $\{F_{n}\}_{n\in\N}\subset \mathfrak{F}$ to a limit functional $F$ allows us to compare $\m^F$ with $\{\m^{F_n}\}_{n\in\N}$.

\begin{lemma}\label{Lemma: existence of limit in rho}
      Let $\{F_n\}_{n\in\N}\subset \mathfrak{F}$.  Assume that there exists $F\in\mathfrak{F}_{\rm sc}$ such that for every $U\in\U_c(\Rd)$ the sequence $\{F_n(\cdot,U)\}_{n\in\N}$ $\Gamma$-converges to $F(\cdot,U)$ with respect to the topology of  $L^1_{\rm loc}(\Rd;\Rd)$. Let $U,W\in \U_c(\Rd)$ with Lipschitz boundary and  $W\subset\subset U$, and let $w\in {\rm BD}(U)$.
      Then   
      \begin{gather}
   \label{liming geq point}
\m^F(w,U)\leq\liminf_{n\to+\infty}\m^{F_n}(w,W)+c_3|Ew|(U\setminus W)+c_4\Ld(U\setminus W),\\\label{limsup leq point}
 \limsup_{n\to+\infty}\m^{F_n}(w,U)\leq \m^F(w,U).
      \end{gather}
\end{lemma}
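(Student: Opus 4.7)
The plan is to prove the two inequalities separately, each relying on the trace-matching condition in the definition of the minimisation problems and on ${\rm BD}$-compactness.

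For \eqref{liming geq point}, I may assume $L:=\liminf_n\m^{F_n}(w,W)<+\infty$ (otherwise the bound is trivial) and pass to a subsequence realising $L$. For each $n$ I would take an almost-minimiser $\hat v_n\in{\rm BD}(W)$ with $\hat v_n=w$ on $\partial W$ and $F_n(\hat v_n,W)\leq\m^{F_n}(w,W)+\tfrac{1}{n}$, then extend $\hat v_n$ to $v_n\in L^1_{\rm loc}(\Rd;\Rd)$ by setting $v_n:=w$ on $U\setminus W$. The matching of the inner traces of $\hat v_n$ and $w$ on $\partial W$ ensures that $v_n|_U\in{\rm BD}(U)$ and that $Ev_n$ agrees with $Ew$ on $U\setminus W$ (including the possible $\hd$-contribution on $\partial W\cap U$). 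Combining the measure property (a) with the upper bound in (c) of Definition \ref{abstract functionals} then gives
\begin{equation*}
F_n(v_n,U)\leq\m^{F_n}(w,W)+\tfrac{1}{n}+c_3|Ew|(U\setminus W)+c_4\Ld(U\setminus W).
\end{equation*}
The Poincaré-Korn inequality applied to $\hat v_n-w$, which has zero trace on the Lipschitz boundary $\partial W$, bounds $\|\hat v_n\|_{L^1(W)}$ uniformly, while $|Ev_n|(U)$ is bounded by the preceding display. By ${\rm BD}$-compactness, a subsequence converges in $L^1_{\rm loc}(\Rd;\Rd)$ to some $v\in{\rm BD}(U)$. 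Since $v_n=w$ on the open neighbourhood $U\setminus\overline W$ of $\partial U$, the trace of $v$ on $\partial U$ matches that of $w$, so $v$ is admissible for $\m^F(w,U)$, and the $\Gamma$-liminf inequality for $F(\cdot,U)$ yields \eqref{liming geq point}.

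For \eqref{limsup leq point}, which I expect to be the more delicate inequality, I would fix $\eta>0$ and take $u\in {\rm BD}(U)$ with $u=w$ on $\partial U$ and $F(u,U)\leq\m^F(w,U)+\eta$. By the $\Gamma$-limsup inequality, a recovery sequence $u_n\to u$ in $L^1_{\rm loc}(\Rd;\Rd)$ satisfies $\limsup_n F_n(u_n,U)\leq F(u,U)$, but the $u_n$ need not satisfy the boundary condition, so a gluing procedure inspired by Lemma \ref{fundamental estimate} is required. For $\zeta>0$ and $m\in\N$, I would consider the nested sets $U_j:=\{x\in U:\operatorname{dist}(x,\partial U)>2\zeta-j\zeta/m\}$ for $j=0,\dots,m$, all compactly contained in $U$, together with cut-offs $\varphi_j\in C^\infty_c(\Rd;[0,1])$ equal to $1$ on $\overline U_{j-1}$, vanishing outside $U_j$, and with $|\nabla\varphi_j|\leq Cm/\zeta$. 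Then $v_n^j:=\varphi_j u_n+(1-\varphi_j)u$ coincides with $u$ on the neighbourhood $U\setminus U_m$ of $\partial U$, so its trace on $\partial U$ equals $w$ and $v_n^j$ is admissible for $\m^{F_n}(w,U)$.

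The measure property decomposes $F_n(v_n^j,U)$ as $F_n(u_n,U_{j-1})+F_n(v_n^j,S_j)+F_n(u,U\setminus U_j)$, with $S_j:=U_j\setminus\overline U_{j-1}$. The Leibniz-type identity $Ev_n^j=\varphi_j Eu_n+(1-\varphi_j)Eu+(u_n-u)\odot\nabla\varphi_j\,\Ld$, the upper bound in (c), and averaging over $j\in\{1,\dots,m\}$ yield, for some choice $j_n=j_n(m)$, the estimate $F_n(v_n^{j_n},S_{j_n})\leq M/m+(Cc_3/\zeta)\|u_n-u\|_{L^1(U)}$, where $M$ is independent of $n$ and $m$ (one uses the uniform bound $|Eu_n|(U)\leq(F_n(u_n,U)+c_2\Ld(U))/c_1$ coming from the lower bound in (c)). Meanwhile $F_n(u,U\setminus U_j)\leq c_3|Eu|(U\setminus U_0)+c_4\Ld(U\setminus U_0)$. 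Taking $\limsup_n$ (using $u_n\to u$ in $L^1_{\rm loc}$ to kill the gluing defect), then $m\to+\infty$ (to kill $M/m$), and finally $\zeta\to 0^+$ (so that the thin strip $U\setminus U_0$ shrinks to the empty set, forcing $|Eu|(U\setminus U_0)$ and $\Ld(U\setminus U_0)$ to vanish by continuity of the corresponding finite measures) gives $\limsup_n\m^{F_n}(w,U)\leq F(u,U)\leq \m^F(w,U)+\eta$, from which \eqref{limsup leq point} follows on letting $\eta\to 0^+$. The main technical challenge is the triple diagonal argument in $\zeta$, $m$, and $n$, which must be arranged so that each bound remains uniform in the parameters sent to their limit later.
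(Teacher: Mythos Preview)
Your proposal is correct and follows essentially the same route as the paper's proof. For \eqref{liming geq point} both arguments extend almost-minimisers by $w$ outside $W$, use ${\rm BD}$-compactness, and apply the $\Gamma$-liminf inequality (you add the explicit Poincaré--Korn justification for the $L^1$ bound, which the paper leaves implicit); for \eqref{limsup leq point} both glue a recovery sequence to $u$ near $\partial U$ via the averaging argument of Lemma~\ref{fundamental estimate}, the only organisational difference being that the paper first fixes a compact set $K\subset U$ with $c_3|Eu|(U\setminus K)+c_4\Ld(U\setminus K)<\delta$, whereas you parametrise the strip directly by $\zeta$ and send $\zeta\to 0^+$ at the end.
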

\begin{proof}
    We consider a subsequence, not relabelled, such that the liminf in the right-hand side of \eqref{liming geq point} is actually a limit. To prove \eqref{liming geq point} we fix $\delta>0$ and for every $n\in\N$   we select $z
_n\in {\rm BD}(W)$  such that $u_n=w$ on $\partial W$ and 
\begin{equation}\label{quasiminimiser linign}
    F_n(z_n,W)<\m^{F_n}(w,W)+\delta.
\end{equation}
We observe that by the upper bound in (c) of Definition \ref{abstract functionals}, we have 
\begin{equation*}
    F_n(z_n,W)<\m^{F_n}(w,W)+\eta\leq F_n(w,W)+\delta\leq  c_3|Ew|(W)+c_4\Ld(W)+\delta,
\end{equation*}
so that by the lower bound in (c) we also get that  $
   |Ez_n|(W)$ is bounded uniformly with respect to $n$.
   We extend $z_n$ to $U$ by setting $z_n=w$ on $U\setminus W$ and observe that $
   |Ez_n|(U) $ is uniformly bounded in $n$ as well. Hence, there exist a subsequence, not relabelled, and a function $z\in {\rm BD}(U)$ such that $z_n\to z$ in $L^1(U;\Rd)$ and  $z=w$ on $U\setminus W$, so that 
   \begin{gather}\label{dise}
    \m^F(w,U)\leq F(z,U)\leq \liminf_{n\to+\infty}F_n(z_n,U),
   \end{gather}
   where the last inequality is due to $\Gamma$-convergence. Since the inner trace of $z_n$ on $\partial W$ equals  the inner trace of $w$, we have $|Ez_n| (\partial W)=|Ew|(\partial W)$.  Hence, 
   the upper bound in (c) of Definition \ref{abstract functionals} gives
   \begin{gather*}
    F_n(z_n,U\setminus W)\leq c_3|Ez_n|(U\setminus W)+c_4\Ld(U\setminus W)= c_3|Ew|(U\setminus W)+c_4\Ld(U\setminus W),
   \end{gather*}
   where the last equality is due to the fact that $z_n=w$ in the open set $U\setminus \overline{W}$ and $|Ez_n| (\partial W)=|Ew|(\partial W)$.
  Combining this inequality with \eqref{quasiminimiser linign} and  \eqref{dise}, and letting $\delta\to 0^+$ we obtain \eqref{liming geq point}.
  
 To prove \eqref{limsup leq point}, let us fix $\delta>0$ and  let  $u\in {\rm BD}(U)$ be such that $u=w$ on $\partial U$ and $
        F(u,U)< \m^{F}(w,U)+\delta.$
    By $\Gamma$-convergence there exists a sequence $\{u_n\}_{n\in\N}\subset {\rm BD}(U)$ converging   to $u$ strongly  in $L^1(U;\Rd)$ and such that \begin{equation}
    \label{recovery for affine}
\lim_{n\to+\infty}F_n(u_n,U)= F(u,U)<\m^{F}(w,U)+\delta.
    \end{equation} 
    
    We now fix a compact set $K\subset U$ such that
    \begin{equation}\label{smallness on u}
        c_3|Eu|(U\setminus K)+c_4\Ld(U\setminus K)<\delta.
    \end{equation}
    We also consider two additional open sets $U''$ and $U'$ with the property that $K\subset U''\subset\subset  U'\subset \subset U$. We now argue as in the part of the proof of Lemma \ref{fundamental estimate}  that starts from \eqref{convex combination}, with $V=U\setminus U''$,  $v_n$ replaced by $u$, and for every $n\in\N$ we construct a function $z_n\in {\rm BD}(U)$ such that $z_n=u$ in a neighbourhood of $\partial U$ and 
    \begin{gather*}
        F_n(z_n,U)\leq  \,\,F_n(u_n,U)+F_n(u,U\setminus U'')+\frac{M}{m}+\frac{2c_3}{\eta}\|u_n-u\|_{L^1(U;\Rd)},
    \end{gather*}
    where $M\geq 0$ is the constant introduced in \eqref{bound M} and  $\eta:={\rm dist}(U,\partial U')$. Since $z_n=u=w$ on $\partial U$, we have  $\m^{F_n}(w,U)\leq F_n(z_n,U)$. Hence, using the upper bounds in property (c) of Definition \ref{abstract functionals} for $F_n$, \eqref{smallness on u}, and the inclusion $K\subset U''$, from the previous displayed formula we obtain 
    \begin{equation*}
        \m^{F_n}(w,U)\leq F_n(u_n,U)+\delta+\frac{M}{m}+\frac{2c_3}{\eta}\|u_n-u\|_{L^1(U;\Rd)}.
    \end{equation*}
   We now pass to the limsup as $n\to+\infty$ and, recalling that $u_n\to u$ in $L^1(U;\Rd)$ as $n\to+\infty$, from \eqref{recovery for affine} we get
    \begin{equation}\notag
    \limsup_{n\to+\infty}\m^{F_n}(w,U)\leq \m^F(w,U)+2\delta+\frac{M}{m}.
    \end{equation}
    Taking the limit as $m\to+\infty$ and $\delta\to0^+$ we obtain \eqref{limsup leq point}.
\end{proof}

The next result shows that for a functional $F\in\mathfrak{F}_{\rm sc}$ arising as $\Gamma$-limit of a sequence of functionals in $\mathfrak{F}$ the integrands $f$ and $g$ of the bulk and surface parts can be obtained using  limits of the minimum values of the minimisation problems $\m^{F_n}(\ell_A,Q(x,\rho))$  and $\m^{F_n}(u_{x,\zeta,\nu},Q_\nu(x,\rho))$ for the functionals $F_n$.

\begin{lemma}\label{asymptotic minimum}
  Let $\{F_n\}_{n\in\N}\subset \mathfrak{F}$.  Assume that there exists $F\in\mathfrak{F}_{\rm sc}$ such that for every $U\in\U_c(\Rd)$ the sequence $\{F_n(\cdot,U)\}_{n\in\N}$ $\Gamma$-converges to $F(\cdot,U)$ with respect to the topology of $L^1_{\rm loc}(\Rd;\Rd)$, and let $f$ and $g$ be the functions associated to $F$ by \eqref{def f} and \eqref{def g}. Then for every $x\in\Rd$, $A\in\Rdsym$, $\zeta\in\Rd$, and $\nu\in\Sd$ we have
  
 \begin{gather*}
f(x,A)=\limsup_{\rho\to 0^+}\limsup_{n\to+\infty}\frac{\m^{F_n}(\ell_A,Q(x,\rho))}{\rho^d}=\limsup_{\rho\to 0^+}\liminf_{n\to+\infty}\frac{\m^{F_n}(\ell_A,Q(x,\rho))}{\rho^d},\\
g(x,\zeta,\nu)=\limsup_{\rho\to 0^+}\limsup_{n\to+\infty}\frac{\m^{F_n}(u_{x,\zeta,\nu},Q_\nu(x,\rho))}{\rho^{d-1}}=\limsup_{\rho\to 0^+}\liminf_{n\to+\infty}\frac{\m^{F_n}(u_{x,\zeta,\nu},Q_\nu(x,\rho))}{\rho^{d-1}}.
    \end{gather*}
    \end{lemma}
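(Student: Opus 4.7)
\smallskip

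\textbf{Plan of proof.} The entire lemma is a consequence of the two inequalities \eqref{liming geq point} and \eqref{limsup leq point} in Lemma \ref{Lemma: existence of limit in rho}, applied to the special boundary data $w=\ell_A$ and $w=u_{x,\zeta,\nu}$ on concentric cubes. The strategy is to get the ``$\leq$'' half of both chains of equalities directly from \eqref{limsup leq point}, and the ``$\geq$'' half from \eqref{liming geq point} by choosing a diagonal pair of radii that shrinks to zero while keeping the ratio of inner to outer cube close to one, so that the extra terms involving $|Ew|(U\setminus W)$ and $\Ld(U\setminus W)$ vanish in the limit.

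\smallskip

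\textbf{Upper bound.} Applying \eqref{limsup leq point} with $w=\ell_A$ and $U=Q(x,\rho)$, and dividing by $\rho^d$, we obtain
\begin{equation*}
\limsup_{n\to+\infty}\frac{\m^{F_n}(\ell_A,Q(x,\rho))}{\rho^d}\leq \frac{\m^F(\ell_A,Q(x,\rho))}{\rho^d}.
\end{equation*}
Taking $\limsup_{\rho\to 0^+}$ and recalling \eqref{def f} yields the inequality $\limsup_\rho\limsup_n\leq f(x,A)$, and an identical computation with $w=u_{x,\zeta,\nu}$, $U=Q_\nu(x,\rho)$ divided by $\rho^{d-1}$ gives the analogue for $g(x,\zeta,\nu)$. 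Since $\limsup_\rho\liminf_n\leq \limsup_\rho\limsup_n$ trivially, all the $(\leq)$-inequalities required by the statement follow at once.

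\smallskip

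\textbf{Lower bound.} For the bulk integrand, fix $0<\rho'<\rho$ and apply \eqref{liming geq point} with $w=\ell_A$, $U=Q(x,\rho)$ and $W=Q(x,\rho')\subset\subset U$. Since $E\ell_A=A\,\Ld$, we have $|E\ell_A|(U\setminus W)=|A|(\rho^d-\rho'^d)$, so that
\begin{equation*}
\frac{\m^F(\ell_A,Q(x,\rho))}{\rho^d}\leq \Big(\frac{\rho'}{\rho}\Big)^{\!d}\liminf_{n\to+\infty}\frac{\m^{F_n}(\ell_A,Q(x,\rho'))}{\rho'^d}+(c_3|A|+c_4)\frac{\rho^d-\rho'^d}{\rho^d}.
\end{equation*}
I would then choose a sequence $\rho_k\to 0^+$ realising the $\limsup$ in \eqref{def f}, i.e.\ such that $\m^F(\ell_A,Q(x,\rho_k))/\rho_k^d\to f(x,A)$, and set $\rho'_k:=\rho_k(1-1/k)$. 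Then $\rho'_k\to 0^+$, $(\rho'_k/\rho_k)^d\to 1$, and $(\rho_k^d-\rho'^d_k)/\rho_k^d\to 0$, so passing to the limit along this diagonal gives
\begin{equation*}
f(x,A)\leq \limsup_{k\to+\infty}\liminf_{n\to+\infty}\frac{\m^{F_n}(\ell_A,Q(x,\rho'_k))}{\rho'^d_k}\leq \limsup_{\rho\to 0^+}\liminf_{n\to+\infty}\frac{\m^{F_n}(\ell_A,Q(x,\rho))}{\rho^d},
\end{equation*}
closing the sandwich for $f$.

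\smallskip

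For the surface integrand, the same scheme applies with $w=u_{x,\zeta,\nu}$, $U=Q_\nu(x,\rho)$, and $W=Q_\nu(x,\rho')$. The only new point is the computation of $|Eu_{x,\zeta,\nu}|(U\setminus W)$: since $Eu_{x,\zeta,\nu}=(\zeta\odot\nu)\,\hd\mres\Pi^\nu_x$ and the hyperplane $\Pi^\nu_x$ cuts each concentric cube $Q_\nu(x,r)$ in a $(d-1)$-dimensional square of $\hd$-measure $r^{d-1}$, one gets $|Eu_{x,\zeta,\nu}|(U\setminus W)=|\zeta\odot\nu|(\rho^{d-1}-\rho'^{d-1})$. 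Dividing \eqref{liming geq point} by $\rho^{d-1}$ yields an error of the form $c_3|\zeta\odot\nu|(1-(\rho'/\rho)^{d-1})+c_4\rho(1-(\rho'/\rho)^d)$, which again vanishes along the diagonal $\rho'_k=\rho_k(1-1/k)$ with $\rho_k$ realising the $\limsup$ in \eqref{def g}. There is no real obstacle; the entire argument is bookkeeping to ensure the boundary-layer errors coming from the strict inclusion $W\subset\subset U$ tend to zero at the same rate as $\rho_k\to 0^+$.
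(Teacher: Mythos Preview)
Your proof is correct and follows essentially the same approach as the paper: the paper's proof simply refers to \cite[Lemma 3.3]{DalToaHomo} with the instruction to use Lemma \ref{Lemma: existence of limit in rho}, and what you have written is precisely that argument spelled out. The only cosmetic difference is the choice of inner radius in the diagonal step --- the paper (cf.\ the proof of Theorem \ref{theorem sufficient}) uses $\rho'=\rho-\rho^2$ rather than your $\rho'_k=\rho_k(1-1/k)$, but the effect is identical.
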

    \begin{proof}
       The proof can be obtained arguing exactly as in \cite[Lemma 3.3]{DalToaHomo},  replacing $D$ with $E$, $\nabla$ with $\E$, and Propositions 3.1 and 3.2 of that paper with our Lemma \ref{Lemma: existence of limit in rho}.
    \end{proof}
  
We conclude this section by proving a sufficient condition for the $\Gamma$-convergence of a sequence $F_n$, based on limits of minimum values of minimisation  problems for $F_n$ on small cubes. Note that we require that the limits corresponding to the volume integrand do not depend on $x$.
\begin{theorem}\label{theorem sufficient}
    Let $\{F_n\}_{n\in\N}\subset\mathfrak{F^{\alpha,\infty}} $. Assume that there exist  $\hat{f}\colon\Rdsym\to[0,+\infty)$ and $\hat{g}
    \colon\Rd\times \Rd\times \Sd\to [0,+\infty)$ such that 
    \begin{gather}\label{1}
\hat{f}(A)=\lim_{\rho\to 0^+}\limsup_{n\to+\infty}\frac{\m^{F_n}(\ell_A,Q(x,\rho))}{\rho^d}=\lim_{\rho\to 0^+}\liminf_{n\to+\infty}\frac{\m^{F_n}(\ell_A,Q(x,\rho))}{\rho^d},\\\label{2}
\hat{g}(x,\zeta,\nu)=\limsup_{\rho\to 0^+}\limsup_{n\to+\infty}\frac{\m^{F_n}(u_{x,\zeta,\nu},Q_\nu(x,\rho))}{\rho^{d-1}}=\limsup_{\rho\to 0^+}\liminf_{n\to+\infty}\frac{\m^{F_n}(u_{x,\zeta,\nu},Q_\nu(x,\rho))}{\rho^{d-1}},
    \end{gather}
    for every $x\in\Rd$, $A\in\Rdsym$, $\zeta\in\Rd$, and $\nu\in\Sd$. Then $\hat{f}\in\mathcal{F}^\alpha$, $\hat{g}\in\mathcal{G}^\infty$, and for every $U\in\mathcal{U}_c(\Rd)$ we have that $\{F_n(\cdot,U)\}_{n\in\N}$ $\Gamma$-converges to $F^{\hat{f},\hat{g}}(\cdot,U)$ with respect to the topology of $L^1_{\rm loc}(\Rd;\Rd)$, where $F^{\hat{f},\hat{g}}$ is the functional introduced in Definition \ref{def:integral functionals}.
\end{theorem}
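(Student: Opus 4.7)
The plan is a subsequence argument combined with the full integral representation of Theorem \ref{theorem cantor}. Fix an arbitrary subsequence of $\{F_n\}_{n\in\N}$; by Theorem \ref{compactness} there exist a further subsequence, which for simplicity I still index by $n$, and a functional $F\in\mathfrak{F}_{\rm sc}$ such that $F_n(\cdot,U')$ $\Gamma$-converges to $F(\cdot,U')$ in the $L^1_{\rm loc}(\Rd;\Rd)$-topology for every $U'\in\U_c(\Rd)$. Proposition \ref{alfa closure} upgrades this to $F\in \mathfrak{F}^{\alpha,\infty}_{\rm sc}$.

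The next step is to identify the integrands $f,g$ of $F$ defined by \eqref{def f}--\eqref{def g} with $\hat f$ and $\hat g$. Lemma \ref{asymptotic minimum} applied to the extracted subsequence gives
\begin{equation*}
f(x,A)=\limsup_{\rho\to 0^+}\limsup_{n\to+\infty}\frac{\m^{F_n}(\ell_A,Q(x,\rho))}{\rho^d},\quad g(x,\zeta,\nu)=\limsup_{\rho\to 0^+}\limsup_{n\to+\infty}\frac{\m^{F_n}(u_{x,\zeta,\nu},Q_\nu(x,\rho))}{\rho^{d-1}}.
\end{equation*}
For each $\rho$, the inner $\limsup_n$ along the extracted subsequence is sandwiched between the $\liminf_n$ and the $\limsup_n$ along the full original sequence; taking $\limsup_{\rho\to 0^+}$ and exploiting the two equalities assumed in \eqref{1} and \eqref{2} collapses the sandwich and forces $f(x,A)=\hat f(A)$ and $g(x,\zeta,\nu)=\hat g(x,\zeta,\nu)$. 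From $F\in\mathfrak{F}^{\alpha,\infty}_{\rm sc}$ and the last lemma of Section~6 we also get $\hat f\in\mathcal{F}^\alpha$ and $\hat g\in\mathcal{G}^\infty$, so that $F^{\hat f,\hat g}$ is a bona fide element of $\mathfrak{F}^{\alpha,\infty}$ by Proposition \ref{inclusion in falfainfty}.

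Since $\hat f$ does not depend on $x$, Theorem \ref{theorem cantor} applies to $F$ with $N=\emptyset$ and yields
\begin{equation*}
F(u,B)=\int_B\hat f(\E u)\,\dx x+\int_B\hat f^\infty\Big(\frac{\dx E^cu}{\dx|E^cu|}\Big)\dx|E^cu|+\int_{J_u\cap B}\hat g(x,[u],\nu_u)\,\dx\hd
\end{equation*}
for every $U\in\U_c(\Rd)$, $u\in{\rm BD}(U)$, and $B\in\B(U)$. Hence $F=F^{\hat f,\hat g}$ on $\U_c(\Rd)$. Since the same limit $F^{\hat f,\hat g}$ is obtained along every $\Gamma$-convergent subsequence, the Urysohn property of $\Gamma$-convergence implies that the full sequence $F_n(\cdot,U)$ $\Gamma$-converges to $F^{\hat f,\hat g}(\cdot,U)$ for every $U\in\U_c(\Rd)$.

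The main delicate point is the identification of the integrands: Lemma \ref{asymptotic minimum} only provides them as iterated $\limsup$ over $\rho$ and $n$, and $\limsup_\rho$ does not in general commute with passage to subsequences in $n$. It is precisely the strong form of \eqref{1} and \eqref{2}---the equality of outer $\limsup_n$ and $\liminf_n$ in both, together with the existence of a genuine limit in $\rho$ in \eqref{1}, which is what allows Theorem \ref{theorem cantor} to be invoked---that makes the iterated expression insensitive to the choice of $\Gamma$-convergent subsequence and forces the $\Gamma$-limit to be unique.
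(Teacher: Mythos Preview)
Your overall strategy coincides with the paper's, but there is one genuine gap at the step where you invoke Theorem \ref{theorem cantor}. The hypothesis \eqref{x independence} of that theorem asks for the existence of the \emph{limit}
\[
\lim_{\rho\to 0^+}\frac{\m^F(\ell_A,Q(x,\rho))}{\rho^d}
\]
for every $x$ outside a $\hd$-$\sigma$-finite set. What your sandwich argument establishes is only that the $\limsup_{\rho\to 0^+}$ of this quantity (i.e.\ the function $f(x,A)$ of \eqref{def f}) equals $\hat f(A)$ and is therefore independent of $x$. Knowing that the limsup is independent of $x$ does not, by itself, force the liminf to coincide with it; the proof of Theorem \ref{theorem cantor} (via Lemma \ref{change of parallelogram}) genuinely uses the full limit. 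Your closing remark that ``the existence of a genuine limit in $\rho$ in \eqref{1}'' is what allows Theorem \ref{theorem cantor} to be invoked conflates the limit of $\limsup_n \m^{F_n}/\rho^d$ in $\rho$ with the limit of $\m^F/\rho^d$ in $\rho$; these are different objects and the passage from the first to the second is not automatic.

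The missing ingredient is Lemma \ref{Lemma: existence of limit in rho}, which the paper uses explicitly at this point: \eqref{limsup leq point} gives $\limsup_n \m^{F_n}(\ell_A,Q(x,\rho))\le \m^F(\ell_A,Q(x,\rho))$, so by \eqref{1} one gets $\hat f(A)\le \liminf_{\rho\to 0^+}\m^F(\ell_A,Q(x,\rho))/\rho^d$; conversely \eqref{liming geq point} with $W=Q(x,\rho-\rho^2)\subset\subset Q(x,\rho)$ yields $\m^F(\ell_A,Q(x,\rho))\le \liminf_n \m^{F_n}(\ell_A,Q(x,\rho-\rho^2))+o(\rho^d)$, whence $\limsup_{\rho\to 0^+}\m^F(\ell_A,Q(x,\rho))/\rho^d\le \hat f(A)$. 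This closes the gap and puts you exactly in the hypotheses of Theorem \ref{theorem cantor} with $N=\emptyset$. Once you insert this step, your proof matches the paper's.
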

\begin{proof}
    By Theorem \ref{compactness} there exists a subsequence, not relabelled, and a functional $F\in\mathfrak{F}_{\rm sc}$ such that $\{F_n(\cdot,U)\}_{n\in\N}$ $\Gamma$-converges ot $F(\cdot,U)$ for every $U\in\U_c(\Rd)$.  
Let $f$ and $g$ be the functions defined by \eqref{def f} and \eqref{def g} corresponding to $F$. Using Lemmas \ref{Lemma: existence of limit in rho} and  \ref{asymptotic minimum}, together with \eqref{1} and \eqref{2}, we see that $f(x,A)=\hat{f}(A)$ and $g(x,\zeta,\nu)=\hat{g}(x,\zeta,\nu)$ for every $x\in\Rd$, $\zeta\in\Rd$, $A\in\Rdsym$, and $\nu\in\Sd$. 
Moreover, 
thanks to Lemma \ref{Lemma: existence of limit in rho} we obtain that 
    \begin{gather*}
        \hat{f}(A)=\lim_{\rho\to 0^+}\limsup_{n \to +\infty}\frac{\m^{F_n}(\ell_A,Q(x,\rho))}{\rho^d}\leq \liminf_{\rho\to 0^+}\frac{\m^F(\ell_A,Q(x,\rho))}{\rho^d},\\
          \hat{f}(A)=\lim_{\rho\to 0^+}\liminf_{n \to +\infty}\frac{\m^{F_n}(\ell_A,Q(x,\rho-\rho^2))}{(\rho-\rho^2)^d}\geq \limsup_{\rho\to 0^+}\frac{\m^F(\ell_A,Q(x,\rho))}{\rho^d},
    \end{gather*}
    hence
    \begin{equation}\notag
        \hat{f}(A)=\lim_{\rho\to 0^+}\frac{\m^F(\ell_A,Q(x,\rho))}{\rho^d}\quad \text{ for every $x\in\Rd$ and $A\in\Rdsym$}.
    \end{equation}
     Thus, the functional $F$ satisfies the hypotheses of Theorem \ref{theorem cantor}, so that $F=F^{\hat{f},\hat{g}}$.

     Moreover, by \eqref{1} and \eqref{2} the functions $\hat{f}$ and $\hat{g}$, and hence  the functional $F^{\hat{f},\hat{g}}$, do not depend on the  subsequence chosen at the beginning of the proof, so that by the Urysohn property of $\Gamma$-convergence (see \cite[Proposition 8.3]{DalBook})  for every $U\in \U_c(\Rd)$ the whole sequence $\{ F_n(\cdot,U)\}_{n\in\N}$ $\Gamma$-converges to $F^{\hat{f},\hat{g}}(\cdot,U)$.
\end{proof}

\section{Functionals obtained by rescaling}\label{section: homo}

In this section we use the theory developed in the previous sections to deal with the problem of $\Gamma$-convergence of oscillating functionals obtained by rescaling of a single functional. In particular, we prove a general theorem which provides sufficient conditions for the $\Gamma$-convergence of these functionals (see Theorem \ref{homogeneous homogenization}).

\medskip 
We now introduce the notation we will use in the rest of the work. Throughout this section we keep fixed  $f\in\mathcal{F}^\alpha$ and $g\in\mathcal{G}^{\rm \infty}$ and set
\begin{equation}\notag 
F:=F^{f,g}\in\mathfrak{F}^{\alpha,\infty},\end{equation}
where $F^{f,g}$ is the functional introduced in Definition \ref{def:integral functionals}.
We also assume that the modulus of continuity introduced in  \eqref{slope at infty for sigma} satisfies \begin{equation}\label{def sigma}
\sigma(\tau)=\sigma_1\tau\quad \text{for every }\tau\geq 0,\end{equation} for some constant $\sigma_1>0$. With this hypothesis, condition (g4) of Definition \ref{bulk and surface integrands} reads
\begin{equation}\notag
    |g(x,\zeta_1,\nu)-g(x,\zeta_2,\nu)|\leq \sigma_1|\zeta_1-\zeta_2|,
\end{equation}
while the surface continuity estimate (f) of Definition \ref{abstract functionals} becomes 
\begin{equation*}
  F(u+u_{x,\zeta,\nu},B)\leq F(u,B)+\sigma_1|\zeta|\Hd{\Pi^\nu_x\cap B}.
\end{equation*}

\begin{definition}\label{definition rescaled}
For every $\e>0$  we  consider the integrands $f_\e\in\mathcal{F}^\alpha$ and $g_\e\in\mathcal{G}^\infty$ defined by 
\begin{gather}\label{def fe}
    f_\e(x,A):=f\big(\tfrac{x}{\e},A\big) \quad \text{ for }x\in\Rd \text{ and }A\in\Rdsym,\\\label{def ge}
    g_\e(x,\zeta,\nu):=\e g\big(\tfrac{x}{\e},\tfrac{\zeta}{\e},\nu\big) \quad \text{ for }x\in\Rd, \zeta\in\Rd, \text{ and }\nu\in\Sd,
\end{gather}
and we set $F_\e:=F^{f_\e,g_\e}\in\mathfrak{F}^{\alpha,\infty}$.
\end{definition}

\begin{remark}
    If for every $x\in\Rd$, $\zeta\in\Rd$, and $\nu\in\Sd$ the function $t\mapsto g(x,t\zeta,\nu)$ is positively homogeneous of degree one, that is to say
     \begin{equation}\label{homogeneity}
        g(x,t\zeta,\nu)=tg(x,\zeta,\nu)\quad \text{for every } x\in\Rd, \,\zeta\in\Rd, \, \nu\in\Sd,\, \text{ and }t\geq 0, 
    \end{equation}
    then
    $$
    g_\e(x,\zeta,\nu)=g(\tfrac{x}{\e},\zeta,\nu) \quad \text{ for every }x\in\Rd, \, \zeta\in\Rd,\, \nu\in\Sd,\, \text{ and }\e>0.
    $$
    In this case, for $U\in \U_c(\Rd)$ and $u\in {\rm BD}(U)$  the functionals $F_\e(u,U)$ of Definition \ref{definition rescaled} become \begin{equation}\label{homogenizzato}
       \int_{U}f\Big(\frac{x}{\e},\E u\Big)\,{\rm d}x +\int_{U}f^\infty\Big(\frac{x}{\e},\frac{{\rm d}E^cu}{{\rm d}|E^cu|}\Big)\,{\rm d}|E^cu|+\int_{J_u\cap U}g\Big(\frac{x}{\e},[u],\nu_u\Big)\,{\rm d}\hd,    \end{equation}
       which are the functionals commonly considered in homogenisation of free discontinuity problems.
      
      Our choice in the definition of $g_\e$ given by \eqref{def ge} is justified by the fact that the corresponding functional $F_\e$ defined by $F^{f_\e,g_\e}$ satisfies good change of variables formulas (see Lemmas \ref{lemma change of bulk variables}  and \ref{lemma change of surface variables} below) even when $g$ does not satisfy \eqref{homogeneity}. This will allow us to prove a very general $\Gamma$-convergence result for $F^{f_\e,g_\e}$ (see Theorem \ref{homogeneous homogenization}), which implies the $\Gamma$-convergence of the functionals in \eqref{homogenizzato} when $g$ satisfies the additional condition \eqref{homogeneity}.
      Unfortunately, in the ${\rm BD}$ case we are not able to extend the truncation arguments that were crucial  to study the analogue of \eqref{homogenizzato} in ${\rm BV}$ (see \cite{CagnettiAnnals,DalToaHomo, DalMasoDonati2025}). 
\end{remark}

In the rest of the section we need the following technical result about a change of variables formula involving the Cantor part of a ${\rm BD}$ function.
\begin{lemma}\label{lemma change cantor}
    Let $\e,\rho\in(0,1)$, $x\in\Rd$, $\nu \in\Sd$, $u\in {\rm BD}(Q_\nu(x,\rho))$, and $v\in {\rm BD}(Q_\nu(\frac{x}{\e},\frac{\rho}{\e})) $. Assume that $v(z)=\frac{1}{\e}u(\e z)$ for every $z\in Q_\nu(\frac{x}{\e},\frac{\rho}{\e}).$ Then      \begin{gather}\label{ponte}
          \int_{Q_\nu(x,\rho)}f^\infty_\e\Big(y,\frac{{\rm d}E^cu}{{\rm d}|E^cu|}\Big)\,{\rm d}|E^cu|=\e^d\int_{Q_\nu(\frac{x}{\e},\frac{\rho}{\e})}f^\infty\Big(z,\frac{{\rm d}E^cv}{{\rm d}|E^cv|}\Big)\,{\rm d}|E^cv|.
    \end{gather}
\end{lemma}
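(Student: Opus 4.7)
The plan is to combine three ingredients: a scaling identity for the recession function, a scaling law for the measure $Ev$ in terms of $Eu$, and an ordinary change of variable in the integral. First, directly from \eqref{def fe} and \eqref{def recession},
\[
f_\e^\infty(y,A)=\limsup_{t\to+\infty}\tfrac{f(y/\e,tA)}{t}=f^\infty(y/\e,A)
\]
for every $y\in\Rd$ and $A\in\Rdsym$, which handles the integrand on the left-hand side of \eqref{ponte}.

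Next, I would derive the scaling rule linking $Eu$ and $Ev$. Setting $T(z):=\e z$, and testing $Ev$ against an arbitrary $\varphi\in C^\infty_c(Q_\nu(\tfrac{x}{\e},\tfrac{\rho}{\e});\Rdsym)$, the equality $v(z)=\e^{-1}u(\e z)$ together with integration by parts and the change of variable $y=\e z$ yields $Ev(B)=\e^{-d}Eu(\e B)$ for every Borel set $B\subset Q_\nu(\tfrac{x}{\e},\tfrac{\rho}{\e})$. Since $\Ld(\e B)=\e^d\Ld(B)$ and $J_v=T^{-1}(J_u)$ with $[v](z)=\e^{-1}[u](\e z)$, this rescaling respects the decomposition \eqref{decomposition of gradients}. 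In particular,
\[
E^cv(B)=\e^{-d}E^cu(\e B)\quad\text{and}\quad|E^cv|(B)=\e^{-d}|E^cu|(\e B)
\]
for every such Borel set $B$, from which the standard invariance of Radon--Nikodym densities under a common rescaling of numerator and denominator gives
\[
\frac{\mathrm{d}E^cv}{\mathrm{d}|E^cv|}(z)=\frac{\mathrm{d}E^cu}{\mathrm{d}|E^cu|}(\e z)\quad\text{for }|E^cv|\text{-a.e.\ }z.
\]

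Finally, I would perform the change of variable $y=\e z$ in the left-hand side of \eqref{ponte}. Combining the identity for $f_\e^\infty$, the transformation rule for the Radon--Nikodym density, and the scaling $|E^cu|(\e B)=\e^d|E^cv|(B)$ (which dictates how the reference measure transforms), the integrand becomes $f^\infty(z,\tfrac{\mathrm{d}E^cv}{\mathrm{d}|E^cv|}(z))$ and the reference measure acquires the prefactor $\e^d$, producing exactly the right-hand side.

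The only step that requires some care is the measure-theoretic scaling $Ev=\e^{-d}T^{\ast}Eu$ and its compatibility with the Cantor component; this is a routine computation via integration by parts for ${\rm BD}$ functions, and once it is in hand the rest of the proof is a straightforward change of variable in the integral.
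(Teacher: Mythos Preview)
Your proposal is correct and follows essentially the same approach as the paper: both arguments establish the scaling identity $Ev=\e^{-d}(\cdot/\e)_\# Eu$ via integration by parts and a change of variable, isolate the Cantor part using that $J_v=\e^{-1}J_u$, deduce the transformation rule for the Radon--Nikodym density, and then apply the change-of-variables (push-forward) formula for integrals. Your additional explicit observation that $f_\e^\infty(y,A)=f^\infty(y/\e,A)$ is used implicitly in the paper as well.
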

\begin{proof}
     
    Let $\psi\in C^\infty_c(Q_\nu(\frac{x}{\e},\frac{\rho}{\e});\Rdsym)$ and let $\psi_\e:=\psi\big(\frac{\cdot}{\e}\big)\in C^\infty_c(Q_\nu(x,\rho);\Rdsym)$. By  change of variables and integration by parts we have 
     \begin{gather*}
         -\int_{Q_\nu(\frac{x}{\e},\frac{\rho}{\e})}\psi\,{\rm d}Ev=\int_{Q_\nu(\frac{x}{\e},\frac{\rho}{\e})}v\,{\rm div}\,\psi \,{\rm d}z=\int_{Q_\nu(\frac{x}{\e},\frac{\rho}{\e})}\tfrac{1}{\e}u(\e z)\,{\rm div}\,\psi \,{\rm d}z\\ =\frac
         {1}{\e^d}\int_{Q_\nu(x,\rho)}\frac{u(y)}{\e}{\rm div}\,\psi\big(\frac{y}{\e}\big)\,{\rm d}y=\frac{1}{\e^d}\int_{Q_\nu(x,\rho)}u \,{\rm div}\,\psi_\e\,{\rm d}y\\
         =-\frac{1}{\e^d}\int_{Q_\nu(x,\rho)}\psi_\e\,{\rm d}Eu
         =-\frac{1}{\e^d}\int_{Q_\nu(\frac{x}{\e},\frac{\rho}{\e})}\psi\,{\rm d}\Big(\big(\frac{\cdot}{\e}\big)_{\#}Eu\Big),
     \end{gather*}
     where $(\frac{\cdot}{\e}\big)_\#Eu$ denotes the push-forward of $Eu$ via the function $y\mapsto\frac{y}{\e}$ (see, for instance, \cite[Theorem 3.6.1]{Bogachev}). This implies that 
\begin{equation*}
        Ev=\frac{1}{\e^d}\Big(\frac{\cdot}{\e}\Big)_{\#}Eu \quad \text{ as Borel measures on }Q_\nu(\tfrac{x}{\e},\tfrac{\rho}{\e})
    \end{equation*}
   and, passing to their singular parts,
    \begin{equation*}
        E^sv=\frac{1}{\e^d}\Big(\frac{\cdot}{\e}\Big)_{\#}E^su \quad \text{ as Borel measures on }Q_\nu(\tfrac{x}{\e},\tfrac{\rho}{\e}),
    \end{equation*}
    where $E^sv$ and $E^su$ denote the singular part of $Ev$ and $Eu$ with respect to the Lebesgue measure. Restricting the previous equality to $Q_\nu(\tfrac{x}{\e},\tfrac{\rho}{\e})\setminus J_v=Q_\nu(\tfrac{x}{\e},\tfrac{\rho}{\e})\setminus (\frac{1}{\e}J_u)$, we deduce that
    \begin{equation*}
        E^cv=\frac{1}{\e^d}\Big(\frac{\cdot}{\e}\Big)_{\#}E^cu\quad \text{ as Borel measures on }Q_\nu(\tfrac{x}{\e},\tfrac{\rho}{\e}).
    \end{equation*}
    This implies that 
    \begin{equation*}
        \frac{{\rm d}E^cv}{{\rm d}|E^cv|}\big(\frac{y}{\e}\big)= \frac{{\rm d}E^cu}{{\rm d}|E^cu|}(y)\quad \text{ for } |E^cu|\text{-a.e.\ } y\in Q_\nu(x,\rho).
    \end{equation*}
    Hence, by the integration formula for the push-forward of measures (see, for instance, \cite[Theorem 3.6.1]{Bogachev}) we get \eqref{ponte}.
\end{proof}

We state two preliminary results that allow us to rewrite the minimisation problems on small cubes for the functionals $F_\e$ by means of minimisation problems on large cubes for $F$ and $F^{f^\infty,g^\infty}$, where $f^\infty$ and $g^\infty$ are the functions defined by \eqref{existence finffty} and \eqref{limit ginfty}, respectively.
\begin{lemma}\label{lemma change of bulk variables}
    Let $\e,\rho\in(0,1)$, $x\in\Rd$, and $A\in\Rdsym$. Then
    \begin{equation}\notag 
     \m^{F_\e}(\ell_A,Q(x,\rho))=\e^d\m^{F^{f,g}}(\ell_A,Q(\tfrac{x}{\e},\tfrac{\rho}{\e})).
\end{equation}
\end{lemma}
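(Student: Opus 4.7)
The plan is to exhibit an energy-preserving (up to the factor $\e^d$) bijection between the admissible classes of the two minimisation problems via the scaling $v(z):=\tfrac{1}{\e}u(\e z)$, defined for $z\in Q(\tfrac{x}{\e},\tfrac{\rho}{\e})$. First I would check that this map sends $\mathrm{BD}(Q(x,\rho))$ onto $\mathrm{BD}(Q(\tfrac{x}{\e},\tfrac{\rho}{\e}))$ and that it preserves the Dirichlet datum: since $\ell_A$ is positively $1$-homogeneous, $\tfrac{1}{\e}\ell_A(\e z)=\ell_A(z)$, so $u=\ell_A$ on $\partial Q(x,\rho)$ if and only if $v=\ell_A$ on $\partial Q(\tfrac{x}{\e},\tfrac{\rho}{\e})$.

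Next I would compute each of the three pieces of $F_\e(u,Q(x,\rho))$ under the change of variables $y=\e z$. From $Dv(z)=Du(\e z)$ one gets $\mathcal{E}v(z)=\mathcal{E}u(\e z)$, so the bulk integral satisfies
\begin{equation*}
\int_{Q(x,\rho)}f\bigl(\tfrac{y}{\e},\mathcal{E}u(y)\bigr)\,\dx y
=\e^d\!\!\int_{Q(\frac{x}{\e},\frac{\rho}{\e})}\!\!f\bigl(z,\mathcal{E}v(z)\bigr)\,\dx z.
\end{equation*}
For the jump part, $J_v=\tfrac{1}{\e}J_u$ with $\nu_v(z)=\nu_u(\e z)$ and $[v](z)=\tfrac{1}{\e}[u](\e z)$, and $\dx\hd(y)=\e^{d-1}\dx\hd(z)$, so the factor $\e$ in the definition \eqref{def ge} of $g_\e$ combines with $\e^{d-1}$ to give exactly $\e^d$:
\begin{equation*}
\int_{J_u\cap Q(x,\rho)}\!\!\!\e\, g\bigl(\tfrac{y}{\e},\tfrac{[u](y)}{\e},\nu_u(y)\bigr)\dx\hd
=\e^d\!\!\int_{J_v\cap Q(\frac{x}{\e},\frac{\rho}{\e})}\!\!\!g\bigl(z,[v](z),\nu_v(z)\bigr)\dx\hd.
\end{equation*}
For the Cantor part I would invoke Lemma \ref{lemma change cantor}: its proof is stated for cubes $Q_\nu$, but it only uses the push-forward relation $E^cv=\e^{-d}(\tfrac{\cdot}{\e})_\#E^cu$, which holds on any open set, so the same identity applies to the axis-aligned cube $Q(x,\rho)$ in place of $Q_\nu(x,\rho)$. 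Summing the three contributions yields
\begin{equation*}
F_\e(u,Q(x,\rho))=\e^d F^{f,g}\bigl(v,Q(\tfrac{x}{\e},\tfrac{\rho}{\e})\bigr).
\end{equation*}

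Taking the infimum over $u$ on the left and over $v$ on the right, and using the bijection $u\leftrightarrow v$ together with the equivalence of the boundary conditions, gives the claimed equality. The only subtle point is the Cantor integral, where one must be careful that $f^\infty$ enters (rather than $f$) on both sides and that the positive $1$-homogeneity of $f^\infty$ in its second argument is compatible with the push-forward of $E^cu$; this is exactly what Lemma \ref{lemma change cantor} provides, so no further work is required.
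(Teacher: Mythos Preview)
Your proof is correct and follows essentially the same approach as the paper: the same scaling $v(z)=\tfrac{1}{\e}u(\e z)$, the same verification of the boundary datum via the $1$-homogeneity of $\ell_A$, and the same term-by-term change of variables for the bulk, Cantor (via Lemma~\ref{lemma change cantor}), and jump parts. One minor remark: since $R_{e_d}=I$, the axis-aligned cube $Q(x,\rho)$ is already of the form $Q_\nu(x,\rho)$ with $\nu=e_d$, so Lemma~\ref{lemma change cantor} applies directly without needing to note that its argument extends to general open sets.
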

\begin{proof}
    Let $u\in {\rm BD}(Q(x,\rho))$, with $u=\ell_A$ on $\partial Q(x,\rho)$, and let $v(z):=\frac{1}{\e}u(\e z)$ for $z\in Q(\frac{x}{\e},\tfrac{\rho}{\e})$. It is easy to see that $v=\ell_A$ on $\partial Q(\frac{x}{\e},\frac{\rho}{\e})$. Thanks to \eqref{def fe} and \eqref{def ge}, a change of variables and Lemma \ref{lemma change cantor} show that
    \begin{gather*}
        \int_{Q(x,\rho)}f_\e(y,\E u)\,{\rm d}y=\e^d\int_{Q(\frac{x}{\e},\frac{\rho}{\e})}f(z,\E v)\,{\rm d}z,\\
          \int_{Q(x,\rho)}f^\infty_\e\Big(y,\frac{{\rm d}E^cu}{{\rm d}|E^cu|}\Big)\,{\rm d}|E^cu|=\e^d\int_{Q(\frac{x}{\e},\frac{\rho}{\e})}f^\infty\Big(z,\frac{{\rm d}E^cv}{{\rm d}|E^cv|}\Big)\,{\rm d}|E^cv|,\\
          \int_{J_u\cap Q(x,\rho)}g_\e(y,[u],\nu_u)\,{\rm d}\hd=\e^d\int_{J_v\cap Q(\frac{x}{\e},\frac{\rho}{\e})}g(z,[v],\nu_v)\,{\rm d}\hd.
    \end{gather*}
    Thus, we infer
\begin{equation}\notag
    \m^{F_\e}(\ell_A,Q(x,\rho))=\e^{d}\m^{F}(\ell_A,Q(\tfrac{x}{\e},\tfrac{\rho}{\e})),
\end{equation}
concluding the proof.
\end{proof}

\begin{definition}\label{def Finfty}
    We set 
    \begin{equation}\notag 
F^\infty:=F^{f^\infty,g^\infty}\in\mathfrak{F}^{\alpha,\infty},
    \end{equation}
    where $f^\infty$ and $g^\infty$ are  the functions defined by \eqref{existence finffty} and \eqref{limit ginfty}.
\end{definition}

\begin{lemma}\label{lemma change of surface variables}
    Let $\e\in (0,1/(2c_6))$, $x\in\Rd$, $\zeta\in\Rd$, $\nu\in\Sd$, and $\rho\in (0,1)$. Then 
    \begin{equation}\label{claim change of surface}
        \big|\m^{F_\e}(u_{x,\zeta,\nu},Q_\nu(x,\rho))-\e^{d-1}\m^{F^\infty}(u_{\frac{x}{\e},\zeta,\nu},Q_\nu(\tfrac{x}{\e},\tfrac{\rho}{\e}))\big|\leq C\rho^{d-1+\alpha}+C\e\rho^{d-1},
    \end{equation}
    where $C=C_\zeta>0$ is a constant depending only on $|\zeta|$ and on the structural constants $\alpha, c_3$, $c_6$, and $c_7$, but is independent of $\e$, $x$, $\nu$, and $\rho$.
\end{lemma}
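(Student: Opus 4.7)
Start by a change of variables analogous to the one performed in the proof of Lemma \ref{lemma change of bulk variables}. Given an admissible $u$ for $\m^{F_\e}(u_{x,\zeta,\nu}, Q_\nu(x,\rho))$, set $v(z) := \frac{1}{\e}u(\e z)$ for $z \in Q_\nu(x/\e, \rho/\e)$, so that $v = u_{x/\e, \zeta/\e, \nu}$ on the boundary (using the identity $\frac{1}{\e}u_{x,\zeta,\nu}(\e z) = u_{x/\e, \zeta/\e, \nu}(z)$). The bulk part transforms by the usual $\e^d$ Jacobian; the Cantor part by Lemma \ref{lemma change cantor}; and for the jump part, the identities $[u](\e z) = \e[v](z)$, $J_u = \e J_v$, $\nu_u(\e z) = \nu_v(z)$, combined with the prefactor $\e$ in \eqref{def ge} and the $\e^{d-1}$ Jacobian of $\hd$, yield a total factor of $\e^d$. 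Therefore
\[
\m^{F_\e}(u_{x,\zeta,\nu}, Q_\nu(x,\rho)) = \e^d\,\m^{F}\bigl(u_{x/\e,\zeta/\e,\nu}, Q_\nu(x/\e, \rho/\e)\bigr).
\]
Introducing $s := 1/\e$, $r := \rho/\e$, $y_0 := x/\e$, $Q := Q_\nu(y_0, r)$, and noting the identities $\e^{d-1}r^d/s = \rho^d \leq \rho^{d-1+\alpha}$, $\e^{d-1}r^{d-1+\alpha}/s^\alpha = \rho^{d-1+\alpha}$, and $\e^{d-1}r^{d-1}/s = \e\rho^{d-1}$, the claim \eqref{claim change of surface} reduces (after dividing by $\e^{d-1}$) to proving
\[
\bigl|s^{-1}\m^F(u_{y_0, s\zeta, \nu}, Q) - \m^{F^\infty}(u_{y_0, \zeta, \nu}, Q)\bigr| \leq C\bigl(r^d/s + r^{d-1+\alpha}/s^\alpha + r^{d-1}/s\bigr).
\]

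For the $\leq$ direction, fix $\eta > 0$ and let $\tilde u$ be an $\eta$-minimiser of $\m^{F^\infty}(u_{y_0, \zeta, \nu}, Q)$; then $u := s\tilde u$ is admissible for $\m^F(u_{y_0, s\zeta, \nu}, Q)$, so it is enough to bound from above the quantity $s^{-1}F(s\tilde u, Q) - F^\infty(\tilde u, Q)$. Since $f^\infty$ is positively $1$-homogeneous, the Cantor parts of these two quantities coincide. Using the quantitative recession bound \eqref{quantified infinity}, property (f2), and H\"older's inequality, the bulk discrepancy is controlled by $C|Q|/s + C|Q|^\alpha |E\tilde u|(Q)^{1-\alpha}/s^\alpha$, while from \eqref{difference with ginfty} the jump discrepancy is controlled by $2c_3c_7 |E\tilde u|(Q)/s$. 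The key a priori bound $|E\tilde u|(Q) \leq C_\zeta r^{d-1}$ follows from the lower bound in (c) applied to $F^\infty$ (which holds with $c_2 = c_4 = 0$ in view of \eqref{bound finfty} and \eqref{grwoth g infity}) combined with $F^\infty(\tilde u, Q) \leq F^\infty(u_{y_0,\zeta,\nu}, Q) + \eta \leq c_3|\zeta\odot\nu|r^{d-1} + \eta$, obtained by testing with the pure-jump $u_{y_0,\zeta,\nu}$ itself. Substituting $|Q| = r^d$ and this bound, the algebra $d\alpha + (d-1)(1-\alpha) = d-1+\alpha$ shows that all error contributions fit within the right-hand side of the reduced inequality; letting $\eta \to 0^+$ closes this direction.

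The reverse inequality is symmetric. Take an $\eta$-minimiser $u$ of $\m^F(u_{y_0, s\zeta, \nu}, Q)$ and set $\tilde u := u/s$, which satisfies $\tilde u = u_{y_0,\zeta,\nu}$ on $\partial Q$ and is therefore admissible for $\m^{F^\infty}$. The analogous a priori bound $|E\tilde u|(Q) \leq C_\zeta r^{d-1}$ now follows from the lower bound in (c) for $F$, together with the trivial estimate $F(u, Q) \leq c_3 s|\zeta\odot\nu|r^{d-1} + \eta$ (obtained by testing with $u_{y_0, s\zeta, \nu}$), after dividing by $s$ and using $r^d/s = \rho\,r^{d-1} \leq r^{d-1}$ to absorb the unavoidable $c_2|Q|$ contribution. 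The identical bulk and jump error estimates then produce the matching lower bound. The main technical point is really bookkeeping: one must track precisely how each error term scales under the two successive rescalings and verify that the additive constants $c_4$ (in (f2)) and $c_2$ (in (c)) only enter at the benign order $\rho^d$, which is absorbable into $\rho^{d-1+\alpha}$ since $\rho < 1$. The assumption $\e \in (0, 1/(2c_6))$ keeps $s = 1/\e$ bounded below by a fixed positive constant, so that \eqref{quantified rate} applies and all implicit constants remain uniform in $\e$.
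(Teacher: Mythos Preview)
Your argument is correct and follows the same overall strategy as the paper (change of variables plus recession comparison), but with a genuinely different organization that is worth noting. The paper rescales by setting $v(z)=u(\e z)$, keeping the boundary datum $u_{x/\e,\zeta,\nu}$ and then comparing $\e f(\cdot,\tfrac{1}{\e}\E v)$ with $f^\infty(\cdot,\E v)$ and $\e g(\cdot,\tfrac{1}{\e}[v],\cdot)$ with $g^\infty(\cdot,[v],\cdot)$ pointwise; to close the estimate it needs the ``inversion'' step (their inequalities \eqref{second}--\eqref{fifth}, based on the Young-type inequality $\tau^{1-\alpha}\le \tfrac{1}{2c_6}\tau + C$, which is where the hypothesis $\e<1/(2c_6)$ enters). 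You instead rescale by $v(z)=\tfrac{1}{\e}u(\e z)$, obtaining the exact identity $\m^{F_\e}(u_{x,\zeta,\nu},Q_\nu(x,\rho))=\e^d\,\m^{F}(u_{x/\e,\zeta/\e,\nu},Q_\nu(x/\e,\rho/\e))$, and then compare $s^{-1}F(s\tilde u,Q)$ with $F^\infty(\tilde u,Q)$ using the a~priori coercivity bound $|E\tilde u|(Q)\le C_\zeta r^{d-1}$ coming from property~(c). This sidesteps the inversion entirely and is a bit cleaner; in fact your argument never uses $\e<1/(2c_6)$, so your closing sentence about that hypothesis is unnecessary for your own proof.

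Two small write-up issues. First, in the reverse direction your ``trivial estimate'' should read $F(u,Q)\le c_3 s|\zeta\odot\nu|r^{d-1}+c_4 r^d+\eta$ (the bulk contribution $f(\cdot,0)\le c_4$ of the competitor $u_{y_0,s\zeta,\nu}$ is missing); this extra $c_4 r^d$ is absorbed by the same mechanism you already invoke for the $c_2|Q|$ term. Second, your constant $C$ necessarily depends also on $c_1$ (through the coercivity bound) and on $c_2,c_4$; the lemma's statement is slightly loose on this point as well, since the paper's own constant involves $C_1=2c_3c_7/c_1$.
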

\begin{proof}
Let $u\in {\rm BD}(Q_\nu(x,\rho))$ with $u=u_{x,\zeta,\nu}$ on $\partial Q_\nu(x,\rho)$ and let $v(z):=u(\e z)$ for $z\in Q_\nu(\frac{x}{\e},\tfrac{\rho}{\e})$, so that $v=u_{\frac{x}{\e},\zeta,\nu}$ on $\partial Q_\nu(\frac{x}{\e},\frac{\rho}{\e})$. Recalling \eqref{def fe} and \eqref{def ge}, a change of variables and Lemma \ref{lemma change cantor} imply that
    \begin{gather}\label{bulk}
        \int_{Q_\nu(x,\rho)}f_\e(y,\E u)\,{\rm d}y=\e^{d-1}\int_{Q_\nu(\frac{x}{\e},\frac{\rho}{\e})} \e f\Big(z,\frac{1}{\e}\E v\Big)\,{\rm d}z,\\ \label{cantor}
\int_{Q_\nu(x,\rho)}f^\infty_\e\Big(y,\frac{{\rm d}E^cu}{{\rm d}|E^cu|}\Big)\,{\rm d}|E^cu|=\e^{d-1}\int_{Q_\nu(\frac{x}{\e},\frac{\rho}{\e})}f^\infty\Big(z,\frac{{\rm d}E^cv}{{\rm d}|E^cv|}\Big)\,{\rm d}|E^cv|,\\\label{jump}
          \int_{J_u\cap Q_\nu(x,\rho)}g_\e(y,[u],\nu_u)\,{\rm d}\hd=\e^{d-1}\int_{J_v\cap Q_\nu(\frac{x}{\e},\frac{\rho}{\e})} \e g\Big(z,\frac{1}{\e}[v],\nu_v\Big)\,{\rm d}\hd.
    \end{gather}    

We can now exploit \eqref{quantified infinity} of  Remark \ref{remark recession infinity}  to obtain for $\Ld$-a.e.\ $z\in  Q_\nu(\tfrac{x}{\e},\tfrac{\rho}{\e})$
\begin{gather}\label{minus 1}
    |\e f(z,\tfrac{1}{\e}{\E v})-f^\infty(z,\E v)|\leq c_6\e+c_6\e f(z,\tfrac{1}{\e}{\E v})^{1-\alpha},
 \end{gather}
 while using \eqref{difference with ginfty} of Remark \ref{remark ginfty} and inequalities  (g3) and \eqref{grwoth g infity},   for $\hd$-a.e.\ $z\in J_v\cap Q_\nu(\tfrac{x}{\e},\tfrac{\rho}{\e})$ we get  
 \begin{gather}
    \label{zeroth}
    |\e g(z,\tfrac{1}{\e}{[v]},\nu_v)-g^\infty(z,[v],\nu_v)|\leq2c_3c_7\e|[v]\odot\nu_v|\leq  C_1\e^2 g(z,\tfrac{1}{\e}{[v]},\nu_v),\\
    \label{zeroth1}
    |\e g(z,\tfrac{1}{\e}{[v]},\nu_v)-g^\infty(z,[v],\nu_v)| \leq2c_3c_7\e|[v]\odot\nu_v|\leq C_1\e g^\infty(z,[v],\nu_v),
\end{gather}
where we have set $C_1:=(2c_3c_7)/c_1$. From \eqref{minus 1} we then deduce  for $\Ld$-a.e.\ $z\in  Q_\nu(\tfrac{x}{\e},\tfrac{\rho}{\e})$ that
\begin{gather}\label{first}
    f^\infty(z,\E v)\leq\e f(z,\tfrac{1}{\e}{\E v})+ c_6\e+c_6\e f(z,\tfrac{1}{\e}{\E v})^{1-\alpha},\\\notag
    f^\infty(z,\E v)\geq \e f(z,\tfrac{1}{\e}{\E v})-c_6\e-c_6\e f(z,\tfrac{1}{\e}{\E v})^{1-\alpha}\\\label{second}\geq \tfrac{\e}{2} f(z,\tfrac{1}\e{\E v})-c_6\e-c_6\e(2c_6(1-\alpha))^{\frac{1-\alpha}{\alpha}},
\end{gather}
where we have used the inequality  $\tau^{1-\alpha}\leq \frac{1}{2c_6}\tau+(2c_6(1-\alpha))^{\frac{1-\alpha}{\alpha}}$ for every $\tau\geq 0$ and $\e\in (0,1/(2c_6))$, while from \eqref{zeroth} and \eqref{zeroth1} we obtain for $\hd$-a.e.\ $z\in J_v\cap Q_\nu(\tfrac{x}{\e},\tfrac{\rho}{\e})$ 
\begin{gather}\label{third}
    g^\infty(z,[v],\nu_v)\leq \e g(z,\tfrac{1}{\e}{[v]},\nu_v)+ C_1\e^2 g(z,\tfrac{1}{\e}{[v]},\nu_v),\\\label{fourth}
     g^\infty(z,[v],\nu_v)\geq  \e g(z,\tfrac{1}{\e}{[v]},\nu_v)- C_1\e g^\infty(z,{[v]},\nu_v).
\end{gather}
Note that \eqref{second} implies that 
\begin{equation*}
    \e f(z,\tfrac{1}{\e}{\E v})\leq 2f^\infty(z,\E v)+C_2\e,
\end{equation*}
where $C_2:=2c_6+2c_6(2c_6(1-\alpha))^{\frac{1-\alpha}{\alpha}}$, so that from  \eqref{minus 1} we infer 
\begin{equation}\label{fifth}
    \e f(z,\tfrac{1}{\e}{\E v})\leq f^\infty(z,\E v)+c_6\e+ c_6\e^\alpha(2f^\infty(z,\E v)+C_2\e)^{1-\alpha}.
\end{equation}

Integrating \eqref{first} on $Q_\nu\big(\tfrac{x}{\e},\tfrac{\rho}{\e}\big)$, by H\"older's inequality we get 
\begin{equation}\label{sixth}
\begin{gathered}
   \e^{d-1} \int_{Q_\nu(\frac{x}{\e},\frac{\rho}{\e})} f^\infty(z,\E v)\,{\rm d}z\leq  \e^{d-1} \int_{Q_\nu(\frac{x}{\e},\frac{\rho}{\e})} \e f(z,\tfrac{1}{\e}{\E v})\,{\rm d}z+c_6\rho^d\\
   +c_6\Big(\e^{d-1}\int_{Q_\nu(\frac{x}{\e},\frac{\rho}{\e})} \e f(z,\tfrac{1}{\e}{\E v})\,{\rm d}z \Big)^{1-\alpha}\rho^{\alpha d},
\end{gathered}
\end{equation}\vspace{-0.1 cm}
while from \eqref{fifth} we get
\begin{equation}\label{seventh}
\begin{gathered}
   \e^{d-1} \int_{Q_\nu(\frac{x}{\e},\frac{\rho}{\e})} \e f(z,\tfrac{1}{\e}{\E v})\,{\rm d}z\leq  \e^{d-1} \int_{Q_\nu(\frac{x}{\e},\frac{\rho}{\e})} f^\infty(z,{\E v})\,{\rm d}z+c_6\rho^d\\
   +c_6\Big(2\e^{d-1}\int_{Q_\nu(\frac{x}{\e},\frac{\rho}{\e})} f^\infty(z,{\E v})\,{\rm d}z +C_2\rho^d \Big)^{1-\alpha}\rho^{\alpha d}.
\end{gathered}
\end{equation}
Integrating \eqref{third} and \eqref{fourth} on $J_v\cap Q_\nu\big(\tfrac{x}{\e},\tfrac{\rho}{\e}\big)$ we obtain 
\begin{equation}\label{eighth}
\begin{gathered}
     \e^{d-1}\int_{J_v\cap Q_\nu(\tfrac{x}{\e},\tfrac{\rho}{\e})}g^\infty(z,[v],\nu_v)\,{\rm d}\hd\leq \e^{d-1}\int_{J_v\cap Q_\nu(\tfrac{x}{\e},\tfrac{\rho}{\e})} \e g(z,\tfrac{1}{\e}{[v]},\nu_v)\,{\rm d}\hd\\+\e^{d}{C_1}\int_{J_v\cap Q_\nu(\tfrac{x}{\e},\tfrac{\rho}{\e})} \e 
     g(z,\tfrac{1}{\e}{[v]},\nu_v)\,{\rm d}\hd,
     \end{gathered}
     \end{equation}
     and 
     \begin{equation}
     \label{ninth}
     \begin{gathered}
     \e^{d-1}   \int_{J_v\cap Q_\nu(\tfrac{x}{\e},\tfrac{\rho}{\e})}g^\infty(z,[v],\nu_v)\,{\rm d}\hd\geq \e^{d-1}\int_{J_v\cap Q_\nu(\tfrac{x}{\e},\tfrac{\rho}{\e})} \e g(z,\tfrac{1}{\e}{[v]},\nu_v)\,{\rm d}\hd\\- \e^{d
     }C_1\int_{J_v\cap Q_\nu(\tfrac{x}{\e},\tfrac{\rho}{\e})}g^\infty(z,[v],\nu_v)\,{\rm d}\hd.
\end{gathered}
\end{equation}

Recalling \eqref{bulk}-\eqref{jump} and combining  \eqref{sixth} and \eqref{eighth},  we obtain 
\begin{gather*}
    \e^{d-1}F^{\infty}(v,Q_\nu(\tfrac{x}{\e},\tfrac{\rho}{\e}))\leq F_\e(u,Q_\nu(x,\rho))+c_6\rho^d\\
     +c_6F_\e(u,Q_\nu(x,\rho))^{1-\alpha}\rho^{\alpha d}
    +C_1\e F_\e(u,Q_\nu(x,\rho)),
\end{gather*}
while from \eqref{seventh} and \eqref{ninth} we get
\begin{gather*}
    F_\e(u,Q_\nu(x,\rho))\leq \e^{d-1}F^{\infty}(v,Q_\nu(\tfrac{x}{\e},\tfrac{\rho}{\e}))+c_6\rho^d\\
    +c_6\Big(2\e^{d-1}F^{\infty}(v,Q_\nu(\tfrac{x}{\e},\tfrac{\rho}{\e})) +C_2\rho^d \Big)^{1-\alpha}\rho^{\alpha d}
    +C_1\e^d F^{\infty}(v,Q_\nu(\tfrac{x}{\e},\tfrac{\rho}{\e})).
\end{gather*}
Since $u=u_{x,\zeta,\nu}$ on $\partial Q_\nu(x,\rho)$ if and only  $v=u_{\frac{x}{\e},\zeta,\nu}$ on $\partial Q_\nu(\frac{x}{\e},\frac{\rho}{\e})$, the last two inequalities imply 
\begin{equation}\label{tenth}
\begin{gathered}
    \e^{d-1}\m^{F^{\infty}}(u_{\frac{x}{\e},\zeta,\nu},Q_\nu(\tfrac{x}{\e},\tfrac{\rho}{\e}))\leq \m^{F_\e}(u_{x,\zeta,\nu},Q_\nu(x,\rho))+c_6\rho^d\\
     +c_6\m^{F_\e}(u_{x,\zeta,\nu},Q_\nu(x,\rho))^{1-\alpha}\rho^{\alpha d}
    +C_1\e \m^{F_\e}(u_{x,\zeta,\nu},Q_\nu(x,\rho)),
\end{gathered}
\end{equation}
and 
\begin{equation}\label{eleventh}
\begin{gathered}
  \m^{F_\e}(u_{x,\zeta,\nu},Q_\nu(x,\rho))\leq \e^{d-1}\m^{F^{\infty}}(u_{\tfrac{x}{\e},\zeta,\nu},Q_\nu(\tfrac{x}{\e},\tfrac{\rho}{\e}))+c_6\rho^d\\
\hspace{-0.3 cm}+c_6\Big(2\e^{d-1}\m^{F^{\infty}}(u_{\frac{x}{\e},\zeta,\nu},Q_\nu(\tfrac{x}{\e},\tfrac{\rho}{\e})) +C_2\rho^d \Big)^{1-\alpha}\rho^{\alpha d}
    +C_1\e^d \m^{F^{\infty}}(u_{\frac{x}{\e},\zeta,\nu},Q_\nu(\tfrac{x}{\e},\tfrac{\rho}{\e})).
\end{gathered}
\end{equation}
To conclude, as $u_{x,\zeta,\nu}$ and $u_{\frac{x}{\e},\zeta,\nu}$ are competitors for the  problems $ \m^{F_\e}(u_{x,\zeta,\nu},Q_\nu(x,\rho))$ and $m^{F^{\infty}}(u_{\frac{x}{\e},\zeta,\nu},Q_\nu(\tfrac{x}{\e},\tfrac{\rho}{\e}))$, respectively,  using the upper bounds in (g3) and \eqref{grwoth g infity}  we obtain
\begin{gather*} 
 \m^{F_\e}(u_{x,\zeta,\nu},Q_\nu(x,\rho))\leq c_3|\zeta\odot\nu|\rho^{d-1},\\  \m^{F^{\infty}}(u_{\frac{x}{\e},\zeta,\nu},Q_\nu(\tfrac{x}{\e},\tfrac{\rho}{\e}))\leq c_3|\zeta\odot\nu|\frac{\rho^{d-1}}{\e^{d-1}}.
 \end{gather*}
 Finally, combining these two inequalities with \eqref{tenth} and \eqref{eleventh} we get
 \begin{gather*}
      \e^{d-1}\m^{F^{\infty}}(u_{\frac{x}{\e},\zeta,\nu},Q_\nu(\tfrac{x}{\e},\tfrac{\rho}{\e}))\leq \m^{F_\e}(u_{x,\zeta,\nu},Q_\nu(x,\rho))+c_6\rho^d\\
     +c_6c_3|\zeta\odot\nu|\rho^{d-1+\alpha}
    +C_1 c_3|\zeta\odot\nu|\e\rho^{d-1},
 \end{gather*}
 and 
 \begin{gather*}
     \m^{F_\e}(u_{x,\zeta,\nu},Q_\nu(x,\rho))\leq \e^{d-1}\m^{F^{\infty}}(u_{\tfrac{x}{\e},\zeta,\nu},Q_\nu(\tfrac{x}{\e},\tfrac{\rho}{\e}))+c_6\rho^d\\
    +c_6\big(2c_3|\zeta\odot\nu| +C_2 \big)^{1-\alpha}\rho^{d-1+\alpha}
    +C_1c_3|\zeta\odot\nu|\e\rho^{d-1},
 \end{gather*}
 which imply \eqref{claim change of surface} for $C:=\max\{c_6,c_6c_3|\zeta|, C_1c_3|\zeta|,c_6\big(2c_3|\zeta| +C_2 \big)^{1-\alpha}\}$. 
\end{proof}

The following theorem constitutes the main result of this section. We shall see in the next section that its hypotheses are satisfied under the standard hypotheses of periodic or stochastic homogenisation.
\begin{theorem}\label{homogeneous homogenization}
    Let $f\in\mathcal{F}^\alpha$, $g\in\mathcal{G}^{\rm \infty}$, and let $F_\e:=F^{f_\e,g_\e}$ be the functionals introduced in Definition \ref{definition rescaled}. Assume that there exist functions $f_{\rm lim}\colon \Rdsym\to [0,+\infty)$ and $g_{\rm lim}\colon \Rd\times \Sd\to[0,+\infty)$ such that 
    \begin{gather}\label{def f hom 2}
        f_{\rm lim}(A)=\lim_{r\to+\infty}\frac{\m^{F^{f,g}}(\ell_A,Q(r x,r))}{r^d}\quad \text{for all $x\in\Rd$ and $A\in \Rdsym$,}\\\label{def g hom 2}\hspace{-0.2 cm}
        g_{\rm lim}(\zeta,\nu)=\lim_{r\to+\infty}\frac{\m^{F^{\infty}}(u_{rx,\zeta,\nu},Q_\nu(rx,r))}{r^{d-1}}\quad \text{for all $x\in\Rd$, $\zeta\in \Rd$, and $\nu\in \Sd$.}
    \end{gather}
   Then $f_{\rm lim}\in\mathcal{F}^\alpha$, $g_{\rm lim}\in\mathcal{G}^{\rm \infty}$, $g_{\rm lim}$ satisfies \eqref{homogeneity}, and  the following property holds:  for every positive sequence $\{\e_n\}_{n\in\N}$ converging to $0$ as $n\to+\infty$ and for every $U\in\U_c(\Rd)$  the sequence $\{F_{\e_n}(\cdot,U)\}_{n\in\N}$ $\Gamma$-converges to $F^{f_{\rm lim},g_{\rm lim}}(\cdot,U)$ with respect to the topology of $L^1_{\rm loc}(\Rd;\Rd)$. Moreover, we have the equality 
   \begin{equation}\label{eq omogenea}
   g_{\rm lim}(\zeta,\nu)=f^\infty_{\rm lim}(\zeta\odot\nu)\quad \text{ for all $\zeta\in\Rd$ and $\nu\in\Sd$,}
   \end{equation} which implies that for every $U\in\U(\Rd)$, $u\in {\rm BD}(U)$, and $B\in\B(U)$ we have 
   \begin{equation}\label{singular defin}
       F^{f_{\rm lim},g_{\rm lim}}(u,B)=\int_Bf_{\rm lim}(\E u)\,{\rm d}x+\int_Bf_{\rm lim}^\infty\Big(\frac{{\rm d}E^su}{{\rm d}|E^su|}\Big)\,{\rm d}|E^su|,
   \end{equation}
   where $E^su$ denotes the singular part of $Eu$ with respect to the Lebesgue measure. 
   \end{theorem}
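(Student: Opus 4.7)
The plan is to deduce the $\Gamma$-convergence from Theorem \ref{theorem sufficient}, prove the positive $1$-homogeneity of $g_{\rm lim}$ by bootstrapping to the sequence $F^\infty_{\e_n}$, and then obtain the identity \eqref{eq omogenea} by testing the $\Gamma$-limit, via lower semicontinuity, against two families of approximants.

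\emph{Step $1$: $\Gamma$-convergence.} I would first apply Lemma \ref{lemma change of bulk variables}, whose exact identity
\[
\frac{\m^{F_{\e_n}}(\ell_A,Q(x,\rho))}{\rho^d}=\frac{\m^{F^{f,g}}(\ell_A,Q(r_n\tfrac{x}{\rho},r_n))}{r_n^d},\qquad r_n:=\rho/\e_n,
\]
gives convergence to $f_{\rm lim}(A)$ by \eqref{def f hom 2}, for every $\rho\in(0,1)$, $x\in\Rd$, and $A\in\Rdsym$. Lemma \ref{lemma change of surface variables} gives the analogous identity for surface minima, modulo an error bounded by $C\rho^\alpha+C\e_n$ after division by $\rho^{d-1}$; sending first $n\to+\infty$ (using \eqref{def g hom 2}) and then $\rho\to 0^+$ produces $g_{\rm lim}(\zeta,\nu)$ as the iterated limit. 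This verifies conditions \eqref{1} and \eqref{2} of Theorem \ref{theorem sufficient} with $\hat f=f_{\rm lim}$ and $\hat g(x,\zeta,\nu)=g_{\rm lim}(\zeta,\nu)$, and the theorem delivers $f_{\rm lim}\in\mathcal{F}^\alpha$, $g_{\rm lim}\in\mathcal{G}^\infty$, and $\Gamma$-convergence of $\{F_{\e_n}(\cdot,U)\}$ to $F^{f_{\rm lim},g_{\rm lim}}(\cdot,U)$ on $L^1_{\rm loc}$.

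\emph{Step $2$: positive $1$-homogeneity of $g_{\rm lim}$.} I would then apply the same programme to the sequence $F^\infty_{\e_n}$ obtained by rescaling $F^\infty=F^{f^\infty,g^\infty}$; since $f^\infty$ and $g^\infty$ are positively $1$-homogeneous, each $F^\infty_{\e_n}$ is $1$-homogeneous in $u$, and the rescaling \eqref{def ge} reduces, for $g^\infty$, to $(g^\infty)_{\e_n}(x,\zeta,\nu)=g^\infty(x/\e_n,\zeta,\nu)$. To verify the corresponding bulk hypothesis I would combine the pointwise identity $F^{f,g}(tv,Q)/t\to F^\infty(v,Q)$ (which, through passage to the infimum, gives $\lim_{t\to+\infty}\m^{F^{f,g}}(t\ell_A,Q)/t=\m^{F^\infty}(\ell_A,Q)$) with Lemma \ref{minimum estimate}, which provides the Cauchy control of $\m^{F^{f,g}}(t\ell_A,Q)/t$ in $t$, uniform in $Q$, needed to exchange the limits in $t$ and $r$. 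The surface hypothesis is automatic since $(F^\infty)^\infty=F^\infty$. Step $1$ applied to $\{F^\infty_{\e_n}\}$ then yields $\Gamma$-convergence to $F^{f_{\rm lim}^\infty,g_{\rm lim}}$, and because $\Gamma$-convergence preserves positive $1$-homogeneity, the $\Gamma$-limit is $1$-homogeneous in $u$, forcing $g_{\rm lim}(t\zeta,\nu)=t\,g_{\rm lim}(\zeta,\nu)$ for every $t>0$.

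\emph{Step $3$: the identity \eqref{eq omogenea}.} By Proposition \ref{alfa closure}, $F^{f_{\rm lim},g_{\rm lim}}(\cdot,Q_\nu(0,1))$ is $L^1_{\rm loc}$-lower semicontinuous, and I would test it against two families of approximants. For $g_{\rm lim}(\zeta,\nu)\leq f_{\rm lim}^\infty(\zeta\odot\nu)$, the Lipschitz slab interpolant $u^\delta(y):=\zeta\,\psi_\delta(y\cdot\nu)$, with $\psi_\delta$ affine on $(-\delta/2,\delta/2)$ and equal to $0$ and $1$ at its endpoints, converges to $u_{0,\zeta,\nu}$ in $L^1_{\rm loc}$ and satisfies $F^{f_{\rm lim},g_{\rm lim}}(u^\delta,Q_\nu(0,1))=\delta f_{\rm lim}((\zeta\odot\nu)/\delta)+(1-\delta)f_{\rm lim}(0)\to f_{\rm lim}^\infty(\zeta\odot\nu)+f_{\rm lim}(0)$ as $\delta\to 0^+$. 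Comparing with $F^{f_{\rm lim},g_{\rm lim}}(u_{0,\zeta,\nu},Q_\nu(0,1))=f_{\rm lim}(0)+g_{\rm lim}(\zeta,\nu)$, lower semicontinuity gives the desired bound. For the reverse inequality the $\nu$-staircase
\[
u_k(y):=\frac{t\rho}{k}\,\zeta\,\Bigl\lfloor\frac{k(y\cdot\nu)}{\rho}+\frac{1}{2}\Bigr\rfloor
\]
converges uniformly on $Q_\nu(0,\rho)$ to $\ell_{t\zeta\otimes\nu}$; since $\ell_{t\zeta\otimes\nu}-\ell_{t\zeta\odot\nu}$ is a rigid motion, the invariance property (d) of Definition \ref{abstract functionals} gives $F^{f_{\rm lim},g_{\rm lim}}(\ell_{t\zeta\otimes\nu},Q_\nu(0,\rho))=\rho^d f_{\rm lim}(t\zeta\odot\nu)$, while $u_k$ is purely jump with $k$ jumps of value $t\rho\zeta/k$ across hyperplanes of normal $\nu$, so by the $1$-homogeneity from Step $2$, $F^{f_{\rm lim},g_{\rm lim}}(u_k,Q_\nu(0,\rho))=\rho^d f_{\rm lim}(0)+t\rho^d g_{\rm lim}(\zeta,\nu)$. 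Lower semicontinuity yields $\rho^d f_{\rm lim}(t\zeta\odot\nu)\leq\rho^d f_{\rm lim}(0)+t\rho^d g_{\rm lim}(\zeta,\nu)$; dividing by $t\rho^d$ and sending $t\to+\infty$ concludes \eqref{eq omogenea}. The formula \eqref{singular defin} then follows from the $1$-homogeneity of $f_{\rm lim}^\infty$ and the mutual singularity of $E^cu$ and $E^ju$. The main obstacle will be Step $3$, where the correct choice of two distinct approximants and the rigid-motion trick relating $\zeta\otimes\nu$ to the symmetric product $\zeta\odot\nu$ in the lower bound must be identified.
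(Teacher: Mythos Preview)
Your Steps~1 and~3 are essentially the paper's approach: Step~1 is exactly how the paper verifies the hypotheses of Theorem~\ref{theorem sufficient}, and Step~3 is the content of the paper's Lemma~\ref{lower semicontinuity implies}, carried out on oriented cubes $Q_\nu(0,\rho)$ rather than on a generic open set; this localisation is a mild simplification (it spares you the Lebesgue-point and Riemann-sum bookkeeping the paper uses), but the two approximating families and the rigid-motion trick $\ell_{\zeta\otimes\nu}-\ell_{\zeta\odot\nu}\in\mathcal{R}$ are the same.

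The genuine difference is Step~2. The paper obtains the positive $1$-homogeneity of $g_{\rm lim}$ in one line, directly from the defining formula~\eqref{def g hom 2}: since $f^\infty$ and $g^\infty$ are positively $1$-homogeneous, so is $F^\infty$ in $u$, hence $\m^{F^\infty}(u_{rx,t\zeta,\nu},Q_\nu(rx,r))=t\,\m^{F^\infty}(u_{rx,\zeta,\nu},Q_\nu(rx,r))$ (using $u_{rx,t\zeta,\nu}=t\,u_{rx,\zeta,\nu}$), and passing to the limit in $r$ gives $g_{\rm lim}(t\zeta,\nu)=t\,g_{\rm lim}(\zeta,\nu)$. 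Your route---rerunning the entire $\Gamma$-convergence programme for $F^\infty_{\e_n}$---can be made to work, but it requires the nontrivial identity $\lim_{t\to+\infty}\m^{F^{f,g}}(t\ell_A,Q)/t=\m^{F^\infty}(\ell_A,Q)$ (an inf/limit exchange that is not immediate) together with a Moore--Osgood-type interchange of the limits in $t$ and $r$ via Lemma~\ref{minimum estimate}. This is correct but a substantial detour for a one-line fact; you should replace it by the direct observation above.
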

   \begin{proof}
    Let us fix a sequence $\{\e_n\}_{n\in\N}\subset (0,1)$ converging to $0$ as $n\to +\infty$. Since $\{F_{\e_n}\}_{n\in\N}\subset \mathfrak{F}^{\alpha,\infty}$, to prove the $\Gamma$-convergence part of the statement it is enough to check that the hypotheses of Theorem \ref{theorem sufficient} are satisfied. Using Lemma \ref{lemma change of bulk variables} we see that for every $\rho\in (0,1)$, $n\in\N$, $x\in\Rd$, and $A\in \Rdsym$ we have 
       \begin{equation*}
             \frac{1}{\rho^d}\m^{F_{\e_n}}(\ell_A,Q(x,\rho))=\frac{\e_n^d}{\rho^d}\m^{F}(\ell_A,Q(\tfrac{x}{\e_n},\tfrac{\rho}{\e_n})),
       \end{equation*}
       so that, setting $r_n:=\rho/\e_n$, we may rewrite the previous equality as 
       \begin{equation*}
            \frac{1}{\rho^d}\m^{F_{\e_n}}(\ell_A,Q(x,\rho))=\frac{1}{r_n^d}\m^{F}(\ell_A,Q(r_n\tfrac{x}{\rho},r_n)).
       \end{equation*}
        By \eqref{def f hom 2} applied with $x$ replaced by $x/\rho$, we then obtain for every $A\in \Rdsym$ and $\rho\in (0,1)$ that 
       \begin{equation*}
           \lim_{n\to+\infty}
            \frac{1}{\rho^d}\m^{F_{\e_n}}(\ell_A,Q(x,\rho))=f_{\rm lim} (A),
       \end{equation*}
       which proves \eqref{1} for every $ x\in\Rd$ and  $A\in \Rdsym$. 

       Let us fix $x\in\Rd$, $\zeta\in \Rd$, and $\nu\in \Sd$.  Using Lemma \ref{lemma change of surface variables} we see that for every $\rho\in (0,1)$ and $n$ large enough we have
       \begin{gather*}
           \frac{\e_n^{d-1}}{\rho^{d-1}}\m^{F^{f^\infty,g}}(u_{\frac{x}{\e_n},\zeta,\nu},Q_\nu(\tfrac{x}{\e_n},\tfrac{\rho}{\e_n}))-C(\rho^\alpha +\e_n)\leq  \frac{1}{\rho^{d-1}}\m^{F_{\e_n}}(u_{x,\zeta,\nu},Q_\nu(x,\rho))\\\leq  \frac{\e_n^{d-1}}{\rho^{d-1}}\m^{F^{f^\infty,g}}(u_{\frac{x}{\e_n},\zeta,\nu},Q_\nu(\tfrac{x}{\e_n},\tfrac{\rho}{\e_n}))+C(\rho^\alpha+\e_n),
       \end{gather*}
       so that, setting again $r_n:=\rho/\e_n$, by \eqref{def g hom 2} we may pass to the limit first as $n\to +\infty$  and then as $\rho\to 0^+$ to obtain \begin{equation*}
           \lim_{\rho \to 0^+}\lim_{n\to+\infty}
            \frac{1}{\rho^{d-1}}\m^{F_{\e_n}}(u_{x,\zeta,\nu},Q_\nu(x,\rho))=g_{\rm lim} (\zeta,\nu),
       \end{equation*}
       proving \eqref{2} for every $x\in\Rd$, $\zeta\in \Rd$, and $\nu\in \Sd$.

       The fact that $f_{\rm lim}\in\mathcal{F}^\alpha$ and $g_{\rm lim}\in\mathcal{G}^\infty$ follows again by Theorem \ref{theorem sufficient}, while property \eqref{homogeneity} for $g_{\rm lim}$ follows from the observation that, being $f^\infty$ and $g^\infty$ both positively homogeneous of degree one (in the variables $A$ and $\zeta$, respectively), so is the function $t\mapsto \m^{F^{\infty}}(u_{x,t\zeta,\nu},Q_\nu(x,\rho))$. By \eqref{def g hom 2} this leads to the positive homogeneity of $g_{\rm lim}$.

       Finally, since for every $U\in\U_c(\Rd)$ the functional $F^{f_{\rm lim},g_{\rm lim}}(\cdot,U)$ is $L^1_{\rm loc}(\Rd;\Rd)$ lower semicontinuous and the functions $f_{\rm lim}$ and $g_{\rm lim}$ are independent of the variable $x$, equality \eqref{eq omogenea}
    
    \noindent follows from Lemma \ref{lower semicontinuity implies} below. Equality \eqref{singular defin} follows immediately from Definition \ref{def:integral functionals}.
   \end{proof}

For the application to stochastic homogenisation it is useful to obtain the conclusion of the previous theorem assuming only that the limits in \eqref{def f hom 2} and \eqref{def g hom 2} hold  on  countable dense collections of $A$, $\zeta$, and $\nu$. This is made possible by the following two lemmas.
\begin{lemma}\label{lemma: extension bulk}
    Let $f\in\mathcal{F}$, $g\in\mathcal{G}$, and $\mathbb{D}\subset \Rdsym$ be a dense subset. Assume that there exists a function $f_{\rm lim}\colon \mathbb{D}\to [0,+\infty)$ such that 
    \begin{equation}\label{net}
         f_{\rm lim}(A)=\lim_{r\to+\infty}\frac{\m^{F^{f,g}}(\ell_A,Q(r x,r))}{r^d}\quad \text{for every $x\in\Rd$ and $A\in \mathbb{D}$}.
    \end{equation}
    Then there exists a unique continuous  extension $f_{\rm lim}\colon \Rdsym\to [0,+\infty)$ and this extension is Lipschitz continuous with Lipschitz constant $c_5$ and  satisfies
    \begin{equation}\label{nest}
         f_{\rm lim}(A)=\lim_{r\to+\infty}\frac{\m^{F^{f,g}}(\ell_A,Q(r x,r))}{r^d}\quad \text{for every $x\in\Rd$ and $A\in \Rdsym$}.
    \end{equation}
\end{lemma}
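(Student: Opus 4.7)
The plan is to show that the Lipschitz continuity (f3) of $f$ transfers, through the minimisation problem $\m^{F^{f,g}}(\ell_A, Q(rx,r))$, to a uniform Lipschitz estimate in $A$ of the same constant $c_5$. Once this is established on the dense subset $\mathbb{D}$, both the unique continuous extension to $\Rdsym$ and the convergence \eqref{nest} for all $A\in\Rdsym$ follow by a standard $\varepsilon$-approximation argument.

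First I would observe that, since $F^{f,g}\in\mathfrak{F}$ by Proposition \ref{prop integrals are abstract}, it satisfies property (e) of Definition \ref{abstract functionals}. The same argument leading to \eqref{estimate for lipschiztianity} in the proof of Lemma \ref{inclusion bulk} then yields, for every $A_1,A_2\in\Rdsym$, every $x\in\Rd$, and every $\rho>0$, the estimate
\begin{equation*}
    \bigl|\m^{F^{f,g}}(\ell_{A_1},Q(x,\rho))-\m^{F^{f,g}}(\ell_{A_2},Q(x,\rho))\bigr|\leq c_5|A_1-A_2|\,\rho^d.
\end{equation*}
Applying this with centre $rx$ and side $r$, dividing by $r^d$, and passing to the limit as $r\to+\infty$ using \eqref{net}, I would deduce that for every $A_1,A_2\in\mathbb{D}$
\begin{equation*}
    |f_{\rm lim}(A_1)-f_{\rm lim}(A_2)|\leq c_5|A_1-A_2|.
\end{equation*}
Hence $f_{\rm lim}$ is $c_5$-Lipschitz on the dense set $\mathbb{D}$ and admits a unique Lipschitz extension to $\Rdsym$ with the same constant.

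To prove \eqref{nest}, fix $x\in\Rd$, $A\in\Rdsym$, and $\varepsilon>0$, and choose $A'\in\mathbb{D}$ with $|A-A'|<\varepsilon$. Then the displayed Lipschitz estimate gives
\begin{equation*}
    \bigl|\tfrac{1}{r^d}\m^{F^{f,g}}(\ell_{A},Q(rx,r))-\tfrac{1}{r^d}\m^{F^{f,g}}(\ell_{A'},Q(rx,r))\bigr|\leq c_5\varepsilon
\end{equation*}
for every $r>0$, while \eqref{net} implies that the second quotient tends to $f_{\rm lim}(A')$ as $r\to+\infty$, and $|f_{\rm lim}(A)-f_{\rm lim}(A')|\leq c_5\varepsilon$ by the extended Lipschitz bound. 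Combining these three inequalities via the triangle inequality and letting $\varepsilon\to 0^+$ yields \eqref{nest}.

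The argument is essentially routine once the Lipschitz estimate on the minimisation values is in place, and there is no real obstacle: the only point requiring a small check is that the estimate \eqref{estimate for lipschiztianity} used inside the proof of Lemma \ref{inclusion bulk} applies verbatim to cubes $Q(rx,r)$ of large side, which it does because property (e) of Definition \ref{abstract functionals} holds uniformly in the size and location of the cube.
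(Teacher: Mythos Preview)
Your proposal is correct and follows essentially the same approach as the paper: both proofs establish the Lipschitz estimate $|\m^{F^{f,g}}(\ell_{A_1},Q(rx,r))-\m^{F^{f,g}}(\ell_{A_2},Q(rx,r))|\leq c_5|A_1-A_2|r^d$ by invoking the argument of \eqref{estimate for lipschiztianity}, and then use this together with \eqref{net} to obtain the unique Lipschitz extension and \eqref{nest}. Your version simply spells out the $\varepsilon$-approximation step more explicitly than the paper, which compresses it into a single sentence.
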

\begin{proof}
  Arguing exactly as in the proof of \eqref{estimate for lipschiztianity}, we obtain
       \begin{equation*}
           |\m^{F^{f,g}}(\ell_{A_1},Q(rx,r))-  \m^{F^{f,g}}(\ell_{A_2},Q(rx,r))|\leq c_5|A_2-A_1|r^d \quad \text{ for every $A_1,A_2\in \Rdsym$}.
       \end{equation*}
        Combining the previous inequality with \eqref{net}, we obtain that there exists a unique continuous extension of $f_{\rm lim}$ and that this extension is $c_5$-Lipschitz continuous  and satisfies \eqref{nest}.
\end{proof}
\begin{lemma}\label{lemma: extension surface}
     Let $f\in\mathcal{F}$, $g\in\mathcal{G}$, and $\mathbb{D}_1\subset \Rd$ and $\mathbb{D}_2\subset \Sd$ be dense subsets. Assume that there exists a function $g_{\rm lim}\colon \mathbb{D}_1\times \mathbb{D}_2\to [0,+\infty)$ such that 
     \begin{equation}\label{hp theorem extension surf}
          g_{\rm lim}(\zeta,\nu)=\lim_{r\to+\infty}\frac{\m^{F^{\infty}}(u_{rx,\zeta,\nu},Q_\nu(rx,r))}{r^{d-1}}\quad \text{for all  $x\in\Rd$, $\zeta\in \mathbb{D}_1$, and $\nu\in \mathbb{D}_2$.}
     \end{equation}
     Then there exists a unique extension $g_{\rm lim}\colon \Rd\times \Sd\to [0,+\infty)$ such that 
     \begin{equation}\label{claim lemma extension}
        \hspace{-0.5 cm} g_{\rm lim}(\zeta,\nu)=\lim_{r\to+\infty}\frac{\m^{F^{\infty}}(u_{rx,\zeta,\nu},Q_\nu(rx,r))}{r^{d-1}}\quad \text{for all $x\in\Rd$, $\zeta\in \Rd$, and $\nu\in \Sd$.}
     \end{equation}
\end{lemma}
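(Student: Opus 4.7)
My plan is to first establish a modulus of continuity for $g_{\rm lim}$ on $\mathbb{D}_1\times\mathbb{D}_2$ that is uniform on bounded sets in $\zeta$, extend $g_{\rm lim}$ by density to $\Rd\times\Sd$, and then upgrade this to the convergence in \eqref{claim lemma extension} for arbitrary $(\zeta,\nu)$. Define $h_r(x,\zeta,\nu):=r^{-(d-1)}\m^{F^\infty}(u_{rx,\zeta,\nu},Q_\nu(rx,r))$, so that the hypothesis \eqref{hp theorem extension surf} reads $h_r(x,\zeta,\nu)\to g_{\rm lim}(\zeta,\nu)$ for every $x\in\Rd$, $\zeta\in\mathbb{D}_1$, $\nu\in\mathbb{D}_2$, with a limit independent of $x$. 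Since $f^\infty$ and $g^\infty$ are positively $1$-homogeneous, $F^\infty\in\mathfrak{F}^{\alpha,\infty}$ satisfies property (c) of Definition \ref{abstract functionals} with $c_2=c_4=0$, and because $g^\infty$ satisfies \eqref{lipschitz for iinfity}, $F^\infty$ satisfies property (f) with $\sigma(\tau)=\sigma^\infty\tau$. Arguing as in the derivation of \eqref{sigma continuita g} one gets $|h_r(x,\zeta_1,\nu)-h_r(x,\zeta_2,\nu)|\leq \sigma^\infty|\zeta_1-\zeta_2|$ uniformly in $r,x,\nu$.

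For the $\nu$-direction, given $\nu_1,\nu_2\in\Sd$ close, I pick $\delta=\delta(|\nu_1-\nu_2|)\in(0,1)$, with $\delta\to 0$ as $|\nu_1-\nu_2|\to 0$, such that $Q_{\nu_1}(rx,r(1-\delta))\subset\subset Q_{\nu_2}(rx,r)$ for every $r,x$, and apply Lemma \ref{lemma poco used} to $F^\infty$ (with $c_4=0$). Dividing by $r^{d-1}$ and rewriting the infimum at side-length $r(1-\delta)$ via the change of base point $x\mapsto x/(1-\delta)$ yields
\begin{equation*}
h_r(x,\zeta,\nu_2)\leq (1-\delta)^{d-1}h_{r(1-\delta)}\!\big(\tfrac{x}{1-\delta},\zeta,\nu_1\big)+c_3|\zeta|\big(1-(1-\delta)^{d-1}\big)+c_3|\zeta|\omega(0,|\nu_1-\nu_2|)(1-\delta)^{d-1},
\end{equation*}
together with the bound obtained by swapping $\nu_1\leftrightarrow\nu_2$. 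For $\zeta\in\mathbb{D}_1$ and $\nu_1,\nu_2\in\mathbb{D}_2$, letting $r\to+\infty$ in both inequalities (the shifted base point on the right is harmless because the limit is independent of $x$) and rearranging using $g_{\rm lim}\geq 0$ and $(1-\delta)^{d-1}\leq 1$ yields the modulus of continuity $|g_{\rm lim}(\zeta,\nu_1)-g_{\rm lim}(\zeta,\nu_2)|\leq c_3|\zeta|\bigl[(1-(1-\delta)^{d-1})+\omega(0,|\nu_1-\nu_2|)\bigr]$, which tends to $0$ as $|\nu_1-\nu_2|\to 0$ uniformly in $\zeta$ on bounded sets. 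Combined with the $\sigma^\infty$-Lipschitz bound in $\zeta$, this shows $g_{\rm lim}$ is uniformly continuous on bounded subsets of $\mathbb{D}_1\times\mathbb{D}_2$ and thus admits a unique continuous extension to $\Rd\times\Sd$.

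To obtain \eqref{claim lemma extension} at an arbitrary $(\zeta,\nu)\in\Rd\times\Sd$, I approximate by $(\zeta_n,\nu_n)\in\mathbb{D}_1\times\mathbb{D}_2$ converging to $(\zeta,\nu)$ and apply the two one-sided $\nu$-comparison inequalities to the pairs $(\zeta_n,\nu)$ and $(\zeta_n,\nu_n)$: the limits on the right-hand sides exist and equal $g_{\rm lim}(\zeta_n,\nu_n)$ by the hypothesis, and combining them with the $\zeta$-Lipschitz bound gives
\begin{equation*}
\tfrac{g_{\rm lim}(\zeta_n,\nu_n)}{(1-\delta_n)^{d-1}}-E_n\leq \liminf_{r\to+\infty} h_r(x,\zeta,\nu)\leq \limsup_{r\to+\infty} h_r(x,\zeta,\nu)\leq (1-\delta_n)^{d-1}g_{\rm lim}(\zeta_n,\nu_n)+E_n
\end{equation*}
for every $x\in\Rd$, with $E_n\to 0$ as $n\to\infty$. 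Sending $n\to\infty$ and using continuity of the extended $g_{\rm lim}$ proves \eqref{claim lemma extension}, and uniqueness of the extension is immediate from its continuity. The main obstacle is that Lemma \ref{lemma poco used} compares minima over cubes of \emph{different} side-lengths, so after normalisation the right-hand side features $h_{r(1-\delta)}$ at the shifted point $x/(1-\delta)$ rather than $h_r$ at $x$; this complication is resolved precisely by the base-point independence built into the hypothesis \eqref{hp theorem extension surf}, and it is what makes the argument more delicate than the bulk analogue in Lemma \ref{lemma: extension bulk}.
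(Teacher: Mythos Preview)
Your approach is correct and essentially the same as the paper's: both extend first in $\zeta$ via the Lipschitz estimate coming from property (f) of $F^\infty$ (as in \eqref{sigma continuita g}), then in $\nu$ via the cube-comparison of Lemma \ref{lemma poco used}, using the base-point independence built into \eqref{hp theorem extension surf} to absorb the shifted centre that appears after rescaling. The only cosmetic differences are that the paper parametrises the nested cubes by $Q_{\nu_1}(rx,r)\subset\subset Q_{\nu_2}(rx,(1+\eta)r)$ rather than your $(1-\delta)$-version, skips the intermediate step of extending $g_{\rm lim}$ by continuity in favour of bounding $\limsup-\liminf$ directly, and is explicit that the $\nu$-comparison must be carried out within a single hemisphere $\Sd_\pm$ (where $\nu\mapsto R_\nu$ is continuous).
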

 \begin{proof}
       We begin by proving that there exists an extension 
       $g_{\rm lim}\colon\Rd\times \mathbb{D}_2\to[0,+\infty)$ such that
\begin{equation}\label{g hom 2 every}
g_{\rm lim}(\zeta,\nu)=\lim_{r\to+\infty}\frac{\m^{F^{\infty}}(u_{rx,\zeta,\nu},Q_\nu(rx,r))}{r^{d-1}}\quad \text{for all $x\in\Rd$, $\zeta\in \Rd$, and $\nu\in \mathbb{D}_2$.}
       \end{equation}
To this aim, we observe that by \eqref{lipschitz for iinfity} and \eqref{def sigma} we have
\begin{equation*}
           |\m^{F^{\infty}}(u_{x,\zeta_1,\nu},Q_\nu(rx,r))-  \m^{F^{\infty}}(u_{x,\zeta_2,\nu},Q_\nu(rx,r))|\leq \sigma_1|\zeta_1-\zeta_2|r^{d-1}
       \end{equation*}
for every $x\in\Rd$,  $\zeta_1,\zeta_2\in \Rd$, and $\nu\in \Sd$. This can be proved arguing as in the proof of inequality \eqref{sigma continuita g} in Lemma \ref{surface inclusion}.  Combining this inequality with \eqref{hp theorem extension surf}, we obtain that the limit in \eqref{g hom 2 every} exists for every $\zeta\in\Rd$ and $\nu\in \mathbb{D}_2$.

  We now prove that there exists and extension of $g_{\rm lim}$ satisfying \eqref{claim lemma extension}.
We begin by observing that, given $0<\eta<1$, by the continuity condition on the map $\nu\mapsto R_\nu$ introduced in (d) of Section \ref{sec: Notation} there exists $0<\delta_\eta<\eta$ 
such that 
\begin{equation}\label{inclusione cubi}Q_{\nu_1}(0,1)\subset\subset Q_{\nu_2}(0,1+\eta) \quad \text{for every $\nu_1,\nu_2\in \Sd_{\pm}$ with $|\nu_1-\nu_2|<\delta_\eta$.}\end{equation}

Let us fix $x\in\Rd$, $\zeta\in \Rd$,  $r>0$, and $\eta>0$. We observe that  \eqref{inclusione cubi} gives 
\begin{equation*}
    Q_{\nu_1}(rx,r)\subset\subset Q_{\nu_2}(rx,(1+\eta)r) \quad \text{for every $\nu_1,\nu_2\in \Sd_{\pm}$ with $|\nu_1-\nu_2|<\delta_\eta$.}
\end{equation*}
Hence, we can apply Lemma \ref{lemma poco used} with  $x_1=x_2=r x$, $\rho_1=r$,  $\rho_2=(1+\eta)r$, and $\nu_1,\nu_2\in\Sd_{\pm}$ with $|\nu_1-\nu_2|<\delta_\eta$. We observe that $F^{\infty}$ satisfies the upper estimate in (c) of Definition \ref{abstract functionals} with $c_4=0$ because of \eqref{bound finfty} and Proposition \ref{prop integrals are abstract}, so that we can omit the term containing $c_4$ in Lemma \ref{lemma poco used}.
Using  the inequality $(1+\eta)^{d-1}-1\le 2^{d-1}\eta$, for every $\nu_1,\nu_2\in \Sd_{\pm}$ with $|\nu_1-\nu_2|<\delta_\eta<\eta$ we then obtain
 \begin{gather*}
\m^{F^{\infty}}\!(u_{rx ,\zeta,\nu_2},Q_{\nu_2}(r x, (1+\eta)r))\leq   \m^{F^{\infty}}
    (u_{r x,\zeta,\nu_1},Q_{\nu_1}(r
    x,r))+c_3|\zeta|(2^{d-1}\eta+\omega(0,\eta)) r^{d-1}.
        \end{gather*}
We set $x_\eta:=\frac{x}{1+\eta}$ and $r_\eta:=(1+\eta)r$. Dividing the previous inequality by $r^{d-1}$, we obtain
        \begin{gather}\label{stima continuità nu}
\hspace{-0.38 cm}\frac{\m^{F^{\infty}}\!(u_{r_\eta x_\eta ,\zeta,\nu_2},Q_{\nu_2}(r_\eta x_\eta, r_\eta))}{r_\eta^{d-1}}\leq   \frac{\m^{F^{\infty}}
    (u_{r x,\zeta,\nu_1},Q_{\nu_1}(r
    x,r))}{r^{d-1}}+c_3|\zeta|(2^{d-1}\eta+\omega(0,\eta)).
        \end{gather}
Exchanging the roles of $\nu_1$ and $\nu_2$ we also get
  \begin{gather}\label{stima continuityà nu 2}
\hspace{-0.38 cm}\frac{\m^{F^{\infty}}\!(u_{r_\eta x_\eta ,\zeta,\nu_1},Q_{\nu_1}(r_\eta x_\eta, r_\eta))}{r_\eta^{d-1}}\leq   \frac{\m^{F^{\infty}}
    (u_{r x,\zeta,\nu_2},Q_{\nu_2}(r
    x,r))}{r^{d-1}}+c_3|\zeta|(2^{d-1}\eta+\omega(0,\eta)).
        \end{gather}
Moreover, applying  \eqref{stima continuità nu} with $x$ replaced by $x^{\eta}:=(1+\eta)x$ we obtain
\begin{gather}\label{stima continuità nu 3}
\hspace{-0.38 cm}\frac{\m^{F^{\infty}}\!(u_{r_\eta x ,\zeta,\nu_2},Q_{\nu_2}(r_\eta x, r_\eta))}{r_\eta^{d-1}}\leq   \frac{\m^{F^{\infty}}
    (u_{r x^\eta,\zeta,\nu_1},Q_{\nu_1}(r
    x^\eta,r))}{r^{d-1}}+c_3|\zeta|(2^{d-1}\eta+\omega(0,\eta)).
        \end{gather}

Let us fix $\nu\in \Sd_{\pm}$. For every $\eta>0$ we can find $\nu_\eta\in \Sd_{\pm}\cap \mathbb{D}_2$ with $|\nu_\eta-\nu|<\delta_\eta$, where $0<\delta<\eta$ is the constant given by \eqref{inclusione cubi}. Using \eqref{stima continuityà nu 2} with $\nu_1=\nu_\eta$ and $\nu_2=\nu$,  by \eqref{g hom 2 every}, which clearly holds with $r$ replaced by $r_\eta$, we obtain 
\begin{gather}\label{stima novella}
    g_{\rm lim}(\zeta,\nu_\eta)-c_3|\zeta|(2^{d-1}\eta+\omega(0,\eta))\leq \liminf_{r\to+\infty}\frac{\m^{F^{\infty}}
    (u_{r x,\zeta,\nu},Q_{\nu}(r
    x,r))}{r^{d-1}},
\end{gather}
while using \eqref{stima continuità nu 3} with $\nu_1=\nu_\eta$ and $\nu_2=\nu$  by \eqref{g hom 2 every} we get
\begin{equation}\label{stima continuit}
\limsup_{n\to+\infty}\frac{\m^{F^{\infty}}
    (u_{r x,\zeta,\nu},Q_{\nu}(r
    x,r))}{r^{d-1}}\leq g_{\rm lim}(\zeta,\nu_\eta)+c_3|\zeta|(2^{d-1}\eta+\omega(0,\eta)),
\end{equation}
where we have also used that by \eqref{g hom 2 every} the limit appearing the definition of $g_{\rm lim}(\zeta,\nu_\eta)$ is the same for $x$ and $x^\eta$. Combining \eqref{stima novella} and \eqref{stima continuit}, we obtain 
\begin{gather}\notag
    \limsup_{r\to+\infty}
    \frac{\m^{F^{\infty}}\!
    (u_{r x,\zeta,\nu},Q_{\nu}(r
    x,r))}{r^{d-1}}-\liminf_{r\to+\infty}\frac{\m^{F^{\infty}}\!
    (u_{r x,\zeta,\nu},Q_{\nu}(r
    x,r))}{r^{d-1}}\leq  
   2c_3|\zeta|(2^{d-1}\eta+\omega(0,\eta)).
\end{gather}
Letting $\eta\to 0^+$ we conclude the proof of the lemma.
     \end{proof}
We conclude this section by showing that, if  $F^{f,g}(\cdot,U)$ is $L^1(U;\Rd)$ lower semicontinuous for some bounded open set $U\subset \Rd$,  then $f^\infty(\zeta\odot\nu)=g(\zeta,\nu)$, provided that $f$ and $g$ are independent of $x$ and  $g$ is positively homogeneous of degree one in  $\zeta$. Although  variants of this result are well-known (see, for instance, \cite[Step 3 of Theorem 6.14]{CaroccFocardiVan}),  we present here a complete proof in order to conclude the proof of Theorem \ref{homogeneous homogenization}.
\begin{lemma}\label{lower semicontinuity implies}
   Let $f\colon \Rdsym\to [0,+\infty)$ and $g\colon\Rd\times \Sd\to [0,+\infty)$ be Borel functions satisfying {\rm (f2), (g2), (g3)}, and \eqref{homogeneity}. Assume that there exists a non-empty bounded open set $U\subset \Rd$ such that the functional $F^{f,g}(\cdot,U)$ is $L^1(U;\Rd)$-lower semicontinuous. Then we have 
   \begin{equation}\notag 
    g(\zeta,\nu)=f^\infty(\zeta\odot\nu)
   \end{equation}
   for every $\zeta\in\Rd$ and $\nu\in\Sd$.
   \end{lemma}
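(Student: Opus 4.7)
The strategy is to prove the two inequalities $g(\zeta,\nu)\le f^\infty(\zeta\odot\nu)$ and $f^\infty(\zeta\odot\nu)\le g(\zeta,\nu)$ separately, in both cases by exhibiting an $L^1(U;\Rd)$-converging sequence and applying the lower semicontinuity of $F^{f,g}(\cdot,U)$. Throughout, I would fix $x_0\in U$ so that $\hd(\Pi_{x_0}^\nu\cap U) > 0$, and compare with the explicit values $F^{f,g}(u_{x_0,\zeta,\nu},U) = f(0)\Ld(U) + g(\zeta,\nu)\hd(\Pi_{x_0}^\nu\cap U)$ and $F^{f,g}(\ell_A,U) = f(A)\Ld(U)$, both of which follow immediately from Definition \ref{def:integral functionals} since $u_{x_0,\zeta,\nu}$ has vanishing $\E$ and $E^c$ parts and $\ell_A$ is smooth.

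For the inequality $g(\zeta,\nu)\le f^\infty(\zeta\odot\nu)$, I would approximate the pure jump $u_{x_0,\zeta,\nu}$ by Lipschitz functions $u_n(y) := \phi_n((y-x_0)\cdot\nu)\zeta$, where $\phi_n$ is the piecewise affine ramp equal to $0$ on $(-\infty,0]$, linear on $[0,1/n]$, and equal to $1$ on $[1/n,+\infty)$. Then $u_n\to u_{x_0,\zeta,\nu}$ in $L^1(U;\Rd)$, and since $u_n$ is Lipschitz, $\E u_n = n(\zeta\odot\nu)\mathbf{1}_{S_n}$ on the slab $S_n := \{0 < (y-x_0)\cdot\nu < 1/n\}$, so
\[
F^{f,g}(u_n,U) = f(n\zeta\odot\nu)\,\Ld(S_n\cap U) + f(0)\bigl(\Ld(U) - \Ld(S_n\cap U)\bigr).
\]
By Cavalieri and Lebesgue differentiation (invoked on a translate of $x_0$ along $\nu$ if needed), $n\,\Ld(S_n\cap U)\to \hd(\Pi_{x_0}^\nu\cap U)$. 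Passing to a subsequence $n_k$ along which $f(n_k\zeta\odot\nu)/n_k \to f^\infty(\zeta\odot\nu)$, we obtain $F^{f,g}(u_{n_k},U) \to f(0)\Ld(U) + f^\infty(\zeta\odot\nu)\hd(\Pi_{x_0}^\nu\cap U)$, and lower semicontinuity together with the cancellation of the common bulk term $f(0)\Ld(U)$ yields the desired inequality.

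For the reverse inequality, I would fix $t>0$ and approximate the linear function $v_t(y) := t((y-x_0)\cdot\nu)\zeta$, whose symmetric gradient satisfies $Ev_t = t(\zeta\odot\nu)\Ld$, by the staircase functions $v_{t,n,\tau}(y) := (t/n)\lfloor n(y-x_0)\cdot\nu + \tau \rfloor\zeta$ depending on a shift parameter $\tau\in[0,1]$. These converge uniformly to $v_t$, are piecewise constant with jumps $(t/n)\zeta$ on the hyperplanes $H_{k,\tau}^n := \{(y-x_0)\cdot\nu = (k-\tau)/n\}$, and by the homogeneity hypothesis \eqref{homogeneity},
\[
F^{f,g}(v_{t,n,\tau},U) = f(0)\Ld(U) + \tfrac{t}{n}\,g(\zeta,\nu)\sum_{k\in\Z}\hd(U\cap H_{k,\tau}^n).
\]
By Fubini, $\int_0^1 \sum_k \hd(U\cap H_{k,\tau}^n)\,d\tau = n\,\Ld(U)$, so for each $n$ we may pigeonhole some $\tau_n\in[0,1]$ with $(1/n)\sum_k \hd(U\cap H_{k,\tau_n}^n)\le \Ld(U)$. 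Since $v_{t,n,\tau_n}\to v_t$ in $L^1$, lower semicontinuity gives $f(t\zeta\odot\nu)\le f(0) + t\,g(\zeta,\nu)$; dividing by $t$ and passing to $\limsup_{t\to+\infty}$ produces $f^\infty(\zeta\odot\nu)\le g(\zeta,\nu)$.

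The main obstacle is the careful handling of the Cavalieri-type sums when $U$ is only open and bounded, so that $\Ld(\partial U)$ need not vanish and the slice function $t\mapsto\hd(U\cap\{(y-x_0)\cdot\nu = t\})$ is only lower semicontinuous; naive Riemann-sum convergence can fail. This is why the shift-averaging pigeonhole is invoked in the lower bound, and why a generic translate of $x_0$ (via Lebesgue differentiation) is used in the upper bound. A secondary and milder point is that, without any analogue of (f4) among the hypotheses, $f^\infty$ is only a $\limsup$ rather than a limit; this forces the subsequence extraction realising $f^\infty(\zeta\odot\nu)$ in the upper bound argument, while in the lower bound it cancels naturally with the outer $\limsup$ in $t$.
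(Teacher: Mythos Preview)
Your proof is correct and follows essentially the same two-step approach as the paper: approximate the pure jump by affine ramps to get $g\le f^\infty$, and approximate the linear function by staircases to get $f^\infty\le g$. The only noteworthy difference is in the second step, where the paper handles the Riemann-sum convergence by invoking a classical result (Hahn, Doob) to choose a generic base point $x$ for which the sums converge to $\Ld(U)$, whereas you average over the shift $\tau$ and pigeonhole; your argument is slightly more self-contained. One small imprecision: in the first step you extract an \emph{integer} subsequence $n_k$ with $f(n_k\zeta\odot\nu)/n_k\to f^\infty(\zeta\odot\nu)$, but $f^\infty$ is a $\limsup$ over real $t$ and need not be attained along integers---either allow real slab-widths $1/t_k$, or simply observe that $\limsup_n f(nA)/n\le f^\infty(A)$ suffices for the inequality you need (this is exactly what the paper does).
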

\begin{proof}
Let us fix $\zeta\in\Rd$,  $\nu\in\Sd$, and $x\in\Rd$ such that  
\begin{gather}\label{berc}
    0<\hd(\Pi^\nu_x\cap U),\\\label{Lebesgue point}
    0 \text{ is a Lebesgue point of the function }\R\ni t\mapsto \hd(\Pi^\nu_{x+t\nu}\cap U),
\end{gather}
where we recall that for $\nu_0\in\Sd$ and $x_0\in\Rd$ the symbol $\Pi^{\nu_0}_{x_0}$ denotes the hyperplane passing  through $x_0$ and orthogonal to $\nu_0$, given by $\{y\in\Rd:(y-x_0)\cdot\nu_0=0\}$.
Thanks to the Fubini Theorem and the Lebesgue Differentiation Theorem the set of points $x\in\Rd$ for which the two previous conditions are satisfied has positive $\Ld$ measure. 

We now show that 
   \begin{equation}\label{g leq fhom}
     g(\zeta,\nu)\leq {f}^\infty(\zeta\odot\nu).   \end{equation}
To prove this,  we approximate the function $u_{x,\zeta,\nu}$ by the sequence of piecewise affine functions $\{u_n\}_{n\in\N}\subset  {\rm BD}(U)$ defined for every $y\in U$ by
   \begin{equation}\notag u_n(y):=
       \begin{cases}
           0 &\text{ if }(y-x)\cdot\nu\leq -\tfrac{1}{2n},\\
          
           n((y-x)\cdot\nu+\frac{1}{2n})  \zeta &\text{ if }-\tfrac{1}{2n}\leq (y-x)\cdot\nu\leq \tfrac{1}{2n},\\
         
           \zeta &\text{ if }(y-x)\cdot\nu\geq \tfrac{1}{2n}.
       \end{cases}
   \end{equation}
   Clearly $\{u_n\}_{n\in\N}$ converges to $u_{x,\zeta,\nu}$ strongly in $L^1(U;\Rd)$ as $n\to+\infty$. 
   
   Setting $H_n:=\{y\in U:-\tfrac{1}{2n}\leq (y-x)\cdot\nu\leq \tfrac{1}{2n}\}$,  one checks immediately that,
   \begin{equation}\label{gradient of aggine}
       \E u_n= n\zeta\odot\nu\chi_{H_n}\quad \text{ $\Ld$-a.e.\ in } U,
   \end{equation}
   where $\chi_{H_n}$ is the characteristic function of $H_n$. Moreover, the Fubini Theorem  implies that 
   \begin{equation}\notag 
       \Ld(H_n)=\int_{-\frac{1}{2n}}^{\frac{1}{2n}}\hd(\Pi^\nu_{x+t\nu}\cap U)\,{\rm d}t,
   \end{equation}
   which together with \eqref{Lebesgue point} this gives 
   \begin{equation}\label{nuova sfubinata}
       \lim_{n\to+\infty}n\Ld(H_n)=\hd(\Pi^\nu_x\cap U).
   \end{equation}
   
   In light of the lower semicontinuity of $F^{f,g}(\cdot,U)$, a direct computation shows that
   \begin{gather*}
       f(0)\Ld(U)+g(\zeta,\nu)\hd(\Pi^\nu_x\cap U)=F^{f,g}(u_{x,\zeta,\nu},U) \leq 
       \liminf_{n\to+\infty}F^{f,g}(u_n,U)\\
       =\liminf_{n\to+\infty}\Big(\int_{H_n}f(\E u_n)\,{\rm d}x+f(0)\Ld(U\setminus H_n)\Big).
   \end{gather*}
   Since $\Ld(H_n)$ converges to $0$ as $n\to+\infty$, the previous inequality gives
   \begin{equation}\label{8}
       g(\zeta,\nu)\hd(\Pi^\nu_x\cap U)\leq \liminf_{n\to+\infty}\int_{H_n}f(\E u_n)\,{\rm d}x.
   \end{equation}
   Using \eqref{gradient of aggine} we see  that
   \begin{gather}\label{9}
    \int_{H_n}f(\E u_n)\,{\rm d}x=f(n\zeta\odot\nu)\Ld(H_n).
   \end{gather}
   Combining \eqref{nuova sfubinata}-\eqref{9}, recalling the definition of $f^\infty$ given by \eqref{def recession},   we have
   \begin{gather*}
g(\zeta,\nu)\hd(\Pi^\nu_x\cap U)\leq \liminf_{n\to+\infty}\frac{f(n\zeta\odot\nu)}{n}(n\Ld(H_n))\leq f^\infty(\zeta\odot\nu)\hd(\Pi^\nu_x\cap U),
   \end{gather*}
   which by \eqref{berc} implies  \eqref{g leq fhom}.

  We now prove
  \begin{equation}\label{other}
      f^\infty(\zeta\odot\nu)\leq g(\zeta,\nu).
  \end{equation}
  To this aim, we set  $T>{\rm diam}(U)$ and let $x\in U$. By the Fubini Theorem we have
  \begin{gather}
    \notag\Ld(U)=\int_{-T}^{T}\hd(\Pi^\nu_{x+t\nu}\cap U)\,{\rm d}t.
  \end{gather}
  By the well-know approximation properties of Lebesgue integrals by means of Riemann sums (see \cite{Hahn}, \cite[Page 63]{Doob}, or \cite[Lemma 4.12]{FrancfortToad})  we can select $x\in U$ in such a way that 
  \begin{equation}
   \label{manz}\Ld(U)=\lim_{n\to+\infty}\frac{T}{n}\sum_{i=-n+1}^n\hd\big(\Pi^\nu_{x+\frac{i-1}{n}T\nu}\cap U\big).
  \end{equation}

We set $A:=\zeta\otimes\nu$ and fix $t>0$. To prove \eqref{other}, we  construct a sequence of pure jump functions approximating $\ell_{tA}$ in $L^1(U;\Rd)$. Given $n\in\N$,  let $\sigma_n\colon(-T,T)\to \R$ be defined by 
   \begin{equation}\notag 
    \sigma_n(s):=\sum_{i=-n+1}^{n}\tfrac{iT}{n}\chi_{(\tfrac{i-1}{n}T,\tfrac{i}{n}T\,)}(s)\quad \text{ for every }s\in(-T,T).
   \end{equation}
   For every $n\in\N$, let $u_n\in {\rm BD}(U)$ be the function defined by
   \begin{equation}\notag
u_n(y):= 
    t\zeta  \sigma_n((y-x)\cdot\nu)\quad \text{for every }y\in U.
   \end{equation}
   It is easy to see that $\{u_n\}_{n\in\N}$ converges to $\ell_{tA}$ strongly in $L^1(U;\Rd)$ as $n\to+\infty$. Thus, by the $L^1$-lower  semicontinuity  of $F^{f,g}(\cdot, U)$ we obtain
   \begin{equation}\label{bound fhat}
       f(t A^{\rm sym})\Ld(U)=\int_{U}f(t A^{\rm sym})\,{\rm d}x=F^{f,g}(\ell_{tA},U)\leq \liminf_{n\to+\infty}F^{f,g}(u_n,U).
   \end{equation}
On the other hand, using the homogeneity of $g$ given by \eqref{homogeneity}  we immediately check that
   \begin{gather*}
       F^{f,g}(u_n,U)=f(0)\Ld(U)+\sum_{i=-n+1}^ng\Big(\frac{t\zeta}{n}T,\nu\Big)\hd\big(\Pi^\nu_{x+\frac{i-1}{n}T\nu}\cap U)\\
       = f(0)\Ld(U)+tg(\zeta,\nu)\frac{T}{n}\sum_{i=-n+1}^n\hd\big(\Pi^\nu_{x+\frac{i-1}{n}T\nu}\cap U).  
   \end{gather*}
   Taking the liminf as $n\to+\infty$ and using \eqref{manz} and \eqref{bound fhat} we get
\begin{equation}\notag 
   f(t A^{\rm sym})\Ld(U)\leq f(0)\Ld(U)+tg(\zeta,\nu)\Ld(U).
\end{equation}
Hence, we get  $\frac{1}{t}f(t A^{\rm sym})\leq \frac{1}{t}f(0)+g(\zeta,\nu)$, and letting $t\to+\infty$   we obtain \eqref{other}. 
\end{proof}

\section{Stochastic homogenisation}\label{subsection stochastic}
In this section we use the results of the previous section to study the problem of stochastic homogenisation of free discontinuity functionals defined on the space ${\rm BD}$. 
 In the following we still assume that the modulus of continuity introduced in \eqref{slope at infty for sigma} satisfies \eqref{def sigma}. We begin by introducing the probabilistic setting we will use to deal with this problem.

\medskip

Throughout this section  $(\Omega, \mathcal{T}, P )$  is a probability space endowed with  a group $(\tau_z )_{z\in\Z^d}$ of $P$-preserving transformations on
 $(\Omega,\mathcal{T},P)$, i.e., a family $(\tau_z )_{z\in\Z^d}$ of $\mathcal{T}$-measurable bijective maps $\tau_z\colon \Omega\to\Omega$ such that
 \begin{enumerate}
\item [(a)]    $\tau_{z_1}\circ\tau_{z_2}=\tau_{z_1+z_2}$ for every $z_1,z_2\in\Z^d$;
\item [(b)] $P (\tau_z^{-1} (E))= P(E)$ for every $E \in\mathcal{T}$ and $z\in\>\Z^d$. 
 \end{enumerate}
Note that from (a) and the bijectivity it follows that for $z=0$ the map $\tau_0$ is the identity on $\Omega$. We 
say that a group $(\tau_z)_{z\in\Z^d}$ of $P$-preserving transformations is ergodic if every set  $E\in\mathcal{T}$ with $\tau_z(E)=E$ for every $z\in\Z^d$ has probability either $0$ or $1$.
 In analogy with \cite{DalToaHomo,DalMasoDonati2025}, we introduce the following two classes of stochastic integrands.
 \begin{definition}\label{def:stochastic integrands}
      $\mathcal{F}^\alpha_{\rm stoc}$ denotes the set of all  $\mathcal{T}\otimes\mathcal{B}(\Rd\times\Rdsym)$-measurable functions $f\colon\Omega\times\Rd\times\Rdsym\to[0,+\infty)$ such that for every $\omega\in\Omega$ the function $f(\omega):=f(\omega,\cdot,\cdot)$ belongs to $\mathcal{F}^\alpha$  and the following stochastic periodicity condition holds: 
         \begin{equation*}
    f(\omega,x+z,A)=f(\tau_z(\omega),x,A)
       \end{equation*}
       for every $\omega\in\Omega$, $z\in\Z^d$, $x\in\Rd$, and $A\in\Rdsym$.
    $\mathcal{G}_{\rm stoc}^{\rm \infty}$ denotes 
    the set of all $\mathcal{T}\otimes\mathcal{B}(\Rd\times\Rd\times\mathbb{S}^{d-1})$-measurable functions $g\colon\Omega\times\Rd\times\Rd\times\mathbb{S}^{d-1}\to[0,+\infty)$ such that for every $\omega\in\Omega$ the function $g(\omega):=g(\omega,\cdot,\cdot,\cdot)$ belongs to $\mathcal{G}^{\rm \infty}$, and the following stochastic periodicity condition holds:
         \begin{equation*} g(\omega,x+z,\zeta,\nu)=g(\tau_z(\omega),x,\zeta,\nu)
       \end{equation*}
        for every $\omega\in\Omega$, $z\in\Z^d$, $x,\zeta\in\Rd$, and $\nu\in\mathbb{S}^{d-1}$.
 \end{definition}

We recall the definition of subadditive process.
Let $\mathcal{R}$ the collection of half-closed rectangles defined by
\begin{align*}
    \mathcal{R}:=\{R\subset\Rd\colon& R=[a_1,b_1)\times...\times [a_d,b_d) \text{ with } a_i<b_i \text{ for }i\in\{1,...,d\}\}.
\end{align*}
Given $R\in\mathcal{R}$ its interior is denoted by $R^\circ$.
 We also introduce the completion of $(\Omega,\mathcal{T},P)$, denoted by $(\Omega,\widehat{\mathcal{T}},\widehat{P})$. It is immediate to see that $(\tau_z)_{z}$ is a group of $P$-preserving transformation also on $(\Omega,\widehat{\mathcal{T}},\widehat{P})$. 
\begin{definition}\label{subadditive process}
A function $\mu\colon\Omega\times \mathcal{R}\to\R$ is  said to be a covariant subadditive process with respect to $(\tau_{z})_{z\in\Z^d}$ if the following properties are satisfied
 \begin{enumerate}
\item[(a)]  for every $R\in\mathcal{R}$ the function $\mu(\cdot, R)$  is $\widehat{\mathcal{T}}$-measurable;
\item[(b)]  for every $\omega\in\Omega$, $R \in\mathcal{R}$, and $z \in \Z^d$ we have  $\mu(\omega,R + z)=\mu(\tau_z (\omega), R)$;
\item[(c)] given $R\in\mathcal{R}$ and  a finite partition $(R_i)_{i=1}^n\subset\mathcal{R}$ of $R$, we have
\begin{equation*}
   \mu(\omega,R)\leq \sum_{i=1}^n\mu(\omega,R_i)
\end{equation*}
 for every $\omega\in\Omega$;
\item[(d)] there exists $C>0$ such that $0\leq \mu(\omega,R)\leq C\Ld(R)$ for every $\omega\in\Omega$
and $R\in\mathcal{R}$.
 \end{enumerate} 
\end{definition}

In the following we will make substantial use of the  Subadditive Ergodic Theorem of Akcoglu and Krengel {\cite[Theorem 2.7]{AkcogluKrengel}}. In particular, we will use the version of this theorem  stated in  \cite[Proposition 1]{DaLMasoModica}.
 \begin{theorem}\label{thm:subadditive ergodic theorem}
 Let $\mu$ be a subadditive process with respect to the group $(\tau_z)_{z\in\Z^d}$. Then there exist  $\Omega'\in \mathcal{T}$, with $P(\Omega')=1$, and  $\varphi\colon\Omega\to[0,+\infty)$ such that
 \begin{equation*}
     \lim_{r\to+\infty} \frac{\mu(\omega,\widetilde{Q}(rx,r))}{r^d}=\varphi(\omega)
\end{equation*}
for every $x\in \Rd$ and every $\omega \in \Omega' $, where $\widetilde{Q}(rx,r):=[rx_1-\frac{r}{2},rx_1+\frac{r}{2})\times\dots\times [rx_d-\frac{r}{2},rx_d+\frac{r}{2})$.

If the group $(\tau_z )_{z\in\Z^d}$ is also ergodic, then $\varphi$ is constant $P$-a.e.
 \end{theorem}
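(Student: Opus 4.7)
The plan is to reduce to the case $x=0$ with integer scales first, then extend. Let $\widetilde{Q}_n := \widetilde{Q}(0,n) = [-n/2, n/2)^d$ and, for each $n \in \mathbb{N}$, set $X_n(\omega) := \mu(\omega, \widetilde{Q}_n)/n^d$. The upper bound $\mu(\omega, R) \leq C\mathcal{L}^d(R)$ in (d) of Definition \ref{subadditive process} gives $0 \leq X_n \leq C$. The key identity to exploit is that, by subadditivity and translation covariance, the decomposition of $\widetilde{Q}_{kn}$ into $k^d$ disjoint translates $z + \widetilde{Q}_n$ with $z \in n\mathbb{Z}^d \cap \widetilde{Q}_{kn}$ yields
\begin{equation*}
\mu(\omega, \widetilde{Q}_{kn}) \leq \sum_{z} \mu(\tau_z \omega, \widetilde{Q}_n),
\end{equation*}
so that $X_{kn}(\omega) \leq \frac{1}{k^d} \sum_{z} X_n(\tau_z\omega)$.

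First I would apply the multidimensional pointwise ergodic theorem for the $\mathbb{Z}^d$-action $(\tau_z)_{z\in\mathbb{Z}^d}$ to the bounded random variable $X_n$. This yields, for each fixed $n$ and for almost every $\omega$,
\begin{equation*}
\limsup_{k\to\infty} X_{kn}(\omega) \leq \mathbb{E}[X_n \mid \mathcal{I}](\omega),
\end{equation*}
where $\mathcal{I}$ is the $\sigma$-algebra of $(\tau_z)$-invariant sets. A Fekete-type argument applied to $n \mapsto \mathbb{E}[X_n \mid \mathcal{I}]$ (using subadditivity at the conditional-expectation level) shows that this sequence is "essentially" non-increasing along a suitable subsequence and admits a limit $\varphi(\omega)$, which is $\mathcal{I}$-measurable, non-negative, and bounded by $C$.

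The delicate step — and the main obstacle — is the matching lower bound $\liminf_{n\to\infty} X_n(\omega) \geq \varphi(\omega)$ a.s., because subadditivity only provides upper bounds on $\mu$. The standard device is to fix a large $N$, cover $\widetilde{Q}_n$ with $\lfloor n/N\rfloor^d$ disjoint $N$-cubes (with a boundary shell of $\mathcal{L}^d$-measure $O(n^{d-1}N)$), and then use that $\mu$ restricted to these $N$-cubes is controlled both from above by subadditivity and from below by $\mu$ on $\widetilde{Q}_n$ minus the boundary contribution, which is absorbed by the bound $\mu \leq C \mathcal{L}^d$ in (d). Applying the pointwise ergodic theorem to the sum $\sum_z X_N(\tau_z \omega)$ and letting $N \to \infty$ yields $\liminf_n X_n(\omega) \geq \varphi(\omega)$. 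This establishes $X_n(\omega) \to \varphi(\omega)$ a.s.

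The final steps extend from integer $n$ to real $r$ and from $x = 0$ to general $x$. Write $r = \lfloor r\rfloor + \{r\}$: then $\widetilde{Q}(0, \lfloor r\rfloor) \subset \widetilde{Q}(0, r) \subset \widetilde{Q}(0, \lceil r\rceil)$, and by subadditivity together with the upper bound $\mu \leq C\mathcal{L}^d$, one controls the discrepancy between $\mu(\omega, \widetilde{Q}(0, r))$ and $\mu(\omega, \widetilde{Q}_{\lfloor r\rfloor})$ by $C\mathcal{L}^d(\widetilde{Q}(0,r) \setminus \widetilde{Q}_{\lfloor r\rfloor}) = O(r^{d-1})$, which is negligible after dividing by $r^d$. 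For general $x \in \mathbb{R}^d$, decompose $rx = z_r + y_r$ with $z_r \in \mathbb{Z}^d$ and $y_r \in [0,1)^d$; then covariance gives $\mu(\omega, \widetilde{Q}(rx, r)) = \mu(\tau_{z_r}\omega, \widetilde{Q}(y_r, r))$, and a further sandwich between $\widetilde{Q}_{\lfloor r\rfloor - 2}$ and $\widetilde{Q}_{\lceil r\rceil + 2}$ translated appropriately shows this equals $\mu(\tau_{z_r}\omega, \widetilde{Q}_r) + O(r^{d-1})$. Since $\varphi$ is $\mathcal{I}$-measurable, $\varphi(\tau_{z_r}\omega) = \varphi(\omega)$ on a full-measure set invariant under all $\tau_z$, so the limit is $\varphi(\omega)$ independently of $x$. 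Under ergodicity, $\mathcal{I}$ is $P$-trivial, so $\varphi$ is constant $P$-a.e.
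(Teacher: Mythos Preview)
The paper does not prove this theorem: it is quoted from the literature (Akcoglu--Krengel \cite{AkcogluKrengel}, in the formulation of \cite{DaLMasoModica}). Your proposal attempts an actual proof, and the outline has the right shape for the \emph{upper} half, but there are two genuine gaps.

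\textbf{The lower bound.} Your covering argument for $\liminf_n X_n \geq \varphi$ does not produce a lower bound. Decomposing $\widetilde{Q}_n$ into $N$-cubes plus a boundary shell and applying subadditivity yields
\[
X_n(\omega)\;\leq\; \frac{1}{\lfloor n/N\rfloor^d}\sum_{z} X_N(\tau_z\omega) \;+\; O(N/n),
\]
whose right-hand side tends (by the ergodic theorem) to $\mathbb{E}[X_N\mid\mathcal{I}]$. This is again the \emph{upper} estimate $\limsup_n X_n\leq \mathbb{E}[X_N\mid\mathcal{I}]$, hence $\limsup_n X_n\leq \varphi$. Subadditivity gives inequalities only in this direction; there is no way to extract $\liminf_n X_n\geq \varphi$ from a covering of $\widetilde{Q}_n$ by smaller cubes. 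The lower bound is the genuinely hard half of the subadditive ergodic theorem and requires a different mechanism (a maximal inequality, or Kingman's decomposition into an additive part and a non-negative remainder, or the Akcoglu--Krengel transfer argument). The pointwise ergodic theorem plus covering is not enough.

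\textbf{Extension to general $x$.} After covariance you arrive at $X_r(\tau_{z_r}\omega)+O(r^{-1})$ with $z_r=\lfloor rx\rfloor$. You then invoke $\varphi(\tau_{z_r}\omega)=\varphi(\omega)$, but this is not the issue: you need $\lim_{r\to\infty} X_r(\tau_{z_r}\omega)=\varphi(\omega)$, and the a.e.\ convergence $X_r(\omega')\to\varphi(\omega')$ is for \emph{fixed} $\omega'$, not for a moving point $\omega'=\tau_{z_r}\omega$ with $|z_r|\to\infty$. Taking $\Omega'$ invariant under all $\tau_z$ does not help, because convergence at each $\tau_z\omega$ separately says nothing about the diagonal sequence $r\mapsto X_r(\tau_{z_r}\omega)$. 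The uniformity in $x$ asserted in the statement is precisely the additional content of the Dal~Maso--Modica formulation over the bare Akcoglu--Krengel theorem, and it requires a separate argument.
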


The following is the main result concerning the bulk part.

\begin{proposition}\label{stochastic bulk}
Let $f \in \mathcal{F}^\alpha_{\rm stoc}$ and  $g \in\mathcal{G}^{\rm \infty}_{\rm stoc}$. Then there exist  $\Omega'\in \mathcal{T}$, with $P(\Omega')=1$, and a  function $f_{\rm lim}\colon \Omega\times \Rdsym \to [0,+\infty)$, with $f_{\rm lim}(\omega,\cdot)$ continuous for every $\omega\in\Omega'$ and $f_{\rm lim}(\cdot,A)$ $\mathcal{T}$-measurable for every $A\in\Rdsym$, such that 
\begin{equation}\label{statement bulk ergodico}
f_{\rm lim }(\omega, A)=\lim_{r\to+\infty}\frac{\m^{E^{f(\omega),g(\omega)}}(\ell_A,Q(rx,r))}{r^d}
\end{equation}
for every $\omega\in\Omega'$, $x\in\Rd$, and $A\in\Rdsym$. 
\noindent If, in addition,  $(\tau_z)_{z\in\Z^d}$ is ergodic, by choosing $\Omega'$ appropriately we have that $f_{\rm lim}$ is
independent of $\omega$. 
\end{proposition}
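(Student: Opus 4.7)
\medskip

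The plan is to apply the Subadditive Ergodic Theorem of Akcoglu--Krengel to the random set function
$$\mu_A(\omega, R):=\m^{F^{f(\omega),g(\omega)}}(\ell_A,R^\circ)\qquad \text{for }A\in\Rdsym,\ \omega\in\Omega,\ R\in\mathcal{R},$$
on a countable dense subset $\mathbb{D}\subset\Rdsym$, and then to extend the limit to every matrix by continuity. Note that $\widetilde{Q}(rx,r)^\circ=Q(rx,r)$, so the version of Akcoglu--Krengel stated in Theorem~\ref{thm:subadditive ergodic theorem} will directly produce the limit appearing in \eqref{statement bulk ergodico}.

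For each fixed $A\in\Rdsym$, the first step is to verify that $\mu_A$ is a covariant subadditive process. The upper bound $0\le\mu_A(\omega,R)\le(c_3|A|+c_4)\Ld(R)$ is immediate from the upper estimate in $\mathrm{(c)}$ of Definition~\ref{abstract functionals} applied with the competitor $u=\ell_A$. Covariance under $(\tau_z)_{z\in\Z^d}$ follows by the change of variables $y\mapsto y-z$, combined with the stochastic periodicity of $f$ and $g$ in Definition~\ref{def:stochastic integrands} and the translation invariance of $E\ell_A=A$. Subadditivity is obtained by gluing: given a finite partition $R=\bigcup_i R_i$, for $\eta>0$ pick competitors $u_i\in\mathrm{BD}(R_i^\circ)$ with $u_i=\ell_A$ on $\partial R_i$ and $F^{f(\omega),g(\omega)}(u_i,R_i^\circ)<\m^{F^{f(\omega),g(\omega)}}(\ell_A,R_i^\circ)+\eta/n$; setting $u=u_i$ on each $R_i^\circ$ and $u=\ell_A$ elsewhere yields a function in $\mathrm{BD}(R^\circ)$ with $u=\ell_A$ on $\partial R$, because the continuous function $\ell_A$ matches the traces of the $u_i$ on the common internal faces and hence no jump contribution is created. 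The measurability of $\omega\mapsto\mu_A(\omega,R)$ is the main technical step: one reduces the infimum to a countable one, by exhibiting a countable family of competitors (e.g.\ constructed from piecewise affine functions with rational coefficients on rational subdivisions, perturbed by the fixed datum $\ell_A$), dense in energy among admissible functions; the measurability of the integrals $F^{f(\omega),g(\omega)}$ of such fixed competitors with respect to $\omega$ then follows from the joint measurability of $f$ and $g$ and Fubini.

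Once these four properties are verified, Theorem~\ref{thm:subadditive ergodic theorem} provides $\Omega'_A\in\mathcal{T}$ with $P(\Omega'_A)=1$ and a $\widehat{\mathcal{T}}$-measurable function $\varphi_A\colon\Omega\to[0,+\infty)$ such that \eqref{statement bulk ergodico} holds along $R=\widetilde Q(rx,r)$ for every $x\in\Rd$ and every $\omega\in\Omega'_A$. Choosing a countable dense set $\mathbb{D}\subset\Rdsym$ and setting $\Omega':=\bigcap_{A\in\mathbb{D}}\Omega'_A$, we still have $P(\Omega')=1$ and the limit exists for every $\omega\in\Omega'$ and every $A\in\mathbb{D}$. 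Fixing $\omega\in\Omega'$, the hypotheses of Lemma~\ref{lemma: extension bulk} are satisfied for the functional $F^{f(\omega),g(\omega)}$, so $A\mapsto\varphi_A(\omega)$ admits a unique $c_5$-Lipschitz extension $f_{\rm lim}(\omega,\cdot)$ to the whole of $\Rdsym$ for which \eqref{statement bulk ergodico} holds for every $A\in\Rdsym$.

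Finally, for $A\in\mathbb{D}$ the map $\omega\mapsto f_{\rm lim}(\omega,A)=\varphi_A(\omega)$ is $\mathcal{T}$-measurable (up to redefining it on a $P$-null set, thanks to the completeness of $\widehat{\mathcal T}$ and the fact that $P$ is defined on $\mathcal T$), and by the uniform Lipschitz estimate the value at a general $A\in\Rdsym$ is a pointwise limit of such measurable functions along any sequence in $\mathbb{D}$ converging to $A$; this gives $\mathcal{T}$-measurability of $f_{\rm lim}(\cdot,A)$ for every $A\in\Rdsym$. In the ergodic case the Akcoglu--Krengel theorem asserts that each $\varphi_A$ with $A\in\mathbb{D}$ is $P$-a.e.\ constant; intersecting the corresponding full-measure sets over the countable set $\mathbb{D}$ and invoking the Lipschitz continuity in $A$ shows that $f_{\rm lim}$ does not depend on $\omega$. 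The main obstacle in the whole scheme is the measurability statement for $\mu_A$, since one must ensure that the countable dense family of admissible competitors is compatible with the Dirichlet datum $\ell_A$ and still approximates the infimum in energy; everything else is either a direct computation (covariance, bounds) or a standard gluing argument.
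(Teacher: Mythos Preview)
Your proposal is correct and follows essentially the same approach as the paper: verify that $\mu_A$ is a covariant subadditive process, apply the Akcoglu--Krengel theorem over a countable dense set $\mathbb{D}\subset\Rdsym$, and then invoke Lemma~\ref{lemma: extension bulk} to extend to all $A$. You spell out the gluing argument for subadditivity and the passage from $\widehat{\mathcal T}$- to $\mathcal T$-measurability in more detail than the paper, which simply defers properties (a)--(d) of Definition~\ref{subadditive process} to \cite{CagnettiAnnals}; in particular your identification of the measurability of $\omega\mapsto\mu_A(\omega,R)$ as the main technical point is accurate, and the paper handles it the same way (by citing the appendix of \cite{CagnettiAnnals} rather than giving the countable-density argument you sketch).
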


\begin{proof}
Let us fix $A\in\Rdsym$ and consider the function $\Phi_{A}\colon\Omega\times\mathcal{R}\to[0,+\infty)$ defined by 
     \begin{equation}\notag 
         \Phi_{A}(\omega,R):=\m^{E^{f(\omega),g(\omega)}}(\ell_A,R^\circ).
     \end{equation}
 This function defines a covariant subadditive process.
Indeed,  it is easy to check that properties (b), (c), and (d) of Definition \ref{subadditive process} hold (see \cite[Theorem 9.1]{CagnettiAnnals} for the details), while  the proof of the measurability property (a) can be obtained by slightly modifying the Appendix of \cite{CagnettiAnnals}, replacing $Du$ and $\nabla u$ by $Eu$ and $\E u$.

Let $\mathbb{D}$ be a countable dense subset of $\Rdsym$. We may then apply Theorem \ref{thm:subadditive ergodic theorem} with $\mu=\Phi_A$ for every $A\in \mathbb{D}$ to obtain a set $\Omega'\in\mathcal{T}$, with $P(\Omega')=1$, and a function $\phi_A\colon\Omega\to[0,+\infty)$ such that 
\begin{equation*}
   \phi_A(\omega)= \lim_{r\to+\infty}\frac{\Phi_A(\omega,\widetilde{Q}(rx,r))}{r^d}=\lim_{r\to+\infty}\frac{\m^{F^{f(\omega),g(\omega)}}(\ell_A,Q(rx,r))}{r^d}
\end{equation*}
for every $\omega\in\Omega'$, $x\in\Rd$, and $A\in \mathbb{D}$. By Lemma \ref{lemma: extension bulk}
for every $\omega\in\Omega'$ there exists a unique continuous function $f_{\rm lim}(\omega,\cdot)\colon\Rdsym\to [0,+\infty)$ such that $f_{\rm lim}(\omega,A)=\phi_A(\omega)$ for every $A\in\Rdsym$. Moreover, $A\mapsto f_{\rm lim}(\omega,A)$ is $c_5$-Lipschitz continuous and satisfies \eqref{statement bulk ergodico}. 

Finally, the statement concerning the ergodic hypothesis follows from the last part of Theorem \ref{thm:subadditive ergodic theorem}.
\end{proof}

The following proposition collects the main results concerning the surface term.
\begin{proposition}\label{stochastic surface}
    Let $f \in \mathcal{F}^\alpha_{\rm stoc}$ and  $g \in\mathcal{G}^{\rm \infty}_{\rm stoc}$. Then there exist  $\Omega'\in \mathcal{T}$, with $P(\Omega')=1$, and a  function $g_{\rm lim}\colon \Omega\times \Rd\times \Sd \to [0,+\infty)$ such that 
    \begin{gather*}
        \zeta \mapsto g_{\rm lim}(\omega,\zeta,\nu) \text{ is Lipschitz continuous for every }\omega\in \Omega' \text{ and }\nu\in\Sd,\\
        \Sd_{\pm}\ni\nu\mapsto g_{\rm lim}(\omega,\zeta,\nu) \text{ are continuous for every }\omega\in \Omega' \text{ and }\zeta\in\Rd,\\
        \omega\mapsto g_{\rm lim}(\omega,\zeta,\nu) \text{ is $\mathcal{T}$-measurable for every }\zeta\in\Rd \text{ and }\nu\in\Sd,
    \end{gather*}
    and 
\begin{equation}\label{statement surface ergodico}
g_{\rm lim }(\omega, \zeta,\nu)=\lim_{r\to+\infty}\frac{\m^{{F^\infty(\omega)}}(u_{rx,\zeta,\nu},Q_\nu(rx,r))}{r^{d-1}}
\end{equation}
for every $\omega\in\Omega'$, $x\in\Rd$, $\zeta\in\Rd$, and $\nu\in\Sd$, where $F^\infty$ is the functional introduced in Definition \ref{def Finfty}. 
\noindent If, in addition,  $(\tau_z)_{z\in\Z^d}$ is ergodic, by choosing $\Omega'$ appropriately we have that $g_{\rm lim}$ is
independent of $\omega$. 
\end{proposition}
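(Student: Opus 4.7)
The plan is to adapt to the BD setting the stochastic homogenisation strategy of \cite{CagnettiStocFree,CagnettiAnnals}, originally developed in the BV case. The key observation is that for every $\nu\in\Sd$ with rational coordinates the hyperplane $\nu^\perp$ contains a sublattice $\Lambda_\nu:=\Z^d\cap\nu^\perp$ of rank $d-1$; hence $(\tau_z)_{z\in\Lambda_\nu}$ is a $(d-1)$-parameter group of $P$-preserving transformations on $(\Omega,\mathcal{T},P)$, and the minimum values of $F^\infty(\omega)$ on thickenings of $(d-1)$-rectangles in $\nu^\perp$ define a $(d-1)$-dimensional covariant subadditive process to which Theorem \ref{thm:subadditive ergodic theorem} can be applied.

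Fix countable dense sets $\mathbb{D}_1\subset\Rd$ and $\mathbb{D}_2\subset\Sd$, with $\mathbb{D}_2$ consisting of unit vectors with rational coordinates. For each pair $(\zeta,\nu)\in\mathbb{D}_1\times\mathbb{D}_2$ and every half-open $(d-1)$-rectangle $R$ of $\nu^\perp$ (identified with $\R^{d-1}$ via $R_\nu$) I would set
\begin{equation*}
\Psi_{\zeta,\nu}(\omega,R):=\m^{F^\infty(\omega)}\bigl(u_{0,\zeta,\nu},\,T_\nu(R)\bigr),\qquad T_\nu(R):=\{y+s\nu:y\in R,\,|s|<\tfrac12\}.
\end{equation*}
The process $\Psi_{\zeta,\nu}$ fits Definition \ref{subadditive process}: the boundedness (d) follows from the upper estimates in \eqref{grwoth g infity} and \eqref{bound finfty}; the $\Lambda_\nu$-covariance follows from the stochastic periodicity in Definition \ref{def:stochastic integrands}; measurability in $\omega$ is obtained as in the appendix of \cite{CagnettiAnnals}, with $D$ and $\nabla$ replaced by $E$ and $\E$; and subadditivity is proved by gluing competitors across common faces of a partition, the surface error being absorbed through property (f) of Definition \ref{abstract functionals} in its linear form \eqref{def sigma}. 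Applying Theorem \ref{thm:subadditive ergodic theorem} in dimension $d-1$ simultaneously for all $(\zeta,\nu)\in\mathbb{D}_1\times\mathbb{D}_2$ yields a single set $\Omega'\in\mathcal{T}$ of full probability and numbers $g_{\rm lim}(\omega,\zeta,\nu)$ such that $r^{-(d-1)}\Psi_{\zeta,\nu}(\omega,rQ')\to g_{\rm lim}(\omega,\zeta,\nu)$ for every $\omega\in\Omega'$ and every rational $(d-1)$-cube $Q'$.

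To reach the precise form \eqref{statement surface ergodico} I would pass from $T_\nu(rQ')$ to $Q_\nu(rx,r)$ in two steps: first, using the $\Lambda_\nu$-covariance one translates the base point to the nearest point of $r\Lambda_\nu$, the $O(1)$ discrepancy in the boundary datum yielding an $O(r^{d-2})$ error via (f) of Definition \ref{abstract functionals}; second, Lemma \ref{lemma poco used} (together with property (c) applied to $F^\infty$) absorbs the transverse geometric mismatch between the thickening and $Q_\nu(rx,r)$ into a further lower order term. After dividing by $r^{d-1}$ these errors vanish, so \eqref{statement surface ergodico} holds on $\mathbb{D}_1\times\mathbb{D}_2$ for every $\omega\in\Omega'$ and $x\in\Rd$, and the $\mathcal{T}$-measurability of $\omega\mapsto g_{\rm lim}(\omega,\zeta,\nu)$ is inherited from that of the pre-limit. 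At this stage Lemma \ref{lemma: extension surface} applies (for each $\omega\in\Omega'$) and produces the unique extension of $g_{\rm lim}(\omega,\cdot,\cdot)$ to $\Rd\times\Sd$; the Lipschitz continuity in $\zeta$ comes from \eqref{lipschitz for iinfity}, as in the derivation of \eqref{sigma continuita g}, while the continuity of the restrictions to $\Sd_{\pm}$ is built into the extension via Lemma \ref{lemma poco used}. Measurability in $\omega$ is preserved by these explicit extension formulas. Under the ergodicity hypothesis, the last clause of Theorem \ref{thm:subadditive ergodic theorem} forces each $g_{\rm lim}(\cdot,\zeta,\nu)$ on $\mathbb{D}_1\times\mathbb{D}_2$ to be $P$-a.e.\ constant, and the extension propagates the constancy to all $(\zeta,\nu)\in\Rd\times\Sd$.

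The main technical obstacle is the rigorous construction of $\Psi_{\zeta,\nu}$ as a genuine subadditive process in dimension $d-1$: one must produce matching Dirichlet data of the form $u_{0,\zeta,\nu}$ along shared faces after partitioning, with a residual surface energy that is at most of order $r^{d-2}$, and, crucially, one must handle the non-rational directions $\nu\in\Sd\setminus\mathbb{D}_2$ by approximation, relying on the quantitative continuity estimate in Lemma \ref{lemma poco used}. This is the precise surface analogue of the bulk construction used in Proposition \ref{stochastic bulk}, and the passage from BV to BD goes through unchanged because both property (f) of Definition \ref{abstract functionals} and the surface growth (g3) retain their structure.
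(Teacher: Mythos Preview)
Your proposal is correct and follows essentially the same route as the paper: the paper's proof simply refers to \cite[Propositions 9.4 and 9.5]{CagnettiAnnals}, replacing their extension step by Lemma~\ref{lemma: extension surface}, and your sketch unpacks precisely that strategy---the $(d-1)$-dimensional subadditive process on thickenings of rectangles in $\nu^\perp$ for rational $\nu$, the Subadditive Ergodic Theorem, and the extension to all $(\zeta,\nu)$ via Lemma~\ref{lemma: extension surface}. One minor slip: subadditivity of $\Psi_{\zeta,\nu}$ is \emph{exact}, not approximate---when gluing competitors that all equal $u_{0,\zeta,\nu}$ on their respective boundaries, the traces match along shared faces and no residual surface energy is created, so property~(f) of Definition~\ref{abstract functionals} is not needed at that step.
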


\begin{proof}
It is enough to repeat the arguments of \cite[Proposition 9.4 and 9.5]{CagnettiAnnals}, replacing Step 2 of their Proposition 9.4 and 9.5  by our Lemma \ref{lemma: extension surface}.
\end{proof}

Collecting the results of Propositions \ref{stochastic bulk} and \ref{stochastic surface} we obtain the following almost sure convergence result.
\begin{theorem}\label{stochastic homo}
 Let $f \in \mathcal{F}^\alpha_{\rm stoc}$,   $g \in\mathcal{G}^{\rm \infty}_{\rm stoc}$ and let $F_\e(\omega):=F^{f_\e(\omega),g_\e(\omega)}$ be the functionals introduced in Definition \ref{definition rescaled}. Then there exist  $\Omega'\in \mathcal{T}$, with $P(\Omega')=1$, and functions $f_{\rm lim}\in\mathcal{F}_{\rm stoc}^\alpha$ and $g_{\rm lim}\in \mathcal{G}^{\rm \infty}_{\rm stoc}$, with $g_{\rm lim}(\omega)$ satisfying \eqref{homogeneity} for every $\omega\in\Omega'$, and with the following property: for every sequence $\{\e_n\}_{n\in\N}\subset (0,1)$ converging to $0$ as $n\to+\infty$, $U\in \U_c(\Rd)$, and $\omega\in\Omega'$ we have that $\{F_{\e_n}(\omega)(\cdot,U)\}_{n\in\N}$ $\Gamma$-converges, with respect to the topology of $L^1_{\rm loc}(\Rd;\Rd)$, to $F^{f_{\rm lim}(\omega),g_{\rm lim}(\omega)}(\cdot,U)$ as $n\to+\infty$. Moreover, we have the equality 
 \begin{equation}\label{g=fstoc}
 g_{\rm lim}(\omega,\zeta,\nu)=f^\infty_{\rm lim}(\omega,\zeta\odot\nu)\quad \text{ for every $\zeta\in\Rd$ and $\nu\in\Sd$.}
 \end{equation}
If, in addition,  $(\tau_z)_{z\in\Z^d}$ is ergodic, by choosing $\Omega'$ appropriately we have that $f_{\rm lim}$ and $g_{\rm lim}$ are independent of $\omega$.
\end{theorem}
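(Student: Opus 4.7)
The plan is to combine the almost sure existence of the bulk and surface limits from Propositions \ref{stochastic bulk} and \ref{stochastic surface} with the deterministic $\Gamma$-convergence criterion of Theorem \ref{homogeneous homogenization}. Since those propositions produce exactly the limits required as hypotheses of the deterministic theorem, the task reduces to gluing the two probabilistic inputs together and checking carefully the membership of $f_{\rm lim}$ and $g_{\rm lim}$ in the stochastic classes.

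First I would apply Proposition \ref{stochastic bulk} to obtain a set $\Omega_1'\in\mathcal{T}$ of full probability and a function $f_{\rm lim}$ for which \eqref{statement bulk ergodico} holds on $\Omega_1'$, and Proposition \ref{stochastic surface} to obtain $\Omega_2'\in\mathcal{T}$ of full probability and a function $g_{\rm lim}$ for which \eqref{statement surface ergodico} holds on $\Omega_2'$. Setting $\widetilde\Omega:=\Omega_1'\cap \Omega_2'$, for each $\omega\in\widetilde\Omega$ the integrands $f(\omega)$ and $g(\omega)$ verify hypotheses \eqref{def f hom 2} and \eqref{def g hom 2} of Theorem \ref{homogeneous homogenization}. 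Thus that theorem yields, for every sequence $\e_n\to 0^+$ and every $U\in\U_c(\Rd)$, the $L^1_{\rm loc}$-$\Gamma$-convergence of $\{F_{\e_n}(\omega)(\cdot,U)\}_{n\in\N}$ to $F^{f_{\rm lim}(\omega),g_{\rm lim}(\omega)}(\cdot,U)$, together with the identity \eqref{g=fstoc}, which is precisely \eqref{eq omogenea}.

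The next step is to verify that $f_{\rm lim}\in\mathcal{F}^\alpha_{\rm stoc}$ and $g_{\rm lim}\in\mathcal{G}^\infty_{\rm stoc}$. The measurability in $\omega$ and the analytic properties defining $\mathcal{F}^\alpha$ and $\mathcal{G}^\infty$ are already provided by the two propositions. The delicate point is the covariance condition. Since the limits in \eqref{statement bulk ergodico} and \eqref{statement surface ergodico} are independent of $x$, this reduces to showing $f_{\rm lim}(\tau_z\omega,A)=f_{\rm lim}(\omega,A)$ and $g_{\rm lim}(\tau_z\omega,\zeta,\nu)=g_{\rm lim}(\omega,\zeta,\nu)$ for every $z\in\Z^d$. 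I would prove this by the change of variables $y=y'+z$ in the minimisation problems defining the two limits, using the stochastic periodicity of $f$ and $g$ and the rigid-motion invariance (property (d) of Definition \ref{abstract functionals}) to absorb the shift $Az$ that arises when comparing the boundary datum $\ell_A$ on translated cubes. This yields
\begin{equation*}
\m^{F^{f(\omega),g(\omega)}}(\ell_A,Q(rx,r))=\m^{F^{f(\tau_z\omega),g(\tau_z\omega)}}(\ell_A,Q(rx-z,r)),
\end{equation*}
and an analogous identity for the surface minimum involving $F^\infty$; passing to the limit $r\to+\infty$ gives the required equalities on $\widetilde\Omega\cap\tau_z^{-1}\widetilde\Omega$. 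Restricting to the $(\tau_z)$-invariant set $\Omega':=\bigcap_{z\in\Z^d}\tau_z^{-1}\widetilde\Omega$ (still of full probability by the $P$-preserving property of the group) and extending $f_{\rm lim}$ and $g_{\rm lim}$ off $\Omega'$ in a trivial measurable way (for instance as $c_3|A|$ and $c_3|\zeta\odot\nu|$) produces genuine elements of the stochastic classes.

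Finally, under ergodicity, the last assertions of Propositions \ref{stochastic bulk} and \ref{stochastic surface} give that $f_{\rm lim}$ and $g_{\rm lim}$ may be taken independent of $\omega$ on a suitable full-measure subset. The main obstacle I foresee is the bookkeeping required by the covariance step: one must produce functions in the prescribed classes for every $\omega\in\Omega$, not merely almost surely, and this forces the intersection-with-translates and measurable-extension procedure sketched above. All the genuine analytical work (integral representation of the Cantor part, characterisation of $\Gamma$-limits by minimum values on small cubes, and the change-of-variable formulas of Lemmas \ref{lemma change of bulk variables} and \ref{lemma change of surface variables}) has already been carried out in earlier sections, so no new ideas are needed beyond this assembly.
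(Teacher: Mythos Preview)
Your proposal is correct and follows the same approach as the paper: combine Propositions \ref{stochastic bulk} and \ref{stochastic surface} to obtain a full-measure set on which \eqref{def f hom 2} and \eqref{def g hom 2} hold, then invoke the deterministic Theorem \ref{homogeneous homogenization} pointwise in $\omega$. You are in fact more careful than the paper on the membership $f_{\rm lim}\in\mathcal{F}^\alpha_{\rm stoc}$, $g_{\rm lim}\in\mathcal{G}^\infty_{\rm stoc}$: the paper attributes these inclusions entirely to Theorem \ref{homogeneous homogenization}, whereas you correctly note that the deterministic theorem only yields $f_{\rm lim}(\omega,\cdot)\in\mathcal{F}^\alpha$ and $g_{\rm lim}(\omega,\cdot)\in\mathcal{G}^\infty$ and then supply the covariance argument and the measurable extension off $\Omega'$ needed to obtain genuine stochastic integrands.
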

\begin{proof}
Thanks to Propositions \ref{stochastic bulk} and \ref{stochastic surface}, there exist $\Omega'\in\mathcal{T}$, with $P(\Omega')=1$, $f_{\rm lim}\colon\Omega\times \Rdsym\to [0,+\infty)$, and $g_{\rm lim}\colon\Omega\times \Rd\times \Sd\to [0,+\infty)$ such that \eqref{statement bulk ergodico} and \eqref{statement surface ergodico} hold. Thus, by Theorem \ref{homogeneous homogenization} we obtain the two inclusions $f_{\rm lim}\in\mathcal{F}^\alpha_{\rm stoc}$ and $g_{\rm  lim}\in\mathcal{G}^{\rm \infty}_{\rm stoc}$, together with the fact that $g_{\rm lim}(\omega)$ satisfies   \eqref{homogeneity} for every $\omega\in\Omega'$, as well as the $\Gamma$-convergence part of the statement and equality \eqref{g=fstoc}. 

The last part of the statement follows from the final parts of the statements of Propositions \ref{stochastic bulk} and \ref{stochastic surface}. 
\end{proof}

As a particular case of the previous theorem we deduce the following result in the periodic deterministic setting.
\begin{corollary}\label{periodic corollary}
    Let $f\in\mathcal{F}^\alpha$, $g\in\mathcal{G}^{\rm \infty}$, and let $F_\e:=F^{f_\e,g_\e}$ be the functional introduced in Definition \ref{definition rescaled}. Assume that both $f$ and $g$ are $1$-periodic in the $x$ variable. Then there exist functions $f_{\rm lim}\in \mathcal{F}^\alpha$ and $g_{\rm lim}\in \mathcal{G}^{\rm \infty}$, with $f_{\rm lim}$ satisfying \eqref{def f hom 2}, and with $g_{\rm lim}$ satisfying \eqref{homogeneity} and \eqref{def g hom 2},  and consequently the following property holds:  for every sequence $\{\e_n\}_{n\in\N}\subset (0,1)$ converging to $0$ as $n\to+\infty$ and for every $U\in\U_c(\Rd)$  the sequence $\{F_{\e_n}(\cdot,U)\}_{n\in\N}$ $\Gamma$-converges to $F^{f_{\rm lim},g_{\rm lim}}(\cdot,U)$ with respect to the topology of $L^1_{\rm loc}(\Rd;\Rd)$. Moreover, we have the equality $$g_{\rm lim}(\zeta,\nu)=f^\infty_{\rm lim}(\zeta\odot\nu)\quad \text{ for every $\zeta\in\Rd$ and $\nu\in\Sd$.}$$
\end{corollary}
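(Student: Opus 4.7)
The plan is to deduce the corollary directly from Theorem~\ref{stochastic homo} by realising the deterministic periodic setting as a degenerate instance of the stochastic one. Concretely, I would take $(\Omega,\mathcal{T},P)$ to be a one-point probability space $\Omega=\{\omega_0\}$ with the only possible $\sigma$-algebra and probability measure, equipped with the trivial $\Z^d$-action $\tau_z\equiv\mathrm{id}$. This action is automatically $P$-preserving, and since the only non-empty measurable set has probability $1$ it is trivially ergodic.

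I would then lift the given deterministic integrands by setting $\tilde{f}(\omega_0,x,A):=f(x,A)$ and $\tilde{g}(\omega_0,x,\zeta,\nu):=g(x,\zeta,\nu)$. The joint measurability is automatic since the $\sigma$-algebra on $\Omega$ is trivial, and the covariance condition in Definition~\ref{def:stochastic integrands} reduces to
\begin{equation*}
\tilde{f}(\omega_0,x+z,A)=\tilde{f}(\omega_0,x,A), \qquad \tilde{g}(\omega_0,x+z,\zeta,\nu)=\tilde{g}(\omega_0,x,\zeta,\nu),
\end{equation*}
for every $z\in\Z^d$, which is exactly the $1$-periodicity of $f$ and $g$ in $x$ assumed in the statement. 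Together with $f\in\mathcal{F}^\alpha$ and $g\in\mathcal{G}^\infty$ this gives $\tilde{f}\in\mathcal{F}^\alpha_{\rm stoc}$ and $\tilde{g}\in\mathcal{G}^\infty_{\rm stoc}$.

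I would then invoke Theorem~\ref{stochastic homo} with the ergodicity hypothesis. Since the conclusion holds on a set $\Omega'$ with $P(\Omega')=1$ and $\Omega$ is a single point, necessarily $\omega_0\in\Omega'$. The theorem therefore produces deterministic limit integrands $f_{\rm lim}\in\mathcal{F}^\alpha$ and $g_{\rm lim}\in\mathcal{G}^\infty$, with $g_{\rm lim}$ positively $1$-homogeneous in $\zeta$ and satisfying $g_{\rm lim}(\zeta,\nu)=f_{\rm lim}^\infty(\zeta\odot\nu)$, such that for every sequence $\e_n\to 0^+$ and every $U\in\U_c(\Rd)$ the sequence $\{F_{\e_n}(\cdot,U)\}_n$ $\Gamma$-converges to $F^{f_{\rm lim},g_{\rm lim}}(\cdot,U)$ with respect to $L^1_{\rm loc}(\Rd;\Rd)$. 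The cell-formulas \eqref{def f hom 2} and \eqref{def g hom 2} are read off from \eqref{statement bulk ergodico} of Proposition~\ref{stochastic bulk} and \eqref{statement surface ergodico} of Proposition~\ref{stochastic surface} at $\omega_0$.

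There is no substantive obstacle: all of the homogenisation work has been absorbed into the stochastic theorem, and the only items to verify are that the covariance and measurability conditions of Definition~\ref{def:stochastic integrands} are preserved in the degenerate probability space, which is immediate. If a self-contained argument were preferred, one could instead bypass the probabilistic framework by applying Theorem~\ref{homogeneous homogenization} directly, establishing the existence and $x$-independence of the limits \eqref{def f hom 2}--\eqref{def g hom 2} via the classical subadditive convergence theorem for $\Z^d$-periodic subadditive set functions applied to $R\mapsto \m^{F^{f,g}}(\ell_A,R^\circ)$ and $R\mapsto\m^{F^\infty}(u_{\cdot,\zeta,\nu},R^\circ)$, exactly as in the bulk and surface parts of Propositions~\ref{stochastic bulk} and~\ref{stochastic surface}.
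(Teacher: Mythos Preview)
Your proposal is correct and takes essentially the same approach as the paper, which simply states that it suffices to apply Theorem~\ref{stochastic homo} when $\Omega$ consists of a single point. Your additional remarks about ergodicity being automatic and about tracing the cell formulas back to Propositions~\ref{stochastic bulk} and~\ref{stochastic surface} are accurate elaborations of this one-line argument.
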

\begin{proof}
    It is enough to apply Theorem \ref{stochastic homo} in the case where  $\Omega$ consists of a single point.
\end{proof}

\textbf{Acknowledgements.}
 This paper is based on work supported by the National Research Project PRIN 2022J4FYNJ “Variational methods for stationary and evolution problems
with singularities and interfaces” funded by the Italian Ministry of University and Research. 
Davide Donati is a member of GNAMPA of INdAM.

\bibliographystyle{siam}
\bibliography{Sources}

\begin{thebibliography}{10}

\bibitem{AkcogluKrengel}
{\sc M.~A. Akcoglu and U.~Krengel}, {\em Ergodic theorems for superadditive processes}, J. Reine Angew. Math., 323 (1981), pp.~53--67.

\bibitem{AmbCoscDalM}
{\sc L.~Ambrosio, A.~Coscia, and G.~Dal~Maso}, {\em Fine properties of functions with bounded deformation}, Arch. Ration. Mech. Anal., 139 (1997), pp.~201--238.

\bibitem{Ambrosio}
{\sc L.~Ambrosio, N.~Fusco, and D.~Pallara}, {\em Functions of bounded variation and free discontinuity problems}, Oxford Mathematical Monographs, The Clarendon Press, Oxford University Press, New York, 2000.

\bibitem{Mandallena}
{\sc O.~Anza~Hafsa and J.-P. Mandallena}, {\em Stochastic homogenization of nonconvex integrals in the space of functions of bounded deformation}, Asymptot. Anal., 131 (2023), pp.~209--232.

\bibitem{arroyo2020slicing}
{\sc A.~Arroyo-Rabasa}, {\em Slicing and fine properties for functions with bounded {$\mathcal{A}$}-variation}, arXiv preprint \href{https://arxiv.org/abs/2009.13513}{arXiv:2009.13513},  (2020).

\bibitem{Babdjian}
{\sc J.-F. Babadjian}, {\em Traces of functions of bounded deformation}, Indiana Univ. Math. J., 64 (2015), pp.~1271--1290.

\bibitem{BabadjianIurlano}
{\sc J.-F. Babadjian and F.~Iurlano}, {\em Piecewise rank-one approximation of vector fields with measure derivatives}, Bull. Lond. Math. Soc., 57 (2025), pp.~181--202.

\bibitem{Bogachev}
{\sc V.~I. Bogachev}, {\em Measure theory. {V}ol. {I}, {II}}, Springer-Verlag, Berlin, 2007.

\bibitem{bouchitte1998global}
{\sc G.~Bouchitt\'e, I.~Fonseca, and L.~Mascarenhas}, {\em A global method for relaxation}, Arch. Ration. Mech. Anal., 145 (1998), pp.~51--98.

\bibitem{Bourdin}
{\sc B.~Bourdin, G.~A. Francfort, and J.-J. Marigo}, {\em The variational approach to fracture}, J. Elasticity, 91 (2008), pp.~5--148.

\bibitem{BraHandbook}
{\sc A.~Braides}, {\em A handbook of ${\Gamma}$-convergence}, in Handbook of Differential Equations: stationary partial differential equations, vol.~3, Elsevier, 2006, pp.~101--213.

\bibitem{BraidesChiadoPiat}
{\sc A.~Braides and V.~Chiad\`o{}~Piat}, {\em Integral representation results for functionals defined on {${\rm SBV}(\Omega;{\bf R}^m)$}}, J. Math. Pures Appl. (9), 75 (1996), pp.~595--626.

\bibitem{Braides1996}
{\sc A.~Braides, A.~Defranceschi, and E.~Vitali}, {\em Homogenization of free discontinuity problems}, Arch. Ration. Mech. Anal., 135 (1996), pp.~297--356.

\bibitem{CagnettiDetFree}
{\sc F.~Cagnetti, G.~Dal~Maso, L.~Scardia, and C.~I. Zeppieri}, {\em {$\Gamma$}-convergence of free-discontinuity problems}, Ann. Inst. H. Poincar\'{e} C Anal. Non Lin\'{e}aire, 36 (2019), pp.~1035--1079.

\bibitem{CagnettiStocFree}
\leavevmode\vrule height 2pt depth -1.6pt width 23pt, {\em Stochastic homogenisation of free-discontinuity problems}, Arch. Ration. Mech. Anal., 233 (2019), pp.~935--974.

\bibitem{CagnettiAnnals}
\leavevmode\vrule height 2pt depth -1.6pt width 23pt, {\em A global method for deterministic and stochastic homogenisation in {$\rm BV$}}, Ann. PDE, 8 (2022), pp.~Paper No. 8, 89.

\bibitem{CaroccFocardiVan}
{\sc M.~Caroccia, M.~Focardi, and N.~Van~Goethem}, {\em On the integral representation of variational functionals on {$\rm BD$}}, SIAM J. Math. Anal., 52 (2020), pp.~4022--4067.

\bibitem{ChambolleJMPA}
{\sc A.~Chambolle}, {\em An approximation result for special functions with bounded deformation}, J. Math. Pures Appl. (9), 83 (2004), pp.~929--954.

\bibitem{ChambolleJMPA2}
\leavevmode\vrule height 2pt depth -1.6pt width 23pt, {\em Addendum to: ``{A}n approximation result for special functions with bounded deformation'' {[}{J}. {M}ath. {P}ures {A}ppl. (9) {\bf 83} (2004), no. 7, 929--954{]}}, J. Math. Pures Appl. (9), 84 (2005), pp.~137--145.

\bibitem{ChambolleContiFranfort}
{\sc A.~Chambolle, S.~Conti, and G.~Francfort}, {\em Korn-{P}oincar\'e{} inequalities for functions with a small jump set}, Indiana Univ. Math. J., 65 (2016), pp.~1373--1399.

\bibitem{ChambolleConti2}
{\sc A.~Chambolle, S.~Conti, and F.~Iurlano}, {\em Approximation of functions with small jump sets and existence of strong minimizers of {G}riffith's energy}, J. Math. Pures Appl. (9), 128 (2019), pp.~119--139.

\bibitem{ChambPonsGiacom}
{\sc A.~Chambolle, A.~Giacomini, and M.~Ponsiglione}, {\em Piecewise rigidity}, J. Funct. Anal., 244 (2007), pp.~134--153.

\bibitem{ContiFocardiIurlano}
{\sc S.~Conti, M.~Focardi, and F.~Iurlano}, {\em Which special functions of bounded deformation have bounded variation?}, Proc. Roy. Soc. Edinburgh Sect. A, 148 (2018), pp.~33--50.

\bibitem{CrismaleSIAM}
{\sc V.~Crismale}, {\em On the approximation of {$\rm SBD$} functions and some applications}, SIAM J. Math. Anal., 51 (2019), pp.~5011--5048.

\bibitem{DalBook}
{\sc G.~Dal~Maso}, {\em An introduction to {$\Gamma$}-convergence}, vol.~8 of Progress in Nonlinear Differential Equations and their Applications, Birkh\"auser Boston, Inc., Boston, MA, 1993.

\bibitem{DalMasoDonati2025}
{\sc G.~Dal~Maso and D.~Donati}, {\em Homogenization of vectorial free-discontinuity functionals with cohesive type surface terms}, Proc. Royal Soc. Edinburgh Sec. A,  (2025), p.~1–71.

\bibitem{FrancfortToad}
{\sc G.~Dal~Maso, G.~A. Francfort, and R.~Toader}, {\em Quasistatic crack growth in nonlinear elasticity}, Arch. Ration. Mech. Anal., 176 (2005), pp.~165--225.

\bibitem{DaLMasoModica}
{\sc G.~Dal~Maso and L.~Modica}, {\em Nonlinear stochastic homogenization and ergodic theory}, J. Reine Angew. Math., 368 (1986), pp.~28--42.

\bibitem{DalConvex}
{\sc G.~Dal~Maso and R.~Toader}, {\em {$\Gamma$}-convergence and integral representation for a class of free discontinuity functionals}, J. Convex Anal., 31 (2024), pp.~411--476.

\bibitem{DalToaHomo}
\leavevmode\vrule height 2pt depth -1.6pt width 23pt, {\em Homogenisation problems for free discontinuity functionals with bounded cohesive surface terms}, Arch. Ration. Mech. Anal., 248 (2024), pp.~Paper No. 109, 48.

\bibitem{DeGiorgiLetta}
{\sc E.~De~Giorgi and G.~Letta}, {\em Une notion g\'en\'erale de convergence faible pour des fonctions croissantes d'ensemble}, Ann. Scuola Norm. Sup. Pisa Cl. Sci. (4), 4 (1977), pp.~61--99.

\bibitem{DePhilRindler}
{\sc G.~De~Philippis and F.~Rindler}, {\em On the structure of {$\mathcal{A}$}-free measures and applications}, Ann. of Math. (2), 184 (2016), pp.~1017--1039.

\bibitem{DePhilRindlersurvey}
\leavevmode\vrule height 2pt depth -1.6pt width 23pt, {\em Fine properties of functions of bounded deformation---an approach via linear {PDE}s}, Math. Eng., 2 (2020), pp.~386--422.

\bibitem{DelNin}
{\sc G.~Del~Nin}, {\em Rectifiability of the jump set of locally integrable functions}, Ann. Sc. Norm. Super. Pisa Cl. Sci. (5), 22 (2021), pp.~1233--1240.

\bibitem{DiFrattaSolombrino}
{\sc G.~Di~Fratta and F.~Solombrino}, {\em Korn and {P}oincar\'e-{K}orn inequalities: a different perspective}, Proc. Amer. Math. Soc., 153 (2025), pp.~143--159.

\bibitem{Doob}
{\sc J.~L. Doob}, {\em Stochastic processes}, John Wiley \& Sons, Inc., New York; Chapman \& Hall, Ltd., London, 1953.

\bibitem{EbobisseToader}
{\sc F.~Ebobisse and R.~Toader}, {\em A note on the integral representation representation of functionals in the space {$\rm SBD (\Omega)$}}, Rend. Mat. Appl. (7), 23 (2003), pp.~189--201.

\bibitem{Federer}
{\sc H.~Federer}, {\em Geometric measure theory}, vol.~Band 153 of Die Grundlehren der mathematischen Wissenschaften, Springer-Verlag New York, Inc., New York, 1969.

\bibitem{FrancfortMarigo}
{\sc G.~A. Francfort and J.-J. Marigo}, {\em Revisiting brittle fracture as an energy minimization problem}, J. Mech. Phys. Solids, 46 (1998), pp.~1319--1342.

\bibitem{FriedrichM3MA}
{\sc M.~Friedrich}, {\em A {K}orn-type inequality in {$\rm SBD$} for functions with small jump sets}, Math. Models Methods Appl. Sci., 27 (2017), pp.~2461--2484.

\bibitem{Friedrich}
\leavevmode\vrule height 2pt depth -1.6pt width 23pt, {\em A piecewise {K}orn inequality in {$\rm SBD$} and applications to embedding and density results}, SIAM J. Math. Anal., 50 (2018), pp.~3842--3918.

\bibitem{Hahn}
{\sc H.~Hahn}, {\em {\"U}ber {A}nn\"aherung an {L}ebesgue’sche {I}ntegrale durch {R}iemann’sche {S}ummen}, Sitzungsber. Math. Phys. Kl. K. Akad. Wiss. Wien, 123 (1914), pp.~713--743.

\bibitem{Hajlasz}
{\sc P.~Haj{\l}asz}, {\em On approximate differentiability of functions with bounded deformation}, Manuscripta Math., 91 (1996), pp.~61--72.

\bibitem{kohn1979new}
{\sc R.~Kohn}, {\em New estimates for deformations in terms of their strains}, PhD thesis, Princeton University,  (1979).

\bibitem{Larsen}
{\sc C.~J. Larsen}, {\em Quasiconvexification in {$W^{1,1}$} and optimal jump microstructure in {$\rm BV$} relaxation}, SIAM J. Math. Anal., 29 (1998), pp.~823--848.

\bibitem{Matthies}
{\sc H.~Matthies, G.~Strang, and E.~Christiansen}, {\em The saddle point of a differential program}, in Energy methods in finite element analysis, Wiley Ser. Numer. Methods Engrg., Wiley, Chichester, 1979, pp.~309--318.

\bibitem{Rindler}
{\sc F.~Rindler}, {\em Calculus of variations}, Universitext, Springer, Cham, 2018.

\bibitem{suquet}
{\sc P.-M. Suquet}, {\em Sur un nouveau cadre fonctionnel pour les \'equations de la plasticit\'e}, C. R. Acad. Sci. Paris S\'er. A-B, 286 (1978), pp.~A1129--A1132.

\bibitem{Temam}
{\sc R.~Temam and G.~Strang}, {\em Functions of bounded deformation}, Arch. Ration. Mech. Anal., 75 (1980/81), pp.~7--21.

\end{thebibliography}
\end{document}